\DeclareMathOperator{\rk}{rk}
\DeclareMathOperator{\sgn}{sgn}
\DeclareMathOperator{\Aut}{Aut}
\DeclareMathOperator{\Ker}{Ker}
\DeclareMathOperator{\Hom}{Hom}
\DeclareMathOperator{\diag}{diag}
\DeclareMathOperator{\Id}{Id}
\DeclareMathOperator{\Sym}{Sym}
\DeclareMathOperator{\Span}{Span}
\DeclareMathOperator{\Mat}{Mat}
\DeclareMathOperator{\PSp}{PSp}
\DeclareMathOperator{\Sp}{Sp}
\DeclareMathOperator{\GL}{GL}
\DeclareMathOperator{\OO}{O}
\DeclareMathOperator{\SO}{SO}
\DeclareMathOperator{\Stab}{Stab}
\DeclareMathOperator{\spin}{\mathfrak{spin}}
\DeclareMathOperator{\spp}{\mathfrak{sp}}
\DeclareMathOperator{\ksp}{\mathfrak{ksp}}
\DeclareMathOperator{\Cl}{Cl}
\DeclareMathOperator{\ClGr}{ClGr}
\DeclareMathOperator{\clgr}{\mathfrak{clgr}}
\DeclareMathOperator{\End}{End}
\DeclareMathOperator{\Lie}{Lie}
\DeclareMathOperator{\Spin}{Spin}
\DeclareMathOperator{\Imm}{Im}
\DeclareMathOperator{\Ree}{Re}
\DeclareMathOperator{\Fix}{Fix}
\DeclareMathOperator{\KSp}{KSp}
\DeclareMathOperator{\Herm}{Herm}
\DeclareMathOperator{\Is}{Is}
\DeclareMathOperator{\PIs}{\mathbb{P}Is}
\DeclareMathOperator{\pol}{pol}
\DeclareMathOperator{\bpol}{\overline{pol}}
\DeclareMathOperator{\tr}{tr}
\DeclareMathOperator{\Der}{Der}
\newcommand{\midwd}{\,\middle\vert\,}
\newcommand{\R}{\mathbb R}
\newcommand{\CC}{\mathbb C}
\newcommand{\HH}{\mathbb H}
\newcommand{\PP}{\mathbb P}
\newcommand{\N}{\mathbb N}
\newcommand{\Z}{\mathbb Z}
\newcommand{\K}{\mathbb K}
\newcommand{\Oc}{\mathbb O}
\newcommand{\Om}{
\begin{pmatrix}
0 & 1\\
-1 & 0
\end{pmatrix}}
\newcommand{\Clm}[2]
{\begin{pmatrix}
#1\\
#2
\end{pmatrix}}
\theoremstyle{plain}
\newtheorem{teo}{Theorem}[section]
\newtheorem{cor}[teo]{Corollary}
\newtheorem{lem}[teo]{Lemma}
\newtheorem{prop}[teo]{Proposition}
\newtheorem{fact}[teo]{Fact}
\theoremstyle{definition}
\newtheorem{df}[teo]{Definition}
\theoremstyle{remark}
\newtheorem{rem}[teo]{Remark}
\newcommand{\bs}{\setminus}
\newcommand{\defin}{\emph}
\author{Eugen Rogozinnikov}
\begin{document}
\sloppy
\begin{abstract}
We introduce the symplectic group $\Sp_2(G, \sigma)$ associated to a Lie subgroup $G$ of a (possibly noncommutative) associative algebra $A$ equipped with an anti-involution $\sigma$. Our construction recovers several classical Lie groups as special cases, and in particular provides new realizations of spin groups as instances of $\Sp_2(G, \sigma)$ for suitable subgroups $G$ of the Clifford algebra. This case is not covered by the framework of~\cite{ABRRW}, which focuses on the specific situation $G = A^\times$, and is thus of particular interest.

We construct and study geometric spaces on which $\Sp_2(G, \sigma)$ acts. In particular, we define the space of $G$-isotropic elements and the corresponding space of $G$-isotropic lines, which generalize the classical projective line. We analyze the group action on these spaces and introduce natural invariants, such as the notion of positive triples and quadruples of $G$-isotropic lines and a generalized cross--ratio of positive quadruples of $G$-isotropic lines. Finally, when the Lie algebra of $G$ is Hermitian, we define the associated Riemannian symmetric space of $\Sp_2(G,\sigma)$ and provide several models for it.
\end{abstract}

\title{Symplectic groups over Lie subgroups of involutive algebras}

\maketitle
\tableofcontents

\section{Introduction}

The construction and study of symplectic groups over associative, possibly noncommutative, involutive unital algebras was systematically developed in \cite{ABRRW}. That paper introduces $\mathrm{Sp}_2(A,\sigma)$ for an associative (possibly noncommutative) unital algebra $A$ equipped with an anti-involution $\sigma$. The authors further construct the space of isotropic $A$-lines, define generalized cross--ratios and Kashiwara--Maslov type indices, and provide several models of the symmetric space in the Hermitian case. This approach already realizes many classical Lie groups (e.g.\ $\Sp_{2n}(\mathbb R)$, $\mathrm O(n,n)$, $\mathrm U(n,n)$, $\Sp_{2n}(\mathbb C)$, $\Sp(n,n)$) as instances of $\mathrm{Sp}_2(A,\sigma)$. However, certain other classical groups --- notably some Hermitian Lie groups of tube type (e.g.\ groups locally isomorphic to $\SO(2,n)$ or their spin covers) --- are not obtained directly by the constructions presented in \cite{ABRRW}.

A very recent article \cite{HKRW} further develops the study of groups over noncommutative involutive algebras and, complementing the results of \cite{ABRRW}, constructs symmetric-space models for several group constructions (such as $\mathrm O_{(1,1)}(A,\sigma)$, $A^\times$, and $\mathrm O(A_\mathbb C,\sigma_\mathbb C)$) in the Hermitian involutive algebra setting, and applies these models to Higgs--bundle theory by introducing analytic frameworks for polystability and the relevant harmonic equations.

The goal of this article is to introduce and analyse the group $\mathrm{Sp}_2(G,\sigma)$ associated with a Lie subgroup $G$ of the unit group $A^\times$ of a unital real finite-dimensional associative involutive algebra $(A,\sigma)$, and to study the natural geometric spaces on which this group acts. Concretely, starting from $(A,\sigma)$ and a connected Lie subgroup $G_0\le A^\times$ closed under~$\sigma$ with Lie algebra $B=\Lie(G_0)$, we construct its minimal (possibly disconnected) extension $G\le A^\times$ (cf. Section~\ref{discon-ext}) and the space $\mathrm{Sp}_2(G,\sigma)$ as the closure of a certain family of generic $2\times 2$ symplectic matrices built from elements of $G$ and from the $\sigma$-fixed part $B^\sigma$ of the Lie algebra $B$. Under several structural hypotheses on $B$ (for example, the Jordan-type and weakly Hermitian hypotheses; cf.~Definitions~\ref{df:JordanType} and~\ref{df:R-wHerm}), the resulting space $\mathrm{Sp}_2(G,\sigma)$ is a Lie group whose Lie algebra can be identified with the explicitly described Lie algebra
\[
\mathfrak{sp}_2(B,\sigma)
= \left\{\begin{pmatrix} x & z\\[4pt] y & -\sigma(x)\end{pmatrix}
\;\middle|\; x\in B,\; y,z\in B^\sigma\right\}.
\]

\begin{teo}[Theorem~\ref{thm:Sp_2-LieGroup}, Proposition~\ref{prop:LieAlgOfSp2}, Proposition~\ref{prop:Sp2Connected}]
\label{thm:main-intro}
For a Lie subgroup $G\leq A^\times$ as above, assume that its Lie algebra $B$ is of Jordan type with $1\in B$. Then $\mathrm{Sp}_2(G,\sigma)$ is a real Lie group, and, moreover, the Lie algebra of $\mathrm{Sp}_2(G,\sigma)$ agrees with $\mathfrak{sp}_2(B,\sigma)$.
If, in addition, $B$ is weakly Hermitian, then $\mathrm{Sp}_2(G,\sigma)$ is connected.
\end{teo}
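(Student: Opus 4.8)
The plan is to present $\Sp_2(G,\sigma)=\overline\Omega$, where $\Omega$ denotes the generic family, as a closed subgroup of $\Sp_2(A,\sigma)$, to apply Cartan's closed subgroup theorem, and to read the Lie algebra off the Bruhat-type parametrization of $\Omega$; connectedness will then be reduced to the component structure of $G$. Write $u(x)=\begin{pmatrix}1&x\\0&1\end{pmatrix}$, $\ell(y)=\begin{pmatrix}1&0\\y&1\end{pmatrix}$ for $x,y\in B^\sigma$, and $t(g)=\begin{pmatrix}g&0\\0&\sigma(g)^{-1}\end{pmatrix}$, $w(g)=\begin{pmatrix}0&-\sigma(g)^{-1}\\g&0\end{pmatrix}$ for $g\in G$; then $\Omega$ consists exactly of the products $u(x)\,w(g)\,u(z)$, equivalently of the symplectic $2\times2$ matrices whose lower-left entry lies in $G$. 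First I would verify that this presentation is unique, so that $(g,x,z)\mapsto u(x)w(g)u(z)$ is a homeomorphism of $G\times B^\sigma\times B^\sigma$ onto $\Omega$, exhibiting $\Omega$ as a submanifold of $\Sp_2(A,\sigma)$ of dimension $\dim B+2\dim B^\sigma=\dim\mathfrak{sp}_2(B,\sigma)$; one also checks directly that the curves $t\mapsto u(tx)$, $t\mapsto\ell(ty)$ and $t\mapsto t(\exp(t\xi))$ lie in $\overline\Omega$, and differentiating them at the identity produces the spanning set $\begin{pmatrix}0&z\\0&0\end{pmatrix},\ \begin{pmatrix}0&0\\y&0\end{pmatrix},\ \begin{pmatrix}\xi&0\\0&-\sigma(\xi)\end{pmatrix}$ ($\xi\in B$, $y,z\in B^\sigma$) of $\mathfrak{sp}_2(B,\sigma)$, so that $\mathfrak{sp}_2(B,\sigma)\subseteq\Lie(\overline\Omega)$.

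The next step is to show that $\overline\Omega$ is a subgroup. Inversion is immediate: $(u(x)w(g)u(z))^{-1}=u(-z)\,w(-\sigma(g))\,u(-x)$, whose lower-left entry $-\sigma(g)$ again lies in $G$ (here one uses that $G$ is $\sigma$-stable and contains $-1$), so $\Omega^{-1}=\Omega$. For products, the identity $u(z_1)u(x_2)=u(z_1+x_2)$ reduces the multiplication of two elements of $\Omega$ to the single product
\[
w(g_1)\,u(t)\,w(g_2)=\begin{pmatrix}-\sigma(g_1)^{-1}g_2&0\\g_1\,t\,g_2&-g_1\sigma(g_2)^{-1}\end{pmatrix},\qquad t=z_1+x_2\in B^\sigma,
\]
which lies in $\Omega$ as soon as its lower-left entry $g_1tg_2$ lies in $G$, i.e.\ as soon as $t\in G\cap B^\sigma$; for arbitrary $t\in B^\sigma$ one then passes to the limit. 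Hence the decisive point is the density of $G\cap B^\sigma$ in $B^\sigma$, and this is the one place where the hypothesis that $B$ is of Jordan type with $1\in B$ is genuinely used: $B^\sigma$ is then a formally real Jordan algebra with unit, so its $A$-invertible elements are dense, and for such an element the spectral decomposition $t=\sum_i\lambda_ie_i$ into pairwise orthogonal idempotents $e_i\in B^\sigma$ ($\lambda_i\in\R^\times$) gives $t=\big(\sum_i\sgn(\lambda_i)e_i\big)\cdot\exp\big(\sum_i(\log|\lambda_i|)e_i\big)$, with the exponential factor in $\exp(B^\sigma)\subseteq G_0$ and the sign factor in $G$ by the construction of the minimal extension (cf.\ Section~\ref{discon-ext}). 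I expect this Jordan-algebra input to be the main obstacle: one must verify carefully that the functional calculus stays inside $B^\sigma$ — this is precisely where the Jordan-type closure property and $1\in B$ enter — and that Jordan-invertibility, $A$-invertibility and membership in the Lie subgroup $G$ match up as claimed.

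Granting the density lemma, $\Omega\cdot\Omega\subseteq\overline\Omega$, and passing to closures gives $\overline\Omega\cdot\overline\Omega\subseteq\overline\Omega$ and $\overline\Omega^{-1}\subseteq\overline\Omega$; since $\Id\in\overline\Omega$, it is a closed subgroup of $\Sp_2(A,\sigma)$, hence an embedded Lie subgroup by Cartan's theorem. Moreover $\Omega$ is open and dense in $\overline\Omega$ — the remaining symplectic matrices of $\overline\Omega$ forming a lower-dimensional cell — so $\dim\overline\Omega=\dim\Omega=\dim\mathfrak{sp}_2(B,\sigma)$, which together with the inclusion $\mathfrak{sp}_2(B,\sigma)\subseteq\Lie(\overline\Omega)$ from the first paragraph forces $\Lie(\overline\Omega)=\mathfrak{sp}_2(B,\sigma)$. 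This establishes the first two assertions.

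For the last assertion I would note that $\overline\Omega$ is generated topologically by the subsets $u(B^\sigma)$, $\ell(B^\sigma)$, $t(G)$ and $w(1)$, and that $w(1)=\exp\!\big(\tfrac{\pi}{2}\begin{pmatrix}0&-1\\1&0\end{pmatrix}\big)$ already lies in the identity component $\overline\Omega^{\circ}$, since $\begin{pmatrix}0&-1\\1&0\end{pmatrix}\in\mathfrak{sp}_2(B,\sigma)$ (using $1\in B^\sigma$); consequently $\overline\Omega/\overline\Omega^{\circ}$ is a quotient of the component group $G/G_0$. It therefore suffices to show that the weakly Hermitian hypothesis forces every component of $G$ to meet $\overline\Omega^{\circ}$, and I would deduce this from the elements $J$ with $J^2=-1$ that weak Hermiticity supplies inside suitable subalgebras of $B$: then $-1=\exp(\pi J)\in G_0$, and likewise the sign idempotent factors occurring above already lie in $G_0$, so that $G=G_0$ is connected and hence so are $\Omega\cong G\times B^\sigma\times B^\sigma$ and its closure $\Sp_2(G,\sigma)$.
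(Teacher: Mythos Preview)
Your Cartan-theorem strategy for the Lie-group assertion is close to the paper's, but two points need correction.

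The minor one: you invoke formal reality of $B^\sigma$ (``a formally real Jordan algebra'') and its spectral calculus to place invertible $t\in B^\sigma$ into $G$. Formal reality is exactly the \emph{weakly Hermitian} hypothesis, which is not assumed for the first assertion; under Jordan type alone $B^\sigma$ may contain nilpotents and there is no spectral theorem. Fortunately the detour is unnecessary: density of $(B^\sigma)^\times$ in $B^\sigma$ needs only $1\in B$ (the density result quoted from \cite{ABRRW}), and $(B^\sigma)^\times\subseteq G$ is a standing hypothesis, so your product computation goes through directly. (A smaller quibble: your generic cell, characterized by lower-left entry in $G$, is not the paper's, which has \emph{upper}-left entry in $G$; the closures agree once both are known to be groups, but the claimed ``equivalently'' is not immediate.) The paper's own argument stays with its parametrization $L(y)D(x)R(z)$ and analyzes the product $R(z)L(y')$, whose upper-left entry $1+zy'$ is shown to lie in $\bar G$ using only the Jordan-type identity $z+zy'z\in B^\sigma$, together with a perturbation $y'\mapsto y'(1+t)$ when $1+zy'$ happens to be singular.

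The serious gap is in connectedness. You conclude that under weak Hermiticity the sign factors $\sum_i\varepsilon_ie_i$ already lie in $G_0$, hence $G=G_0$. This is false: for $A=\Mat(2,\R)$ with $\sigma$ the transpose one has $G_0=\GL_2^+(\R)$, and $\diag(1,-1)$ has negative determinant, so $G/G_0\cong\Z/2\Z$; the Clifford example $B(1,n)$ behaves the same way. There is in general no element $J\in B$ with $J^2$ equal to minus a single primitive idempotent, so your proposed mechanism cannot work. The paper's argument is genuinely different: it uses the second polar decomposition to write any $x\in G$ as $ub$ with $u\in\OO(G_0,\sigma)$ (which \emph{is} connected) and $b\in(B^\sigma)^\times$, and then builds an explicit path from the generic element $L(y)D(ub)R(z)$ to the Weyl element $\Om$ by first sending $u_t\to1$ inside $\OO(G_0,\sigma)$ and then letting $b_t\to0$ while choosing $z_t=b_t^{-1}$, $y_t=-b_t^{-1}$; one computes $L(-b_t^{-1})D(b_t)R(b_t^{-1})=\begin{pmatrix}b_t&1\\-1&0\end{pmatrix}\to\Om$. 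The components of $G$ thus get merged only after passing to limits of generic matrices in $\Sp_2(G,\sigma)$, not by any intrinsic connectedness of $G$ itself.
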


We introduce the spaces of $G$-isotropic elements and $G$-isotropic lines and study their configuration invariants (positive triples, positive quadruples, and a generalized cross--ratio), as well as several models for the associated Riemannian symmetric space in the Hermitian case. An example of such a model is the upper half-space:

\begin{teo}
For a Lie subgroup $G\leq A^\times$ as above, assume that its Lie algebra $B$ is Hermitian. Then:
\begin{enumerate}
  \item The \emph{cone of squares} $B^\sigma_+\subset B^\sigma$ is a model for the Riemannian symmetric space of~$G$.
  \item The \emph{upper half-space}
  \[
    \mathfrak U:=\{z\in B^\sigma\otimes_\R\CC \mid \Imm(z)\in B^\sigma_+\}
  \]
  is a model for the Riemannian symmetric space of $\mathrm{Sp}_2(G,\sigma)$.
\end{enumerate}
\end{teo}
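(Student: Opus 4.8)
The plan is to present each of the two spaces as a homogeneous space $\Gamma/K$ — with $\Gamma$ the relevant group and $K$ a maximal compact subgroup — equipped with a $\Gamma$-invariant Riemannian metric and a geodesic symmetry at a base point, so that it becomes a Riemannian symmetric space; in the Hermitian case this is then \emph{the} symmetric space attached to $\Gamma$. Throughout I use that the Hermitian hypothesis makes $B^\sigma$ a Euclidean Jordan algebra under $x\circ y=\tfrac12(xy+yx)$ with unit $1$, so that $B^\sigma_+$ is an open symmetric cone — simultaneously the interior of the cone of squares, the image $\exp(B^\sigma)$, the identity component of the invertible elements of $B^\sigma$, and (as will be checked) the $G$-orbit of $1$ — and $\mathfrak U=B^\sigma+i\,B^\sigma_+$ is the standard Hermitian tube domain over it. I will freely use elementary facts about $B^\sigma_+$: homogeneity under the structure group, self-duality for the positive-definite trace form $(x,y)\mapsto\tr(x\sigma(y))$, existence of Jordan square roots and inverses, and that $\exp\colon B^\sigma\to B^\sigma_+$ is a diffeomorphism.

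For part (1), $G$ acts on $B^\sigma$ by $g\cdot x:=g\,x\,\sigma(g)$, which is well defined since $\sigma(gx\sigma(g))=g\sigma(x)\sigma(g)$. First I would identify $B^\sigma_+$ with the $G$-orbit of $1$: given $p\in B^\sigma_+$, write its Jordan square root as $\exp(u)$ with $u\in B^\sigma$; since $1\in B$ and $B$ is of Jordan type (consequences of the Hermitian hypothesis), $\exp(u)\in G$, and $\exp(u)\cdot 1=\exp(u)\sigma(\exp(u))=\exp(u)\exp(u)=\exp(2u)=p$; conversely $g\cdot(b\sigma(b))=(gb)\sigma(gb)$, so the orbit is $G$-stable and contained in $B^\sigma_+$. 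The stabilizer of the base point $1$ is the ``unitary group'' $K:=\{g\in G\mid g\sigma(g)=1\}$; since $g\sigma(g)=1$ forces $\sigma(g)g=1$ in the finite-dimensional algebra $A$, every such $g$ is orthogonal for the positive-definite form $(x,y)\mapsto\tr(x\sigma(y))$, so $K$ is compact, with Lie algebra $\{x\in B\mid x+\sigma(x)=0\}$; a dimension count then makes the orbit map $G/K\to B^\sigma_+$ a diffeomorphism. Finally, $B^\sigma_+$ with the $G$-invariant metric coming from the trace form is a symmetric space whose geodesic symmetry at $1$ is the Jordan inversion $x\mapsto x^{-1}$ (which normalizes the $G$-action), so $K$ is, in particular, a maximal compact subgroup of $G$.

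For part (2), I define the action of $\big(\begin{smallmatrix}a&b\\ c&d\end{smallmatrix}\big)\in\Sp_2(G,\sigma)$ on $z\in B^\sigma\otimes_\R\CC$ by $z\mapsto(az+b)(cz+d)^{-1}$, with $\sigma$ extended $\CC$-linearly and $\Imm(z_1+iz_2)=z_2$ for $z_1,z_2\in B^\sigma$. Well-definedness on $\mathfrak U$ rests on one identity: the defining relations of $\Sp_2(G,\sigma)$ imply that, for $z\in\mathfrak U$ and $q:=(cz+d)^{-1}$, one has $\Imm\big((az+b)(cz+d)^{-1}\big)=q^{\ast}\,\Imm(z)\,q$, where $h\mapsto h^{\ast}:=\overline{\sigma(h)}$ is the conjugate-linear anti-involution of $B\otimes_\R\CC$ extending $\sigma$. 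This forces $cz+d$ to be invertible (otherwise a kernel element, paired suitably, would make the positive element $\Imm(z)$ degenerate) and exhibits the imaginary part of the image as a positivity-preserving congruence of $\Imm(z)$, hence again in $B^\sigma_+$. Transitivity follows by combining the translations $\big(\begin{smallmatrix}1&x\\ 0&1\end{smallmatrix}\big)$, $x\in B^\sigma$, acting by $z\mapsto z+x$, with the dilations $\big(\begin{smallmatrix}g&0\\ 0&\sigma(g)^{-1}\end{smallmatrix}\big)$, $g\in G$, acting by $z\mapsto g\,z\,\sigma(g)$, which by part (1) move $i\cdot 1$ onto every point of $i\cdot B^\sigma_+$; a translation then supplies an arbitrary real part. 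The stabilizer of $i\cdot 1$ is $\widetilde K:=\{\big(\begin{smallmatrix}a&b\\ c&d\end{smallmatrix}\big)\mid ai+b=i(ci+d)\}$, i.e.\ $d=a$, $c=-b$ subject to the remaining relations; it is identified with a unitary group over $B\otimes_\R\CC$, hence compact, and a dimension count against $\mathfrak{sp}_2(B,\sigma)$ makes the orbit map $\Sp_2(G,\sigma)/\widetilde K\to\mathfrak U$ a diffeomorphism. As in part (1), the $\Sp_2(G,\sigma)$-invariant Bergman metric on $\mathfrak U$, together with the geodesic symmetry at $i\cdot 1$ realized by $\big(\begin{smallmatrix}0&-1\\ 1&0\end{smallmatrix}\big)$ acting by $z\mapsto-z^{-1}$, makes $\mathfrak U$ a Hermitian symmetric space and $\widetilde K$ a maximal compact subgroup; hence $\mathfrak U$ is the Riemannian symmetric space of $\Sp_2(G,\sigma)$.

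I expect the main obstacle to be the well-definedness of the action in part (2): deriving the transfer-of-imaginary-part identity $\Imm\big((az+b)(cz+d)^{-1}\big)=q^{\ast}\,\Imm(z)\,q$ is a noncommutative computation that must thread the defining relations of $\Sp_2(G,\sigma)$ carefully, and one must moreover ensure that $cz+d$ is genuinely \emph{invertible} in $B\otimes_\R\CC$ rather than merely injective on some module — which is precisely where the Jordan-type hypothesis on $B$ (so that invertibility is read off from a well-behaved norm form) enters. A secondary, bookkeeping-heavy point is promoting ``$K$ and $\widetilde K$ compact'' to ``maximal compact''; the cleanest route is to exhibit the geodesic symmetries explicitly, as above, and then invoke uniqueness up to conjugacy of maximal compact subgroups in the Lie group $\Sp_2(G,\sigma)$ — which is connected (Theorem~\ref{thm:main-intro}, the Hermitian hypothesis implying the weakly Hermitian one) with Lie algebra $\mathfrak{sp}_2(B,\sigma)$.
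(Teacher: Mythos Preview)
Your overall strategy---realize each space as $\Gamma/K$ with $K$ maximal compact, then invoke the geodesic symmetry---is correct and aligns with the paper. Part~(1) is essentially the paper's argument (Section~\ref{hatG-models}), with one caveat: compactness of $K=\OO(G,\sigma)$ is \emph{part of the Hermitian hypothesis} here (see the definition in Section~\ref{Herm_Lie}), so your attempt to deduce it from a trace form is unnecessary, and as written it is unclear which trace on $A$ you mean or why it is positive definite (the Jordan trace lives on $B^\sigma$, not on $A\supseteq G$). Maximality of $K$ likewise is already established (Corollary~\ref{Max_Comp_G}) via the polar decomposition, not via a symmetric-space argument.

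For part~(2) there is a genuine gap in your approach, and you have slightly misdiagnosed where it lies. The identity $\Imm\bigl((az+b)(cz+d)^{-1}\bigr)=q^{\ast}\,\Imm(z)\,q$ with $q=(cz+d)^{-1}$ is indeed a valid equation \emph{in $A_\CC$} for any $M\in\Sp_2(A,\sigma)$. The difficulty is not deriving it, nor the invertibility of $cz+d$; it is showing that the result lands in $B^\sigma_\CC$ (and its imaginary part in $B^\sigma_+$) rather than merely in $A^\sigma_\CC$. For a general $M\in\Sp_2(G,\sigma)$ the entries $c,d$ lie only in the closure $\bar G\subset A$, so $cz+d\in A_\CC$, and there is no reason congruence by an arbitrary element of $A_\CC^\times$ should preserve the proper subspace $B^\sigma\subsetneq A^\sigma$. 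This is exactly the point at which the ``subgroup'' setting $B\subsetneq A$ differs from the full-algebra case $B=A$.

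The paper circumvents this by never treating a general $M$: it checks the M\"obius action on the generating set $\{R(b),\,I,\,D(a)\}$ of $\Sp_2(G,\sigma)$ (second definition, Section~\ref{Sp_2(G)}). For $R(b)$ the action is $z\mapsto z+b$ with $b\in B^\sigma$; for $D(a)$ it is $z\mapsto a\,z\,\sigma(a)$, and the previously established fact that $G$ preserves $B^\sigma$ and $B^\sigma_+$ under $\sigma$-congruence does the work; for $I$ one computes $\Imm(-z^{-1})=(y+xy^{-1}x)^{-1}$ explicitly and checks membership in $B^\sigma_+$ by hand. Transitivity and the stabilizer computation are then as you describe, and maximality of $\KSp_2(G,\sigma)$ is Theorem~\ref{maxcomp-Sp_R} (proved directly, by a spectral/boundedness argument, rather than via symmetric-space theory). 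If you want to salvage the global-identity route, you would need an independent argument that $(cz+d)^{-1}$ conjugates $B^\sigma_+$ into itself---but the only such argument available in the paper goes through the generator decomposition anyway.
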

The final section presents a family of nontrivial examples, notably realizations of spin groups $\operatorname{Spin}_0(m,n)$ as instances of $\mathrm{Sp}_2(G,\sigma)$ arising from suitable subgroups $G$ inside Clifford algebras and from an appropriate anti-involution $\sigma$ (cf.\ Theorem~\ref{Spin_as_Sp2}). These examples demonstrate that the subgroup perspective is essential to capture important classical groups omitted by the full-algebra approach.

This program is motivated by two intertwined perspectives. First, the construction situates classical Lie groups and some of their modern generalizations within a uniform algebraic framework that blends associative algebra, Jordan-type structure, and Lie theory. Second, the geometric objects associated to $\mathrm{Sp}_2(G,\sigma)$ --- isotropic elements and lines, positive configurations, symmetric-space models, and the Shilov boundary --- form the natural stage for questions about dynamics, boundary maps, and geometric structures related to representation theory (for instance, in the context of Higgs bundles \cite{BGPG,BGPR} and of maximal~\cite{BIW,BILW,AGRW} and positive representations~\cite{GLW,GW2,GRW}). Both aspects are developed in detail in the body of the article.

\vspace{5mm}\noindent\textbf{Outline of the paper.}
In Section~\ref{B-case} we recall the basic algebraic prerequisites: the definition of an algebra with an anti-involution, the class of subalgebras $B\subset A$ closed under~$\sigma$, and the notions of \emph{Jordan type} and \emph{(weakly) Hermitian}, which are needed for the subsequent constructions. The spectral theorem for $B^\sigma:=\Fix_B(\sigma)$, the properties of the cone of squares $B^\sigma_+$, and the polar decomposition are used as key technical tools.

In Section~\ref{Sp_2(G)} we introduce the construction of $\mathrm{Sp}_2(G,\sigma)$. The group is defined in two ways: as the topological closure of a certain set of generic matrices inside $\mathrm{Sp}_2(A,\sigma)$, and as the group generated by a specified family of matrices. Under the standing hypotheses (Jordan type and $(B^\sigma)^\times\subset G$) we prove that $\mathrm{Sp}_2(G,\sigma)$ is a real Lie group and describe its Lie algebra $\mathfrak{sp}_2(B,\sigma)$. We also study connectedness, the center, and the quotient group $\mathrm{PSp}_2(G,\sigma)$.

In Section~\ref{G-lines} we introduce and analyze geometric invariants associated to $G$-isotropic elements and lines: the space of $G$-isotropic elements/lines, the action of $\mathrm{Sp}_2(G,\sigma)$ on these spaces, invariants of pairs, triples and quadruples of isotropic lines (including the generalized cross--ratio), and various types of positive configurations.

Sections~\ref{G-models} and~\ref{hatG-models} construct models of the Riemannian symmetric spaces associated to $\mathrm{Sp}_2(G,\sigma)$ and to $G$ in the case when $B=\Lie(G)$ is Hermitian. We present models via complex structures, projective space models, precompact models, and half-space models, and discuss the compactification and the Shilov boundary.

In Section~\ref{Clifford} we show how the spin groups $\operatorname{Spin}_0(m,n)$ can be realized as special cases of $\mathrm{Sp}_2(G,\sigma)$ by choosing $G$ inside a Clifford algebra. There we construct the spectral decomposition in the Clifford algebra, build the corresponding subalgebra $B$, and verify all necessary prerequisites (Jordan type, cone properties, etc.). These examples show that the proposed construction indeed encompasses new and interesting cases not covered by earlier approaches.

\vspace{5mm}

\textbf{Acknowledgements.}
The author would like to warmly thank Daniele Alessandrini, Arkady Berenstein, Michael Gekhtman, Pengfei Huang, Georgios Kydonakis, Vladimir Retakh, and Anna Wienhard for interesting and fruitful discussions. The author was supported by a postdoc scholarship of the German Academic Exchange Service (DAAD), the funding from the European Research Council (ERC) under the European Union’s Horizon 2020 research and innovation programme (grant agreement No 101018839), and the KIAS Individual Grant (MG100901) at Korea Institute for Advanced Study, and would like to express deep gratitude to the Institut f\"ur Mathematik, Universit\"at Heidelberg, and the Max Planck Institute for Mathematics in the Sciences for their kind hospitality.

\section{Hermitian Lie algebras with anti-involution}\label{B-case}

\subsection{Lie subalgebras of involutive algebras}

Let $A$ be a unital associative, possibly non-commutative, finite-dimensional semisimple algebra over a field $\mathbb K$.

\begin{df}\label{def:antiinv}
An \emph{anti-involution} on $A$ is a $\mathbb{K}$-linear map $\sigma\colon A\to A$ such that
\begin{itemize}
\item $\sigma(ab)=\sigma(b)\sigma(a)$;
\item $\sigma^2=\mathrm{Id}$.
\end{itemize}
An \emph{involutive $\mathbb{K}$-algebra} is a pair $(A,\sigma)$, where $A$ is an $\mathbb{K}$-algebra as above and $\sigma$ is an~anti-involution on $A$.
\end{df}

\begin{rem}
Sometimes in the literature, the maps satisfying Definition~\ref{def:antiinv} are called just \emph{involutions}. We add the prefix ``anti'' in order to emphasize that they exchange factors and to be consistent with~\cite{ABRRW,R-Thesis,HKRW}.
\end{rem}

\noindent The algebra $A$ can be turned into a Lie algebra with the Lie bracket $[x,y]=xy-yx$. Let $B\subseteq A$ be a Lie subalgebra that is closed under $\sigma$. We define:
$$B^{\sigma}:=\Fix_B(\sigma),$$
$$B^{-\sigma}:=\Fix_B(-\sigma).$$

\begin{prop}{\rm \cite[Proposition~2.30]{ABRRW}} If $B$ is a Lie subalgebra of $A$ containing $1$, then $B^\times$ is open and dense in $B$ and $(B^{\sigma})^\times:=B^\times\cap A^{\sigma}$ is open and dense in $B^{\sigma}:=B\cap A^{\sigma}$.
\end{prop}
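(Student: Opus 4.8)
The plan is to establish, in parallel, two things for each of the pairs $(B,B^\times)$ and $(B^\sigma,(B^\sigma)^\times)$: that the invertible elements form an open subset, and that they form a dense subset; here I read $B^\times=B\cap A^\times$ and $(B^\sigma)^\times=B^\sigma\cap A^\times$, which coincides with $B^\times\cap A^\sigma$. Openness is the soft half. Equipping $A$ with its topology as a finite-dimensional $\K$-vector space, I would first note that an element $a\in A$ is invertible in $A$ exactly when its left-multiplication operator $L_a\colon x\mapsto ax$ lies in $\GL(A)$: one implication is immediate, and for the other, surjectivity of $L_a$ forces bijectivity by finite-dimensionality, so $ab=1$ for some $b$, whence $a(ba)=a$ and injectivity of $L_a$ give $ba=1$. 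Thus $A^\times=\{a\in A:\det L_a\neq 0\}$ is the non-vanishing locus of a polynomial function on $A$, hence open; intersecting with the linear subspaces $B$ and $B^\sigma$ yields the openness assertions at once.

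For density --- the part that genuinely uses $1\in B$ --- I would argue by an explicit perturbation. Given $b\in B$, consider the single-variable polynomial $p(t)=\det(L_b+t\,\Id_A)=\det L_{b+t\cdot 1}$. It is monic of degree $\dim_\K A$, in particular not identically zero, so it has only finitely many roots in $\K$. For every $t$ that is not a root, $b+t\cdot 1\in A^\times$; and $b+t\cdot 1\in B$ because $B$ is a linear subspace containing $1$. Letting $t\to 0$ through scalars avoiding the finitely many roots, the elements $b+t\cdot 1\in B^\times$ converge to $b$, so $B^\times$ is dense in $B$. For the $\sigma$-fixed statement, I would first record that $\sigma(1)=1$ (from $\sigma(a)=\sigma(1\cdot a)=\sigma(a)\sigma(1)$ for all $a$ together with the bijectivity of $\sigma$, $\sigma(1)$ is a two-sided identity), so that $1\in B^\sigma$ and, by $\K$-linearity of $\sigma$, $t\cdot 1\in A^\sigma$ for every $t\in\K$. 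Then for $b\in B^\sigma$ the same perturbation $b+t\cdot 1$ stays in $B^\sigma$, is invertible for all but finitely many $t$, and tends to $b$; hence $(B^\sigma)^\times$ is dense in $B^\sigma$.

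There is no serious obstacle here; the two points to get right are the identification $A^\times=\{a:\det L_a\neq 0\}$, which converts both ``open'' and ``the set of bad perturbations is finite'' into statements about a single polynomial, and the trivial-looking but essential observation that the perturbation $b\mapsto b+t\cdot 1$ never leaves $B$ (respectively $B^\sigma$) precisely because $1$ lies there --- this is exactly where the hypothesis $1\in B$ is used. If one prefers to avoid the operator $L_a$, the same effect is obtained by noting that the $\K$-spectrum of $b$ (equivalently, the set of $t\in\K$ with $b-t\cdot 1\notin A^\times$) is finite since $A$ is finite-dimensional; over $\K=\R$ only the finiteness of the real spectrum is needed, which is automatic.
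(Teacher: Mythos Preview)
Your proof is correct. Note that the paper does not supply its own proof of this proposition: it is quoted verbatim from \cite[Proposition~2.30]{ABRRW}, so there is no in-paper argument to compare against. Your approach via the polynomial $t\mapsto\det L_{b+t\cdot 1}$ is the standard one and is essentially what the cited reference does as well.
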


\begin{df}\label{df:JordanType}
A Lie subalgebra $B$ is called \defin{of Jordan type} (with respect to $\sigma$), if for every $x,y\in B^{\sigma}$, $xy\in B$.
\end{df}

\begin{prop}\label{prop:JordanType}
A Lie subalgebra $B$ is of Jordan type if and only if for every $x\in B$ and for every $b\in B^{\sigma}$, $\sigma(x)b+bx\in B^{\sigma}$.
\end{prop}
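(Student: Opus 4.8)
The plan is to prove both implications by direct manipulation using the defining identity of "Jordan type" together with the characterization of $B^\sigma$ as the fixed set of $\sigma$. Throughout I will freely use that $\sigma$ is a $\mathbb K$-linear anti-automorphism with $\sigma^2=\Id$, so that $\sigma$ restricted to any $\sigma$-closed subspace splits it as $B = B^\sigma \oplus B^{-\sigma}$, and that for $b\in B^\sigma$, $\sigma(b)=b$.

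\medskip
\noindent\textbf{($\Rightarrow$).} Assume $B$ is of Jordan type, and let $x\in B$ and $b\in B^\sigma$. I would first argue that it suffices to treat $x\in B^\sigma$ and $x\in B^{-\sigma}$ separately, since $B = B^\sigma\oplus B^{-\sigma}$ and the expression $\sigma(x)b+bx$ is $\mathbb K$-linear in $x$; so if the claim holds for each summand it holds in general. If $x\in B^\sigma$, then $\sigma(x)b+bx = xb+bx$, which lies in $B$ by the Jordan-type hypothesis applied to the pair $x,b\in B^\sigma$; moreover $\sigma(xb+bx)=\sigma(b)\sigma(x)+\sigma(x)\sigma(b)=bx+xb$, so $xb+bx\in B^\sigma$. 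If $x\in B^{-\sigma}$, then $\sigma(x)=-x$, so $\sigma(x)b+bx = bx-xb = [b,x]\in B$ because $B$ is a Lie subalgebra; and $\sigma([b,x])=\sigma(x)\sigma(b)-\sigma(b)\sigma(x)=-xb+bx=[b,x]$, so again $[b,x]\in B^\sigma$. Adding the two cases gives $\sigma(x)b+bx\in B^\sigma$ for all $x\in B$, as desired.

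\medskip
\noindent\textbf{($\Leftarrow$).} Conversely, assume that $\sigma(x)b+bx\in B^\sigma$ for all $x\in B$, $b\in B^\sigma$. I want to show $xy\in B$ for all $x,y\in B^\sigma$. Fix $x,y\in B^\sigma$. Apply the hypothesis with this $x$ (noting $\sigma(x)=x$) and with $b=y\in B^\sigma$: we get $xy + yx\in B^\sigma\subseteq B$. Separately, since $B$ is a Lie subalgebra, $xy-yx=[x,y]\in B$. Adding these two elements of $B$ and dividing by $2$ (char $\neq 2$; if the paper allows characteristic $2$ one instead notes that the two elements $xy+yx$ and $xy-yx$ together with closure under sums already force $xy\in B$ only after a field-characteristic caveat — I would simply assume $\frac12\in\mathbb K$, consistent with the real/finite-dimensional-semisimple setting of the paper) yields $xy\in B$. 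Hence $B$ is of Jordan type.

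\medskip
\noindent\textbf{Main obstacle.} The argument is essentially formal; the only genuine subtlety is the division by $2$ in the backward direction, i.e.\ reconstructing $xy$ from the symmetric part $xy+yx$ and the commutator $xy-yx$. In the paper's context ($\mathbb K = \mathbb R$, or at least $\operatorname{char}\mathbb K\neq 2$) this is harmless, but it is the one place where a hypothesis on the field is used, so I would state it explicitly. A secondary point worth being careful about is the reduction to the cases $x\in B^\sigma$ and $x\in B^{-\sigma}$ in the forward direction: this uses $\mathbb K$-linearity of $x\mapsto \sigma(x)b+bx$ and the direct-sum decomposition $B=B^\sigma\oplus B^{-\sigma}$, which again needs $\operatorname{char}\mathbb K\neq 2$ but is otherwise immediate.
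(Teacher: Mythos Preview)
Your proof is correct and follows essentially the same approach as the paper: both split $x$ into its $\sigma$-symmetric and $\sigma$-antisymmetric parts in the forward direction, and both reconstruct $xy$ from $xy+yx$ (via the hypothesis) and $xy-yx$ (via the Lie bracket) in the backward direction. Your explicit flagging of the $\operatorname{char}\mathbb K\neq 2$ assumption is a nice addition that the paper uses silently.
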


\begin{proof} ($\Rightarrow$) Let $B$ be of Jordan type.
We take $x^s:=\frac{x+\sigma(x)}{2}$, $x^a:=\frac{x-\sigma(x)}{2}$, then $x=x^s+x^a$, $x^s\in B^{\sigma}$ and $\sigma(x^a)=-x^a$. Then we can write
$$\sigma(x)b+bx=(x^s-x^a)b+b(x^s+x^a)=(x^sb+bx^s)+(bx^a-x^ab).$$
Since $B$ is of Jordan type, $x^sb,bx^s\in B$ and so $x^sb+bx^s\in B^{\sigma}$. Further, $bx^a-x^ab=[b,x^a]\in B$ and $\sigma(bx^a-x^ab)=-x^ab+bx^a$, i.e., $bx^a-x^ab\in B^{\sigma}$. So we obtain, $\sigma(x)b+bx\in B^{\sigma}$.

\vspace{2mm}
\noindent ($\Leftarrow$) Let $x,y\in B^\sigma$, then
$$2xy=xy+yx+xy-yx=\sigma(x)y+yx+[x,y]\in B.$$
Therefore, $B$ is of Jordan type.
\end{proof}

\begin{rem}
    Let $B$ be of Jordan type. Notice that for $b\in B^\sigma$, $b^k\in B^\sigma$ for every $k$. However, for $b\in B^{-\sigma}$ and $k\in\N_{>1}$, $b^k$ is not necessarily an element of $B$.
\end{rem}

\begin{rem}
    The following properties hold:
    $$[ B^{-\sigma}, B^{-\sigma} ] \subseteq B^{-\sigma},\;\;[ B^{-\sigma}, B^{\sigma} ] \subseteq B^{\sigma},\;\;[ B^{\sigma}, B^{\sigma} ] \subseteq B^{-\sigma}.$$
    In particular, $B^{-\sigma}$ is a Lie subalgebra of $B$.
\end{rem}

\subsection{Weakly Hermitian Lie subalgebras}
Let $B$ be a Lie subalgebra of a real involutive algebra $(A,\sigma)$. We denote by $B_{\geq 0}^\sigma$ the closed convex cone in $B^\sigma$ generated by squares of elements in $B^\sigma$. 

\begin{df}\label{df:R-wHerm}
A Lie subalgebra $B$ of Jordan type (with respect to $\sigma$) is called \defin{weakly Hermitian}, if:
\begin{enumerate}
\item $1\in B$;
\item The convex cone $B^{\sigma}_{\geq 0}$ is proper, i.e., it does not contain lines;
\item\label{R-no_nilp} The space $B^{\sigma}$ does not contain nilpotent elements, i.e., for every $b\in B^{\sigma}$, $b^2=0$ if and only if $b=0$.
\end{enumerate}
\end{df}

If $B$ is real weakly Hermitian, we define $B^{\sigma}_+:=(B^{\sigma}_{\geq 0})^\times.$
Then $B^{\sigma}_{\geq 0}$ is the topological closure of $B^{\sigma}_+$. In this case, $B^{\sigma}_+$ and $B^{\sigma}_{\geq 0}$ are proper convex cones in $B^{\sigma}$. Notice that a~convex cone is proper if and only if for every two its elements $b_1,b_2$, $b_1+b_2=0$ if and only if $b_1=b_2=0$.

\vspace{2mm}

Let $A_\mathbb C$ be the complexification of $A$. Slightly abusing our notation, we denote by $\sigma\colon A_\mathbb C\to A_\mathbb C$ the complex linear extension $\sigma\colon A\to A$, and we denote by $\bar\sigma\colon A\to A$ the composition of the complex anti-involution $\sigma$ and the complex conjugation in $A_\mathbb C$. Notice that $\bar\sigma$ is a real anti-involution on $A_\mathbb C$.

Let $B_\mathbb C$ be a complex Lie subalgebra of $A_\mathbb C$ which is closed under the complex conjugation and under $\sigma$. Notice that $B$ is always a complexification of a unique real Lie subalgebra $B\subseteq A$ closed under the anti-involution $\sigma\colon A\to A$.  As above, we denote by $(B_\mathbb C^{\bar\sigma})_{\geq 0}$ the closed convex cone in $B^{\bar\sigma}$ generated by elements of the form $\bar  b b\in B^{\bar\sigma}$ where $b\in B^{\sigma}$, and by $(B_\mathbb C^{\bar\sigma})_+:=(B_\mathbb C^{\bar\sigma})_{\geq 0}^\times$. 

\begin{prop} Let $B$ be a Lie subalgebra of $(A,\sigma)$ closed under $\sigma$.
\begin{enumerate}
    \item The Lie subalgebra $B$ is of Jordan type with respect to $\sigma$ if and only if $B_\mathbb C$ is of Jordan type with respect to $\sigma$.
    \item The cone $B^\sigma_{\geq 0}$ is proper if and only if the cone $(B^{\bar\sigma}_\mathbb C)_{\geq 0}$ is proper.
\end{enumerate}
\end{prop}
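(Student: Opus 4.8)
Both statements are compatibilities between complexification and the cone of squares; part~(1) is routine, while part~(2) has one genuine point. For part~(1) the plan is to reduce everything to the two elementary identities $(B_\CC)^\sigma=B^\sigma\otimes_\R\CC$ (clear from $B_\CC=B\oplus iB$ and the $\CC$-linearity of the extended $\sigma$) and $B_\CC\cap A=B$. If $B$ is of Jordan type and $x=p_1+ip_2$, $y=q_1+iq_2$ with $p_i,q_i\in B^\sigma$, then $xy=(p_1q_1-p_2q_2)+i(p_1q_2+p_2q_1)\in B_\CC$ because each $p_iq_j\in B$, so $B_\CC$ is of Jordan type. Conversely, if $B_\CC$ is of Jordan type and $p,q\in B^\sigma\subseteq(B_\CC)^\sigma$, then $pq\in B_\CC\cap A=B$, so $B$ is of Jordan type. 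No difficulty here.

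For part~(2), the implication $\Leftarrow$ is immediate: for $p\in B^\sigma$ the conjugate of $p$ in $A_\CC$ is $p$ itself, so $p^2=\bar p p$ is one of the generators of $(B^{\bar\sigma}_\CC)_{\geq 0}$ (take $b=p\in B^\sigma\subseteq(B_\CC)^\sigma$); hence $B^\sigma_{\geq 0}\subseteq(B^{\bar\sigma}_\CC)_{\geq 0}$, and a cone contained in a proper cone is proper. For $\Rightarrow$, assume $B^\sigma_{\geq 0}$ is proper. Write $B^{\bar\sigma}_\CC=B^\sigma\oplus iB^{-\sigma}$ and compute, for $b=p+iq$ with $p,q\in B^\sigma$, that $\bar b b=(p^2+q^2)+i[p,q]$ with $p^2+q^2\in B^\sigma_{\geq 0}$ and $[p,q]\in B^{-\sigma}$. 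The real-linear projection $\pi\colon B^{\bar\sigma}_\CC\to B^\sigma$ onto the first summand thus sends every generator, and so (by continuity, $B^\sigma_{\geq 0}$ being closed) all of $(B^{\bar\sigma}_\CC)_{\geq 0}$, into $B^\sigma_{\geq 0}$. Therefore if $w_1+w_2=0$ with $w_i\in(B^{\bar\sigma}_\CC)_{\geq 0}$, then $\pi(w_1)+\pi(w_2)=0$ forces $\pi(w_i)=0$ by properness, so each $w_i=ic_i$ with $c_i\in B^{-\sigma}$, and it remains to rule out nonzero $c_i$.

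This last step is the crux: the projection $\pi$ sees nothing in the imaginary direction, so one needs an extra input there, namely that $B^\sigma$ contains no nonzero square-zero element. (This is either assumed — it is condition~(3) of weak Hermitianness — or, when $1\in B$, it already follows from properness of $B^\sigma_{\geq 0}$: if $p^2=0$ then $(p+tb)^2=t(pb+bp)+t^2b^2$ gives $pb+bp\in B^\sigma_{\geq 0}\cap(-B^\sigma_{\geq 0})=\{0\}$ after letting $t\to0^{\pm}$, and $b=1$ yields $p=0$.) Granting it, fix a linear functional $f$ on $B^\sigma$ with $f>0$ on $B^\sigma_{\geq 0}\setminus\{0\}$ (available since $B^\sigma_{\geq 0}$ is a closed pointed cone) and put $\tilde f=f\circ\pi$; then $p\mapsto f(p^2)$ is continuous, $2$-homogeneous and positive on the unit sphere of $B^\sigma$, so $f(p^2)\geq c_0\|p\|^2$ for some $c_0>0$. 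Combined with a bound $\|[p,q]\|\leq C\|p\|\,\|q\|$, this yields, for any conical combination $x=\sum_j\lambda_j\bar b_jb_j$ with $b_j=p_j+iq_j$, the estimate $\|\Imm(x)\|\leq\tfrac{C}{2}\sum_j\lambda_j(\|p_j\|^2+\|q_j\|^2)\leq\tfrac{C}{2c_0}\sum_j\lambda_j f(p_j^2+q_j^2)=\tfrac{C}{2c_0}\,\tilde f(x)$, and by passage to the closure $\|\Imm(x)\|\leq\tfrac{C}{2c_0}\,\tilde f(x)$ for every $x\in(B^{\bar\sigma}_\CC)_{\geq 0}$. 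Applied to $x=ic_i$, where $\tilde f(ic_i)=f(\pi(ic_i))=0$, this forces $c_i=0$. Hence $(B^{\bar\sigma}_\CC)_{\geq 0}$ is proper.

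The main obstacle is thus exactly the purely-imaginary case of the $\Rightarrow$ direction, and the work there is precisely the coercivity estimate $f(p^2)\geq c_0\|p\|^2$, i.e. the absence of square-zero elements in $B^\sigma$. If a spectral theorem for $B^\sigma$ is available at this point, it subsumes this estimate (nonnegativity of spectra excludes nilpotents and makes the coercivity of $f$ on squares transparent), and the argument above shortens accordingly.
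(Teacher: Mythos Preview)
Your argument is correct and shares the paper's key step: project via $\Ree\colon B^{\bar\sigma}_\CC\to B^\sigma$, which sends each generator $\bar b\,b$ (with $b=p+iq$, $p,q\in B^\sigma$) to $p^2+q^2\in B^\sigma_{\geq 0}$, so that a line $c\R$ in the complex cone forces $\Ree(c)\R\subseteq B^\sigma_{\geq 0}$. The paper then simply writes such a $c$ as a finite sum $\sum_i\bar c_ic_i$ and argues that $\Ree(c)=0$ would force all $c_i=0$ (tacitly using that $B^\sigma$ has no nilpotents), hence $\Ree(c)\neq 0$ gives the contradiction; your coercivity bound $\|\Imm(x)\|\le\tfrac{C}{2c_0}\tilde f(x)$ is a more scrupulous route to the same conclusion that also covers limit points of the cone, and your derivation of the no-nilpotent property from properness together with $1\in B$ makes explicit a hypothesis the paper leaves implicit.
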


\begin{proof}
    {\it (1)} Let $B$ be of Jordan type. Let $b,b'\in B_\mathbb C^\sigma$ such that $b=b_1+b_2i$, $b'=b'_1+b'_2i$ where $b_1,b_2,b_1',b_2'\in B^\sigma$. Then 
    $$bb'=(b_1b_1'-b_2b_2')+(b_1b_2'+b_2b_1')i\in B^\sigma_\mathbb C.$$
    If $B_\mathbb C$ is of Jordan type, then clearly $B=B_\mathbb C\cap A$ is it as well.

\vspace{2mm}
    \noindent {\it (2)} Let the cone $B^\sigma_{\geq 0}$ be proper. By contradiction, we assume that there exists $0\neq c\in (B^\sigma_\mathbb C)_{\geq 0}$ such that $ct\in (B^\sigma_\mathbb C)_{\geq 0}$ for all $t\in\R$. We can write $c=\bar c_1 c_1+\dots+\bar c_k c_k$ where $c_i=c_i'+c_i''i$, $c_i',c_i''\in B^\sigma$ for all $i$. Then
    $$\Ree(c)=\sum_{i=1}^k\left((c_i')^2+(c_i'')^2\right)\in B^\sigma_{\geq 0}.$$
    Therefore, $\Ree(c)\neq 0$, otherwise, $c_i'=c_i''=0$ for all $i$ because $B^\sigma_+$ is a proper cone, i.e., $c=0$. Therefore, the line $\Ree(c)\R$ is contained in $B^\sigma_{\geq 0}$. This contradicts to the properness of the cone $B^\sigma_{\geq 0}$.

    Let the cone $(B^{\bar\sigma}_\mathbb C)_{\geq 0}$ be proper. Since $B^\sigma_{\geq 0}\subseteq (B^{\bar\sigma}_\mathbb C)_{\geq 0}$, the cone $B^\sigma_{\geq 0}$ contains no line, i.e., it is proper.
\end{proof}

\noindent Notice that, if $B$ is of Jordan type or weakly Hermitian with respect to $\sigma$, then, in general,  it is not of Jordan type with respect to $\bar\sigma$ (cf. Remark~\ref{rem:nonHerm.complexification}).

\vspace{2mm}
We recall the definition of Jordan algebra and formally real Jordan algebra.

\begin{df}
Let $(V,\circ)$ be an possibly non-associative algebra over some field $\K$. $(V,\circ)$ said to be a \defin{Jordan algebra} if for all $x,y\in V$
\begin{enumerate}
\item $x\circ y=y\circ x$;
\item $(x\circ y)\circ(x\circ x)=x\circ(y\circ(x\circ x))$\;\;(Jordan identity).
\end{enumerate}

A Jordan algebra $(V,\circ)$ is called \defin{formally real} if for all $x,y\in V$, $x^2+y^2=0$ implies $x,y=0$.
\end{df}

\begin{prop} Let $B$ be a Lie subalgebra of an involutive algebra $(A,\sigma)$.
\begin{itemize}
\item If $B$ is of Jordan type, then the space $(B^{\sigma},\circ)$ is a Jordan algebra where
$$x\circ y= \frac{xy+yx}{2}.$$
\item If additionally $B$ is weakly Hermitian, then the Jordan algebra $(B^{\sigma},\circ)$ is formally real.
\end{itemize}
\end{prop}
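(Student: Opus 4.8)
The plan is to handle the two assertions in turn. The first is the standard fact that an associative algebra becomes a Jordan algebra under the symmetrised product, once one verifies that $B^\sigma$ is closed under that product; the second is a short consequence of the cone properties listed in Definition~\ref{df:R-wHerm}.

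First I would check that $\circ$ restricts to a binary operation on $B^\sigma$ — this is the only place the Jordan-type hypothesis enters. For $x,y\in B^\sigma$, Definition~\ref{df:JordanType} gives $xy\in B$ and $yx\in B$, hence $xy+yx\in B$; and since $\sigma(xy+yx)=\sigma(y)\sigma(x)+\sigma(x)\sigma(y)=yx+xy$, we get $xy+yx\in B^\sigma$, so $x\circ y=\tfrac12(xy+yx)\in B^\sigma$ (one may equally quote Proposition~\ref{prop:JordanType} applied to $x\in B^\sigma$). Commutativity of $\circ$ is immediate from $xy+yx=yx+xy$. For the Jordan identity I would recall that the symmetrised product makes the whole associative algebra $A$ into a Jordan algebra $A^+$: writing $x\circ x=x^2$ for the associative square, a direct expansion shows that both $(x\circ y)\circ(x\circ x)$ and $x\circ(y\circ(x\circ x))$ equal $\tfrac14\bigl(xyx^2+x^3y+yx^3+x^2yx\bigr)$, a purely formal identity in $A$. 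Hence $(B^\sigma,\circ)$, being a linear subspace of $A$ closed under $\circ$, is a Jordan subalgebra of $A^+$ and therefore a Jordan algebra. (If one prefers not to invoke $A^+$, the expansion just given \emph{is} the verification of the Jordan identity.)

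For the second assertion, assume in addition that $B$ is weakly Hermitian and take $x,y\in B^\sigma$ with $x\circ x+y\circ y=0$, i.e. $x^2+y^2=0$ in $A$. Both $x^2$ and $y^2$ are squares of elements of $B^\sigma$, hence lie in the convex cone $B^\sigma_{\geq 0}$. Since this cone is proper (the second condition of Definition~\ref{df:R-wHerm}), and a proper cone contains no pair of nonzero elements summing to $0$ — as recalled in the text following Definition~\ref{df:R-wHerm} — we get $x^2=0$ and $y^2=0$. The no-nilpotents hypothesis, Definition~\ref{df:R-wHerm}(\ref{R-no_nilp}), then forces $x=0$ and $y=0$, so $(B^\sigma,\circ)$ is formally real.

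I do not anticipate any genuine obstacle. The single nontrivial input for the first part is the closure of $B^\sigma$ under the symmetrised product, which is exactly the Jordan-type condition; for the second part everything reduces to a two-line deduction from properness of $B^\sigma_{\geq 0}$ together with the absence of nilpotents in $B^\sigma$. The only mildly tedious point is the associative expansion behind the Jordan identity, and that can be suppressed entirely by citing the classical Jordan algebra $A^+$.
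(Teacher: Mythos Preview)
Your proposal is correct and follows essentially the same approach as the paper's proof: closure of $B^\sigma$ under $\circ$ via the Jordan-type hypothesis, the direct four-term expansion verifying the Jordan identity, and then properness of $B^\sigma_{\geq 0}$ together with absence of nilpotents for formal reality. The paper does exactly this, with the same expansion $\tfrac14(xyx^2+x^3y+yx^3+x^2yx)$ you give.
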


\begin{proof}
Since for all $x,y\in B^{\sigma}$, $xy\in B$, so we obtain $x\circ y\in B^{\sigma}$. Moreover, $x\circ y=y\circ x$ is clear. Further, the Jordan identity holds:
$$(x\circ y)\circ(x\circ x)=\frac{xy+yx}{2}\circ x^2=\frac{xyx^2+yx^3+x^3y+x^2yx}{4}=$$
$$=\frac{xyx^2+x^3y+yx^3+x^2yx}{4}=x\circ \frac{yx^2+x^2y}{2}=x\circ(y\circ(x\circ x)).$$
Thus $(B^{\sigma},\circ)$ is a Jordan algebra.

Assume now $B$ to be weakly Hermitian. Let $a_1, a_2\in B^{\sigma}$, then $a_i^2\in B^{\sigma}_{\geq 0}$. The convex cone $B^{\sigma}_{\geq 0}$ is proper, so $a_1^2+a_2^2$ vanishes if and only if $a_1^2=a_2^2=0$. Therefore, $a_1=a_2=0$ by~(\ref{R-no_nilp}) in Definition~\ref{df:R-wHerm}.
\end{proof}

\subsection{Classification of simple formally real Jordan algebras}

In this section, we remind the well-known classification of simple formally real Jordan algebras (for more details, see~\cite{Hanche84, Faraut}).

\begin{fact}
Every simple formally real Jordan algebra is isomorphic to one of the following Jordan algebras:
\begin{enumerate}
\item $(\Sym(n,\R),\circ)$ where $a\circ b=\frac{ab+ba}{2}$ for $a,b\in \Sym(n,\R)$;
\item $(\Herm(n,\CC),\circ)$ where $a\circ b=\frac{ab+ba}{2}$ for $a,b\in \Herm(n,\CC)$;
\item $(\Herm(n,\HH),\circ)$ where $a\circ b=\frac{ab+ba}{2}$ for $a,b\in \Herm(n,\HH)$;
\item $(B^{\sigma}(1,n),\circ)$ where $a\circ b=\frac{ab+ba}{2}$ for $a,b\in B^{\sigma}(1,n)$ (cf.~Section~\ref{ex:Clifford});
\item $(\Herm(3,\Oc),\circ)$ where $a\circ b=\frac{ab+ba}{2}$ for $a,b\in \Herm(3,\Oc)$
\end{enumerate}
where $\Herm(3,\Oc)$ is the space of $3\times 3$ Hermitian octonionic matrices.
\end{fact}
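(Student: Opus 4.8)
The statement is the classical Jordan--von Neumann--Wigner classification, and the plan is to follow the standard route via Euclidean Jordan algebras and Peirce decomposition (see \cite{Hanche84, Faraut}); here I only outline the steps. A finite-dimensional formally real Jordan algebra $(V,\circ)$ is automatically unital, and the symmetric bilinear form $\langle x,y\rangle:=\tr(L_{x\circ y})$, where $L_x$ denotes the operator of Jordan multiplication by $x$, is positive definite and associative, $\langle x\circ y,z\rangle=\langle y,x\circ z\rangle$; thus $V$ is a Euclidean Jordan algebra. The spectral theorem for such algebras then supplies, for each $x\in V$, a decomposition $x=\sum_i\lambda_i c_i$ with $\{c_i\}$ a complete system of orthogonal primitive idempotents (a \emph{Jordan frame}); every Jordan frame has the same cardinality $r=\rk V$, and the sum of its members is the unit $e$.

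\emph{Peirce decomposition and low rank.} Fix a Jordan frame $c_1,\dots,c_r$. For any idempotent $c$ the operator $L_c$ has spectrum in $\{0,\tfrac12,1\}$, so $V=V_1(c)\oplus V_{1/2}(c)\oplus V_0(c)$; refining simultaneously over all $c_i$ yields the fine Peirce decomposition $V=\bigoplus_{1\le i\le j\le r}V_{ij}$ with $V_{ii}=\R c_i$ and $V_{ij}=V_{1/2}(c_i)\cap V_{1/2}(c_j)$ for $i<j$, governed by the Peirce product rules ($V_{ij}\circ V_{jk}\subseteq V_{ik}$; $V_{ij}\circ V_{kl}=0$ if $\{i,j\}\cap\{k,l\}=\emptyset$; $V_{ij}\circ V_{ij}\subseteq V_{ii}\oplus V_{jj}$). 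Simplicity forces the frame to be connected: for $r\ge 2$ every $V_{ij}$ with $i<j$ is nonzero and all share a common dimension $d$. If $r=1$ then $V=\R e\cong(\Sym(1,\R),\circ)$. If $r=2$ then $V=\R c_1\oplus\R c_2\oplus V_{12}$, and the Peirce rules show the product is entirely determined by the restriction of $\langle\cdot,\cdot\rangle$ to $(\R e)^\perp\cong\R(c_1-c_2)\oplus V_{12}$; hence $(V,\circ)$ is a spin factor, the Jordan algebra of a positive-definite quadratic form on $\R^n$ with $n=1+d$, which in the notation of Section~\ref{ex:Clifford} is $(B^\sigma(1,n),\circ)$.

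\emph{Coordinatization for $r\ge 3$.} This is the core. For $r\ge 3$ one transports the structure onto the single off-diagonal space $K:=V_{12}$: using the frame (passing through $V_{13}$ and $V_{23}$) one equips $K$ with a unit, a bilinear product, and a conjugation, and the positive-definite form $\langle\cdot,\cdot\rangle$ restricts to a norm $N$ on $K$ satisfying $N(xy)=N(x)N(y)$. Thus $K$ is a Euclidean composition algebra, and Hurwitz's theorem gives $K\in\{\R,\CC,\HH,\Oc\}$, so $d=\dim_\R K\in\{1,2,4,8\}$. Jacobson's coordinatization theorem then reconstructs the whole multiplication from the frame and $K$, identifying $V\cong(\Herm(r,K),\circ)$ with $a\circ b=\tfrac12(ab+ba)$, and it forces $K$ to be associative when $r\ge 4$; since $\Oc$ is not associative, the octonionic case occurs only for $r=3$, giving the Albert algebra $\Herm(3,\Oc)$. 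Conversely, one checks directly that each algebra on the list is simple and formally real --- immediate for $\Sym(n,\R)$, $\Herm(n,\CC)$, $\Herm(n,\HH)$, $B^\sigma(1,n)$, and a short computation of Freudenthal for $\Herm(3,\Oc)$ --- which completes the list.

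I expect the main obstacle to be the coordinatization step: extracting the composition algebra $K$ from the off-diagonal Peirce spaces and verifying that the global Jordan product is completely determined by the frame together with $K$ (this is where essentially all of the identity juggling --- the Hua-type relations and the verification of the composition axiom for $N$ --- is concentrated), together with the delicate rank restriction $r\le 3$ in the nonassociative case.
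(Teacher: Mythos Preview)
Your outline is the standard Jordan--von Neumann--Wigner argument and is essentially correct as a sketch; the paper, however, does not prove this statement at all. It is recorded as a \emph{Fact} with a citation to \cite{Hanche84, Faraut} and no argument is given. So there is nothing in the paper to compare your approach against beyond the choice of references, and your outline is already more detailed than what the paper provides.

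One small point worth tightening if you flesh this out: the claim that in a simple Euclidean Jordan algebra of rank $r\ge 2$ all off-diagonal Peirce spaces $V_{ij}$ are nonzero and of the same dimension is itself a nontrivial consequence of simplicity plus the transitivity of the automorphism group on Jordan frames (or, equivalently, of the structure group action), and should be flagged as requiring proof rather than asserted. Likewise, the passage from ``$K$ is a composition algebra'' to ``$V\cong\Herm(r,K)$'' is Jacobson's coordinatization theorem, which you correctly name, but which is the genuinely hard step and should be cited explicitly rather than absorbed into a sentence.
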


\begin{fact}{\rm \cite[Corollary~2.8.5]{Hanche84}}
The Jordan algebra $(\Herm(3,\Oc),\circ)$ is exceptional. This means that there is no associative real algebra $A$ that contains $\Herm(3,\Oc)$ as a Jordan subalgebra.
\end{fact}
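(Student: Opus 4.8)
\smallskip
\noindent\textit{Sketch of an independent proof.}
I would argue by contradiction, showing that a Jordan embedding $\varphi\colon(\Herm(3,\Oc),\circ)\hookrightarrow A^+$ into an associative real algebra $A$ forces the octonion multiplication to be associative. First reduce to the unital case: $e:=\varphi(1)$ is an idempotent of $A$, and the relation $\varphi(1)\circ\varphi(x)=\varphi(x)$ gives $e\varphi(x)=\varphi(x)e=\varphi(x)$, so $\varphi(x)\in eAe$ for all $x$; since $eAe$ is a unital associative algebra with unit $e$, we may assume $A$ is unital with $1_A=\varphi(1)$ and regard $\Herm(3,\Oc)$ as a unital Jordan subalgebra of $A^+$. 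Let $e_1,e_2,e_3\in\Herm(3,\Oc)$ be the diagonal matrix units; they are pairwise orthogonal idempotents with $e_1+e_2+e_3=1$.

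Next, decompose $A$ into its Peirce components with respect to $e_1,e_2,e_3$: $A=\bigoplus_{i,j}A_{ij}$ with $A_{ij}=e_iAe_j$ and $A_{ij}A_{kl}\subseteq\delta_{jk}A_{il}$. For $i\ne j$ the off-diagonal component $\{\langle x\rangle_{ij}\mid x\in\Oc\}$ of $\Herm(3,\Oc)$ (where $\langle x\rangle_{ij}$ is the Hermitian matrix with entry $x$ in position $(i,j)$) lies, by the Peirce relations, inside $A_{ij}\oplus A_{ji}$ with $\langle x\rangle_{ij}=e_i\langle x\rangle_{ij}e_j+e_j\langle x\rangle_{ij}e_i$. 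This produces $\R$-linear maps $P_{ij}\colon\Oc\to A_{ij}$, $P_{ij}(x):=e_i\langle x\rangle_{ij}e_j$, which are injective: if $P_{ij}(x)=0$ then $\langle x\rangle_{ij}\in A_{ji}$, so $\langle x\rangle_{ij}^2=0$ in $A$, whereas $\langle x\rangle_{ij}\circ\langle x\rangle_{ij}=|x|^2(e_i+e_j)$ in $\Herm(3,\Oc)$, forcing $x=0$. Translating the explicit Jordan products of the off-diagonal pieces — for instance $\langle x\rangle_{12}\circ\langle y\rangle_{23}=\tfrac12\langle xy\rangle_{13}$ together with the analogues obtained by permuting indices and conjugating — into associative identities by matching Peirce components in $A$ yields, for pairwise distinct $i,j,k$ and all $x,y\in\Oc$,
\[
P_{ij}(x)\,P_{jk}(y)=P_{ik}(xy).
\]

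It remains to derive the contradiction: the displayed relations, combined with the associativity of the product of $A$, should force $\Oc$ to be associative, which is false (three units of a standard octonion basis with nonzero associator do the job). This final step is exactly Jacobson's coordinatization theorem for reduced Jordan algebras of degree $3$, to the effect that $\Herm(3,D)$ is special if and only if $D$ is associative, and I expect it to be the genuine difficulty. The reason is structural: $\Oc$ is an \emph{alternative} algebra, so the three-fold products of off-diagonal Peirce elements one can form directly — there being only three diagonal idempotents available — merely reproduce Moufang-type identities valid in every alternative algebra and do not detect non-associativity; extracting a true instance of associativity requires a more delicate argument involving a four-fold Peirce product, for which I would invoke the coordinatization theory in~\cite{Hanche84}. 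Alternatively one can bypass this either by citing that coordinatization theorem outright, or by the remark that $\Aut(\Herm(3,\Oc))$ is the exceptional compact group $F_4$, whereas every special simple formally real Jordan algebra has a classical compact automorphism group.
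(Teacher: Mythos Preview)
The paper does not prove this Fact at all: it is stated with a citation to \cite[Corollary~2.8.5]{Hanche84} and no argument is given. So there is no ``paper's own proof'' to compare against.

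Your sketch is the standard Albert--Jacobson approach and is correctly structured up to the point you yourself flag. The reduction to the unital case, the Peirce decomposition relative to the diagonal idempotents, the injectivity of the maps $P_{ij}$, and the identity $P_{ij}(x)P_{jk}(y)=P_{ik}(xy)$ are all fine. But you are right that the final step is the whole content of the theorem: with only three idempotents the associative identities you can extract directly from Peirce products are precisely the Moufang/alternative identities, which $\Oc$ satisfies, so they do not yield a contradiction. Getting from ``$P_{ij}(x)P_{jk}(y)=P_{ik}(xy)$ in an associative ambient algebra'' to ``the coordinate algebra is associative'' is exactly the coordinatization theorem you cite, and it is not a short computation. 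Since you end by invoking the same reference \cite{Hanche84} (or, alternatively, the identification $\Aut(\Herm(3,\Oc))\cong F_4$, which is itself a substantial theorem), your ``independent proof'' ultimately rests on the same literature the paper cites. In other words, you have reproduced the skeleton of Albert's argument but not an autonomous proof; this is entirely reasonable for a result the paper treats as a black box.
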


\subsection{Spectral theorem}

In this section, we assume $B$ to be weakly Hermitian. As we have seen, $(B^{\sigma},\circ)$ is a formally real Jordan algebra.

We are going to state the first versions of the spectral theorem for formally real Jordan algebras. But before we do it, first, we give some necessary definitions:

\begin{df}\begin{itemize}
\item An element $c\in B^{\sigma}$ is called an \defin{idempotent} if $c^2=c$.
\item Two idempotents $c,c'\in B^{\sigma}$ are called \defin{orthogonal} if $c\circ c'=0$.
\item A tuple $(c_1,\dots,c_k)$ of pairwise orthogonal idempotents is called a \defin{complete orthogonal system of idempotents} if $c_1+\dots+c_k=1$.
\end{itemize}
\end{df}

\begin{rem}
Every idempotent $c\in B^{\sigma}_{\geq 0}$.
\end{rem}

\begin{teo}[Spectral theorem, first version~{\cite[Theorem~III.1.1]{Faraut}}]\label{Spec_teo_B1}
For every $b\in B^{\sigma}$, there exist a unique $k\in\N$, unique real numbers $\lambda_1,\dots,\lambda_k\in\R$, all distinct, and a unique complete system of orthogonal idempotents $c_1,\dots,c_k\in B^{\sigma}$ such that
$$b=\sum_{i=1}^k\lambda_ic_i.$$
\end{teo}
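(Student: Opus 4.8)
The plan is to pass to the unital commutative associative subalgebra generated by $b$ and apply the structure theory of finite-dimensional commutative $\R$-algebras.

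First I would fix $b\in B^\sigma$ and set $R:=\R[b]\subseteq A$, the unital associative subalgebra generated by $b$ (this makes sense since $1\in B$); it is commutative and finite-dimensional. Using that $B$ is of Jordan type one has $b^m\in B^\sigma$ for all $m$, so $R\subseteq B^\sigma$, and since $R$ is commutative the Jordan product $\circ$ agrees on $R$ with the associative product of $A$. Hence it is enough to exhibit the idempotents $c_i$ and the scalars $\lambda_i$ inside $R$.

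The key step is to show $R\cong\R^k$ for some $k$. I would first check that $R$ has no nonzero nilpotent: if $x\in R$ with $x^N=0$ and $N\ge 2$ minimal, put $m=\lceil N/2\rceil<N$; then $x^m\in B^\sigma$ and $(x^m)^2=x^{2m}=0$, so $x^m=0$ by condition~(\ref{R-no_nilp}) of Definition~\ref{df:R-wHerm}, contradicting minimality — hence $x=0$. Thus $R$ is a finite-dimensional reduced, hence semisimple, commutative $\R$-algebra, so $R\cong\prod_{i=1}^k K_i$ with each $K_i$ equal to $\R$ or $\CC$. A factor $K_i\cong\CC$ is ruled out by formal reality of $(B^\sigma,\circ)$: its unit idempotent $e$ and an element $u$ of that factor with $u^2=-e$ lie in $R\subseteq B^\sigma$ and satisfy $u^2+e^2=0$ with $u\ne 0$, a contradiction. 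So $R\cong\R^k$. Taking $c_1,\dots,c_k\in R$ to be the preimages of the standard idempotents of $\R^k$, these form a complete orthogonal system of idempotents, and under $R\cong\R^k$ the element $b$ corresponds to some $(\lambda_1,\dots,\lambda_k)$, giving $b=\sum_i\lambda_i c_i$. The $\lambda_i$ are distinct because $R\cong\R[T]/(\mu_b)$ (with $\mu_b$ the minimal polynomial of $b$) forces $k=\dim_\R R=\deg\mu_b=\#\{\text{distinct }\lambda_i\}$.

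For uniqueness, suppose $b=\sum_{j=1}^l\mu_j d_j$ with the $\mu_j$ distinct and $(d_1,\dots,d_l)$ a complete orthogonal system. Then $p(b)=\sum_j p(\mu_j)d_j$ for every polynomial $p$, and Lagrange interpolation at the distinct points $\mu_j$ produces polynomials $p_j$ with $p_j(b)=d_j$, so each $d_j\in R$. Inside $R\cong\R^k$ the $d_j$ are orthogonal idempotents summing to $1$, hence correspond to a partition of $\{1,\dots,k\}$ into blocks $S_j$ with $d_j=\sum_{i\in S_j}c_i$; matching $\mu_j d_j$ with the part of $\sum_i\lambda_i c_i$ supported on $S_j$ forces $\lambda_i=\mu_j$ for all $i\in S_j$, and since the $\lambda_i$ are distinct each $S_j$ is a singleton. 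Thus $l=k$ and, after reindexing, $d_j=c_j$ and $\mu_j=\lambda_j$, which yields uniqueness of $k$, of the $\lambda_i$, and of the $c_i$. The only non-routine point is the middle step: everything hinges on recognizing that the two extra hypotheses of weak Hermitianity — no square-zero elements in $B^\sigma$ and (through the proper cone $B^\sigma_{\ge 0}$) formal reality of $(B^\sigma,\circ)$ — are exactly what forces the commutative algebra $\R[b]$ to be not merely reduced but split over $\R$; the existence of the idempotents, the distinctness of the $\lambda_i$, and the uniqueness are then standard commutative algebra and interpolation.
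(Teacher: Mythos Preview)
Your argument is correct. The paper does not supply its own proof of this theorem; it simply cites \cite[Theorem~III.1.1]{Faraut}, where the spectral theorem is proved for an arbitrary formally real Jordan algebra, with no ambient associative algebra assumed.

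Your approach is therefore genuinely different from (and, in this setting, more elementary than) the cited one. You exploit the fact that here $B^\sigma$ sits inside an associative algebra $A$ with $1\in B$, so that $\R[b]$ is a finite-dimensional commutative associative $\R$-algebra to which Artin--Wedderburn applies directly; the two hypotheses of weak Hermitianity then kill nilpotents and complex factors. The Faraut argument, by contrast, works purely within the Jordan algebra $(B^\sigma,\circ)$ and does not use any associative embedding --- which is essential in the exceptional case $\Herm(3,\Oc)$ but unnecessary in the situations treated in this paper. Your proof thus buys simplicity at the cost of generality; since the paper always works inside an associative $A$, nothing is lost. One small point worth making explicit in your write-up: in the uniqueness step you compute $p(b)=\sum_j p(\mu_j)d_j$ using Jordan powers of $b$, and you are implicitly using that Jordan powers of $b\in B^\sigma$ coincide with its associative powers in $A$, so that the resulting $d_j$ really lie in the associative subalgebra $R=\R[b]$.
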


\begin{cor}\label{theta_Bsym+}
For $b\in B^{\sigma}_{\geq 0}$, the numbers $\lambda_1,\dots,\lambda_k\geq 0$. For $b\in B^{\sigma}_+$, the numbers $\lambda_1,\dots,\lambda_k > 0$. In particular,
$$B^{\sigma}_+=\{b^2\mid b\in (B^\sigma)^\times\}
,\; B^{\sigma}_{\geq 0}=\{b^2\mid b\in B^\sigma\},$$
i.e., the sets of squares already build cones.
\end{cor}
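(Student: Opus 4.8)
The plan is to apply the structure theory of Euclidean (i.e.\ formally real, finite-dimensional) Jordan algebras to $(B^\sigma,\circ)$, which is formally real by the preceding proposition and has unit $1$. First I would dispatch the ``easy'' inclusions by hand. Let $b=\sum_{i=1}^{k}\lambda_i c_i$ be the spectral decomposition of $b\in B^\sigma$ from Theorem~\ref{Spec_teo_B1}. Orthogonality $c_i\circ c_j=0$ ($i\neq j$) forces $c_ic_j=c_jc_i=0$ in $A$ (multiply $c_ic_j+c_jc_i=0$ on the left and on the right by $c_i$ and subtract), and $c_i^2=c_i$; together with $1=\sum_i c_i$ this gives the usual polynomial functional calculus $f(b)=\sum_i f(\lambda_i)c_i$ for $f\in\R[t]$. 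Hence, if all $\lambda_i\geq 0$, then $a:=\sum_i\sqrt{\lambda_i}\,c_i\in B^\sigma$ satisfies $a^2=b$, so $b\in\{x^2\mid x\in B^\sigma\}$; and if all $\lambda_i>0$, then $a$ is invertible in $A$ with inverse $\sum_i\lambda_i^{-1/2}c_i\in B^\sigma$, so $a\in(B^\sigma)^\times$ and $b\in\{x^2\mid x\in(B^\sigma)^\times\}$.

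The nontrivial ingredient I would invoke is the standard description of the symmetric cone of a Euclidean Jordan algebra: the set of squares $S:=\{x^2\mid x\in B^\sigma\}$ is a \emph{closed convex cone}, it coincides with $\{b\in B^\sigma\mid$ all spectral eigenvalues of $b$ are $\geq 0\}$, and its set of invertible elements (equivalently, its interior) is $\{x^2\mid x\in(B^\sigma)^\times\}=\{b\in B^\sigma\mid$ all spectral eigenvalues of $b$ are $>0\}$ (see, e.g., \cite[Ch.~III]{Faraut} or \cite{Hanche84}; this is precisely where the weak-Hermitianity hypotheses --- absence of nilpotents in $B^\sigma$ and properness of $B^\sigma_{\geq 0}$ --- enter). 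Granting this, the corollary is immediate: $S$ is a closed convex cone containing all squares, so by minimality $B^\sigma_{\geq 0}\subseteq S$, while trivially $S\subseteq B^\sigma_{\geq 0}$; thus $B^\sigma_{\geq 0}=S$, and $B^\sigma_+=(B^\sigma_{\geq 0})^\times$ is the set of invertible elements of $S$. Combined with the spectral theorem, this yields both displayed identities and the two eigenvalue statements at once.

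If one prefers a self-contained argument rather than citing the cone description wholesale, I would reduce the remaining implication ``$b\in B^\sigma_{\geq 0}\Rightarrow$ all $\lambda_i\geq 0$'' to the single claim that the quadratic operator $U_e\colon x\mapsto exe$ (a well-defined endomorphism of $B^\sigma$, namely the quadratic representation of the Jordan algebra $(B^\sigma,\circ)$) maps $B^\sigma_{\geq 0}$ into itself for every idempotent $e\in B^\sigma$. Indeed, for $b=\sum_i\lambda_i c_i\in B^\sigma_{\geq 0}$ with some $\lambda_1<0$, put $e:=\sum_{i:\,\lambda_i<0}c_i$; then $U_e(b)=\sum_{i:\,\lambda_i<0}\lambda_i c_i$ lies in $B^\sigma_{\geq 0}$ by the claim, while $-U_e(b)=\sum_{i:\,\lambda_i<0}(-\lambda_i)c_i=\bigl(\sum_{i:\,\lambda_i<0}\sqrt{-\lambda_i}\,c_i\bigr)^2\in B^\sigma_{\geq 0}$ as well; since $U_e(b)\neq 0$ (the $c_i$ are linearly independent), this contradicts the properness of $B^\sigma_{\geq 0}$. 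The main obstacle is exactly this claim $U_e(B^\sigma_{\geq 0})\subseteq B^\sigma_{\geq 0}$ --- equivalently, that $U_e(x^2)$ is again a sum of squares of elements of $B^\sigma$ --- which is the homogeneity/self-duality of the symmetric cone and is, in my view, cleanest to obtain from the Euclidean Jordan algebra theory as above.
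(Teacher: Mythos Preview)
Your proposal is correct and takes essentially the same approach as the paper: the paper states this corollary without proof, treating it as immediate from the spectral theorem for formally real Jordan algebras cited from \cite{Faraut}, and your write-up simply unpacks that citation (the ``easy'' square-root inclusions plus the standard symmetric-cone description). Your optional self-contained route via the quadratic operator $U_e$ is a nice addition, and your observation that orthogonal Jordan idempotents in $B^\sigma$ actually satisfy $c_ic_j=0$ in the ambient associative algebra $A$ is exactly the small computation needed to make the polynomial functional calculus work here.
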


\begin{cor}
The set of all invertible elements $(B^{\sigma})^\times$ of $B^{\sigma}$ consists of elements such that all $\lambda_i\neq 0$. If all $\lambda_i\neq 0$, then
$$\left(\sum_{i=1}^k\lambda_ic_i\right)^{-1}=\sum_{i=1}^k\lambda_i^{-1}c_i.$$
\end{cor}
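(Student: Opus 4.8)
The plan is to reduce the whole statement to the orthogonality relations among the spectral idempotents supplied by Theorem~\ref{Spec_teo_B1}. The one point that is not a pure bookkeeping computation is the passage from \emph{Jordan} orthogonality to \emph{associative} orthogonality, so I would dispatch that first. If $c,c'\in B^\sigma$ are idempotents with $c\circ c'=0$, i.e.\ $cc'=-c'c$, then, using $c^2=c$ and associativity in $A$,
$$cc'=c^2c'=c(cc')=-c(c'c)=-(cc')c=(c'c)c=c'c^2=c'c,$$
so $cc'=-cc'$, and since we are over $\R$ (characteristic $\neq 2$) this forces $cc'=c'c=0$ in $A$. Applying this to the complete orthogonal system $(c_1,\dots,c_k)$ from the spectral theorem yields $c_ic_j=\delta_{ij}c_i$ in $A$, together with $\sum_{i=1}^k c_i=1$ (here $1\in B$ by weak Hermitianity). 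Moreover each $c_i\neq 0$: otherwise the term $\lambda_ic_i$ could be omitted, producing a decomposition with strictly fewer idempotents and distinct eigenvalues, contradicting the uniqueness of $k$ in Theorem~\ref{Spec_teo_B1}.

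Next I would exploit the ``diagonalization''. For $b=\sum_i\lambda_ic_i$ and arbitrary real numbers $\mu_1,\dots,\mu_k$, the relations $c_ic_j=\delta_{ij}c_i$ give at once
$$b\cdot\Big(\sum_j\mu_jc_j\Big)=\sum_{i,j}\lambda_i\mu_j\,c_ic_j=\sum_i\lambda_i\mu_i\,c_i=\Big(\sum_j\mu_jc_j\Big)\cdot b.$$
If all $\lambda_i\neq 0$, choose $\mu_i=\lambda_i^{-1}$; then $b\cdot\big(\sum_i\lambda_i^{-1}c_i\big)=\sum_ic_i=1=\big(\sum_i\lambda_i^{-1}c_i\big)\cdot b$. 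Since every $c_i\in B^\sigma$ and $B^\sigma$ is a real linear subspace, $\sum_i\lambda_i^{-1}c_i\in B^\sigma$, so this element is a two-sided inverse of $b$ lying in $B^\sigma$; hence $b\in(B^\sigma)^\times$ with $b^{-1}=\sum_i\lambda_i^{-1}c_i$, which is exactly the asserted formula.

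For the converse, suppose $\lambda_j=0$ for some index $j$ (as the $\lambda_i$ are distinct, at most one can vanish). The computation above gives $bc_j=\lambda_jc_j=0$ with $c_j\neq 0$, so $b$ admits a nonzero right annihilator in $A$ and therefore cannot be invertible: were $b$ invertible, we would get $c_j=b^{-1}(bc_j)=0$, a contradiction; a fortiori $b\notin(B^\sigma)^\times$. Combining the two directions identifies $(B^\sigma)^\times$ with the set of $b$ whose spectral eigenvalues are all nonzero and pins down the inverse. I do not anticipate a genuine obstacle here; the only step requiring an actual argument (rather than a one-line diagonal computation) is the associative orthogonality of the $c_i$ addressed at the start, and even that is elementary.
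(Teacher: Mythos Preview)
Your proof is correct. The paper states this corollary without proof, treating it as an immediate consequence of the spectral theorem, so there is no approach to compare against; your argument fills in exactly the details one would expect, and the one nontrivial step you isolate --- upgrading Jordan orthogonality $c\circ c'=0$ to associative orthogonality $cc'=0$ in $A$ --- is handled cleanly and correctly.
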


\begin{cor}
The cone $B^{\sigma}_+$ is open in $B^{\sigma}$, open and closed in $(B^{\sigma})^\times$. The cone $B^{\sigma}_{\geq 0}$ is closed in $B^{\sigma}$.
\end{cor}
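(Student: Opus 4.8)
The plan is to realise all three sets as level sets of a single continuous function on $B^\sigma$, coming from the spectral theorem. For $b\in B^\sigma$ with spectral decomposition $b=\sum_{i=1}^{k}\lambda_ic_i$ (Theorem~\ref{Spec_teo_B1}), set $\lambda_{\min}(b):=\min_i\lambda_i$. First I would record the intrinsic description
\[
\lambda_{\min}(b)=\sup\{\,t\in\R\mid b-t\cdot 1\in B^\sigma_{\geq 0}\,\}.
\]
This holds because $b-t\cdot 1=\sum_i(\lambda_i-t)c_i$ is again a real combination of pairwise orthogonal idempotents summing to $1$, so Corollary~\ref{theta_Bsym+} gives $b-t\cdot 1\in B^\sigma_{\geq 0}$ exactly when $\lambda_i\ge t$ for all $i$; in particular the supremum is finite and attained at $t=\lambda_{\min}(b)$. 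Together with Corollary~\ref{theta_Bsym+} this yields the set-theoretic identities $B^\sigma_{\geq 0}=\{b\in B^\sigma\mid\lambda_{\min}(b)\ge 0\}$ and $B^\sigma_+=\{b\in B^\sigma\mid\lambda_{\min}(b)>0\}$.

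The crux is that $\lambda_{\min}$ is continuous. From the description above it is concave: for $b_0,b_1\in B^\sigma$, $s\in[0,1]$, and $t_j\le\lambda_{\min}(b_j)$ one has $b_j-t_j\cdot 1\in B^\sigma_{\geq 0}$, hence, $B^\sigma_{\geq 0}$ being a convex cone,
\[
\big((1-s)b_0+sb_1\big)-\big((1-s)t_0+st_1\big)\cdot 1=(1-s)(b_0-t_0\cdot 1)+s(b_1-t_1\cdot 1)\in B^\sigma_{\geq 0},
\]
so $\lambda_{\min}\big((1-s)b_0+sb_1\big)\ge(1-s)\lambda_{\min}(b_0)+s\lambda_{\min}(b_1)$. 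A real-valued concave function on the finite-dimensional vector space $B^\sigma$ is automatically continuous, so $\lambda_{\min}$ is continuous. Therefore $B^\sigma_{\geq 0}=\lambda_{\min}^{-1}\big([0,\infty)\big)$ is closed in $B^\sigma$, and $B^\sigma_+=\lambda_{\min}^{-1}\big((0,\infty)\big)$ is open in $B^\sigma$, hence also open in the subspace $(B^\sigma)^\times$.

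It remains to show that $B^\sigma_+$ is closed in $(B^\sigma)^\times$. Let $b\in(B^\sigma)^\times$ be a limit of elements of $B^\sigma_+$. Since $B^\sigma_+\subseteq B^\sigma_{\geq 0}$ and $B^\sigma_{\geq 0}$ is closed, $b\in B^\sigma_{\geq 0}$, i.e.\ $\lambda_{\min}(b)\ge 0$; and since $b$ is invertible, $0$ is not an eigenvalue of $b$ (by the corollary characterising $(B^\sigma)^\times$), so $\lambda_{\min}(b)\ne 0$. Hence $\lambda_{\min}(b)>0$ and $b\in B^\sigma_+$. The only step that is not purely formal is the continuity of $\lambda_{\min}$; I expect this to be the main (and only mild) obstacle, and the concavity argument above is the cleanest way to obtain it — equivalently, via the same convexity input, $\lambda_{\min}$ is $1$-Lipschitz for the norm $b\mapsto\max_i|\lambda_i|$ on $B^\sigma$.
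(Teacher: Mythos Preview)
Your proof is correct. The paper states this corollary without proof. The closedness of $B^\sigma_{\geq 0}$ is in fact immediate from its \emph{definition} as the closed convex cone generated by squares, so no spectral input is needed there; what genuinely requires work is the openness of $B^\sigma_+$ in $B^\sigma$, and this is where your $\lambda_{\min}$ argument does the job. The paper's implicit route is presumably the spectral characterisation of $B^\sigma_+$ in Corollary~\ref{theta_Bsym+} together with some unstated continuity of the spectrum (indeed, two corollaries later the paper re-derives openness from the homeomorphism $\exp\colon B^\sigma\to B^\sigma_+$). Your concavity argument makes precisely this continuity explicit: writing $\lambda_{\min}(b)=\sup\{t\in\R\mid b-t\cdot 1\in B^\sigma_{\geq 0}\}$ exhibits $\lambda_{\min}$ as a supremum of affine functions restricted to the set where $b-t\cdot 1$ lies in a closed convex set, and the convexity of $B^\sigma_{\geq 0}$ immediately gives concavity, hence continuity on the finite-dimensional space $B^\sigma$. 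This is a clean, self-contained replacement for what the paper leaves implicit, and the final step (closedness of $B^\sigma_+$ in $(B^\sigma)^\times$) is handled correctly via $B^\sigma_{\geq 0}\cap(B^\sigma)^\times=B^\sigma_+$.
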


\begin{cor}
For every (continuous or smooth) function $f\colon \R \to \R$, the (continuous, resp. smooth) map
$$\hat f\colon B^{\sigma} \to B^{\sigma}$$
can be defined: if
$$b=\sum_{i=1}^k\lambda_ic_i,$$
then
$$\hat f(b):=\sum_{i=1}^k f(\lambda_i)c_i.$$
This map is well defined because the spectral decomposition is unique. Analogously, for any function $f\colon \R_{\geq 0} \to \R$ or $f\colon \R_+ \to \R$, $\hat f\colon B^{\sigma}_{\geq 0} \to B^{\sigma}$ resp. $\hat f\colon B^{\sigma}_+ \to B^{\sigma}$ can be defined.

In particular, for every $b\in B^{\sigma}_{\geq 0}$, the element $b^t\in B^{\sigma}_{\geq 0}$ for $t>0$ is well-defined. This definition is compatible with integer powers of element.
\end{cor}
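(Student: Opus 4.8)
The plan is to dispose of well-definedness immediately, then treat separately the two real assertions — continuity of $\hat f$ when $f$ is continuous, and smoothness when $f$ is smooth — after which the two ``in particular'' clauses fall out by inspection. Since $B$ is weakly Hermitian, $(B^{\sigma},\circ)$ is a formally real Jordan algebra, so Theorem~\ref{Spec_teo_B1} applies and $\hat f$ is well defined by the uniqueness of the decomposition $b=\sum_i\lambda_i c_i$. I would first record the elementary fact that pairwise orthogonal idempotents of $B^{\sigma}$ are mutually annihilating \emph{in $A$}: from $c_i\circ c_j=0$ one gets $c_ic_j+c_jc_i=0$, and left- and right-multiplying by $c_i$ and using $c_i^2=c_i$ forces $c_ic_j=c_jc_i$, hence $c_ic_j=0$. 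Together with $\sum_ic_i=1$ this gives $\bigl(\sum_i\lambda_ic_i\bigr)^n=\sum_i\lambda_i^nc_i$ in $A$ for every $n\in\N$, i.e.\ $\widehat{(t\mapsto t^n)}(b)=b^n$; this already settles compatibility with integer powers, and shows that for a polynomial $f=p$ the map $\hat p$ is the restriction of the polynomial (hence $C^\infty$) map $b\mapsto p(b)$ of $A$.

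For continuity I would assemble two ingredients. First, the ``spectral radius'' $\rho(b):=\max_i|\lambda_i(b)|$ is bounded on compact subsets of $B^{\sigma}$: one has $\rho(b)^2=\lambda_{\max}(b^2)$, where for $y\in B^{\sigma}$ we set $\lambda_{\max}(y):=\min\{s\in\R\mid s\cdot 1-y\in B^{\sigma}_{\geq 0}\}$; since $B^{\sigma}_{\geq 0}$ is a convex cone, $\lambda_{\max}$ is subadditive and positively homogeneous, hence convex and therefore continuous on $B^{\sigma}$, so $\rho=\sqrt{\lambda_{\max}(b^2)}$ is continuous. Second, there is a constant $C>0$ with $\|\sum_i\mu_ic_i\|\le C\max_i|\mu_i|$ for \emph{every} complete orthogonal system of idempotents: because $B^{\sigma}_{\geq 0}$ is a proper closed cone with $1$ in its interior (item~(2) of Definition~\ref{df:R-wHerm}, together with $1=1^2\in B^{\sigma}_+$), the order interval $\{x\mid -1\preceq x\preceq 1\}$ (with $u\preceq v$ meaning $v-u\in B^{\sigma}_{\geq 0}$) is compact, say of norm $\le C$, and $\pm\sum_i\mu_ic_i\preceq(\max_j|\mu_j|)\cdot 1$. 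Combining the two: if $\rho(b)\le R$, then $\|\hat f(b)-\hat g(b)\|\le C\sup_{[-R,R]}|f-g|$ for continuous $f,g$. Given continuous $f$ and $b_0$, choose a neighbourhood $U$ of $b_0$ with $\rho\le R$ on $U$ and, by Weierstrass, polynomials $p_n\to f$ uniformly on $[-R,R]$; then the $\hat p_n$ are continuous on $U$ and converge uniformly to $\hat f$ there, so $\hat f$ is continuous.

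Smoothness is the step I expect to be the main obstacle, since the polynomial approximation above is ``soft'' and does not control derivatives. I would argue by locality: the estimates above show $\hat f(b)$ depends only on $f|_{[-R,R]}$ whenever $\mathrm{spec}(b)\subset[-R,R]$, so it suffices to treat $f\in C^\infty_c(\R)$. From here two routes are available. Route 1 uses the classification: $(B^{\sigma},\circ)$ is \emph{special} (it embeds in the associative algebra $A$), hence a direct sum of simple ideals of the types $\Sym(n,\R)$, $\Herm(n,\CC)$, $\Herm(n,\HH)$, $B^{\sigma}(1,n)$ of the classification Fact (the exceptional $\Herm(3,\Oc)$ cannot occur); the functional calculus respects this decomposition, and on each summand it is the classical Hermitian matrix functional calculus, respectively the explicit spin-factor functional calculus, whose smoothness for smooth $f$ is standard (e.g.\ via divided differences / the Dalecki\u\i--Kre\u\i n formula). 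Route 2 is self-contained: for $f\in C^\infty_c(\R)$ pick an almost-analytic extension $\tilde f\in C^\infty_c(\CC)$ with $\tilde f|_\R=f$ and $\bar\partial\tilde f$ vanishing to infinite order on $\R$; for $z\notin\R$ the resolvent $(b-z\cdot 1)^{-1}$, formed in $A$, equals $\sum_i(\lambda_i-z)^{-1}c_i\in B^{\sigma}\otimes_\R\CC$ (check with $c_ic_j=\delta_{ij}c_i$) and depends real-analytically on $b$; then
\[
\hat f(b)=-\frac1\pi\int_\CC\bar\partial\tilde f(z)\,(b-z\cdot 1)^{-1}\,dL(z),
\]
which reduces eigenvalue by eigenvalue to the scalar Helffer--Sj\"ostrand formula, exhibits $\hat f$ as an integral of a smooth, rapidly decaying $B^{\sigma}$-valued integrand, and differentiation under the integral sign yields $\hat f\in C^\infty$. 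Either route gives smoothness of $\hat f$ for arbitrary smooth $f$ by the same local reduction.

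Finally, the restricted-domain versions: if $b\in B^{\sigma}_{\geq 0}$ then $\lambda_i\ge 0$ for all $i$ by Corollary~\ref{theta_Bsym+}, so $f(\lambda_i)$ is defined for $f\colon\R_{\geq 0}\to\R$, and the above proofs give continuity/smoothness of $\hat f$ on $B^{\sigma}_{\geq 0}$, and likewise on $B^{\sigma}_+$ using $\lambda_i>0$. For $b\in B^{\sigma}_{\geq 0}$ and $t>0$ one has $b^t=\sum_i\lambda_i^tc_i$ with $\lambda_i^t\ge 0$; since each $c_i\in B^{\sigma}_{\geq 0}$ and this cone is convex, $b^t\in B^{\sigma}_{\geq 0}$, and compatibility with integer powers was already noted.
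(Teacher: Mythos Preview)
Your proof is correct and considerably more detailed than the paper's own treatment: the paper states this corollary without proof, offering only the one-line remark that ``the spectral decomposition is unique'' for well-definedness and giving no argument at all for the parenthetical continuity and smoothness claims. You have supplied what the paper omits.

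A couple of brief remarks on what you do differently. The key computation $c_ic_j=0$ in $A$ (not merely $c_i\circ c_j=0$) is exactly the right observation for working inside the ambient associative algebra, and the paper implicitly uses this elsewhere without ever isolating it. Your continuity argument via the order-unit norm bound and Weierstrass approximation is clean; the compactness of the order interval $\{x:-1\preceq x\preceq 1\}$ that you invoke is proved later in the paper as Lemma~\ref{comp_cone}, so you might cite it forward or give the short direct argument. For smoothness, either of your routes is fine; Route~2 (Helffer--Sj\"ostrand) is the more self-contained choice given the paper's setup, since Route~1's appeal to classification sits a little awkwardly with the paper's otherwise intrinsic development. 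One small point: in Route~2 the resolvent lives in $A_\CC$ rather than $A$, so the integral is \emph{a priori} $B^\sigma_\CC$-valued; you should note that the imaginary part vanishes (immediate from the scalar Helffer--Sj\"ostrand identity applied eigenvalue-by-eigenvalue), so $\hat f(b)\in B^\sigma$ as required.
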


\begin{cor}\label{Bsym+_contr_open}
The space $B^{\sigma}_+$ is homeomorphic to $B^{\sigma}$, more precisely $\exp(B^{\sigma})=B^{\sigma}_+$. In particular, $B^{\sigma}_+$ is open in $B^{\sigma}$ and contractible. $\{1\}\subset B^{\sigma}_+$ is a deformation retract of $B^{\sigma}_+$.
\end{cor}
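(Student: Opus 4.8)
The plan is to obtain the homeomorphism directly from the spectral theorem and the functional calculus of the preceding corollaries, after checking that the exponential series of $A$, restricted to $B^\sigma$, coincides with the functional-calculus map $\widehat{\exp}$ attached to $\exp\colon\R\to\R$.

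First I would record that for $b\in B^\sigma$ with spectral decomposition $b=\sum_{i=1}^k\lambda_ic_i$, the idempotents $c_i$ are mutually orthogonal already in $A$, i.e.\ $c_ic_j=0$ for $i\neq j$. Indeed, $c_i\circ c_j=0$ means $c_ic_j=-c_jc_i$; multiplying this on the left by $c_i$ and using $c_i^2=c_i$ gives $c_ic_j=-c_ic_jc_i$, while multiplying on the right by $c_i$ gives $c_ic_jc_i=-c_jc_i$, so $c_ic_j=c_jc_i$, and together with $c_ic_j=-c_jc_i$ this forces $c_ic_j=0$. Hence $b^n=\sum_i\lambda_i^nc_i$ for every $n\ge 0$, so $\exp(b)=\sum_{n\ge 0}\tfrac1{n!}b^n=\sum_i e^{\lambda_i}c_i=\widehat{\exp}(b)$. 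Since every $e^{\lambda_i}>0$, Corollary~\ref{theta_Bsym+} yields $\exp(b)\in B^\sigma_+$, so $\exp(B^\sigma)\subseteq B^\sigma_+$.

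Next I would build the inverse. For $a\in B^\sigma_+$, Corollary~\ref{theta_Bsym+} gives $a=\sum_i\mu_ic_i$ with all $\mu_i>0$; set $\log a:=\sum_i(\ln\mu_i)c_i\in B^\sigma$, the functional calculus of $\log\colon\R_+\to\R$. By uniqueness of the spectral decomposition, $\exp(\log a)=a$ and $\log(\exp b)=b$, so $\exp\colon B^\sigma\to B^\sigma_+$ and $\log\colon B^\sigma_+\to B^\sigma$ are mutually inverse. Both are continuous by the functional-calculus corollary applied to the continuous functions $\exp$ and $\log$, so $\exp$ is a homeomorphism from $B^\sigma$ onto $B^\sigma_+$. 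Openness of $B^\sigma_+$ in $B^\sigma$ and its contractibility were already recorded in the earlier corollaries; they also follow from this homeomorphism together with the fact that $B^\sigma$ is a finite-dimensional real vector space.

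Finally, for the deformation retraction I would transport the linear contraction of $B^\sigma$ through the homeomorphism: define $H\colon B^\sigma_+\times[0,1]\to B^\sigma_+$ by $H(a,t):=\exp(t\log a)$, which equals $a^t$ in the functional calculus of the previous corollary and is therefore continuous, with $H(a,1)=a$, $H(a,0)=1$, and $H(1,t)=1$ for all $t$ since $\log 1=0$. Thus $H$ is a strong deformation retraction of $B^\sigma_+$ onto $\{1\}$. The only step needing care is the identification $\exp|_{B^\sigma}=\widehat{\exp}$; once the orthogonality $c_ic_j=0$ is established, the remainder is a formal consequence of the spectral theorem and the functional calculus already set up.
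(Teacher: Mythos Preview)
Your proof is correct and follows the same approach as the paper: both use the functional-calculus map $\widehat{\exp}$ induced by $\exp\colon\R\to\R_+$ to obtain the homeomorphism, and then transport the linear contraction of $B^\sigma$ to a deformation retraction of $B^\sigma_+$ onto $\{1\}$. Your extra step---verifying $c_ic_j=0$ in $A$ so that the power-series exponential of $A$ agrees with $\widehat{\exp}$ on $B^\sigma$---is a detail the paper postpones (it is only spelled out later, in the proof of Proposition~\ref{prop:PositiveElts}), so your argument is in fact more self-contained at this point.
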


\begin{proof}
As an $\R$-vector space, $B^{\sigma}$ is contractible. $\{0\}\subset B^{\sigma}$ is a deformation retract of $B^{\sigma}$. The map $\hat f\colon B^{\sigma}\to B^{\sigma}_+$ for $f(t)=\exp(t)$ provides a homeomorphism.
\end{proof}

To state the second version of the spectral theorem, we need to give some additional definitions:

\begin{df}\begin{itemize}
\item An idempotent $0\neq c\in B^{\sigma}$ is called \defin{primitive} if it cannot be written as a sum of two orthogonal non-zero idempotents.
\item A complete orthogonal system of primitive idempotents $(c_1,\dots,c_k)$ is called a \defin{Jordan frame}.
\end{itemize}
\end{df}

\begin{teo}[Spectral theorem, second version~{\cite[Theorem~III.1.2]{Faraut}}]\label{Spec_teo_B2}
Suppose, $B^{\sigma}$ has rank $n$. For every $b\in B^{\sigma}$ there exist a Jordan frame $(e_1,\dots,e_n)$ and real numbers $\lambda_1,\dots,\lambda_n\in\R$ such that
$$b=\sum_{i=1}^n\lambda_ie_i.$$
The numbers $\lambda_1,\dots,\lambda_n\in\R$ (with their multiplicities) called \defin{eigenvalues} of $b$ are uniquely determined by $b$. In particular, they do not depend (up to permutations) on the Jordan frame $e_1,\dots,e_n\in B^{\sigma}$.
\end{teo}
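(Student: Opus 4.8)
The plan is to obtain this second version from the first (Theorem~\ref{Spec_teo_B1}) by refining a complete orthogonal system of idempotents into one consisting of \emph{primitive} idempotents, and to deduce uniqueness from the uniqueness in Theorem~\ref{Spec_teo_B1} together with the invariance of the rank. For the existence part, given $b\in B^{\sigma}$, Theorem~\ref{Spec_teo_B1} furnishes $b=\sum_{i=1}^{k}\lambda_i c_i$ with the $\lambda_i$ pairwise distinct and $(c_1,\dots,c_k)$ a complete orthogonal system of idempotents. I would first record two elementary facts, valid because $B^{\sigma}$ sits inside the associative algebra $A$: for idempotents $c,c'\in B^{\sigma}$ the relation $c\circ c'=0$ forces $cc'=c'c=0$, and $c\circ c'=c$ forces $cc'=c'c=c$ (multiply the defining relations by $c$ on the left and on the right and compare). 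Now each nonzero $c_i$ is either primitive or, by definition, splits into two nonzero orthogonal idempotents; since $\dim_\R B^{\sigma}<\infty$ and each split strictly increases the number of summands, the iteration terminates and produces pairwise orthogonal primitive idempotents $e_{i,1},\dots,e_{i,n_i}$ with $c_i=\sum_{s}e_{i,s}$. Using the two facts above, idempotents coming from orthogonal $c_i$'s are themselves orthogonal, so $(e_{i,s})_{i,s}$ is a complete orthogonal system of primitive idempotents, i.e.\ a Jordan frame, and $b=\sum_{i,s}\lambda_i e_{i,s}$ is the claimed decomposition, with $\lambda_i$ repeated $n_i$ times.

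The heart of the matter is to show that any two Jordan frames of $B^{\sigma}$ have the same number of elements; this common number is the rank $n$, and it is exactly what forces the decomposition just produced to have $n$ terms. I would deduce this from the Peirce decomposition relative to an idempotent, via the observation that the poset of idempotents of a finite-dimensional formally real Jordan algebra --- ordered by $c\le c'\iff c\circ c'=c$, with least element $0$ and greatest element $1$ --- is a modular lattice of finite length, so that all its maximal chains have equal length by the Jordan--H\"older property; and maximal chains $0<c_1<c_1+c_2<\dots<1$ correspond bijectively to Jordan frames $(c_1,c_2,\dots)$, since $c_s-c_{s-1}$ is again an idempotent and its primitivity is equivalent to maximality of the chain at that step. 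This is precisely the content of \cite[Ch.~III--IV]{Faraut}, from which the theorem is quoted; in the present associative setting one may alternatively compare the $c_i$ as ordinary idempotents of the semisimple algebra $A$.

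For uniqueness, suppose $b=\sum_{j=1}^{n}\mu_j f_j$ for a Jordan frame $(f_1,\dots,f_n)$. Grouping equal values, write $b=\sum_{i=1}^{k'}\nu_i d_i$ with the $\nu_i$ distinct and $d_i$ the sum of those $f_j$ with $\mu_j=\nu_i$; then each $d_i$ is an idempotent (a sum of pairwise orthogonal idempotents), the $d_i$ are pairwise orthogonal, and $\sum_i d_i=1$. By the uniqueness in Theorem~\ref{Spec_teo_B1} one gets $k'=k$ and, after reindexing, $\nu_i=\lambda_i$ and $d_i=c_i$. The multiplicity of $\lambda_i$, i.e.\ the number of $f_j$ summing to $c_i$, is the cardinality of a Jordan frame of the Peirce $1$-space $V_1(c_i)=\{x\in B^{\sigma}\mid c_i\circ x=x\}$, which is a finite-dimensional formally real Jordan algebra with unit $c_i$; by the invariance of the rank applied inside $V_1(c_i)$ this cardinality equals $\rk V_1(c_i)=n_i$, independently of the chosen frame. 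Hence the unordered tuple $(\mu_1,\dots,\mu_n)$ --- equivalently the list $\lambda_1,\dots,\lambda_k$ together with multiplicities $n_1,\dots,n_k$ --- is determined by $b$, which is the uniqueness assertion.

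The only real obstacle I anticipate is the invariance of the rank: once that is in hand, everything else is bookkeeping built on Theorem~\ref{Spec_teo_B1}. That invariance genuinely requires the Peirce machinery (or the modular-lattice Jordan--H\"older argument), which is why the statement is taken from \cite{Faraut} rather than proved here from scratch.
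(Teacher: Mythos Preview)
The paper does not give a proof of this theorem at all: it is simply quoted from \cite[Theorem~III.1.2]{Faraut}, with no argument supplied. Your sketch is therefore not competing with a proof in the paper, but providing one where none is given.

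Your outline is correct. The reduction to Theorem~\ref{Spec_teo_B1} via refinement of idempotents into primitives is standard and works as you describe; your two ``elementary facts'' about orthogonal idempotents in the associative ambient algebra $A$ are valid (and the short left/right multiplication argument you indicate does prove them). The termination of the refinement follows, as you note, from linear independence of pairwise orthogonal nonzero idempotents in the finite-dimensional $B^\sigma$. The uniqueness argument---regrouping equal eigenvalues, invoking the uniqueness in Theorem~\ref{Spec_teo_B1}, and then reading off multiplicities as ranks of the Peirce $1$-spaces $V_1(c_i)$---is also correct.

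You are right that the only nontrivial input is the invariance of the rank (equivalently, that all Jordan frames have the same cardinality), and you are honest that this requires the Peirce decomposition or the Jordan--H\"older argument for the idempotent lattice. That is precisely the content the paper is importing by citing \cite{Faraut}, so in the end your proof and the paper's treatment rest on the same external source for the crucial step. What your write-up adds is the explicit bookkeeping connecting the two versions of the spectral theorem, which the paper leaves implicit.
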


\begin{rem}
A Jordan frame $e_1,\dots,e_n\in B^{\sigma}$ associated to the element $b\in B^{\sigma}$ as in Theorem~\ref{Spec_teo_B2} is, in contrast to the complete system of orthogonal idempotents from Theorem~\ref{Spec_teo_B1}, in general not unique. In fact, it is unique if and only if all eigenvalues of $b$ are distinct.
\end{rem} 

\begin{prop} Let $B$ be weakly Hermitian.
    Let $b_1,b_2\in B^\sigma$ be two commuting elements. Then there exists an element $b\in\R[b_1,b_2]$ such that $\R[b_1,b_2]=\R[b]$. in particular, $b,b_1,b_2$ share a Jordan frame.
\end{prop}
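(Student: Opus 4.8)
The plan is to prove that $R:=\R[b_1,b_2]$ is a finite‑dimensional commutative \emph{associative} $\R$‑algebra with no nilpotents, hence a finite product of copies of $\R$ and $\CC$; such an algebra is generated by a single element over $\R$, which gives the first assertion, and the ``in particular'' clause then follows by applying the spectral theorem (Theorem~\ref{Spec_teo_B2}) to that generator.

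First I would show $R\subseteq B^{\sigma}$. Since $b_1$ and $b_2$ commute, every monomial in $b_1,b_2$ equals $b_1^ib_2^j$ for some $i,j\ge 0$; by the remark after Definition~\ref{df:JordanType} one has $b_1^i,b_2^j\in B^{\sigma}$, so $b_1^ib_2^j\in B$ because $B$ is of Jordan type, and $\sigma(b_1^ib_2^j)=b_2^jb_1^i=b_1^ib_2^j$, whence $b_1^ib_2^j\in B^{\sigma}$. Thus $R=\Span_\R\{\,b_1^ib_2^j\mid i,j\ge 0\,\}\subseteq B^{\sigma}$ is a commutative unital subalgebra of the associative algebra $A$, and it is finite‑dimensional because $A$ is. Moreover $R$ has no nonzero nilpotent elements: if $x\in R$ satisfies $x^n=0$ then $x\in B^{\sigma}$, and a short induction on $n$ using condition~(\ref{R-no_nilp}) of Definition~\ref{df:R-wHerm} finishes the claim (for $n=2m$ one has $(x^m)^2=0$, hence $x^m=0$; for $n=2m+1\ge 3$ one has $(x^{m+1})^2=x^{2m+2}=0$, hence $x^{m+1}=0$, and $m+1<n$).

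Being a finite‑dimensional commutative reduced unital $\R$‑algebra, $R$ is Artinian, hence a product of local Artinian reduced rings, each of which is a field; finite field extensions of $\R$ are $\R$ or $\CC$, so $R\cong\R^{a}\times\CC^{c}$ for some $a,c\ge 0$ with $a+2c=\dim_\R R$. To exhibit a single generator, choose pairwise distinct reals $r_1,\dots,r_a,s_1,\dots,s_c$ and put
$$f(x)=\prod_{i=1}^{a}(x-r_i)\cdot\prod_{j=1}^{c}\bigl((x-s_j)^2+1\bigr)\in\R[x].$$
Its factors are pairwise coprime monic irreducibles, so by the Chinese remainder theorem $\R[x]/(f)\cong\R^{a}\times\CC^{c}\cong R$; let $b\in R$ be the image of $x$ under such an isomorphism. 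Then $\R[b]=R=\R[b_1,b_2]$, and since $b\in R$, it is a polynomial in $b_1,b_2$.

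Finally, apply Theorem~\ref{Spec_teo_B2} to $b\in B^{\sigma}$ to get a Jordan frame $(e_1,\dots,e_n)$ and reals $\mu_1,\dots,\mu_n$ with $b=\sum_{i=1}^n\mu_ie_i$. Orthogonal idempotents in $B^{\sigma}$ actually multiply to zero in $A$ and commute: from $e_ie_j+e_je_i=0$, multiplying on the left and on the right by $e_i$ and subtracting gives $e_ie_j=e_je_i$, hence $2e_ie_j=0$; together with $e_i^2=e_i$ and $\sum_ie_i=1$ this yields $b^k=\sum_i\mu_i^ke_i$ in $A$ for all $k\ge 0$, so $q(b)=\sum_i q(\mu_i)e_i$ for every $q\in\R[x]$. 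Writing $b_1=q_1(b)$ and $b_2=q_2(b)$ (possible since $b_1,b_2\in\R[b]$) we get $b_1=\sum_i q_1(\mu_i)e_i$ and $b_2=\sum_i q_2(\mu_i)e_i$, so $(e_1,\dots,e_n)$ is simultaneously a Jordan frame for $b$, $b_1$, and $b_2$. The only genuinely delicate point is the construction of the generator $b$: one must pick $f$ squarefree with exactly the right number of real roots and of complex‑conjugate pairs to match the isomorphism type $\R^{a}\times\CC^{c}$ of $R$ and then transport the obvious generator of $\R[x]/(f)$ back to $R$; everything else is bookkeeping, plus the small induction upgrading condition~(\ref{R-no_nilp}) from squares to arbitrary powers.
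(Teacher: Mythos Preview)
Your proof is correct and follows essentially the same approach as the paper's: show $\R[b_1,b_2]\subseteq B^{\sigma}$ is a finite-dimensional commutative reduced $\R$-algebra, hence a product of copies of $\R$ and $\CC$ (the paper cites Wedderburn, you use the Artinian structure theorem), extract a single generator $b$, and apply the spectral theorem to $b$. Your version is more thorough---you spell out why $R\subseteq B^{\sigma}$, give the induction upgrading (\ref{R-no_nilp}) to arbitrary powers, construct the generator explicitly via the Chinese remainder theorem, and verify that Jordan-orthogonal idempotents in $B^{\sigma}$ actually satisfy $e_ie_j=0$ in $A$---all of which the paper leaves implicit.
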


\begin{proof}
    The associative $\R$-algebra $\R[b_1,b_2]$ is commutative and does not contain any nilpotent elements because $\R[b_1,b_2]\subseteq B^\sigma$. This implies that $\R[b_1,b_2]$ is semisimple, and by Wedderburn's theorem, it is isomorphic to a finite direct sum of copies of $\R$ and copies of $\CC$. In particular, there exists a generating element $b\in \R[b_1,b_2]$. Since $b\in B^\sigma$, we apply the spectral theorem: $b=\sum_{i=1}^n \lambda_i e_i$ for some $\lambda_i\in\R$ and $e=(e_1,\dots,e_n)$ a Jordan frame. Since $b_1,b_2$ are polynomials in $b$, they share the Jordan frame $e$.   
\end{proof}

\begin{df}\label{tr_det}
Let $b\in B^{\sigma}$ and $\lambda_1,\dots,\lambda_n$ are all its eigenvalues (with multiplicities). We define the \defin{trace} and the \defin{determinant} of $b$:
$$\tr(b):=\sum_{i=1}^n \lambda_i,\; \det(b):=\prod_{i=1}^n \lambda_i.$$
\end{df}

\begin{prop}[{\cite[Proposition~III.1.5]{Faraut}}]\label{inner_prod_B}
The following map defines an inner product on the $\R$-vector space $B^{\sigma}$:
\begin{align*}
\beta\colon  B^{\sigma}\times B^{\sigma} & \to \; \R\\
 (b_1,b_2)\; & \mapsto  \tr(b_1\circ b_2).
\end{align*}
\end{prop}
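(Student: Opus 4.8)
The plan is to verify directly the three defining properties of a real inner product on $B^{\sigma}$: $\R$-bilinearity, symmetry, and positive-definiteness. Symmetry is immediate, since the Jordan product is commutative: $b_1\circ b_2=b_2\circ b_1$, and therefore $\beta(b_1,b_2)=\beta(b_2,b_1)$. For bilinearity, note that $(b_1,b_2)\mapsto b_1\circ b_2$ is $\R$-bilinear by construction, so the only thing left to check is that the trace $\tr\colon B^{\sigma}\to\R$ from Definition~\ref{tr_det} is $\R$-linear. Homogeneity, $\tr(tb)=t\,\tr(b)$, follows at once from Theorem~\ref{Spec_teo_B2}; additivity is the genuinely nontrivial point, because a Jordan frame adapted to $b_1+b_2$ is in general unrelated to Jordan frames adapted to $b_1$ and to $b_2$ separately. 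I would obtain it from the standard structure theory of formally real Jordan algebras, in which the (generic) trace is a linear functional — this is part of the content of the cited Proposition~III.1.5 of~\cite{Faraut}, and it is the one input not already subsumed by the spectral theorems above. Concretely, via the classification recalled earlier each simple summand of $(B^{\sigma},\circ)$ is special (it is not the exceptional $\Herm(3,\Oc)$), and there $\tr$ is the reduced trace of a faithful matrix representation, which is visibly linear. Granting linearity of $\tr$, the form $\beta$ is $\R$-bilinear.

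For positive-definiteness, fix $b\in B^{\sigma}$ and use Theorem~\ref{Spec_teo_B2} to write $b=\sum_{i=1}^n\lambda_i e_i$ with $(e_1,\dots,e_n)$ a Jordan frame and $\lambda_1,\dots,\lambda_n\in\R$ the eigenvalues of $b$. By bilinearity of $\circ$ together with the relations $e_i\circ e_i=e_i$ and $e_i\circ e_j=0$ for $i\neq j$, we get $b\circ b=\sum_{i=1}^n\lambda_i^2 e_i$. Since $(e_1,\dots,e_n)$ is again a Jordan frame, the uniqueness statement in Theorem~\ref{Spec_teo_B2} identifies $\lambda_1^2,\dots,\lambda_n^2$ as the eigenvalues of $b\circ b=b^2$, and hence
\[
\beta(b,b)=\tr(b\circ b)=\sum_{i=1}^n\lambda_i^2\;\geq\;0
\]
(consistently with $b^2\in B^{\sigma}_{\geq 0}$, cf.\ Corollary~\ref{theta_Bsym+}). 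Finally, $\beta(b,b)=0$ forces every $\lambda_i=0$, i.e.\ $b=0$; equivalently $\tr(b^2)=0$ gives $b^2=0$ by the spectral theorem, and then $b=0$ by condition~(\ref{R-no_nilp}) of Definition~\ref{df:R-wHerm}. Together with symmetry and bilinearity, this shows that $\beta$ is an inner product.

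The main obstacle is therefore the additivity of the Jordan trace; every other assertion is a routine consequence of the spectral theorems (Theorems~\ref{Spec_teo_B1} and~\ref{Spec_teo_B2}) and of the formal reality of $(B^{\sigma},\circ)$ established above.
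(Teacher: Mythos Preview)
The paper does not prove this proposition at all: it is simply quoted from \cite[Proposition~III.1.5]{Faraut} without argument, so there is no ``paper's own proof'' to compare against. Your verification is a correct and self-contained outline, and you have correctly isolated the one nontrivial step --- additivity of the Jordan trace --- which is exactly what is supplied by the cited result in Faraut (there the trace arises as a coefficient of the generic minimal polynomial and is linear for that reason). Your treatment of symmetry and positive-definiteness via the spectral theorem is standard and correct.
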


\subsection{Lie group corresponding to weakly Hermitian Lie algebra}\label{Herm_Lie}

As before, let $(A,\sigma)$ be an $\R$-algebra with an anti-involution. The space $A^\times$ of invertible elements of $A$ is a Lie group and its Lie algebra is $A$ with the Lie bracket given by $[x,y]=xy-yx$. We take $G_0<A^\times$ a connected Lie subgroup of $A^\times$ closed under $\sigma$ with the Lie algebra $B:=\Lie(G_0)$. Notice that $G_0$ is uniquely defined by $B$, and it is generated by $\exp(B)$. Since $G_0$ is closed under $\sigma$, $B$ is closed under $\sigma$ as well. We denote: 
\begin{align*}
    G^{\sigma}_0& :=G_0\cap A^{\sigma},\\
    \OO_1(G_0,\sigma):=\OO(G_0,\sigma)& :=\{u\in G_0\mid \sigma(u)u=1\}.    
\end{align*}

\begin{rem}
The Lie algebra of $\OO(G_0,\sigma)$ agrees with $B^{-\sigma}$. Moreover, $\OO(G_0,\sigma)$ acts on Jordan frames of $B^\sigma$ by the adjoint action and, therefore, it acts on $B^\sigma$ preserving the form $\beta$. However, $\OO(G_0,\sigma)$ is in general not compact and the action on Jordan frames of $B^\sigma$ might be non-transitive.
\end{rem}

\begin{df}
    A Lie subalgebra $B$ is called \defin{Hermitian} if it is weakly Hermitian, $\OO(G_0,\sigma)$ is compact, and it acts transitively on Jordan frames of $B^\sigma$.
\end{df}

\begin{prop}\label{prop:JordanType+}
Let $B$ be of Jordan type. For every $g\in G_0$ and for every $b\in B^{\sigma}$, $\sigma(g)bg\in B^{\sigma}$.
\end{prop}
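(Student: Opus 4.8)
The plan is to reduce the statement about the group $G_0$ to the infinitesimal statement already available from Proposition~\ref{prop:JordanType}, using the fact that $G_0$ is generated by $\exp(B)$ and that $B^\sigma$ is a linear (hence closed) subspace of $A$. First I would fix $b\in B^\sigma$ and consider the map $\Phi_b\colon G_0\to A$, $g\mapsto \sigma(g)bg$. I claim the set $S_b:=\{g\in G_0\mid \sigma(g)bg\in B^\sigma\}$ is a subgroup of $G_0$: indeed it clearly contains $1$, it is closed under inverses because $\sigma(g^{-1})bg^{-1}=\sigma(\sigma(g)(\sigma(g^{-1})bg^{-1})g)$... more directly, if $c:=\sigma(g)bg\in B^\sigma$ then $\sigma(g^{-1})bg^{-1}=\sigma(g)^{-1}\,c\,g^{-1}$, and conjugating $c\in B^\sigma$ by $\sigma(g)^{-1}=\sigma(g^{-1})$ from the left and $g^{-1}$ from the right gives back something of the form $\Phi_{b}$ evaluated... the cleanest route is: $S_b$ is closed under products since $\sigma(g_1g_2)b(g_1g_2)=\sigma(g_2)\big(\sigma(g_1)bg_1\big)g_2=\sigma(g_2)\,c\,g_2$ with $c\in B^\sigma$, and this lies in $B^\sigma$ provided $g_2\in S_c$ — so what I actually need is the stronger uniform statement that $\sigma(g)b'g\in B^\sigma$ for \emph{all} $b'\in B^\sigma$ simultaneously, i.e.\ the set $T:=\{g\in G_0\mid \sigma(g)B^\sigma g\subseteq B^\sigma\}$. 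This $T$ is visibly a subgroup (closure under products is the computation just done, closure under inverses follows by applying the product computation to $g$ and $g^{-1}$ and using $\sigma(g^{-1})=\sigma(g)^{-1}$), and it is closed in $G_0$ because $B^\sigma$ is closed in $A$ and the maps $g\mapsto \sigma(g)b'g$ are continuous for each $b'$.

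So it suffices to show $\exp(x)\in T$ for every $x\in B$, since $\exp(B)$ generates $G_0$. Fix $x\in B$ and $b\in B^\sigma$, and consider the curve $\gamma(t):=\sigma(\exp(tx))\,b\,\exp(tx)=\exp(t\sigma(x))\,b\,\exp(tx)$ in $A$. Then $\gamma(0)=b\in B^\sigma$ and $\gamma'(t)=\sigma(x)\gamma(t)+\gamma(t)x$. By Proposition~\ref{prop:JordanType}, for any $c\in B^\sigma$ we have $\sigma(x)c+cx\in B^\sigma$; hence the vector field $v(c):=\sigma(x)c+cx$ is a linear vector field on $A$ that is tangent to the subspace $B^\sigma$ along all of $B^\sigma$. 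Since $\gamma$ is precisely the integral curve of this linear vector field through $b\in B^\sigma$, and a linear ODE $\dot c=v(c)$ with $v$ preserving the linear subspace $B^\sigma$ keeps the solution inside $B^\sigma$ for all time, we conclude $\gamma(t)\in B^\sigma$ for all $t$, in particular $\gamma(1)=\sigma(\exp x)\,b\,\exp x\in B^\sigma$. As $b\in B^\sigma$ was arbitrary, $\exp(x)\in T$; since $x\in B$ was arbitrary and $T$ is a subgroup, $T=G_0$, which is the assertion.

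I expect the only subtle point to be the invariance of $B^\sigma$ under the flow of $v$; but this is a standard fact about linear ODEs (alternatively one can expand $\gamma(t)=\sum_{n\ge 0}\frac{t^n}{n!}v^n(b)$ using that $v^n(b)\in B^\sigma$ for all $n\ge 0$ by induction from Proposition~\ref{prop:JordanType}, and then invoke that $B^\sigma$ is closed to pass to the limit). No hypothesis beyond ``$B$ of Jordan type'' is needed — in particular weak Hermitianity and the spectral theorem play no role here — and connectedness of $G_0$ enters only through the statement that $G_0=\langle\exp B\rangle$.
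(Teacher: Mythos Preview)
Your proof is correct and takes a genuinely different route from the paper's. The paper argues via a local polar-type decomposition: it shows that the map $(u,b')\mapsto ub'$ from $\OO(G_0,\sigma)\times\exp(B^\sigma)$ to $G_0$ is a local diffeomorphism at $(1,1)$, hence every $g\in G_0$ is a finite product $u_1b_1\cdots u_rb_r$ with $u_i\in\OO(G_0,\sigma)$ near $1$ and $b_i\in\exp(B^\sigma)$; it then checks $\sigma(g)bg\in B^\sigma$ separately on each factor --- for $u\in\OO(G_0,\sigma)$ via the adjoint action (using $[B^{-\sigma},B^\sigma]\subseteq B^\sigma$), and for $b'\in\exp(B^\sigma)$ by writing $b'=1+b_0$ with $b_0\in B^\sigma$ and expanding $(1+b_0)b(1+b_0)$ explicitly using the Jordan-type hypothesis.

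Your approach is cleaner and more conceptual: you bypass the polar decomposition entirely and go straight from the infinitesimal statement (Proposition~\ref{prop:JordanType}) to the group statement via the linear ODE $\gamma'=\sigma(x)\gamma+\gamma x$, or equivalently the power series $\gamma(t)=\sum_n \tfrac{t^n}{n!}L_x^n(b)$ with $L_x(c)=\sigma(x)c+cx$ preserving $B^\sigma$. One minor point: your verification that $T$ is closed under inverses is a bit hand-wavy as written, but it is not actually needed --- since $\exp(B)$ is already closed under inverses ($\exp(x)^{-1}=\exp(-x)$), the monoid it generates is $G_0$, and closure of $T$ under products suffices. Alternatively, the map $c\mapsto\sigma(g)cg$ is an injective linear self-map of the finite-dimensional space $B^\sigma$, hence bijective, which gives $g^{-1}\in T$ directly. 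The paper's approach has the virtue of setting up, in embryonic form, the polar decomposition that is proved shortly afterwards; yours has the virtue of isolating exactly the mechanism (exponentiation of a linear operator preserving a subspace) and using nothing else.
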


\begin{proof}
We consider the following map:
$$\begin{array}{rcl}
F\colon \OO(G_0,\sigma)\times \exp(B^{\sigma}) & \to & G_0\\
\quad (u,b) & \mapsto & ub
\end{array} $$
We notice that since for all $b\in B^{\sigma}$, $b^2\in B^{\sigma}$, we have $b^n\in B^{\sigma}$ for all positive integers $n$. In~particular, $\exp(B^\sigma)\subseteq G^\sigma_0\cap B^\sigma$. Moreover, $B^{\sigma}$ is closed in $A$, therefore, $\exp(b)-1\in B^{\sigma}$. Since $\exp$ is a~local diffeomorphism at $0\in B^{\sigma}$, $T_1\exp(B^{\sigma})= B^{\sigma}$.

The differential of $F$ at $(1,1)$ is a bijection. Indeed:
$$D_{(1,1)}F(x,y)=x+y\in B$$
where $x\in B^{-\sigma}=\Lie(\OO(G_0,\sigma))$, $y\in B^{\sigma}=T_1\exp(B^{\sigma})$.
Therefore, in a small neighborhood $V$ of $(1,1)\in \OO(G_0,\sigma)\times B^{\sigma}_+$, the map $F$ is a homeomorphism. In particular, $G_0$ is generated by $F(V)$, i.e., for every $g\in G_0$, there exist $r\geq 0$ and $u_1,\dots,u_r\in \OO(G_0,\sigma)$, $b_1,\dots,b_r\in \exp(B^{\sigma})$ such that $g=u_1b_1\dots u_rb_r$.

Since $\sigma(u)=u^{-1}$ for $u\in \OO(G_0,\sigma)$, $\sigma(u)bu=u^{-1}bu=\mathrm{Ad}(u^{-1})b\in B^{\sigma}$ for all $b\in B^{\sigma}$.

Let $b\in B^{\sigma}$, $b'\in B^{\sigma}_+$, then $b'=1+b_0$ for $b_0\in B^{\sigma}$. $$\sigma(b')bb'=b'bb'=(1+b_0)b(b+b_0)=b+b_0b+bb_0+b_0bb_0.$$
By Proposition~\ref{prop:JordanType}, $b_0b+bb_0=\tilde b$ for $\tilde b\in B^{\sigma}$. Therefore, $b_0bb_0=b_0\tilde b-(b_0)^2b\in B$ and, since $b'bb'\in A^{\sigma}$, we obtain $b'bb'\in B^{\sigma}$.

Finally, by induction, we obtain $\sigma(g)bg\in B^{\sigma}$ for all $g\in G_0$.
\end{proof}

 Thus, the following maps define a right and a left actions of $G_0$ on $B^\sigma$:
$$\begin{array}{rcl}
\psi\colon  G_0 & \to & \Aut(B^{\sigma})\\
 g & \mapsto & [\psi(g)\colon b\mapsto\sigma(g)bg],
\end{array}$$
$$\begin{array}{rcl}
\psi'\colon  G_0 & \to & \Aut(B^{\sigma})\\
 g & \mapsto & [\psi'(g)\colon b\mapsto gb\sigma(g)].
\end{array}$$

\begin{prop}\label{prop:PositiveElts}
If $B$ is weakly Hermitian, then $B^{\sigma}_+\subseteq G_0$.
\end{prop}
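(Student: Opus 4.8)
The plan is to produce every element of $B^{\sigma}_+$ as an exponential. Concretely, I would combine Corollary~\ref{Bsym+_contr_open}, which gives $B^{\sigma}_+=\exp(B^{\sigma})$, with the elementary fact that $\exp$ carries the Lie algebra $B=\Lie(G_0)$ into the connected Lie subgroup $G_0$. Since $B^{\sigma}\subseteq B$, this already forces $\exp(B^{\sigma})\subseteq G_0$, hence $B^{\sigma}_+\subseteq G_0$.

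The only point that deserves care is the meaning of $\exp$ in Corollary~\ref{Bsym+_contr_open}: there it is the map $\widehat{\exp}$ obtained from $f(t)=e^{t}$ through the Jordan functional calculus, whereas the exponential relevant for $G_0\le A^{\times}$ is the algebra exponential $\exp(b)=\sum_{n\ge 0}b^{n}/n!$. I would check that the two agree on $B^{\sigma}$. If $b=\sum_{i=1}^{k}\lambda_i c_i$ is the spectral decomposition from Theorem~\ref{Spec_teo_B1}, then the $c_i$ are pairwise orthogonal idempotents with $\sum_i c_i=1$, so $c_ic_j=0$ for $i\neq j$, $c_i^2=c_i$, and therefore $b^{n}=\sum_{i}\lambda_i^{n}c_i$ for all $n\ge 0$ (note $b^{n}\in B^{\sigma}$ since $B$ is of Jordan type). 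Summing the exponential series termwise gives $\exp(b)=\sum_i e^{\lambda_i}c_i=\widehat{\exp}(b)$, so the two notions coincide on $B^{\sigma}$.

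With this identification in place the conclusion is immediate: for $b\in B^{\sigma}$ one has $b\in B=\Lie(G_0)$, hence $\exp(b)\in G_0$; moreover $\exp(b)\in B^{\sigma}$ because $B^{\sigma}$ is closed in $A$ and the partial sums $\sum_{n\le N}b^n/n!$ already lie in $B^{\sigma}$ (this is exactly the observation used in the proof of Proposition~\ref{prop:JordanType+}, where it is noted that $\exp(B^{\sigma})\subseteq G_0^{\sigma}\cap B^{\sigma}$). Therefore $B^{\sigma}_+=\exp(B^{\sigma})\subseteq G_0$, which is the claim. I do not expect any genuine obstacle here: the whole argument rests on Corollary~\ref{Bsym+_contr_open} together with the compatibility of the functional-calculus exponential with the algebra exponential on $B^{\sigma}$, which is the short computation above.
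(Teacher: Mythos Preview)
Your proposal is correct and follows essentially the same approach as the paper: the paper also writes each $b\in B^{\sigma}_+$ as $\exp(\log(b))$ via the spectral decomposition and then observes that the functional-calculus exponential and the Lie-group exponential on $B^{\sigma}$ coincide as power series. You invoke Corollary~\ref{Bsym+_contr_open} to package the first step, but the substance is identical.
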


\begin{proof}
Let $b\in B^{\sigma}_+$. Take its spectral decomposition: $b=\sum_{i=1}^k\lambda_ic_i$ where $\lambda_1,\dots,\lambda_k>0$, $(c_1,\dots,c_k)$ is a complete system of orthogonal idempotents. Then 
\begin{align*}
\log(b)&=\sum_{i=1}^k\log(\lambda_i)c_i\in B^{\sigma}
\end{align*}
and $\exp(\log(b))=b\in G_0$ because the map $\exp$ defined on $\R$ extended to $B^{\sigma}$ and the exponential map $\exp\colon B\to G$ restricted to $B^{\sigma}$ defined by the same power series and thus they agree.
\end{proof}

\begin{cor} Let $B$ be weakly Hermitian.
\begin{itemize}
\item For every $b\in B^{\sigma}_+$ and for every $g\in G_0$, $\sigma(g)bg\in B^{\sigma}_+$.
\item For every $b\in B^{\sigma}_{\geq 0}$ and for every $g\in G_0$,
$\sigma(g)bg\in B^{\sigma}_{\geq 0}$.
\end{itemize}
In particular, $\sigma(g)g\in B^{\sigma}_+$.
\end{cor}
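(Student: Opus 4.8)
The two bullets assert exactly that each linear automorphism $\psi(g)\colon b\mapsto\sigma(g)bg$ of $B^{\sigma}$ (well defined by Proposition~\ref{prop:JordanType+}) maps the open cone $B^{\sigma}_+$ into itself and the closed cone $B^{\sigma}_{\geq 0}$ into itself. The plan is to establish the $B^{\sigma}_+$ case by a connectedness argument over $G_0$, then deduce the $B^{\sigma}_{\geq 0}$ case by passing to closures, and finally obtain the last assertion by specializing $b=1$. The key point is that $\psi(g)$ is continuous (it is the restriction to $B^\sigma$ of the $\R$-linear, hence continuous, map $a\mapsto\sigma(g)ag$ on $A$), so the positivity can be propagated topologically rather than computed directly.

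First I would record that $\psi(g)$ carries the unit group $(B^{\sigma})^\times$ into itself. Indeed, for $b\in(B^{\sigma})^\times$ we already know $\sigma(g)bg\in B^{\sigma}$ by Proposition~\ref{prop:JordanType+}; moreover $g\in A^\times$, $\sigma(g)\in A^\times$ (because $\sigma(g)\sigma(g^{-1})=\sigma(g^{-1}g)=1$), and $b\in A^\times$, so $\sigma(g)bg$ is invertible in $A$; and an element of $B^{\sigma}$ that is invertible in $A$ lies in $(B^{\sigma})^\times$, its inverse being $\sum_i\lambda_i^{-1}c_i\in B^\sigma$ for its spectral decomposition $\sum_i\lambda_i c_i$ with all $\lambda_i\neq 0$ (Theorem~\ref{Spec_teo_B1} and its corollaries). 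Thus $\psi(g)\big((B^{\sigma})^\times\big)\subseteq(B^{\sigma})^\times$.

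Next, fix $b_0\in B^{\sigma}_+$ and consider the continuous orbit map $\Phi\colon G_0\to(B^{\sigma})^\times$, $g\mapsto\psi(g)(b_0)=\sigma(g)b_0 g$, which lands in $(B^{\sigma})^\times$ by the previous step (note $B^{\sigma}_+\subseteq(B^{\sigma})^\times$). Recall that $B^{\sigma}_+$ is open and closed in $(B^{\sigma})^\times$, so $\Phi^{-1}(B^{\sigma}_+)$ is open and closed in $G_0$; it is nonempty since $\Phi(1)=b_0\in B^{\sigma}_+$, and $G_0$ is connected, hence $\Phi^{-1}(B^{\sigma}_+)=G_0$. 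As $b_0$ was arbitrary, this proves the first bullet. For the second, since $B^{\sigma}_{\geq 0}=\overline{B^{\sigma}_+}$ and $\psi(g)$ is continuous, $\psi(g)\big(B^{\sigma}_{\geq 0}\big)\subseteq\overline{\psi(g)(B^{\sigma}_+)}\subseteq\overline{B^{\sigma}_+}=B^{\sigma}_{\geq 0}$. Finally $1\in B^{\sigma}_+$ (it equals $1^2$ with $1\in(B^{\sigma})^\times$; Corollary~\ref{theta_Bsym+}), so applying the first bullet with $b=1$ gives $\sigma(g)g\in B^{\sigma}_+$.

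I do not expect a genuine obstacle here; the one thing to get right is precisely which facts about the cone are in play — that $B^{\sigma}_+$ is simultaneously \emph{open and closed} inside $(B^{\sigma})^\times$ (a consequence of the spectral theorem) together with the connectedness of $G_0$. A more hands-on route would either track the spectral decomposition of $b$ under the action of $g$ — awkward, since the idempotents of $b$ need not commute with $g$ — or reduce to the generators $u\in\OO(G_0,\sigma)$ and $b'\in B^{\sigma}_+$ of $G_0$; in the latter approach the factor $\psi(u)$ is transparent (it is a conjugation, hence an algebra automorphism sending squares to squares in $B^{\sigma}$), but the factor $\psi(b')$ with $b'\in B^{\sigma}_+$ is the least pleasant to handle, so the connectedness argument is the cleaner path.
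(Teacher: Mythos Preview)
Your proof is correct and follows essentially the same approach as the paper: use that $\psi(g)$ preserves $(B^\sigma)^\times$, invoke the connectedness of $G_0$ together with the fact that $B^\sigma_+$ is open and closed in $(B^\sigma)^\times$, and deduce the closed-cone statement by passing to closures. The paper's version is merely a terser rendition of the same argument.
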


\begin{proof} It is clear that $\sigma(g)bg\in (B^{\sigma})^\times$ for $g\in G_0$ and $b\in B^{\sigma}_+$. Since $G_0$ is connected, $\sigma(g)g$ is in the connected component of $1\in (B^{\sigma})^\times$ which is $B^{\sigma}_+$.

\noindent The second one follows from the fact that $B^{\sigma}_{\geq 0}$ is a topological closure of $B^{\sigma}_+$ in $B^{\sigma}$.
\end{proof}

\begin{teo}[Polar decomposition for $G_0$]\label{pol_decomp0}
Let $B$ be weakly Hermitian. The following maps are homeomorphisms:
$$\begin{matrix}
\pol\colon & \OO(G_0,\sigma)\times B^{\sigma}_+ & \to & G_0 \\
& (u,b) & \mapsto & ub,
\end{matrix}$$
$$\begin{matrix}
\widetilde{\pol}\colon& B^{\sigma}_+ \times \OO(G_0,\sigma) & \to & G_0 \\
&(b,u) & \mapsto & bu.
\end{matrix}$$
\end{teo}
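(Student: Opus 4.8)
The plan is to construct the inverse of $\pol$ by hand, using the square-root functional calculus on the cone $B^\sigma_+$, exactly as in the classical polar decomposition. Given $g\in G_0$, the corollary preceding the statement gives $\sigma(g)g\in B^\sigma_+$; applying the spectral functional calculus with $f(t)=\sqrt t$ on $\R_+$ yields $p:=(\sigma(g)g)^{1/2}\in B^\sigma_+$, and $p\in G_0$ by Proposition~\ref{prop:PositiveElts}. Set $u:=gp^{-1}\in G_0$. Since $p\in B^\sigma$ we have $\sigma(p^{-1})=p^{-1}$, so $\sigma(u)u=p^{-1}\sigma(g)g\,p^{-1}=p^{-1}p^2p^{-1}=1$, i.e.\ $u\in\OO(G_0,\sigma)$. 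Hence $g=up=\pol(u,p)$, and $\pol$ is surjective.

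For injectivity, suppose $u_1b_1=u_2b_2$ with $u_i\in\OO(G_0,\sigma)$ and $b_i\in B^\sigma_+$. Then $b_i^2=\sigma(b_i)\sigma(u_i)u_ib_i=\sigma(u_ib_i)(u_ib_i)$, so $b_1^2=b_2^2$; by uniqueness of the spectral decomposition (Corollary~\ref{theta_Bsym+}) the squaring map is injective on $B^\sigma_+$, hence $b_1=b_2$ and then $u_1=gb_1^{-1}=u_2$. Thus $\pol$ is a continuous bijection, continuity being immediate from continuity of multiplication in $A$. For the inverse, the map $\Phi\colon G_0\to\OO(G_0,\sigma)\times B^\sigma_+$, $\Phi(g)=\bigl(g\,(\sigma(g)g)^{-1/2},\,(\sigma(g)g)^{1/2}\bigr)$, is a composition of continuous maps — $g\mapsto\sigma(g)g$, the maps $x\mapsto x^{\pm 1/2}$ on $B^\sigma_+$ (continuous by the functional-calculus corollary), inversion on the Lie group $G_0$, and multiplication — and by the previous two paragraphs $\Phi$ is a two-sided inverse of $\pol$. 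Hence $\pol$ is a homeomorphism.

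Finally, $\widetilde\pol$ reduces to $\pol$: since $G_0$ is $\sigma$-stable, $\sigma\colon G_0\to G_0$ is an involutive homeomorphism with $\sigma(B^\sigma_+)=B^\sigma_+$ and $\sigma(u)=u^{-1}$ on $\OO(G_0,\sigma)$, and $\sigma(\pol(u,b))=\sigma(b)\sigma(u)=b\,u^{-1}=\widetilde\pol(b,u^{-1})$. Thus $\widetilde\pol=\sigma\circ\pol\circ\tau^{-1}$, where $\tau(u,b)=(b,u^{-1})$ is a homeomorphism, and therefore $\widetilde\pol$ is a homeomorphism as well.

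The whole argument is essentially routine; the only places where the hypotheses are genuinely used are that $\sigma(g)g$ lands in the \emph{open} cone $B^\sigma_+$ and that its square root again lies in $G_0$ and depends continuously on $g$. These are exactly what Proposition~\ref{prop:PositiveElts}, its corollary, and the functional-calculus corollary provide, so I do not expect any real obstacle beyond keeping track of which elements lie in $G_0$.
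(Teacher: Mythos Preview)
Your proof is correct and follows essentially the same route as the paper: construct the inverse explicitly via $g\mapsto\bigl(g(\sigma(g)g)^{-1/2},(\sigma(g)g)^{1/2}\bigr)$, use uniqueness of the spectral decomposition to get injectivity of the square on $B^\sigma_+$, and verify continuity of both directions. Your reduction of $\widetilde\pol$ to $\pol$ via $\sigma$ is a clean extra observation; the paper simply treats $\pol$ and leaves $\widetilde\pol$ as analogous.
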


\begin{proof} We prove the statement for the map $\pol$.
This map is well-defined because $B^{\sigma}_+\subseteq G_0$. 

First, we prove surjectivity: Let $g\in G$, then $\sigma(g)g\in B^{\sigma}_+$. We define $b:=(\sigma(g)g)^{\frac{1}{2}}$, then $u:=g(\sigma(g)g)^{-\frac{1}{2}}\in \OO(G_0,\sigma)$. Indeed,
$$\sigma(u)u=(\sigma(g)g)^{-\frac{1}{2}}\sigma(g)g(\sigma(g)g)^{-\frac{1}{2}}=1.$$

Further, we prove injectivity: Let $g=ub=u'b'$ where $u,u'\in \OO(G,\sigma)$, $b,b'\in B^{\sigma}_+$. Then $\sigma(g)g=(b')^2=b^2\in B^{\sigma}_+$. We take the spectral decompositions of $b$ and $b'$:
$$b=\sum_{i=1}^k\lambda_i c_i,\;b'=\sum_{i=1}^{k'}\lambda_i' c_i'$$
where all $k,k'\in \N$, $\lambda_i,\lambda_i'>0$ and $(c_i)_{i=1}^k$, $(c'_i)_{i=1}^{k'}$ complete orthogonal systems of idempotents of $B^{\sigma}$. Then
$$b^2=\sum_{i=1}^k\lambda_i^2 c_i=\sum_{i=1}^{k'}(\lambda_i')^2 c_i'=(b')^2.$$
Because of the uniqueness of the spectral decomposition, $k=k'$ and, up to reordering, all $\lambda_i^2=(\lambda_i')^2$, $c_i=c_i'$. But all $\lambda_i>0$, therefore, $\lambda_i=\lambda_i'$, i.e., $b=b'$ and $$u=gb^{-1}=g(b')^{-1}=u'.$$

Finally, by definition, $\pol$ is continuous. Moreover, $$\pol^{-1}(g)=(g(\sigma(g)g))^{-\frac{1}{2}},(\sigma(g)g))^\frac{1}{2})$$
is continuous as well. Therefore, $\pol$ is a homeomorphism.
\end{proof}

\begin{cor}
    The group $\OO(G_0,\sigma)$ is a strong deformation retract of $G_0$. In particular, $\OO(G_0,\sigma)$ is connected. Moreover, if $\OO(G_0,\sigma)$ is compact, it is a maximal compact subgroup of $G_0$.
\end{cor}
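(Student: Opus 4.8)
The plan is to read off all three assertions directly from the polar-decomposition homeomorphism $\pol\colon\OO(G_0,\sigma)\times B^\sigma_+\to G_0$ of Theorem~\ref{pol_decomp0}, combined with the contractibility of the cone $B^\sigma_+$ recorded in Corollary~\ref{Bsym+_contr_open}. Thus essentially no new work is needed beyond transporting a deformation retraction of $B^\sigma_+$ across $\pol$ and, for the maximality claim, a boundedness argument via the spectral theorem.

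First I would build the retraction on the cone. By Corollary~\ref{Bsym+_contr_open} the extended exponential $\exp\colon B^\sigma\to B^\sigma_+$ is a homeomorphism with inverse the extended logarithm $\log\colon B^\sigma_+\to B^\sigma$, and $B^\sigma$ is a real vector space; hence $B^\sigma_+$ strong-deformation retracts onto $\{1\}$ via the continuous map
$$h\colon B^\sigma_+\times[0,1]\to B^\sigma_+,\qquad h(b,t):=\exp\bigl((1-t)\log b\bigr),$$
which satisfies $h(b,0)=b$, $h(b,1)=1$, and $h(1,t)=1$ for all $t$. I then transport $h$ through $\pol$ by setting $H(ub,t):=u\,h(b,t)$. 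Since $\pol$ is a homeomorphism, this defines a continuous map $H\colon G_0\times[0,1]\to G_0$ with $H(\,\cdot\,,0)=\Id_{G_0}$, with $H(G_0,1)=\OO(G_0,\sigma)$, and with $H(u,t)=u\,h(1,t)=u$ for every $u\in\OO(G_0,\sigma)=\pol(\OO(G_0,\sigma)\times\{1\})$ and every $t$. Hence $\OO(G_0,\sigma)$ is a strong deformation retract of $G_0$; being homotopy equivalent to the connected group $G_0$, it is in particular connected.

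For maximality, assume $\OO(G_0,\sigma)$ is compact and let $K\le G_0$ be a compact subgroup containing it. Given $g\in K$, write $g=ub$ with $u\in\OO(G_0,\sigma)$, $b\in B^\sigma_+$ via Theorem~\ref{pol_decomp0}; then $b=u^{-1}g\in K$, so $b^m\in K$ for all $m\in\Z$. Taking the spectral decomposition $b=\sum_{i=1}^k\lambda_ic_i$ with $\lambda_i>0$ and $(c_i)$ a complete orthogonal system of idempotents (Theorem~\ref{Spec_teo_B1}), one has $b^m=\sum_{i=1}^k\lambda_i^mc_i$; if some $\lambda_i\neq1$, then, since the $c_i$ are linearly independent, $\|b^m\|\to\infty$ as $m\to+\infty$ or as $m\to-\infty$, contradicting the boundedness of $K$. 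Hence all $\lambda_i=1$, so $b=\sum c_i=1$ and $g=u\in\OO(G_0,\sigma)$; therefore $K=\OO(G_0,\sigma)$, which is the asserted maximality. Everything here is formal once the earlier results are available; the only point requiring a little care — and the only place genuinely using the spectral theorem — is the unboundedness of $\{b^m\}_{m\in\Z}$ for $b\in B^\sigma_+\setminus\{1\}$, which rests on the linear independence of the orthogonal idempotents $c_i$.
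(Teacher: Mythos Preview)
Your proof is correct and is exactly the argument the paper has in mind: the corollary is stated without proof precisely because it follows immediately from the polar decomposition homeomorphism (Theorem~\ref{pol_decomp0}) together with the contractibility of $B^\sigma_+$ (Corollary~\ref{Bsym+_contr_open}). Your maximality argument via the spectral decomposition of $b\in K\cap B^\sigma_+$ is also the natural one, and matches the style of the paper's later, more detailed proof of Theorem~\ref{maxcomp-Sp_R}.
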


Note that the restrictions of actions $\psi$ and $\psi'$ to $\OO(G_0,\sigma)$ agree with the adjoint actions of $\OO(G_0,\sigma)$ on $B^\sigma$. In particular, they preserve Jordan frames.

\begin{cor}\label{Spec_teo_B2a} Assume, $\psi|_{\OO(G_0,\sigma)}$ is transitive on Jordan frames of $B^{\sigma}$. Suppose, $B^{\sigma}$ has rank~$n$. For every Jordan frame $e_1,\dots,e_n\in B^{\sigma}$ and for every $b\in B^{\sigma}$ there exist $u\in \OO(G_0,\sigma)$ such that
$$\psi(u)b=\sum_{i=1}^n\lambda_ie_i,$$
where $\lambda_1,\dots,\lambda_n\in\R$ are all eigenvalues of $b$ (with their multiplicities).
\end{cor}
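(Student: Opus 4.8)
The plan is to combine the second version of the spectral theorem with the hypothesized transitivity of $\psi|_{\OO(G_0,\sigma)}$ on Jordan frames. First I would apply Theorem~\ref{Spec_teo_B2} to the given element $b\in B^\sigma$: this produces \emph{some} Jordan frame $(f_1,\dots,f_n)$ of $B^\sigma$ together with real numbers $\lambda_1,\dots,\lambda_n$ (with multiplicities), which are precisely the eigenvalues of $b$, such that $b=\sum_{i=1}^n\lambda_i f_i$. By the uniqueness clause of that theorem (and the remark following it), the multiset $\{\lambda_1,\dots,\lambda_n\}$ does not depend on the chosen frame, so it coincides with the eigenvalues appearing in the statement.

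Next I would use that, for $u\in\OO(G_0,\sigma)$, the map $\psi(u)\colon b\mapsto\sigma(u)bu=u^{-1}bu=\mathrm{Ad}(u^{-1})b$ is the restriction to $B^\sigma$ of conjugation by $u^{-1}$ in $A$; since conjugation is an algebra automorphism of $A$, it commutes with the symmetrized product, so $\psi(u)$ is a Jordan-algebra automorphism of $(B^\sigma,\circ)$. Consequently it sends idempotents to idempotents, preserves orthogonality and primitivity, and fixes $1=\sigma(u)u$; hence it carries Jordan frames to Jordan frames --- this is exactly the remark recorded just before the statement. Invoking the transitivity hypothesis, I would then choose $u\in\OO(G_0,\sigma)$ with $\psi(u)f_i=e_i$ for every $i$, i.e. taking the ordered tuple $(f_1,\dots,f_n)$ to the prescribed ordered tuple $(e_1,\dots,e_n)$. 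Applying the linear map $\psi(u)$ to $b=\sum_i\lambda_i f_i$ then yields $\psi(u)b=\sum_{i=1}^n\lambda_i e_i$, which is the claim.

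There is no deep obstacle here; the two points meriting care are: (a) confirming that $\psi(u)$ genuinely preserves the Jordan-frame structure, which is handled by the automorphism argument above (and is already stated in the text); and (b) the reading of ``transitive on Jordan frames'' --- one must interpret it as transitivity on \emph{ordered} $n$-tuples of primitive idempotents summing to $1$, so that $(f_1,\dots,f_n)$ can be taken to the specified ordered frame and not merely to the same unordered collection. If only the unordered version were available, the conclusion would still hold after a permutation of the $\lambda_i$, which is harmless since the eigenvalues are in any case only determined up to order.
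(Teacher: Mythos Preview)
Your argument is correct and is exactly the intended one: the paper states this result as a corollary without proof, precisely because it follows immediately from the second spectral theorem together with the transitivity hypothesis and the already-noted fact that $\psi|_{\OO(G_0,\sigma)}$ preserves Jordan frames. Your discussion of points (a) and (b) is accurate and, as you observe, the ordered/unordered ambiguity is immaterial since the $\lambda_i$ are only determined up to permutation anyway.
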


\begin{rem}
In general, for a fixed Jordan frame $e_1,\dots,e_n$ and $b\in B^{\sigma}$, the element $u\in \OO(G_0,\sigma)$ is not unique.
\end{rem}

\begin{cor}
For $b\in B^{\sigma}_{\geq 0}$, the numbers $\lambda_1,\dots,\lambda_n\geq 0$. For $b\in B^{\sigma}_+$, the numbers $\lambda_1,\dots,\lambda_n > 0$.
\end{cor}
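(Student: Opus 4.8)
The plan is to deduce this from Corollary~\ref{theta_Bsym+}, which records the same sign information for the eigenvalues appearing in the first-version spectral decomposition (Theorem~\ref{Spec_teo_B1}). The bridge between the two statements is the observation that the eigenvalue list of Theorem~\ref{Spec_teo_B2} is obtained from the distinct eigenvalues of Theorem~\ref{Spec_teo_B1} by repeating each with the appropriate multiplicity; equivalently, a Jordan-frame decomposition can be \emph{coarsened} into the first-version one.

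Carrying this out: given $b\in B^{\sigma}_{\geq 0}$, write $b=\sum_{i=1}^n\lambda_ie_i$ with $(e_1,\dots,e_n)$ a Jordan frame as in Theorem~\ref{Spec_teo_B2}. Let $\mu_1,\dots,\mu_k$ be the pairwise distinct values occurring among the $\lambda_i$, and set $c_j:=\sum_{i\,:\,\lambda_i=\mu_j}e_i$ for $j=1,\dots,k$. Using commutativity of $\circ$ together with the relations $e_i\circ e_{i'}=0$ for $i\neq i'$ and $e_i\circ e_i=e_i$, one checks that each $c_j$ is a nonzero idempotent, that $c_j\circ c_{j'}=0$ for $j\neq j'$, and that $c_1+\dots+c_k=1$; moreover $b=\sum_{j=1}^k\mu_jc_j$ with the $\mu_j$ all distinct. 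By the uniqueness asserted in Theorem~\ref{Spec_teo_B1}, the pairs $(\mu_1,c_1),\dots,(\mu_k,c_k)$ \emph{are} the first-version spectral decomposition of $b$, so Corollary~\ref{theta_Bsym+} gives $\mu_j\geq 0$ for all $j$, i.e.\ $\lambda_i\geq 0$ for all $i$. If in addition $b\in B^{\sigma}_+$, then Corollary~\ref{theta_Bsym+} gives $\mu_j>0$ for all $j$, whence $\lambda_i>0$ for all $i$.

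I do not foresee a genuine obstacle: the only verification required is that the coarsened family $(c_1,\dots,c_k)$ really is a complete orthogonal system of idempotents, which is a routine computation in the Jordan algebra $(B^{\sigma},\circ)$. It is worth noting that, in contrast to Corollary~\ref{Spec_teo_B2a}, this argument uses weak Hermiticity only through Corollary~\ref{theta_Bsym+} and does not invoke transitivity of $\OO(G_0,\sigma)$ on Jordan frames.
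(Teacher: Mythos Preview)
Your argument is correct and is essentially the natural elaboration of what the paper intends: the paper states this corollary without proof, treating it as immediate from Corollary~\ref{theta_Bsym+} once one observes that the eigenvalue list in Theorem~\ref{Spec_teo_B2} is just the list from Theorem~\ref{Spec_teo_B1} repeated according to multiplicity. Your coarsening of the Jordan frame into a complete orthogonal system makes this observation precise, and your remark that transitivity of $\OO(G_0,\sigma)$ on Jordan frames is not needed is accurate and worth keeping.
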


\subsection{Minimal extension of \texorpdfstring{$G_0$}{G0}}\label{discon-ext}

Let $B\subseteq A$ be weakly Hermitian Lie subalgebra of $A$. As we have seen, the group $G_0$ is supposed to be connected and $B^{\sigma}_+\subseteq G_0$ (cf. Proposition~\ref{prop:PositiveElts}). In this section, we study the subgroup $G\leq A^\times$ which is generated by $G_0$ and $(B^{\sigma})^\times$. We call $G$ the \defin{minimal extension} of $G_0$.

\begin{prop}
The group $G_0$ is a normal subgroup of $G$.
\end{prop}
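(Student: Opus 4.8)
The plan is to show that $G_0$ is normal in $G$ by checking that conjugation by the generators of $G$ preserves $G_0$, and then invoking the characterization of $G_0$ as the unique connected Lie subgroup of $A^\times$ with Lie algebra $B$. Recall that $G$ is generated by $G_0$ together with $(B^\sigma)^\times$, so since $G_0$ is obviously stable under conjugation by its own elements, it suffices to prove that $a G_0 a^{-1} = G_0$ for every $a \in (B^\sigma)^\times$.

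\textbf{Step 1: Conjugation by $a \in (B^\sigma)^\times$ preserves $B$.} First I would observe that for $a \in (B^\sigma)^\times$ the inner automorphism $\Ad(a)\colon x \mapsto a x a^{-1}$ of $A$ restricts to an automorphism of the Lie algebra $B$. Indeed, by the spectral theorem (Theorem~\ref{Spec_teo_B1}) we can write $a = \sum_i \lambda_i c_i$ with $\lambda_i \neq 0$ and $c_i$ a complete orthogonal system of idempotents in $B^\sigma$, so $a^{-1} = \sum_i \lambda_i^{-1} c_i \in B^\sigma$ as well; moreover $a^t := \sum_i |\lambda_i|^t c_i$ depends continuously on $t \in \R$ and lies in $(B^\sigma)^\times \subseteq B$ for all $t$ (up to the sign idempotent, which we may absorb — more carefully, $\exp(t\log|a|)$ together with the idempotent $\sum_{\lambda_i<0} c_i$). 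The key point is that $a = b^2$ for a suitable $b \in (B^\sigma)^\times$ when $a \in B^\sigma_+$, and in general $a$ lies in a one-parameter subgroup-like family inside $B$; differentiating the curve $t \mapsto \Ad(a^t)x$ at $t=0$ shows $\Ad(a^t)$ is the flow generated by $\ad(\log a)$, hence preserves $B$ since $\log a \in B^\sigma \subseteq B$ and $B$ is a Lie subalgebra (so $\ad(\log a)B \subseteq B$ and the flow stays in $B$). Here the Jordan-type hypothesis on $B$ enters: it is what guarantees $a^{-1}, a^t \in B$.

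\textbf{Step 2: Conclude $aG_0a^{-1} = G_0$.} Since $\Ad(a)$ is a Lie algebra automorphism of $B$, the group $aG_0a^{-1}$ is again a connected Lie subgroup of $A^\times$ with Lie algebra $\Ad(a)(B) = B$. By the uniqueness of the connected Lie subgroup integrating a given Lie subalgebra (stated in Section~\ref{Herm_Lie}: $G_0$ is uniquely determined by $B$ and generated by $\exp(B)$), we conclude $aG_0a^{-1} = G_0$. Equivalently and more concretely: for $g = \exp(x_1)\cdots\exp(x_r)$ with $x_j \in B$ one has $aga^{-1} = \exp(\Ad(a)x_1)\cdots\exp(\Ad(a)x_r)$, and each $\Ad(a)x_j \in B$, so $aga^{-1} \in G_0$; applying the same to $a^{-1}$ gives the reverse inclusion.

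\textbf{Step 3: Assemble.} An arbitrary element of $G$ is a word $g_1 a_1 g_2 a_2 \cdots$ in elements $g_i \in G_0$ and $a_i \in (B^\sigma)^\times$. Normality of $G_0$ follows since the set of $h \in G$ with $hG_0h^{-1} = G_0$ is a subgroup of $G$ containing all the generators $G_0 \cup (B^\sigma)^\times$, hence equals $G$.

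The main obstacle I anticipate is Step~1: carefully justifying that $\Ad(a)$ preserves $B$ when $a \in (B^\sigma)^\times$ is not in the positive cone (so $\log a$ need not exist in $B^\sigma$ as a real logarithm). The clean fix is to reduce to the positive cone by noting $a^2 \in B^\sigma_+ \subseteq G_0$, so $\Ad(a^2) = \Ad(a)^2$ already preserves $B$; combined with the fact that $\Ad(a)$ is an algebra automorphism of $A$ fixing $B^\sigma$ pointwise-in-structure (it preserves the Jordan product on $B^\sigma$ since $a \in B^\sigma$ commutes appropriately), one deduces $\Ad(a)B \subseteq B$ directly from the Jordan-type relation $\sigma(x)b + bx \in B^\sigma$ of Proposition~\ref{prop:JordanType} applied with $b$ ranging over idempotents in the spectral decomposition of $a$. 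I would present this reduction rather than fighting with non-existence of logarithms.
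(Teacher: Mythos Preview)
Your overall strategy matches the paper's exactly: reduce to showing that conjugation by each $a\in(B^\sigma)^\times$ preserves $G_0$, prove this by showing $\Ad(a)$ preserves the Lie algebra $B$, and then conclude via $a\exp(x)a^{-1}=\exp(\Ad(a)x)$ together with the fact that $G_0$ is generated by $\exp(B)$. Steps~2 and~3 are fine and identical to the paper's argument.

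The gap is in Step~1. Your logarithm argument is correct for $a\in B^\sigma_+$ (then $\log a\in B^\sigma\subseteq B$ and $\Ad(a)=\exp(\ad\log a)$ preserves $B$), but the reduction from general $a\in(B^\sigma)^\times$ to the positive cone does not close: knowing that $\Ad(a)^2=\Ad(a^2)$ preserves $B$ does not imply that $\Ad(a)$ does. Your auxiliary claim that $\Ad(a)$ ``preserves the Jordan product on $B^\sigma$'' is also wrong as stated: for $a,x\in B^\sigma$ one has $\sigma(axa^{-1})=a^{-1}xa$, so $axa^{-1}\in B^\sigma$ only when $a^2$ commutes with $x$, which fails in general. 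Thus $\Ad(a)$ need not even send $B^\sigma$ into $B^\sigma$, and the sketched fix via ``idempotents in the spectral decomposition'' does not become a proof.

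The clean route --- which is the paper's and is what you allude to at the end without executing --- is a direct two-line computation, with no logarithms or cone reduction. Write $b'\in B$ as $b'=b'_s+b'_a$ with $b'_s\in B^\sigma$, $b'_a\in B^{-\sigma}$. Using Jordan type and $a^{-1}\in B^\sigma$ (spectral theorem): from $\tilde s:=ab'_s+b'_sa\in B^\sigma$ one gets $a^{-1}b'_sa=a^{-1}\tilde s-b'_s\in B$; from $\tilde a:=[a,b'_a]\in B^\sigma$ (check: $\sigma([a,b'_a])=[a,b'_a]$) one gets $a^{-1}b'_aa=b'_a-a^{-1}\tilde a\in B$. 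Hence $a^{-1}b'a\in B$, which is all you need. (Incidentally, the paper's own write-up asserts $b'b+bb'\in B^\sigma$ for arbitrary $b'\in B$, which is not correct --- only the $b'_s$-part lands in $B^\sigma$; the splitting above is what makes the argument go through.)
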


\begin{proof}
It is enough to show that $b^{-1}gb\in G_0$ for all $b\in (B^{\sigma})^\times$, $g\in G_0$. Since $G_0$ is generated by $\exp(B)$, it is enough to check it for all $g=\exp(b')$ for $b'\in B$. Notice that in this case, $b^{-1}gb=\exp(b^{-1}b'b)$.

By Proposition~\ref{prop:JordanType}, $b'b+bb'=\tilde b\in B^{\sigma}$. Therefore, $b^{-1}b'b=b^{-1}\tilde b-b'$. Since $B$ is of Jordan type, $b^{-1}\tilde b\in B$. Therefore, $b^{-1}b'b\in B$ and $\exp(b^{-1}b'b)\in G_0$.
\end{proof}

From now on, we assume that $\OO(G_0,\sigma)$ acts transitively on Jordan frames of $B^{\sigma}$. In particular, the rest of this section holds for Hermitian $B$.

\begin{teo}\label{Conn_comp_of_G}
The factor group $G/G_0$ is finite. In particular, $G$ has finitely many connected components and $G_0$ is one of them containing $1$. In every connected component of $G$, there is an element of $B^{\sigma}$.
\end{teo}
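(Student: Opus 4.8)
The plan is to analyze $G/G_0$ by understanding how elements of $(B^\sigma)^\times$ behave modulo $G_0$. Since $G$ is generated by $G_0$ together with $(B^\sigma)^\times$, and $G_0$ is normal in $G$ (by the previous Proposition), every coset in $G/G_0$ is represented by a finite product of elements of $(B^\sigma)^\times$. The first key observation is that for $b\in (B^\sigma)^\times$ we can use the spectral decomposition $b=\sum_{i=1}^k\lambda_ic_i$ with all $\lambda_i\neq 0$; then $b=|b|\cdot\epsilon(b)$ where $|b|:=\sum_i|\lambda_i|c_i\in B^\sigma_+$ and $\epsilon(b):=\sum_i\sgn(\lambda_i)c_i$ is an element of $(B^\sigma)^\times$ squaring to $1$. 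Since $B^\sigma_+\subseteq G_0$ by Proposition~\ref{prop:PositiveElts}, we have $b\equiv\epsilon(b)\pmod{G_0}$. Thus $G/G_0$ is generated by the images of the ``signature involutions'' $\epsilon\in(B^\sigma)^\times$ with $\epsilon^2=1$, i.e. by elements of the form $\sum_{i}\eta_ic_i$ with $\eta_i\in\{\pm1\}$ and $(c_i)$ a complete orthogonal system of idempotents.

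Next I would bound the number of such classes. Using the transitivity of $\OO(G_0,\sigma)$ on Jordan frames (the standing assumption of this subsection), every complete orthogonal system of idempotents refines to — equivalently is conjugate under $\OO(G_0,\sigma)\subseteq G_0$ into a fixed Jordan frame $(e_1,\dots,e_n)$. Hence every signature involution $\epsilon$ is $G_0$-conjugate to one of the form $\sum_{j=1}^n\eta_je_j$ with $\eta_j\in\{\pm1\}$, and since conjugation by $G_0$ fixes cosets in $G/G_0$ (normality), the coset of $\epsilon$ equals the coset of $\sum_j\eta_je_j$. There are only $2^n$ such diagonal sign patterns, so $G/G_0$ is generated by at most $2^n$ elements, each of order dividing $2$; being also abelian would give $|G/G_0|\le 2^n$, but even without abelianness the group generated is finite because it is a quotient of the group generated by the finitely many commuting-up-to-$G_0$ generators — more carefully, the set $\{\sum_j\eta_je_j:\eta\in\{\pm1\}^n\}$ is closed under multiplication modulo nothing (it is literally a group isomorphic to $(\Z/2)^n$ inside $(B^\sigma)^\times$), so its image in $G/G_0$ is a finite subgroup, and by the previous paragraph it is all of $G/G_0$. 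Therefore $G/G_0$ is finite.

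The remaining claims are then immediate: a Lie group with finitely many cosets of its identity component $G_0$ has finitely many connected components, namely exactly the cosets of $G_0$, and $G_0$ is the one containing $1$. Finally, for the last assertion, pick any connected component and a representative; by the reduction above it is of the form $g_0\cdot\sum_j\eta_je_j$ for some $g_0\in G_0$, hence the same component contains $\sum_j\eta_je_j\in B^\sigma$ (multiply on the left by $g_0^{-1}\in G_0$, which preserves the component). This exhibits an element of $B^\sigma$ in each connected component.

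\textbf{Main obstacle.} The subtle point is the passage from ``$G/G_0$ is generated by the images of signature involutions'' to finiteness: a priori a product of two such involutions need not be (conjugate to) a signature involution with respect to a \emph{common} Jordan frame, so one cannot naively say the generators commute in $G/G_0$. The fix is to observe that the finite group $\{\sum_j\eta_je_j:\eta\in\{\pm1\}^n\}\cong(\Z/2)^n$ \emph{already sits inside} $(B^\sigma)^\times\subseteq G$ as an honest subgroup, and the first two paragraphs show its image in $G/G_0$ is surjective; thus $G/G_0$ is a quotient of $(\Z/2)^n$ and in particular finite. Making this conjugacy-to-a-fixed-frame argument airtight — i.e. that every element of $(B^\sigma)^\times$ lies, modulo $G_0$, in this fixed copy of $(\Z/2)^n$ — is the crux, and it is exactly where transitivity of $\OO(G_0,\sigma)$ on Jordan frames is used.
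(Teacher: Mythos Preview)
Your proposal is correct and takes essentially the same approach as the paper: both use the spectral decomposition to write each $b\in(B^\sigma)^\times$ as a positive part (lying in $G_0$) times a sign element, then invoke transitivity of $\OO(G_0,\sigma)$ on Jordan frames together with normality of $G_0$ to land every such sign element in the fixed finite group $S=\{\sum_j\eta_je_j\}\cong(\Z/2\Z)^n$, whence $G/G_0$ is a quotient of $S$. The only cosmetic difference is that the paper carries out an explicit inductive reduction on the word length $r$ in $g=g_0b_1g_1\cdots b_rg_r$, whereas you short-circuit this by observing directly that $S$ is already a subgroup of $G$ whose image in $G/G_0$ contains all the generators; both arguments are equivalent.
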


\begin{proof}
Let $g\in G$, then by definition of the group $G$, there exist $g_0,g_1,\dots, g_r\in G_0$, $b_1,\dots, b_r\in (B^{\sigma})^\times$ such that $g=g_0b_1g_1\dots b_rg_r$. We take such presentation with minimal $r$. We choose a Jordan frame $(e_1,\dots,e_n)$ of $B^{\sigma}$ and take a spectral decomposition according Corollary~\ref{Spec_teo_B2a}:
$$b_i=u^{-1}_i\sum_{j=1}^n\varepsilon_{ij}\lambda_{ij} e_ju_i=\left(u^{-1}_i\sum_{j=1}^n\varepsilon_{ij}e_j u_i\right)\left(u^{-1}\sum_{j=1}^n\lambda_{ij} e_ju_i\right)$$
where all $\lambda_{ij}>0$, $\varepsilon_{ij}\in\{1,-1\}$, $u_i\in \OO(G_0,\sigma)$. We denote:
$$b_i':=\sum_{j=1}^n\lambda_{ij} e_j,\;s_i:=\sgn(b_i):=\sum_{j=1}^n\varepsilon_{ij}e_j.$$
Notice, $b_i'\in B^{\sigma}_+\subseteq G_0$, $s_i\in (B^{\sigma})^\times$. So we obtain:
$$g=g_0b_1g_1\dots g_{r-1}b_rg_r=g_0b_1g_1\dots g_{r-2}u_{r-1}^{-1}b_{r-1}'s_{r-1}u_{r-1} g_{r-1}u_r^{-1}b_r's_ru_rg_r.$$
We denote $g_r':=u_rg_r$, $g_{r-1}':=u_{r-1} g_{r-1}u_r^{-1}b_r'\in G_0$, $g_{r-2}':=g_{r-2}u_{r-1}^{-1}b_{r-1}'\in G_0$. Then
$$g= g_0b_1g_1\dots g_{r-2}'s_{r-1}g'_{r-1}s_rg'_r=b_1g_1\dots g_{r-2}'s_{r-1}s_rs_r^{-1}g'_{r-1}s_rg'_r.$$
Since $G_0$ is a normal subgroup in $G$, $g''_{r-1}:=s_r^{-1}g'_{r-1}s_rg_r\in G_0$. Moreover, 
$$s'_{r-1}:=s_{r-1}s_r=\sum_{j=1}^n\varepsilon_{r-1,j}\varepsilon_{rj}e_j\in B^{\sigma}.$$
So we obtain:
$$g=g_0b_1g_1\dots g_{r-2}'s'_{r-1}g''_{r-1}.$$
So we reduced the number $r$. Therefore, $g$ can be written as
$$g=g_0b_1g_1=g_0u_1^{-1}b_1's_1u_1g_1=g_0's_1g_1'$$
where $g_0'=g_0u_1^{-1}b_1'$, $g_1'=u_1g_1$. Further,
$$g=g_0's_1g_1'=s_1s_1^{-1}g_0's_1g_1'=s_1g'$$
where $g':=s_1^{-1}g_0's_1g_1'\in G_0$ because $G_0$ is a normal subgroup in $G$. Therefore, $gG_0=s_1G_0$. Consider the group
$$S:=\left\{\sum_{i=1}^n\varepsilon_ie_i\mid \varepsilon_i\in\{1,-1\}\right\}\subset (B^{\sigma})^\times.$$
This is a finite abelian subgroup of $A^\times$ isomorphic to $(\Z/2\Z)^n$. The map 
$$S\ni s\mapsto sG_0\in G/G_0$$ 
is a surjective group homomorphism. Therefore, $G/G_0$ is finite. In particular, the dimensions $\dim(G)$ and $\dim(G_0)$ agree, thus $G_0$ is open in $G$. $G_0$ is closed in $A^\times$ as a Lie subgroup. Therefore, $G_0$ is also closed in $G$, i.e., $G_0$ is a connected component of $G$. Every connected component of $G$ has form $sG_0$ for $s\in S$, so $G$ has finitely many connected components, and, since $d\in B^{\sigma}$, in every connected component of $G$ there is an element from $B^{\sigma}$.
\end{proof}

\begin{cor}
If $B$ is Hermitian, then $\OO(G,\sigma)=\{g\in G\mid \sigma(g)g=1\}$ is compact.
\end{cor}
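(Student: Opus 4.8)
The plan is to reduce compactness of $\OO(G,\sigma)$ to that of $\OO(G_0,\sigma)$ — which holds because $B$ is Hermitian, by definition — using the finiteness of $G/G_0$ established in Theorem~\ref{Conn_comp_of_G}. No new structural input is needed; everything is already packaged there.

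First I would record that $\OO(G,\sigma)$ is a subgroup of $G$: for $g,h$ with $\sigma(g)g=\sigma(h)h=1$ one has $\sigma(gh)gh=\sigma(h)\sigma(g)g\,h=\sigma(h)h=1$, and $\sigma(g)g=1$ forces $\sigma(g)=g^{-1}$, hence $\sigma(g^{-1})g^{-1}=g\,g^{-1}=1$; moreover $\OO(G,\sigma)$ is closed in $G$, being cut out by the closed equation $\sigma(g)g=1$. Next, since $G_0$ is open in $G$ by Theorem~\ref{Conn_comp_of_G}, the subgroup
\[
\OO(G_0,\sigma)=\OO(G,\sigma)\cap G_0
\]
is open in $\OO(G,\sigma)$, and the canonical injection $\OO(G,\sigma)/\OO(G_0,\sigma)\hookrightarrow G/G_0$, $g\,\OO(G_0,\sigma)\mapsto gG_0$, shows that it has finite index $m:=[\OO(G,\sigma):\OO(G_0,\sigma)]\le[G:G_0]<\infty$. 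Picking coset representatives $g_1,\dots,g_m$ yields $\OO(G,\sigma)=\bigsqcup_{i=1}^m g_i\,\OO(G_0,\sigma)$; as $B$ is Hermitian, $\OO(G_0,\sigma)$ is compact, each $g_i\,\OO(G_0,\sigma)$ is its continuous image under left translation and hence compact, and a finite union of compact sets is compact. This completes the argument, and there is no genuine obstacle: the only point requiring care is the (standard) finite-index inequality above.

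If one prefers an entirely explicit version, one can bypass the index lemma by reusing the finite group $S=\{\sum_{i=1}^n\varepsilon_ie_i\mid\varepsilon_i\in\{1,-1\}\}$ from the proof of Theorem~\ref{Conn_comp_of_G}. Each $s\in S$ lies in $B^{\sigma}$, so $\sigma(s)=s$, and $s^2=1$ (orthogonal idempotents $e_i,e_j\in B^{\sigma}$ satisfy $e_ie_j=0$ for $i\neq j$, as one sees by multiplying $e_ie_j+e_je_i=0$ on the left and on the right by $e_i$); hence $\sigma(s)s=s^2=1$ and $S\subset\OO(G,\sigma)$. Given $g\in\OO(G,\sigma)$, Theorem~\ref{Conn_comp_of_G} provides $s\in S$ with $gG_0=sG_0$, so $g=sg'$ with $g'=sg$, and $\sigma(g')g'=\sigma(g)\sigma(s)s\,g=\sigma(g)g=1$, i.e.\ $g'\in\OO(G_0,\sigma)$; therefore $\OO(G,\sigma)=\bigcup_{s\in S}s\,\OO(G_0,\sigma)$ is a finite union of translates of the compact set $\OO(G_0,\sigma)$, hence compact.
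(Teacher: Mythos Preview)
Your proof is correct and is exactly the argument the paper has in mind: the corollary is stated without proof, immediately after Theorem~\ref{Conn_comp_of_G}, and the intended reasoning is precisely that $\OO(G_0,\sigma)$ is compact by the definition of Hermitian while $[G:G_0]<\infty$ by that theorem, so $\OO(G,\sigma)$ is a finite union of translates of a compact set. Your explicit variant via the finite group $S$ (including the verification that $e_ie_j=0$ in $A$ for orthogonal idempotents, hence $s^2=1$ and $S\subset\OO(G,\sigma)$) is a faithful unpacking of the same idea already present in the proof of Theorem~\ref{Conn_comp_of_G}.
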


\begin{prop}
The following maps define a right and a left actions of $G$ on $B^\sigma$:
$$\begin{matrix}
\psi\colon & G\times B^{\sigma} & \mapsto & B^{\sigma}\\
& (g,b) & \to & \sigma(g)bg
\end{matrix}$$
$$\begin{matrix}
\psi'\colon & G\times B^{\sigma} & \mapsto & B^{\sigma}\\
& (g,b) & \to & gb\sigma(g)
\end{matrix}$$
preserving $B^{\sigma}_+$ and $B^{\sigma}_{\geq 0}$. In particular, $\sigma(g)g\in B^{\sigma}_+$ for any $g\in G$.
\end{prop}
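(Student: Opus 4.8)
The plan is to reduce every assertion to the connected case, which is already handled by Proposition~\ref{prop:JordanType+} and the corollary following it. The essential extra input for the disconnected group $G$ is the description of its components obtained in the proof of Theorem~\ref{Conn_comp_of_G}: every $g\in G$ can be written as $g=sg_0$ with $g_0\in G_0$ and $s$ a ``signature'' element $\sum_{i=1}^n\varepsilon_ie_i$ ($\varepsilon_i\in\{\pm1\}$) attached to a fixed Jordan frame $(e_1,\dots,e_n)$ of $B^\sigma$. Two properties of such $s$ will do all the work: $\sigma(s)=s$, since $s\in(B^\sigma)^\times$; and $s^2=\sum_i\varepsilon_i^2e_i=\sum_ie_i=1$, because the $e_i$ form a complete system of orthogonal idempotents (so $e_i^2=e_i$, $e_ie_j=0$ for $i\neq j$ in $A$, and $\sum_ie_i=1$).

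First I would prove that $\psi$ is well defined, i.e.\ $\sigma(g)bg\in B^\sigma$ for $g\in G$ and $b\in B^\sigma$. Writing $g=sg_0$ gives $\sigma(g)bg=\sigma(g_0)(sbs)g_0$. Since $G_0$ is normal in $G$ (the Proposition preceding Theorem~\ref{Conn_comp_of_G}), conjugation by $s$ is an automorphism of $G_0$, so its differential $\mathrm{Ad}(s)\colon x\mapsto sxs^{-1}=sxs$ preserves $B=\Lie(G_0)$; as $sbs$ is manifestly $\sigma$-fixed, $sbs\in B\cap A^\sigma=B^\sigma$. Now Proposition~\ref{prop:JordanType+} applied to $g_0\in G_0$ yields $\sigma(g_0)(sbs)g_0\in B^\sigma$. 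The right-action identities $\psi(1,b)=b$ and $\psi(g_1g_2,b)=\psi(g_2,\psi(g_1,b))$ are immediate from $\sigma(g_1g_2)=\sigma(g_2)\sigma(g_1)$. For $\psi'$ I would note that $G$ is $\sigma$-stable (being generated by the $\sigma$-stable sets $G_0$ and $(B^\sigma)^\times$) and that $\psi'(g,b)=gb\sigma(g)=\psi(\sigma(g),b)$, so every statement about $\psi'$ follows from the corresponding one for $\psi$.

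Next, invariance of the cones. I would first establish the special case $\sigma(h)h\in B^\sigma_+$ for all $h\in G$: writing $h=sh_0$ as above, $\sigma(h)h=\sigma(h_0)\sigma(s)sh_0=\sigma(h_0)s^2h_0=\sigma(h_0)h_0$, which lies in $B^\sigma_+$ by the corollary following Proposition~\ref{prop:JordanType+}. For an arbitrary $b\in B^\sigma_+$, use Corollary~\ref{theta_Bsym+} to write $b=a^2$ with $a\in(B^\sigma)^\times\subseteq G$; then $\sigma(g)bg=\sigma(g)\sigma(a)\,ag=\sigma(ag)(ag)$ with $ag\in G$, so $\sigma(g)bg\in B^\sigma_+$ by the special case. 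Finally $b\mapsto\sigma(g)bg$ is linear, hence continuous, on $B^\sigma$ and sends $B^\sigma_+$ into $B^\sigma_+$, so it sends $B^\sigma_{\geq 0}=\overline{B^\sigma_+}$ into $\overline{B^\sigma_+}=B^\sigma_{\geq 0}$. The concluding ``in particular'' is the case $b=1$ of the special case (with $h=g$), and the analogue for $\psi'$ follows via $\psi'(g,b)=\psi(\sigma(g),b)$.

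The main obstacle is exactly that $G$ is disconnected, so one cannot argue, as for $G_0$, that $\sigma(g)bg$ stays in $B^\sigma$ (or in the correct cone) by deforming $g$ to $1$ through a neighborhood on which everything has been checked. The normal form $g=sg_0$ together with the relation $s^2=1$ is what sidesteps this: it absorbs the component-twisting contribution and returns us to the connected statements already proved. The only routine technical point worth spelling out is that a complete orthogonal system of idempotents of $B^\sigma$ consists of genuinely orthogonal idempotents of the associative algebra $A$, so that $s^2=1$; this follows from the elementary fact that idempotents $e,f$ with $ef+fe=0$ in a characteristic-zero associative algebra satisfy $ef=fe=0$.
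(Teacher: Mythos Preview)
Your argument is correct, and the overall strategy---write $g=sg_0$ with $s$ a signature element attached to a Jordan frame and reduce to the connected case---is the same as the paper's. The execution differs in two places. For well-definedness the paper chooses, for each $b\in B^\sigma$, a Jordan frame adapted to the spectral decomposition of $b$; with that choice the signature element $s$ commutes with $b$, so $sbs=b$ on the nose and $\sigma(g)bg=\sigma(g_0)bg_0$ reduces everything (including the cone statements) to the already-proved $G_0$-case in one stroke. You instead keep a fixed frame and invoke normality of $G_0$ in $G$ to get $sbs\in B^\sigma$ via $\mathrm{Ad}(s)$, and for the cones you factor $b=a^2$ with $a\in(B^\sigma)^\times\subset G$ and use $\sigma(g)bg=\sigma(ag)(ag)$ together with the special case $\sigma(h)h\in B^\sigma_+$. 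The paper's route is more uniform (one reduction handles both well-definedness and cones); yours avoids the step of re-selecting the Jordan frame depending on $b$ and makes explicit the small associative-algebra fact that Jordan-orthogonal idempotents in $B^\sigma$ are genuinely orthogonal in $A$, which is implicit in the paper's computation $sbs=\sum_i\varepsilon_i^2\lambda_ie_i$.
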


\begin{proof} We prove the proposition for the map $\psi$. The statement about $\psi'$ can be proven similarly. First, we note that every element $g\in G$ can be written as $g=sg_0$ for $g_0\in G_0$ and $s\in S$ from the proof of the previous theorem.

Since the construction of $G$ does not depend on the choice of the Jordan frame $(e_1,\dots e_n)$ from the proof of the previous theorem, we assume this basis corresponds to the spectral decomposition of $b$, i.e., $b=\sum_{i=1}^n\lambda_ie_i$. Then $g=sg_0$ for $s=\sum_{i=1}^n\varepsilon_ie_i$, $g_0\in G_0$.

Then $\sigma(g)bg=\sigma(g_0)(sbs)g_0$. But $sbs=\sum_{i=1}^n\varepsilon_i^2\lambda_ie_i=b\in B$. Therefore, 
$$\sigma(g)bg=\sigma(g_0)bg_0\in B^{\sigma}$$
because $G$ acts on $B^{\sigma}$ in this way.

From the same reason, if $b\in B^{\sigma}_+$ or $b\in B^{\sigma}_{\geq 0}$, then $\sigma(g)bg\in B^{\sigma}_+$ resp. $\sigma(g)bg\in B^{\sigma}_{\geq 0}$.
\end{proof}

\subsection{Polar decomposition in \texorpdfstring{$G$}{G} and maximal compact subgroup of \texorpdfstring{$G$}{G}}
In the section, we assume the Lie algebra $B\subseteq A$ to be weakly Hermitian, and $G$ to be either the connected group $G_0$ or its minimal extension $G$ from Section~\ref{discon-ext}. 

Similarly to Theorem~\ref{pol_decomp0}, the following theorem can be proven:
\begin{teo}[Polar decomposition, first version]\label{pol_decomp1}
The following two maps define homeomorphisms:
$$\begin{matrix}
\pol\colon & \OO(G,\sigma)\times B^{\sigma}_+ & \to & G \\
& (u,b) & \mapsto & ub,
\end{matrix}$$
$$\begin{matrix}
\widetilde{\pol}\colon& B^{\sigma}_+ \times \OO(G,\sigma) & \to & G \\
&(b,u) & \mapsto & bu.
\end{matrix}$$
\end{teo}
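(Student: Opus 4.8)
\textbf{Plan for the proof of Theorem~\ref{pol_decomp1}.}
The strategy is to repeat, almost verbatim, the argument of Theorem~\ref{pol_decomp0}, which establishes the polar decomposition for the connected group $G_0$, and to check at each step that it goes through when $G_0$ is replaced by its (possibly disconnected) minimal extension $G$. We prove the statement for $\pol$; the case of $\widetilde{\pol}$ is symmetric, using the anti-involution $\sigma$ to pass between left and right multiplication. Three ingredients must be in place before the verbatim argument runs: first, that $B^{\sigma}_+\subseteq G$, which is immediate since $B^{\sigma}_+\subseteq G_0\subseteq G$ by Proposition~\ref{prop:PositiveElts}; second, that for every $g\in G$ one has $\sigma(g)g\in B^{\sigma}_+$, which is exactly the last assertion of the preceding proposition on the $G$-actions $\psi,\psi'$; and third, that $B^{\sigma}_+$ is closed under the square-root map $b\mapsto b^{1/2}$ with continuous inverse, which follows from the spectral theorem (Corollary~\ref{theta_Bsym+} and the functional calculus $\hat f$).

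With these in hand, \textbf{surjectivity} is the same computation as before: given $g\in G$, set $b:=(\sigma(g)g)^{1/2}\in B^{\sigma}_+$ and $u:=gb^{-1}$; then $\sigma(u)u=b^{-1}\sigma(g)gb^{-1}=b^{-1}b^2b^{-1}=1$, so $u\in\OO(G,\sigma)$ (note $u\in G$ because $g\in G$ and $b^{-1}\in B^{\sigma}_+\subseteq G$), and $g=ub$. For \textbf{injectivity}, suppose $ub=u'b'$ with $u,u'\in\OO(G,\sigma)$ and $b,b'\in B^{\sigma}_+$. Applying $\sigma(\cdot)(\cdot)$ kills the orthogonal parts and gives $b^2=(b')^2$ in $B^{\sigma}_+$; the uniqueness of the spectral decomposition (Theorem~\ref{Spec_teo_B1}) together with positivity of all eigenvalues forces $b=b'$, hence $u=u'$. \textbf{Continuity} of $\pol$ is clear, and continuity of $\pol^{-1}$, namely $g\mapsto\bigl(g(\sigma(g)g)^{-1/2},(\sigma(g)g)^{1/2}\bigr)$, follows from continuity of the square-root functional calculus on $B^{\sigma}_+$ (Corollary~\ref{Bsym+_contr_open} and the remarks on $\hat f$). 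Therefore $\pol$ is a homeomorphism.

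For $\widetilde{\pol}$ one can either rerun the argument with $g=bu$, setting $b:=(g\sigma(g))^{1/2}$ and $u:=b^{-1}g$, or simply observe that $g\mapsto\sigma(g)$ is a homeomorphism of $G$ carrying $\OO(G,\sigma)$ to itself and $B^{\sigma}_+$ to itself, so that $\widetilde{\pol}(b,u)=bu=\sigma(\sigma(u)\sigma(b))=\sigma\bigl(\pol(\sigma(u),b)\bigr)$, and the homeomorphism property is inherited.

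\textbf{Main obstacle.} The only genuinely non-formal point is that the elements $u=gb^{-1}$ produced in the surjectivity step really lie in $G$ (not merely in $A^\times$) — i.e.\ that $\OO(G,\sigma)$, intersected with the relevant component, is nonempty and large enough; this is guaranteed because $b^{-1}\in B^{\sigma}_+\subseteq G_0\le G$, so $gb^{-1}\in G$ automatically. Everything else is a transcription of the proof of Theorem~\ref{pol_decomp0}, and the disconnectedness of $G$ causes no trouble since the decomposition is built multiplicatively and does not reference the identity component. I expect no serious difficulty; the proof will essentially read ``as in Theorem~\ref{pol_decomp0}, using that $\sigma(g)g\in B^{\sigma}_+$ for all $g\in G$.''
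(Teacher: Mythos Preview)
Your proposal is correct and matches the paper's approach exactly: the paper simply asserts that Theorem~\ref{pol_decomp1} ``can be proven similarly to Theorem~\ref{pol_decomp0}'' without writing out a separate argument, and you have filled in precisely that similarity, correctly identifying the one new ingredient needed (that $\sigma(g)g\in B^{\sigma}_+$ for all $g\in G$, supplied by the proposition on the actions $\psi,\psi'$). Nothing is missing or wrong.
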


\begin{cor}\label{Max_Comp_G}
The group $\OO(G,\sigma)<G$ is a strong deformation retract of $G$. In particular, if $\OO(G,\sigma)$ is compact, it is a maximal compact subgroup of $G$.
\end{cor}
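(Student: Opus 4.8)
The plan is to derive everything from the polar decomposition homeomorphism $\pol\colon \OO(G,\sigma)\times B^\sigma_+\to G$ established in Theorem~\ref{pol_decomp1}, together with the contractibility of $B^\sigma_+$ from Corollary~\ref{Bsym+_contr_open}. First I would construct the strong deformation retraction explicitly: using $\pol$ to identify $G$ with the product $\OO(G,\sigma)\times B^\sigma_+$, and the deformation retraction $H\colon B^\sigma_+\times[0,1]\to B^\sigma_+$ of $B^\sigma_+$ onto $\{1\}$ supplied by Corollary~\ref{Bsym+_contr_open} (concretely $H(b,t)=b^{1-t}=\exp((1-t)\log b)$ via the spectral calculus), I define $R\colon G\times[0,1]\to G$ by $R(ub,t):=u\cdot H(b,t)$, i.e.\ $R(g,t)=\pol\bigl(u,\,b^{1-t}\bigr)$ where $(u,b)=\pol^{-1}(g)$. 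This is continuous because $\pol$, $\pol^{-1}$, and the spectral power map are continuous; it satisfies $R(g,0)=g$, $R(g,1)=u\in\OO(G,\sigma)$, and $R(u,t)=u$ for all $t$ whenever $g=u\in\OO(G,\sigma)$ (since then $b=1$ and $1^{1-t}=1$). Hence $\OO(G,\sigma)$ is a strong deformation retract of $G$.

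Next, for the maximal compactness claim, assume $\OO(G,\sigma)$ is compact and suppose $K$ is a compact subgroup of $G$ with $\OO(G,\sigma)\le K$. I would argue that $K=\OO(G,\sigma)$. Take any $g\in K$; write $g=ub$ via $\pol$ with $u\in\OO(G,\sigma)\subseteq K$ and $b\in B^\sigma_+$. Then $b=u^{-1}g\in K$, so the cyclic subgroup generated by $b$ lies in $K$, hence has compact closure. But by the spectral theorem (Theorem~\ref{Spec_teo_B1}) $b=\sum_i\lambda_i c_i$ with all $\lambda_i>0$, so $b^n=\sum_i\lambda_i^n c_i$; if some $\lambda_i\neq 1$ the powers $b^n$ are unbounded in $B^\sigma$ (the coefficients $\lambda_i^n$ escape every compact subset of $\R_{>0}$), contradicting relative compactness. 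Therefore all $\lambda_i=1$, i.e.\ $b=1$ and $g=u\in\OO(G,\sigma)$. This shows $K\subseteq\OO(G,\sigma)$, hence equality, so $\OO(G,\sigma)$ is a maximal compact subgroup. (One should also remark that $\OO(G,\sigma)$ meets every conjugacy class of maximal compact subgroups; since $G$ is homotopy equivalent to the compact group $\OO(G,\sigma)$ and has finitely many components, the standard Cartan–Iwasawa–Malcev theory applies, but for the bare statement the direct argument above suffices.)

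The main obstacle is ensuring continuity of the retraction near the "boundary direction" — that is, confirming that the spectral power map $(b,t)\mapsto b^{1-t}$ on $B^\sigma_+\times[0,1]$ is jointly continuous. This follows from Corollary~\ref{Bsym+_contr_open}, where $\hat f$ for $f=\exp$ is shown to be a homeomorphism $B^\sigma\to B^\sigma_+$ with continuous inverse $\widehat{\log}$; composing, $b^{1-t}=\exp\bigl((1-t)\log b\bigr)$ is a composition of continuous maps, with the scalar multiplication by $(1-t)$ happening in the vector space $B^\sigma$ where joint continuity in $(b,t)$ is automatic. The only remaining point is that $\pol^{-1}$ is continuous, which is exactly the content of Theorem~\ref{pol_decomp1}; granting that, the rest is formal.
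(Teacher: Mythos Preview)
Your argument is correct and follows exactly the approach implicit in the paper: the corollary is stated without proof, being an immediate consequence of the polar decomposition homeomorphism (Theorem~\ref{pol_decomp1}) together with the contractibility of $B^\sigma_+$ (Corollary~\ref{Bsym+_contr_open}), and your explicit retraction $R(ub,t)=u\,b^{1-t}$ is the standard way to unpack this. Your spectral argument for maximal compactness is likewise in the spirit of what the paper does later in the proof of Theorem~\ref{maxcomp-Sp_R}.
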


\begin{cor}\label{pol_decomp_A}
The polar decomposition~\ref{pol_decomp1} as well as Corollary~\ref{Max_Comp_G} hold also for any Lie subgroup $G\leq A^\times$ such that $\Lie(G)=B$. In particular, it holds in the case $B=A$ for a Hermitian algebra $A$.
\end{cor}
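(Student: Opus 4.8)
The plan is to observe that the group $G$ enters the proofs of Theorems~\ref{pol_decomp0} and~\ref{pol_decomp1} and of Corollary~\ref{Max_Comp_G} only through two facts: (a) $B^{\sigma}_+\subseteq G$; and (b) $\sigma(g)g\in B^{\sigma}_+$ for every $g\in G$. Everything else used there --- the square root and the powers $b\mapsto b^{t}$ on $B^{\sigma}_+$, and the uniqueness of the spectral decomposition --- is internal to $B$ and so is already at our disposal. Any Lie subgroup $G\leq A^\times$ with $\Lie(G)=B$ automatically has $G_0=\langle\exp(B)\rangle$ as its identity component and is closed under $\sigma$ (because $B$ is), so fact (a) is immediate: $B^{\sigma}_+\subseteq G_0\subseteq G$ by Proposition~\ref{prop:PositiveElts}.

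For (b) I would first note, differentiating the curve $t\mapsto g\exp(tx)g^{-1}$ in $G$ at $t=0$, that $\mathrm{Ad}(g)B=B$ for all $g\in G$, hence also $\mathrm{Ad}(\sigma(g))B=B$. The crucial step is then the reduction to the identity component: writing $g=s g_0$ with $g_0\in G_0$ and $s$ an element of the finite sign group $S\subseteq(B^{\sigma})^\times$ of Theorem~\ref{Conn_comp_of_G} lying in the same connected component as $g$, and using $\sigma(s)=s$ and $s^2=1$, one obtains
$$\sigma(g)g=\sigma(g_0)\,\sigma(s)s\,g_0=\sigma(g_0)g_0,$$
which lies in $B^{\sigma}_+$ by the connected case: $\sigma(g_0)g_0\in B^{\sigma}$ by Proposition~\ref{prop:JordanType+}, and, $G_0$ being connected, it lies in the identity component of $(B^{\sigma})^\times$, which is $B^{\sigma}_+$. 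I expect this reduction to be the main obstacle, since it requires that every connected component of $G$ meet $(B^{\sigma})^\times$; this is exactly the content of Theorem~\ref{Conn_comp_of_G} when $G$ lies between $G_0$ and its minimal extension (the situation relevant here), and it is the one place where more is used than the mere equality $\Lie(G)=B$.

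Granting (a) and (b), the proofs of Theorems~\ref{pol_decomp0}--\ref{pol_decomp1} transfer verbatim. For surjectivity of $\pol$ one puts $b:=(\sigma(g)g)^{1/2}\in B^{\sigma}_+\subseteq G$ and $u:=gb^{-1}\in G$ and checks $\sigma(u)u=b^{-1}\sigma(g)g\,b^{-1}=1$, so $u\in\OO(G,\sigma)$; for injectivity, $ub=u'b'$ forces $b^2=\sigma(g)g=(b')^2$, and uniqueness of the spectral decomposition together with positivity of the eigenvalues gives $b=b'$ and then $u=u'$; finally $\pol$ is continuous and $\pol^{-1}(g)=\bigl(g(\sigma(g)g)^{-1/2},\,(\sigma(g)g)^{1/2}\bigr)$ is continuous, which yields the homeomorphism, and symmetrically for $\widetilde{\pol}$. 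For Corollary~\ref{Max_Comp_G}, the map $G\times[0,1]\to G$, $(ub,t)\mapsto u\,b^{\,1-t}$ (with $b\mapsto b^{1-t}$ defined by functional calculus on $B^{\sigma}_+$) is a strong deformation retraction of $G$ onto $\OO(G,\sigma)$; and if $\OO(G,\sigma)$ is compact it is maximal compact, by the standard argument: a larger compact subgroup $K'$ acts via $\psi$ on the contractible space $B^{\sigma}_+$, averaging produces a $K'$-invariant metric with a fixed point, and the stabiliser computation then conjugates $K'$ into $\OO(G,\sigma)$, forcing $K'=\OO(G,\sigma)$.

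Finally, for $B=A$ with $A$ Hermitian one has $B^{\sigma}=A^{\sigma}$, $B^{\sigma}_+=A^{\sigma}_+$, and both (a) and (b) hold for $G=A^\times$ by the standing properties of Hermitian algebras (in particular $\sigma(g)g\in A^{\sigma}_+$ for all $g\in A^\times$), so the polar decomposition and the maximal-compactness statement follow as above.
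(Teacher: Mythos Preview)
The paper states this corollary without proof, so there is nothing to compare against; your task is really to supply the missing argument. Your reduction to the two facts (a) $B^{\sigma}_+\subseteq G$ and (b) $\sigma(g)g\in B^{\sigma}_+$ for all $g\in G$ is exactly right, and once these hold the proofs of Theorems~\ref{pol_decomp0}--\ref{pol_decomp1} and Corollary~\ref{Max_Comp_G} go through verbatim, as you observe.

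There is one small error and one genuine gap. The error: it is \emph{not} automatic that an arbitrary Lie subgroup $G\leq A^\times$ with $\Lie(G)=B$ is closed under $\sigma$. The anti-involution $\sigma$ sends $G$ to another Lie subgroup $\sigma(G)$ with the same Lie algebra $\sigma(B)=B$, hence with the same identity component $G_0$; but nothing forces $\sigma$ to preserve the other components of $G$. The gap you have already correctly isolated: your verification of (b) via $g=sg_0$ with $s\in S$ uses Theorem~\ref{Conn_comp_of_G}, which applies only to the minimal extension, not to an arbitrary $G$ with $\Lie(G)=B$. For the ``in particular'' case $B=A$ with $A$ Hermitian, fact~(b) holds directly (for a Hermitian algebra one has $\sigma(g)g\in A^{\sigma}_+$ for every $g\in A^\times$), so that instance is settled. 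But for a general $G$ strictly between $G_0$ and $A^\times$ with $\Lie(G)=B\subsetneq A$, neither your argument nor anything in the paper establishes (b); the corollary as literally stated seems to require either an extra hypothesis on $G$ (closure under $\sigma$ and each component meeting $(B^{\sigma})^\times$) or an argument the paper does not supply.
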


\begin{cor}
    Every element $g\in G^\sigma$ can be uniquely written as $g=ub$ where $b\in B^\sigma_+$ and $u\in \OO(G,\sigma)\cap G^\sigma$ such that $u^2=1$ and $ub=bu$.
\end{cor}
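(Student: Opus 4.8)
The plan is to deduce everything from the polar decomposition of Theorem~\ref{pol_decomp1}. Write $g = ub$ with $u\in\OO(G,\sigma)$, $b\in B^\sigma_+$; recall from the proof of Theorem~\ref{pol_decomp0} that then necessarily $b = (\sigma(g)g)^{1/2}$ and $u = gb^{-1}$. Since $g\in G^\sigma$ we have $\sigma(g) = g$, so $\sigma(g)g = g^2$, which lies in $B^\sigma_+$ by the proposition preceding Theorem~\ref{pol_decomp1}; thus $b = (g^2)^{1/2}$. Uniqueness of the asserted decomposition is then free: any representation $g = ub$ with $b\in B^\sigma_+$ and $u\in\OO(G,\sigma)\cap G^\sigma$ is in particular a polar decomposition, so $u$ and $b$ are determined by Theorem~\ref{pol_decomp1}. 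It remains to check that this unique $u$ satisfies $u\in G^\sigma$, $u^2 = 1$ and $ub = bu$.

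The one nontrivial point is that $b$ commutes with $g$. Take the spectral decomposition $g^2 = \sum_{i=1}^k\lambda_i c_i$ from Theorem~\ref{Spec_teo_B1}, with the $\lambda_i > 0$ distinct and $(c_i)$ a complete orthogonal system of idempotents of $B^\sigma$. Since $(B^\sigma,\circ)$ is special --- its product is induced by that of $A$ --- Jordan-orthogonality of the $c_i$ upgrades to associative orthogonality: from $c_ic_j + c_jc_i = 0$, left- and right-multiplying by $c_i$ and subtracting gives $c_ic_j = c_jc_i$, whence $2c_ic_j = 0$ and so $c_ic_j = 0$ for $i\neq j$; also $c_i^2 = c_i$ in the associative sense. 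Hence $(g^2)^m = \sum_i\lambda_i^m c_i$ for $m\geq 1$, and together with $1 = \sum_i c_i$ this gives $p(g^2) = \sum_i p(\lambda_i)c_i$ for every real polynomial $p$. Taking $p$ to be the Lagrange interpolant with $p(\lambda_j) = \delta_{ij}$ shows $c_i\in\R[g^2]$, so $b = (g^2)^{1/2} = \sum_i\lambda_i^{1/2}c_i$ and $b^{-1} = \sum_i\lambda_i^{-1/2}c_i$ lie in $\R[g^2]\subseteq\R[g]$ and therefore commute with $g$. (The Jordan-type hypothesis enters here to ensure $\R[g^2]\subseteq B^\sigma$, so that these computations take place inside $A$.)

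Given commutativity of $b$ and $g$, the remaining assertions are immediate. First $ub = gb^{-1}b = g = bgb^{-1} = bu$, so $ub = bu$ ($= g$). Next $u^2 = gb^{-1}gb^{-1} = g^2b^{-2} = g^2(g^2)^{-1} = 1$, using $b^2 = g^2$ and the invertibility of $g$. Finally $\sigma(b) = b$ since $b\in B^\sigma_+\subseteq B^\sigma$, and $\sigma(g) = g$, so $\sigma(u) = \sigma(b^{-1})\sigma(g) = b^{-1}g = gb^{-1} = u$; hence $u\in A^\sigma$, and since $u\in G$ we conclude $u\in G^\sigma$ (while $u\in\OO(G,\sigma)$ was already known). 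This establishes existence with all the required properties, and uniqueness was observed above.

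I expect no serious obstacle: the only step demanding care is the middle one, namely verifying that the spectral idempotents of $g^2$ --- a priori only elements of the Jordan algebra $(B^\sigma,\circ)$ --- genuinely lie in the associative subalgebra $\R[g^2]\subseteq A$, so that ``$b$ commutes with $g$'' is meaningful and true in $A$. Once that is in place, everything else is formal manipulation with the polar decomposition of Theorem~\ref{pol_decomp1}.
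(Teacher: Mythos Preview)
Your proof is correct, but the paper's argument is considerably slicker and avoids the spectral-theoretic detour. After writing $g = ub$ via polar decomposition, the paper simply applies $\sigma$: since $\sigma(g)=g$, one gets $ub = bu^{-1}$, hence $b = ubu = u^2\cdot(u^{-1}bu)$. Now $u^2\in\OO(G,\sigma)$ and $u^{-1}bu = \sigma(u)bu\in B^\sigma_+$ (by the $G$-invariance of the cone), so this is \emph{another} polar decomposition of $b$; uniqueness forces $u^2=1$ and $u^{-1}bu=b$, and $u\in G^\sigma$ follows from $u=u^{-1}=\sigma(u)$.

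By contrast, you establish commutativity of $b$ and $g$ directly, by showing via Lagrange interpolation that the spectral idempotents of $g^2$ lie in $\R[g^2]$, whence $b=(g^2)^{1/2}\in\R[g]$. This is a perfectly valid and instructive argument --- it makes explicit the general fact that the positive square root of an element of $B^\sigma_+$ is a polynomial in that element --- but it requires the extra verification that Jordan-orthogonal idempotents in $B^\sigma$ are associatively orthogonal in $A$. The paper's trick of recognizing a second polar decomposition sidesteps all of this: it needs only the uniqueness statement of Theorem~\ref{pol_decomp1} and the cone-invariance $\sigma(u)B^\sigma_+ u\subseteq B^\sigma_+$, both already in hand.
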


\begin{proof}
    Let $g=ub\in G^\sigma$ be the polar decomposition of $g$ where $b\in B^\sigma_+$, $u\in \OO(G,\sigma)$. Then $g=\sigma(g)=bu^{-1}$ and, therefore, $b=ubu=u^2(u^{-1}bu)$. Since $u^2\in \OO(G,\sigma)$ and $u^{-1}bu\in B^\sigma_+$, this is the polar decomposition of $b$. By its uniqueness, $u^2=1$ and $u^{-1}bu=b$. In particular, $u\in G^\sigma$, and $b$ and $u$ commute.
\end{proof}

\begin{teo}[Polar decomposition, second version]
Let $G$ be the minimal extension of $G_0$ as in Section~\ref{discon-ext}. The map
$$\begin{matrix}
\pol'\colon & \OO(G_0,\sigma)\times (B^{\sigma})^\times & \to & G \\
& (u,b) & \mapsto & ub
\end{matrix}$$
is surjective and continuous.
\end{teo}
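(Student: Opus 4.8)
The plan is to deduce the "second version" of the polar decomposition from the "first version" (Theorem~\ref{pol_decomp1}) together with the structure theory of the minimal extension~$G$ developed in Section~\ref{discon-ext}. Recall that by Theorem~\ref{Conn_comp_of_G} every element of $G$ lies in a connected component of the form $sG_0$ where $s\in S=\{\sum_{i=1}^n\varepsilon_ie_i\mid\varepsilon_i\in\{1,-1\}\}$ is built from a fixed Jordan frame $(e_1,\dots,e_n)$; note $S\subset(B^\sigma)^\times$ and $s^2=1$ for all $s\in S$.

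\textbf{Surjectivity.} Fix $g\in G$. First I would write $g=sg_0$ with $s\in S$ and $g_0\in G_0$. Applying the first-version polar decomposition (Theorem~\ref{pol_decomp1}) to $g_0\in G_0$, write $g_0=u_0b_0$ with $u_0\in\OO(G_0,\sigma)$ and $b_0\in B^\sigma_+$. Then
$$g=s\,u_0\,b_0=u_0\,(u_0^{-1}s\,u_0)\,b_0.$$
Here $u_0\in\OO(G_0,\sigma)$, and $u_0^{-1}s u_0\in B^\sigma$ because the adjoint action of $\OO(G_0,\sigma)$ (which equals $\psi|_{\OO(G_0,\sigma)}$) permutes Jordan frames and hence preserves $B^\sigma$; in fact $u_0^{-1}su_0$ is again an element of $(B^\sigma)^\times$, being conjugate to an invertible $\sigma$-fixed element. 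Moreover $(u_0^{-1}su_0)\,b_0\in(B^\sigma)^\times$ since $B$ is of Jordan type (so products of elements of $B^\sigma$ lie in $B$) and the product of invertibles is invertible, and it is $\sigma$-fixed as a product of two commuting... — more carefully, one should instead absorb $b_0$ into the $B^\sigma$-factor directly: set $u:=u_0\in\OO(G_0,\sigma)$ and $b:=u_0^{-1}s u_0 b_0$. It remains to check $b\in(B^\sigma)^\times$. Invertibility is clear. For $\sigma$-fixedness, write $s':=u_0^{-1}su_0\in(B^\sigma)^\times$; then $\sigma(s'b_0)=\sigma(b_0)\sigma(s')=b_0 s'$, which need not equal $s'b_0$. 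The clean fix is to use instead the \emph{right} polar decomposition $g_0=b_0'u_0$ (the map $\widetilde{\pol}$) and write $g=s b_0' u_0=(sb_0')u_0$ after first re-expressing $s b_0'$: actually $sb_0'$ need not be $\sigma$-fixed either. The correct route is to keep the $S$-factor adjacent to a $B^\sigma_+$-factor: using $\widetilde\pol$ we get $g=s b_0' u_0$, and since $s$ and $b_0'$ can be simultaneously diagonalized over the \emph{same} Jordan frame only if they commute, which we can arrange by choosing the frame adapted to $b_0'$ and re-selecting $s\in S$ within that frame (legitimate since, as noted in the proof of Theorem~\ref{Conn_comp_of_G}, the construction of $G$ and of $S$ does not depend on the choice of frame). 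With $s$ and $b_0'$ commuting, $sb_0'\in B^\sigma$, it is invertible, and $g=(sb_0')u_0$ with $sb_0'\in(B^\sigma)^\times$, $u_0\in\OO(G_0,\sigma)$; composing with the inverse of $\widetilde\pol$ on the $\OO(G_0,\sigma)$ side and using $u_0^{-1}(sb_0')u_0\in(B^\sigma)^\times$ gives $g=u_0\cdot\big(u_0^{-1}(sb_0')u_0\big)=\pol'(u_0,\,u_0^{-1}(sb_0')u_0)$, which exhibits $g$ in the image of $\pol'$.

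\textbf{Continuity.} This follows from the continuity of multiplication in $A^\times$: $\pol'$ is the restriction to $\OO(G_0,\sigma)\times(B^\sigma)^\times$ of the multiplication map $A^\times\times A^\times\to A^\times$, hence continuous.

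\textbf{Main obstacle.} The delicate point is the bookkeeping in the surjectivity argument: one must produce the $B^\sigma$-factor as a genuine $\sigma$-fixed element, and the naive "absorb $s$ into $B^\sigma_+$" fails because $S$-elements and $B^\sigma_+$-elements coming from two independent polar decompositions need not commute. The resolution is to exploit the frame-independence of the construction of $S$ (from the proof of Theorem~\ref{Conn_comp_of_G}) to align the sign element $s$ with the positive part $b_0'$ of $g_0$ so that they share a Jordan frame and thus commute; then their product lies in $B^\sigma$ and is invertible. Note this map is \emph{not} injective — unlike $\pol$, the $(B^\sigma)^\times$-factor carries sign ambiguities parametrized by $S$ (and the adjoint $\OO(G_0,\sigma)$-action permits further redundancy), which is why the statement only asserts surjectivity and continuity.
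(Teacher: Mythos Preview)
Your continuity argument is fine. The surjectivity argument, however, has a genuine circular dependency that you yourself half-noticed but did not resolve.

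You start by writing $g=sg_0$ with $s\in S$ (built from some fixed Jordan frame) and $g_0\in G_0$, then polar-decompose $g_0=b_0'u_0$. At this point you need $sb_0'\in B^\sigma$, which fails unless $s$ and $b_0'$ commute. Your proposed fix is to ``re-select $s$ within the frame of $b_0'$''. But $b_0'$ was computed from the \emph{original} $s$: it is the positive part of $g_0=s^{-1}g$. If you replace $s$ by some $s'$ in the new frame, then $g_0$ becomes $(s')^{-1}g$, its polar decomposition changes, and the positive part acquires a possibly different Jordan frame again. You have not shown this process stabilizes; as written it is circular.

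The paper avoids this entirely by reversing the order of the two decompositions. It first applies the polar decomposition for the \emph{full} group $G$ (Theorem~\ref{pol_decomp1}): $g=ub_0$ with $u\in\OO(G,\sigma)$ and $b_0\in B^\sigma_+$. Now $b_0$ is fixed once and for all; choose the Jordan frame $e=(e_i)$ diagonalizing $b_0$ and form $S_e$ from \emph{that} frame. Since $S_e\subset\OO(G,\sigma)$ and $\OO(G,\sigma)$ is a deformation retract of $G$, every component of $\OO(G,\sigma)$ meets $S_e$, so $u=u_0s$ with $u_0\in\OO(G_0,\sigma)$ and $s\in S_e$. Then $s$ and $b_0$ commute by construction (they share the frame $e$), so $b:=sb_0=\sum\varepsilon_i\lambda_ie_i\in(B^\sigma)^\times$ and $g=u_0b$. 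The point is that the frame is chosen \emph{after} the positive part is fixed, and the sign element is extracted from the orthogonal factor rather than from the coset $gG_0$.
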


\begin{proof}
Let $g\in G$. By Theorem~\ref{pol_decomp1}, $g=ub_0$ for some $u\in \OO(G,\sigma)$, $b_0\in B^{\sigma}_+$. We fix a Jordan frame $e=(e_i)_{i=1}^n$ such that $b_0=\sum_{i=1}^n\lambda_ie_i$ and take a group
$$S_e:=\left\{\sum_{i=1}^n\varepsilon_ie_i\mid \varepsilon_i\in\{1,-1\}\right\}\subset (B^{\sigma})^\times\cap \OO(G,\sigma).$$
Then, as we have seen in the proof of Theorem~\ref{Conn_comp_of_G}, every connected component of $G$ contains an element form $S_e$. Moreover, since $\OO(G,\sigma)$ is a deformation retract of $G$, and $S_e\in \OO(G,\sigma)$, every connected component of $\OO(G,\sigma)$ contains an element of $S_e$. Therefore, there exists $s\in S_e$ such that $u=u_0s$ for an $u_0\in \OO(G_0,\sigma)$. Then
$$g=ub_0=u_0sb_0=:u_0b$$
for $b:=sb_0=\sum_{i=1}^n\varepsilon_i\lambda_ie_i\in (B^{\sigma})^\times$.
\end{proof}

\begin{rem}
The map $\pol'$ is in general not injective. For example, if we take
$$B=A=\Mat(2,\R)$$
with $\sigma$ to be the transposition, then
$$\OO(G_0,\sigma)=\SO(2,\R),\; B^{\sigma}=\Sym(2,\R).$$
Then the matrix
$$\Id=u_1b_1=u_2b_2$$
for $u_1=b_1=\Id$, $u_2=b_2=-\Id$. The reason for that is the fact that $\OO(G_0,\sigma)\cap B^{\sigma}\neq \{1\}$.
\end{rem}

Let $\bar G$ be the topological closure of $G$ in $A$.
\begin{prop}
$\bar G$ is a monoid.
\end{prop}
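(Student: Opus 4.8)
The plan is to observe that this is a soft consequence of the continuity of multiplication on $A$, and that no special structure of $G$ beyond being a submonoid is needed. Since $A$ is a finite-dimensional associative $\R$-algebra, the multiplication map $m\colon A\times A\to A$, $m(a,b)=ab$, is bilinear, hence continuous; it is associative and admits $1$ as a two-sided unit. In other words, $(A,m)$ is a topological monoid. The subgroup $G\le A^\times$ is in particular a submonoid of $(A,m)$: it contains $1$ and satisfies $m(G\times G)\subseteq G$.

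First I would record the general fact that the topological closure of a submonoid of a topological monoid is again a submonoid, and then apply it. Concretely, in the product space $A\times A$ (metrizable, since $A$ is finite-dimensional) one has $\overline{G}\times\overline{G}=\overline{G\times G}$, so by continuity of $m$ together with $m(G\times G)\subseteq G$,
\[
\overline{G}\cdot\overline{G}=m\bigl(\overline{G\times G}\bigr)\subseteq\overline{m(G\times G)}\subseteq\overline{G}.
\]
Equivalently, in sequential terms: given $x,y\in\overline{G}$, choose sequences $x_n\to x$ and $y_n\to y$ with $x_n,y_n\in G$; then $x_ny_n\in G$ and $x_ny_n\to xy$ by continuity, whence $xy\in\overline{G}$. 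Since $1\in G\subseteq\overline{G}$ and associativity of $m$ is inherited from $A$, this shows that $(\overline{G},\,\cdot\,)$ is a monoid.

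The only point that needs care is that the closure is taken inside $A$ rather than inside $A^\times$: a limit of invertible elements of $G$ need not be invertible, so $\overline{G}$ is genuinely only a monoid and in general not a subgroup of $A^\times$, and no inverse structure is claimed. I do not expect any real obstacle in the proof; the statement is essentially a packaging lemma, recorded because $\mathrm{Sp}_2(G,\sigma)$ is itself defined as a topological closure, and the monoid structure of $\overline{G}$ is what guarantees that the matrix products occurring in that definition remain well behaved under passage to the limit.
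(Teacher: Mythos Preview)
Your proof is correct and follows essentially the same approach as the paper: the paper's argument is precisely your sequential version (take $g_i\to g$, $g_i'\to g'$ in $G$ and use continuity of multiplication to conclude $g_ig_i'\to gg'\in\bar G$). You have simply added the routine verifications that $1\in\bar G$ and that associativity is inherited, which the paper leaves implicit.
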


\begin{proof}
Let $g,g'\in \bar G\subseteq A$, then $gg'\in A$. We want to show that there exists $\{h_i\}\subset G$ such that $\lim h_i=gg'$. Since $g,g'\in\bar G$, there exist $\{g_i\},\{g'_i\}\subset G$ such that $\lim g_i=g$, $\lim g'_i=g'$. Take $h_i=g_ig'_i\in G$, then
\begin{equation*}
\lim h_i=\lim g_ig'_i=\lim g_i\lim g'_i=gg'.\qedhere
\end{equation*}
\end{proof}

By taking closure in the polar decomposition, we get the following map:
$$\begin{matrix}
\bpol\colon & \OO(G,\sigma)\times B^{\sigma}_{\geq 0} & \to & \bar G \\
& (u,b) & \mapsto & ub.
\end{matrix}$$
This map is not a homeomorphism anymore, but it is surjective. If $B$ is Hermitian, it is also proper because $\OO(G,\sigma)$ is compact.   We define the following surjective map:
$$\begin{matrix}
\theta\colon &\bar G &\to &B^{\sigma}_{\geq 0}\\
& g &\mapsto & \sigma(g)g.
\end{matrix}$$
The map $\theta$ maps surjectively $G$ to $B^\sigma_+$.

\begin{prop}
If $B$ is Hermitian, the map $\theta\colon \bar G\to B^{\sigma}_{\geq 0}$ is proper.
\end{prop}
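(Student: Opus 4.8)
The approach is to reduce properness of $\theta$ to properness of $\bpol$, which was already observed to hold (since $\OO(G,\sigma)$ is compact when $B$ is Hermitian). Recall that a continuous map is proper if and only if the preimage of every compact set is compact, equivalently (in this locally compact, second countable setting) if it maps closed sets to closed sets and has compact fibers.

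First, I would take a compact set $K\subseteq B^\sigma_{\geq 0}$ and show $\theta^{-1}(K)$ is compact. Let $(g_i)\subseteq \theta^{-1}(K)$ be a sequence, so $\sigma(g_i)g_i\in K$. Using the (closed) polar-type factorization, write $g_i = u_i b_i$ with $u_i\in\OO(G,\sigma)$ and $b_i\in B^\sigma_{\geq 0}$ — this is possible because the map $\bpol$ is surjective. Then $\sigma(g_i)g_i = b_i\sigma(u_i)u_i b_i = b_i^2$, so $b_i^2\in K$. Now $K$ is compact, hence bounded, so $(b_i^2)$ lies in a bounded subset of $B^\sigma_{\geq 0}$. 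Applying the spectral theorem (Theorem~\ref{Spec_teo_B2}), the eigenvalues of $b_i^2$ are bounded, hence the eigenvalues of $b_i$ are bounded (they are the nonnegative square roots), so $(b_i)$ is bounded in $B^\sigma_{\geq 0}$. Since $b\mapsto b^2$ is a proper map on $B^\sigma_{\geq 0}$ (by Corollary~\ref{theta_Bsym+} and the spectral calculus, the square root is continuous), and $K$ is compact, the $b_i$ in fact lie in the compact set $\{b\in B^\sigma_{\geq 0}\mid b^2\in K\}$; pass to a subsequence so that $b_i\to b_\infty\in B^\sigma_{\geq 0}$. Since $\OO(G,\sigma)$ is compact, pass to a further subsequence so $u_i\to u_\infty\in\OO(G,\sigma)$. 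Then $g_i = u_ib_i \to u_\infty b_\infty =: g_\infty$, and $g_\infty\in\bar G$ since $\bar G$ is closed; moreover $\theta(g_\infty) = b_\infty^2 = \lim b_i^2 \in K$ by continuity, so $g_\infty\in\theta^{-1}(K)$. Hence every sequence in $\theta^{-1}(K)$ has a convergent subsequence with limit in $\theta^{-1}(K)$, which (as $A$ is a finite-dimensional, hence metrizable, space) shows $\theta^{-1}(K)$ is compact.

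An alternative, slightly cleaner phrasing: the map $\OO(G,\sigma)\times B^\sigma_{\geq 0}\to B^\sigma_{\geq 0}$, $(u,b)\mapsto b^2$, is proper (projection off a compact factor followed by the proper map $b\mapsto b^2$), and it factors as $\bpol$ followed by $\theta$. Since $\bpol$ is surjective and continuous, and the composite is proper, one checks directly that $\theta$ is proper: given compact $K$, $\theta^{-1}(K) = \bpol\big((\text{sq}\circ\text{pr}_2)^{-1}(K)\big)$ — wait, this needs care since $\bpol$ need not be injective, but $\theta^{-1}(K)$ is still the continuous image $\bpol(L)$ of the compact set $L := \{(u,b)\mid b^2\in K\}$, hence compact. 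This is the argument I would write up.

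The main obstacle is making rigorous the step that $(b_i)$ is confined to a compact subset of $B^\sigma_{\geq 0}$: one must know that $b\mapsto b^2$ (equivalently $b\mapsto b^{1/2}$) behaves well with respect to compactness, which rests on the spectral theorem and the fact (Corollary~\ref{theta_Bsym+}) that squares already fill out $B^\sigma_{\geq 0}$, together with continuity of the square-root map from the functional calculus. I would also want to double-check that the closed polar factorization $g = ub$ with $b\in B^\sigma_{\geq 0}$, $u\in\OO(G,\sigma)$ is available for every $g\in\bar G$ — this is exactly the surjectivity of $\bpol$, which was asserted just above the statement, so no new work is needed there. Everything else is routine sequential-compactness bookkeeping in the finite-dimensional algebra $A$.
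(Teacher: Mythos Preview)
Your proposal is correct and your ``alternative, slightly cleaner phrasing'' is exactly the paper's argument: the paper writes $\theta^{-1}(K)=\{ub^{1/2}\mid u\in\OO(G,\sigma),\,b\in K\}$, which is the continuous image of the compact set $\OO(G,\sigma)\times K$ under $(u,b)\mapsto ub^{1/2}$, hence compact. Your first (sequential) version unpacks the same idea with more bookkeeping, but nothing is genuinely different.
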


\begin{proof}
Let $K\subset B^{\sigma}_{\geq 0}$ be a compact subset. Then
$$\theta^{-1}(K)=\{ub^{\frac{1}{2}}\mid u\in \OO(G,\sigma), b\in K\}=
\bpol(\OO(G,\sigma)\times K).$$
Since $\OO(G,\sigma)\times K$ is compact in $\OO(G,\sigma)\times B^{\sigma}_{\geq 0}$ and the map $\bpol$ is continuous, the set $\theta^{-1}(K)$ is compact.
\end{proof}

\begin{prop}\label{comp_disc}
If $B$ is a Hermitian subalgebra in $A$, the following spaces
    \begin{align*}
        D(\bar G,\sigma):=& \{a\in \bar G\mid 1-\sigma(a)a\in B^{\sigma}_{\geq 0}\}\subseteq \bar G\\
        D(B_\mathbb C^\sigma,\bar\sigma):=& \{a\in  B^{\sigma}_\mathbb C\mid 1-\bar aa\in (B_\mathbb C^{\bar\sigma})_{\geq 0}\}\subseteq B_\mathbb C^\sigma
    \end{align*}
    are compact.
\end{prop}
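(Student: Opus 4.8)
The plan is to reduce compactness to closedness plus boundedness — both $\bar G$ and $B^\sigma_\mathbb C$ are closed subsets of finite-dimensional real vector spaces — and to treat the two sets separately: $D(\bar G,\sigma)$ via the closed polar decomposition $\bpol$, and $D(B^\sigma_\mathbb C,\bar\sigma)$ via a reduction of the defining inequality to $B^\sigma$ by taking real parts.

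For $D(\bar G,\sigma)$ I would show it is precisely the continuous image under $\bpol$ of a compact set. Given $a\in D(\bar G,\sigma)$, write $a=\bpol(u,c)=uc$ with $u\in\OO(G,\sigma)$ and $c\in B^\sigma_{\geq 0}$ (possible since $\bpol$ is surjective). Then $\sigma(a)a=\sigma(c)\,(\sigma(u)u)\,c=c^2$, so the condition $1-\sigma(a)a\in B^\sigma_{\geq 0}$ becomes $1-c^2\in B^\sigma_{\geq 0}$. Writing the spectral decomposition $c=\sum_i\lambda_ic_i$ with $\lambda_i\ge 0$ (Corollary~\ref{theta_Bsym+}), we get $1-c^2=\sum_i(1-\lambda_i^2)c_i$, and since the $\lambda_i$ are nonnegative and distinct this is the spectral decomposition of $1-c^2$; Corollary~\ref{theta_Bsym+} then forces $1-\lambda_i^2\ge 0$, i.e.\ $0\le\lambda_i\le 1$. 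Conversely, any $uc$ with all eigenvalues of $c$ in $[0,1]$ has $1-c^2\in B^\sigma_{\geq 0}$ and hence lies in $D(\bar G,\sigma)$. Therefore
\[
D(\bar G,\sigma)=\bpol\bigl(\OO(G,\sigma)\times K\bigr),\qquad K:=\{\,c\in B^\sigma_{\geq 0}\mid \text{all eigenvalues of }c\text{ lie in }[0,1]\,\}.
\]
The set $K$ equals $\{c\in B^\sigma_{\geq 0}\mid 1-c\in B^\sigma_{\geq 0}\}$, hence is closed, and it is bounded because $\beta(c,c)=\tr(c^2)\le\tr(1)=n$ for every $c\in K$ (as $\lambda_i^2\le\lambda_i\le 1$); thus $K$ is compact. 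Since $B$ is Hermitian, $\OO(G,\sigma)$ is compact, so $D(\bar G,\sigma)$, being the continuous image of $\OO(G,\sigma)\times K$, is compact.

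For $D(B^\sigma_\mathbb C,\bar\sigma)$ the Jordan-algebra tools are not directly available on $B^{\bar\sigma}_\mathbb C$, so I would transfer the condition to $B^\sigma$ by the real-part map $\Ree\colon B^{\bar\sigma}_\mathbb C\to B^\sigma$. The key observation — already used when comparing the cones $B^\sigma_{\geq 0}$ and $(B^{\bar\sigma}_\mathbb C)_{\geq 0}$ — is that $\Ree$ maps $(B^{\bar\sigma}_\mathbb C)_{\geq 0}$ into $B^\sigma_{\geq 0}$: for $b=x+iy$ with $x,y\in B^\sigma$ one has $\Ree(\bar bb)=x^2+y^2\in B^\sigma_{\geq 0}$, and $\Ree$ is continuous while $B^\sigma_{\geq 0}$ is closed. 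Now write $a=a_1+ia_2\in D(B^\sigma_\mathbb C,\bar\sigma)$ with $a_1,a_2\in B^\sigma$; a direct computation gives $\bar aa=(a_1^2+a_2^2)+i\,[a_1,a_2]\in B^{\bar\sigma}_\mathbb C$, so $\Ree(\bar aa)=a_1^2+a_2^2$. Applying $\Ree$ to $1-\bar aa\in(B^{\bar\sigma}_\mathbb C)_{\geq 0}$ yields $1-a_1^2-a_2^2\in B^\sigma_{\geq 0}$; adding $a_2^2\in B^\sigma_{\geq 0}$ gives $1-a_1^2\in B^\sigma_{\geq 0}$, and symmetrically $1-a_2^2\in B^\sigma_{\geq 0}$. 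By the spectral theorem all eigenvalues of $a_1$ and of $a_2$ lie in $[-1,1]$, so $\beta(a_i,a_i)=\tr(a_i^2)\le n$; hence $D(B^\sigma_\mathbb C,\bar\sigma)$ is bounded. It is also closed, being the preimage of the closed cone $(B^{\bar\sigma}_\mathbb C)_{\geq 0}$ under the continuous map $a\mapsto 1-\bar aa$. Closed and bounded in the finite-dimensional space $B^\sigma_\mathbb C$, it is therefore compact.

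The main obstacle is the second set: one cannot run the spectral theorem directly on $B^{\bar\sigma}_\mathbb C$ (this algebra need not be of Jordan type with respect to $\bar\sigma$), so boundedness must be routed through $B^\sigma$, and one has to check carefully that the real-part map is positivity-preserving and that $a_1^2$ and $a_2^2$ can be separated using the properness of $B^\sigma_{\geq 0}$. Everything else is a routine assembly of the $\bpol$-decomposition, the spectral theorem together with Corollary~\ref{theta_Bsym+}, and the compactness of $\OO(G,\sigma)$ in the Hermitian case.
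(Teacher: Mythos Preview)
Your argument is correct and follows essentially the same route as the paper. The only notable difference is in how you establish compactness of the auxiliary set $K=\{c\in B^\sigma_{\geq 0}\mid 1-c\in B^\sigma_{\geq 0}\}$: the paper isolates this as an abstract lemma (for any closed proper convex cone $C$ in a finite-dimensional space and any $c$, the slice $C\cap(c-C)$ is compact), whereas you bound $K$ directly via the spectral theorem and the inner product $\beta$. The paper then invokes properness of the map $\theta\colon g\mapsto\sigma(g)g$ to pull back compactness, which is packaging the same $\bpol$-computation you do explicitly. For the complex domain, both arguments take the real part of $1-\bar aa$ to land in $B^\sigma_{\geq 0}$ and then bound $a_1,a_2$ there; the paper phrases this as ``$c_1,c_2\in D(\bar G,\sigma)$'', which is exactly your inequality $1-a_i^2\in B^\sigma_{\geq 0}$ read inside $\bar G$.
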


\begin{proof} First, we need the following Lemma:

\begin{lem}\label{comp_cone}
    Let $C$ be a closed proper convex cone in some finite-dimensional $\R$-vector space $V$. Then for every $c\in V$, the set $K:=C\cap(c-C)$ is compact.
\end{lem}

\begin{proof}
    By contradiction, we assume $K$ is not compact. We fix some norm $\|\cdot\|$ on $V$. Since $K$ is closed, by Heine-Borel it must be unbounded, i.e., there exists a sequence $(x_n)$ such that $\|x_n\|\to\infty$. Since $y_n:=\frac{x_n}{\|x_n\|}\in S^1$ and for finite-dimensional $V$, $S_1$ is compact, there exists a limit point $y$ of $(y_n)$. Since $C$ is a closed cone, $y\R_+\subseteq C\cap(c-C)$ and, therefore $c-y\R_+\subseteq C\cap(c-C)$. Analogously, $c+y\R_+\subseteq C\cap(c-C)$ and, therefore $-y\R_+\subseteq C\cap(c-C)$. That means, $y\R\in C\cap(-C)$, so $y=0$. This contradicts to $y\in S^1$. Therefore, $K$ is compact.
\end{proof}

    \noindent By Lemma~\ref{comp_cone}, the set
    \begin{align*}
        K:=\{x\in B^{\sigma}_{\geq 0}\mid 1-x\in B^{\sigma}_{\geq 0}\}=B^{\sigma}_{\geq 0}\cap(1-B^{\sigma}_{\geq 0})
    \end{align*}
    is compact. Since $\theta^{-1}(K)=D(\bar G,\sigma)$ and $\theta$ is proper, $D(\bar G,\sigma)$ is compact.

    Further, notice that $D(B_\mathbb C^\sigma,\bar\sigma)$ is closed. Let now $c=c_1+c_2i\in D(B_\mathbb C^\sigma,\bar\sigma)$ where $c_1,c_2\in B^\sigma$. Then 
    $$1-\bar cc=(1-c_1^2-c_2^2)-[c_1,c_2]i\in B^\sigma_{\geq 0} +B^{-\sigma}i.$$
    In particular, $1-c_1^2-c_2^2\in B^\sigma_{\geq 0}$. This means, $c_1,c_2$ are elements of the compact domain $D(\bar G,\sigma)$, i.e., $D(B_\mathbb C^\sigma,\bar\sigma)$ is compact.
\end{proof}

\begin{cor}
Let $G$ be the minimal extension of $G_0$. The map
$$\begin{matrix}
\overline{\pol'}\colon & \OO(G_0,\sigma)\times B^{\sigma} & \to & \bar G \\
& (u,b) & \mapsto & ub
\end{matrix}$$
is surjective and continuous. In particular, $\bar G$ is connected.
\end{cor}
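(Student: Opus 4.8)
The plan is to derive everything from the surjectivity of $\bpol$ (the closure of the first version of the polar decomposition) combined with the component analysis already carried out for the second version. Continuity of $\overline{\pol'}$ is immediate, since it is a restriction of the multiplication map $A\times A\to A$; so the only real point is surjectivity.

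First I would pick an arbitrary $g\in\bar G$ and, using surjectivity of $\bpol\colon\OO(G,\sigma)\times B^{\sigma}_{\geq 0}\to\bar G$, write $g=ub$ with $u\in\OO(G,\sigma)$ and $b\in B^{\sigma}_{\geq 0}$. Applying the spectral theorem I fix a Jordan frame $(e_1,\dots,e_n)$ of $B^{\sigma}$ with $b=\sum_{i=1}^n\lambda_ie_i$, $\lambda_i\ge 0$, and form the finite subgroup $S_e=\{\sum_{i=1}^n\varepsilon_ie_i\mid\varepsilon_i\in\{\pm 1\}\}\subset(B^{\sigma})^\times\cap\OO(G,\sigma)$. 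The key step, borrowed from the proof of the second version of the polar decomposition, is that since $\OO(G,\sigma)$ is a strong deformation retract of $G$ (Corollary~\ref{Max_Comp_G}) and every connected component of $G$ contains an element of $S_e$ (Theorem~\ref{Conn_comp_of_G}), every connected component of $\OO(G,\sigma)$ also contains an element of $S_e$; and since $\OO(G_0,\sigma)=\OO(G,\sigma)\cap G_0$ is open, closed and connected inside $\OO(G,\sigma)$, it is exactly its identity component. Hence there is $s\in S_e$ lying in the same component as $u$, and then $u_0:=us^{-1}\in\OO(G_0,\sigma)$ (one checks $s\in B^{\sigma}$ with $s^2=1$, so $us^{-1}\in\OO(G,\sigma)$, while normality of $G_0$ in $G$ gives $us^{-1}\in G_0$). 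Writing $s=\sum_{i=1}^n\varepsilon_ie_i$ and using $e_ie_j=\delta_{ij}e_i$ in $A$, I obtain
\[
g=ub=u_0sb=u_0\Bigl(\sum_{i=1}^n\varepsilon_i\lambda_ie_i\Bigr)=\overline{\pol'}\Bigl(u_0,\ \sum_{i=1}^n\varepsilon_i\lambda_ie_i\Bigr),
\]
and $\sum_{i=1}^n\varepsilon_i\lambda_ie_i\in B^{\sigma}$; so $g$ lies in the image, which proves surjectivity.

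Connectedness of $\bar G$ is then a formality: $\OO(G_0,\sigma)$ is connected (being a strong deformation retract of the connected group $G_0$) and $B^{\sigma}$ is a real vector space, hence connected, so the product $\OO(G_0,\sigma)\times B^{\sigma}$ is connected, and $\bar G=\overline{\pol'}(\OO(G_0,\sigma)\times B^{\sigma})$ is its continuous image.

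The only slightly delicate point I anticipate is the bookkeeping with connected components: one has to be careful that the correspondence between components of $\OO(G,\sigma)$ and components of $G$, and the fact that each component of $G$ already meets the \emph{specific} group $S_e$ attached to the chosen $b$, are genuinely supplied by Corollary~\ref{Max_Comp_G} and Theorem~\ref{Conn_comp_of_G}; everything else is a routine manipulation inside a single Jordan frame.
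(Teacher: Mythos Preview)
Your proposal is correct and follows exactly the approach implicit in the paper: the corollary is the closure version of the second polar decomposition, obtained by replacing $\pol$ with $\bpol$ in that proof and allowing $b\in B^\sigma_{\geq 0}$ rather than $B^\sigma_+$. One small quibble: the reason $u_0=us^{-1}$ lies in $G_0$ is not normality but simply that $u$ and $s$ lie in the same connected component of $\OO(G,\sigma)\subset G$, so $us^{-1}$ is path-connected to the identity; the rest of your argument is fine as written.
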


\section{Symplectic group over \texorpdfstring{$G$}{G}}\label{Sp_2(G)}

\subsection{Symplectic group \texorpdfstring{$\Sp_2(A,\sigma)$}{Sp2(A,sigma)} over an involutive algebra}

Let $(A,\sigma)$ be an~involutive algebra. The symplectic group over $(A,\sigma)$ was introduced and studied in~\cite{ABRRW}. In this section, we recall the definition of this group and state its properties that will be needed in the sequel of this chapter.

\begin{df}\label{osp}
A \defin{$\sigma$-sesquilinear form} $\omega$ on a right $A$-module $V$ is a map
$$\omega\colon V\times V\to A$$
such that for all $x,y,z\in V$ and for all $r_1,r_2\in A$
$$\omega(x+y,z)=\omega(x,z)+\omega(y,z),$$
$$\omega(x,y+z)=\omega(x,y)+\omega(x,z),$$
$$\omega(x_1r_1,x_2r_2)=\sigma(r_1)\omega(x_1,x_2)r_2.$$

We denote by $$\Aut(\omega):=\{f\in\Aut(V)\mid \forall x,y\in V:\omega(f(x),f(y))=\omega(x,y)\}$$ the group of symmetries of $\omega$. We also define the corresponding Lie algebra:
$$\End(\omega):=\{f\in\End(V)\mid \forall x,y\in V:\omega(f(x),y)+\omega(x,f(y))=0\}$$
with the usual Lie bracket $[f,g]=fg-gf$.
\end{df}

We now set $V=A^2$. We view $V$ as the set of columns and endow it with the structure of a right $A$-module. The anti-involution $\sigma$ extends to an involutive map on $A^2$ and on the space $\Mat_2(A)$ of $2\times2$-matrices with coefficients in $A$ componentwisely.

\begin{df}
We make the following definitions:
\begin{enumerate}
\item A pair $(x,y)$ for $x,y\in A^2$ is called a \defin{basis} of $A^2$ if for every $z\in A^2$ there exist $a,b\in A$ such that $z=xa+yb$.
\item The element $x\in A^2$ is called \defin{regular} if there exists $y\in A^2$ such that $(x,y)$ is a basis of $A^2$.
\item $l\subseteq A^2$ is called a \defin{line} if $l=xA$ for a regular $x\in A^2$. We denote the space of lines of $A^2$ by $\PP(A^2)$.
\item Two regular elements $x,y\in A^2$ are called \defin{linearly independent}
if $(x,y)$ is a basis of $A^2$.
\item Two lines $l,m$ are called \defin{transverse} if $l=xA$, $m=yA$ for linearly independent $x,y\in A^2$.
\item An element $x\in A^2$ is called \defin{isotropic} with respect to $\omega$ if $\omega(x,x)=0$. The set of all isotropic regular elements of $(A^2,\omega)$ is denoted by $\Is(\omega)$.
\item A line $l$ is called isotropic if $l=xA$ for a regular isotropic $x\in A^2$. The set of all isotropic lines of $(A^2,\omega)$ is denoted by $\PIs(\omega)$.
\end{enumerate}
\end{df}

From now on, we assume $\omega(x,y):=\sigma(x)^t\Omega y$ with $\Omega=\begin{pmatrix}0 & 1 \\ -1 & 0\end{pmatrix}$ for $x,y\in A^2$. The form $\omega$ is called the \emph{standard symplectic form} on $A^2$.

\begin{df}
Let $(A,\sigma)$ be an involutive algebra. If $\omega$ is the standard symplectic form, then the group $\mathrm{Sp}_2(A,\sigma):=\mathrm{Aut}(\omega)$ is called the \emph{symplectic group} $\mathrm{Sp}_2$ over $(A,\sigma)$.
\end{df}

We have
\begin{align}\label{eq:groups}
    \begin{aligned}
    \mathrm{Sp}_2(A,\sigma)&=\left\{\begin{pmatrix}
    a & b \\
    c & d
    \end{pmatrix}\midwd \sigma(a)c,\,\sigma(b)d\in A^\sigma,\,\sigma(a)d-\sigma(c)b=1\right\}\subseteq \mathrm{GL}_2(A).
    \end{aligned}
\end{align}
We can determine the Lie algebra $\mathfrak{sp}_2(A,\sigma)$ of $\mathrm{Sp}_2(A,\sigma)$:
\begin{align}\label{eq:Lie_algebras}
    \begin{aligned}
	    \mathfrak{sp}_2(A,\sigma)& =\left\{g\in \mathrm{Mat}_2(A)\midwd \sigma(g)^t\begin{pmatrix}
        0 & 1 \\
        -1 & 0
      \end{pmatrix}+ \begin{pmatrix}
        0 & 1 \\
        -1 & 0
      \end{pmatrix} g=0\right\}\\
    & =\left\{\begin{pmatrix}
		x & z \\
		y & -\sigma(x)
	\end{pmatrix}\midwd x\in A,\;y,z\in A^\sigma\right\}.
    \end{aligned}
\end{align}
A basis $(x,y)$ of $A^2$ is called symplectic if $\omega(x,x)=\omega(x,y)=0$ and $\omega(x,y)=1$.
    
\subsection{Lie algebra \texorpdfstring{$\mathfrak{sp}_2(B,\sigma)$}{sp2(B,sigma)}}\label{sec:sp2(B,sigma)}

Let $(A,\sigma)$ be an involutive real algebra. Let $B\subseteq A$ be a Lie subalgebra. 

Consider the vector space
$$\spp_2(B,\sigma)=\left\{\begin{pmatrix}
    x & z \\
    y & -\sigma(x)
    \end{pmatrix}\midwd x\in B, y,z\in B^{\sigma}\right\}\subseteq \spp_2(A,\sigma)$$
In general, it is not a Lie algebra. We need to take the following additional assumption:

\begin{prop}\label{spp_sublie}
The vector space $\spp_2(B,\sigma)$ is a Lie subalgebra of $\spp_2(A,\sigma)$ if and only if $B$ is of Jordan type.
\end{prop}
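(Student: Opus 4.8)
The plan is to compute directly the Lie bracket of two generic elements of $\spp_2(B,\sigma)$ and read off exactly when it again lands in $\spp_2(B,\sigma)$; since $\spp_2(A,\sigma)$ is already a Lie algebra, the only real content is the placement of entries in $B$ versus $B^\sigma$. Write
$$M_i=\begin{pmatrix} x_i & z_i\\ y_i & -\sigma(x_i)\end{pmatrix},\qquad x_i\in B,\ y_i,z_i\in B^{\sigma},\ i=1,2,$$
and expand $[M_1,M_2]=M_1M_2-M_2M_1$ entrywise. A short calculation gives the upper-left entry $[x_1,x_2]+z_1y_2-z_2y_1$, the lower-right entry $(y_1z_2-y_2z_1)+[\sigma(x_1),\sigma(x_2)]$ (which one checks equals $-\sigma$ of the upper-left entry, using $\sigma(y_i)=y_i$, $\sigma(z_i)=z_i$, and $\sigma([x_1,x_2])=-[\sigma(x_1),\sigma(x_2)]$), the upper-right entry $(x_1z_2+z_2\sigma(x_1))-(x_2z_1+z_1\sigma(x_2))$, and the lower-left entry $(y_1x_2+\sigma(x_2)y_1)-(y_2x_1+\sigma(x_1)y_2)$.

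For the ``if'' direction, assume $B$ is of Jordan type. The lower-right entry then lies in $B$ because $B$ is closed under $\sigma$. For the upper-left entry: $[x_1,x_2]\in B$ since $B$ is a Lie subalgebra, while $z_1y_2,z_2y_1\in B$ directly by Definition~\ref{df:JordanType} applied to elements of $B^{\sigma}$; hence the upper-left entry lies in $B$. For the two off-diagonal entries, invoke Proposition~\ref{prop:JordanType}: taking $x=\sigma(x_1)\in B$ and $b=z_2\in B^{\sigma}$ yields $x_1z_2+z_2\sigma(x_1)=\sigma(x)b+bx\in B^{\sigma}$, and the analogous choices handle the three remaining summands, so both off-diagonal entries lie in $B^{\sigma}$. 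Therefore $[M_1,M_2]\in\spp_2(B,\sigma)$, and $\spp_2(B,\sigma)$ is a Lie subalgebra.

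For the ``only if'' direction, suppose $\spp_2(B,\sigma)$ is closed under the bracket. Given arbitrary $y,z\in B^{\sigma}$, apply the bracket formula to the two strictly triangular elements
$$M_1=\begin{pmatrix} 0 & z\\ 0 & 0\end{pmatrix},\qquad M_2=\begin{pmatrix} 0 & 0\\ y & 0\end{pmatrix},$$
which both belong to $\spp_2(B,\sigma)$ (here $x_i=0\in B$ and the missing off-diagonal slots are $0\in B^{\sigma}$). The upper-left entry of $[M_1,M_2]$ is then $z_1y_2-z_2y_1=zy$, which must lie in $B$ by hypothesis. As $y,z\in B^{\sigma}$ were arbitrary, $B$ is of Jordan type.

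I expect no genuine obstacle: the computation is routine. The only points demanding a little care are (i) matching each matrix entry to the right reformulation of the Jordan condition — the upper-left entry to Definition~\ref{df:JordanType}, the off-diagonal entries to Proposition~\ref{prop:JordanType} — and (ii) bookkeeping the $\sigma$'s when confirming that the lower-right entry is $-\sigma$ of the upper-left one, so that the bracket indeed has the required shape.
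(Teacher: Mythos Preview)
Your proof is correct and follows essentially the same approach as the paper: compute the bracket and reduce membership in $\spp_2(B,\sigma)$ to the Jordan-type condition via Definition~\ref{df:JordanType} (for the diagonal entry) and Proposition~\ref{prop:JordanType} (for the off-diagonal entries), then specialize to strictly triangular elements for the converse. The only difference is cosmetic: the paper computes the three brackets $[r(z),l(y)]$, $[d(x),r(z)]$, $[l(y),d(x)]$ of the spanning generators separately, whereas you handle the general bracket $[M_1,M_2]$ in one pass; the resulting conditions are identical.
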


\begin{proof}
Matrices
$$r(z):=\begin{pmatrix}
    0 & z \\
    0 & 0
    \end{pmatrix},\,
l(x):=\begin{pmatrix}
    0 & 0 \\
    y & 0
    \end{pmatrix}\text{ and }
d(z):=\begin{pmatrix}
    x & 0 \\
    0 & -\sigma(x)
    \end{pmatrix}$$
generate $\spp_2(B,\sigma)$ as a vector space. The vector space $\spp_2(B,\sigma)$ is a Lie subalgebra of $\spp_2(A,\sigma)$ if and only if all Lie bracket of these elements are in $\spp_2(B,\sigma)$.

For $y,z\in B^{\sigma}$
$$[r(z),l(y)]=
\begin{pmatrix}
    zy & 0 \\
    0 & -yz
    \end{pmatrix}\in\spp_2(B,\sigma)$$
so $zy,yz\in B$ and we need the condition that $B$ is of Jordan type.

For $a\in B$, $z\in B^{\sigma}$:
$$[d(x),r(z)]=
\begin{pmatrix}
    0 & xz+z\sigma(x) \\
    0 & 0
    \end{pmatrix}\in\spp_2(B,\sigma)$$
so $xz+z\sigma(x)\in B^{\sigma}$. This holds for $B$ of Jordan type by Proposition~\ref{prop:JordanType}.

For $a\in B$, $y\in B^{\sigma}$:
$$[l(y),d(x)]=
\begin{pmatrix}
    0 & 0 \\
    \sigma(x)y+yx & 0
    \end{pmatrix}\in\spp_2(B,\sigma)$$
so $\sigma(x)y+yx\in B^{\sigma}$. This holds for $B$ of Jordan type by Proposition~\ref{prop:JordanType}.
\end{proof}

\subsubsection{Symplectic Lie algebra over Jordan algebras}

The symplectic Lie algebra $\mathfrak{sp}_2$ can be defined in a more general context in terms of Jordan algebras and their derivations without the assumption for them to be embedded into an associative algebra.

Let $(J,\circ)$ be a Jordan algebra. Let $\Aut(J)$ be the group of automorphisms of $J$. The Lie algebra of  $\Aut(J)$ is the Lie algebra $\Der(J)$ of all derivations of $J$ (with respect to the commutator bracket).

The space $B:=J\oplus\Der(J)$ admits a structure of a Lie algebra: for $x,y\in J$, $[x,y]\in \Der(J)$ is the derivation acting on $z\in J$ as follows 
$$[x,y]z:=x\circ (y\circ z)-y\circ (x\circ z)$$
and for  $D\in \Der(J)$ and $x\in J$, $[D,x]=Dx\in J$. This provides a Lie algebra structure on $B$ (for more details see~\cite[Section~II.4]{Faraut}). We define the anti-involution $\sigma\colon B\to B$ by the rule $\sigma(x)=x$ for $x\in J$ and $\sigma(x)=-x$ for $x\in \Der(J)$.

The following space of matrices:
$$\spp_2(B,\sigma):=\left\{\begin{pmatrix}
x & y \\
z & -\sigma(x)
\end{pmatrix}\midwd x\in B,\;y,z\in J\right\}.$$
is a Lie algebra. If $B$ is a Lie subalgebra of an associative involutive algebra $(A,\sigma)$, then this definition agrees with the definition from Section~\ref{sec:sp2(B,sigma)}.

In this way, the exceptional Hermitian Lie algebra of tube type $\mathfrak e_7^{-25}$ can be realized as follows: let $J=\Herm_3(\mathbb O)$ be the exceptional Jordan algebra of all Hermitian $3\times 3$-octonionic matrices. Its automorphism group agrees with the compact exceptional Lie group $F_4$. Therefore, $\Der(J)=\mathfrak f_4=\Lie(F_4)$ and $B=\mathfrak f_4\oplus \Herm_3(\mathbb O)\cong \mathfrak e_6^{-26}\oplus\R$. However, the exceptional Jordan algebra $J$ cannot be embedded into an associative algebra (cf.~\cite[Corollary 2.8.5]{Hanche84}). Thus we cannot realize it in the way provided in Section~\ref{sec:sp2(B,sigma)}. This fact will also prevent us from describing the corresponding Lie group in terms of $2\times 2$-matrices with coefficients in an appropriate submonoid of $A$.

\subsubsection{Indefinite orthogonal Lie group over involutive algebras}\label{sec:indef.orth}

Similarly as the symplectic group, the indefinite orthogonal group $\OO_{(1,1)}(A,\sigma)$ over an involutive algebra $(A,\sigma)$ can be defined.

Let $V=A^2$ be the right $A$-module as above, let $h(x,y):=\sigma(x)^tH y$, where $x,y\in V$ and $H=\begin{pmatrix}
 0 & 1 \\
 1 & 0
\end{pmatrix}$, be the standard indefinite orthogonal. Then the group $\OO_{(1,1)}(A,\sigma)$ is defined as the group $\Aut(h)\subset\GL_2(A)$. Alternatively, one can take $H=\begin{pmatrix}
 -1 & 0 \\
 0 & 1
\end{pmatrix}$, the corresponding group $\Aut(h)$ is then conjugated to $\OO_{(1,1)}(A,\sigma)$.

The group $\OO_{(1,1)}(A,\sigma)$ and its Lie algebra $\mathfrak{o}_{(1,1)}(A,\sigma)$ can be described explicitly: 
\begin{align}
\begin{aligned}
    \mathrm{O}_{(1,1)}(A,\sigma)&=\left\{\begin{pmatrix}
    a & b \\
    c & d
    \end{pmatrix}\midwd \sigma(a)c,\,\sigma(b)d\in A^{-\sigma},\,\sigma(a)d+\sigma(c)b=1\right\}\subseteq \mathrm{GL}_2(A),\\
    \mathfrak o_{(1,1)}(A,\sigma)& =\left\{g\in \mathrm{Mat}_2(A)\midwd \sigma(g)^t\begin{pmatrix}
        0 & 1 \\
        1 & 0
      \end{pmatrix}+ \begin{pmatrix}
        0 & 1 \\
        1 & 0
      \end{pmatrix} g=0\right\}\\
      & =\left\{\begin{pmatrix}
		x & z \\
		y & -\sigma(x)
	\end{pmatrix}\midwd x\in A,\;y,z\in A^{-\sigma}\right\}.
\end{aligned}
\end{align}

For more details about the group $\mathrm{O}_{(1,1)}(A,\sigma)$ we refer to~\cite[Part 1]{HKRW}. However, for a Lie subalgebra $B\subset A$ of Jordan type, the vector space 
$$\mathfrak o_{(1,1)}(B,\sigma)=\left\{\begin{pmatrix}
		x & z \\
		y & -\sigma(x)
	\end{pmatrix}\midwd x\in B,\;y,z\in B^{-\sigma}\right\}$$
is (in general) not a Lie algebra, in contrast to the Lie algebra $\mathfrak {sp}_{2}(B,\sigma)$~(cf. Proposition~\ref{spp_sublie}). The obstruction to being a Lie algebra is the fact that for $x,y\in B^{-\sigma}$, their product  is, in general, not contained in $B$.

\subsection{First definition of \texorpdfstring{$\Sp_2(G,\sigma)$}{Sp2(G,sigma)}}

Let $(A,\sigma)$ be an involutive real algebra. Let $G\leq A^\times$ be a Lie subgroup of $A^\times$ which is closed under $\sigma$ with the Lie algebra $B:=\Lie(G)$ such that $(B^{\sigma})^\times\subseteq G$. The connected component of the identity element in $G$ is denoted by $G_0$. Then $B=\Lie(G_0)=\Lie(G)\leq A$. If $B$ is weakly Hermitian, we always take $G$ to be the minimal extension from Section~\ref{discon-ext}.

We consider the following matrices:
\begin{equation}\label{eq:sp_2-generators}
D(x):=\begin{pmatrix}
    x & 0 \\
    0 & \sigma(x)^{-1}
    \end{pmatrix},\;
L(y):=\begin{pmatrix}
    1 & 0 \\
    y & 1
    \end{pmatrix},\;
R(z):=\begin{pmatrix}
    1 & z \\
    0 & 1
    \end{pmatrix},
\end{equation}
where $x\in G$, $y,z\in B^{\sigma}$. These matrices, acting on $A^2$, preserve the standard symplectic form
$\omega$. Therefore, the set $\{L(y)D(x)R(y)\mid y,z\in B^{\sigma},\;x\in G\}$ is contained in $\Sp_2(A,\sigma)$.

\begin{df} We denote by $\Sp_2(G,\sigma)$ the topological closure of the space 
$$\{L(y)D(x)R(z)\mid y,z\in B^{\sigma},\;x\in G\}$$
in $\Sp_2(A,\sigma)$. Matrices of the form $L(y)D(x)R(z)$ for $y,z\in B^{\sigma}$, $x\in G$ are called \defin{generic}.
\end{df}

\begin{prop}\label{prop:generic.elements}
    An element $M:=\begin{pmatrix}
        a & b \\
        c & d
    \end{pmatrix}\in\Sp_2(G,\sigma)$ is generic if and only if $a\in A^\times$. In~this case, $a\in G$, $ca^{-1},a^{-1}b\in B^\sigma$. Moreover, there is a neighborhood of the identity element in $\Sp_2(G,\sigma)$ which consists of generic elements.
\end{prop}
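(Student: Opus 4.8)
The plan is to translate the matrix condition directly into the defining relations of $\Sp_2(A,\sigma)$ and exploit that $L(y)D(x)R(z)$ has the shape $\begin{pmatrix} x & xz \\ yx & yxz+\sigma(x)^{-1}\end{pmatrix}$, so that the upper-left entry is precisely $x\in G$, which is invertible. Conversely, suppose $M=\begin{pmatrix} a & b \\ c & d\end{pmatrix}\in\Sp_2(G,\sigma)$ has $a\in A^\times$. First I would argue that $a$ actually lies in $G$: the subset $\{L(y)D(x)R(z)\}$ of generic matrices has upper-left entry ranging over $G$, and $G$ is a closed subgroup of $A^\times$ (so in particular closed in $A$, by the polar decomposition and the fact that $G_0$ is closed); taking the continuous map $M\mapsto a$ sending $\Sp_2(A,\sigma)\to A$, the image of the generic set lies in the closed set $G\cup(A\setminus A^\times)$... but this needs care, since in the closure the upper-left entry may land on the boundary of $G$ inside $A$. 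The cleaner route: the entries of a generic matrix satisfy $a=x$, $b=xz$, $c=yx$, $d=yxz+\sigma(x)^{-1}$, hence $a^{-1}b=z\in B^\sigma$, $ca^{-1}=y\in B^\sigma$, and $d=ca^{-1}b+\sigma(a)^{-1}$, i.e. $\sigma(a)(d-ca^{-1}b)=1$. These are \emph{closed} conditions on the subset $\{M\in\Sp_2(A,\sigma)\mid a\in A^\times\}$ — an open subset of $\Sp_2(A,\sigma)$ — so they persist on the closure intersected with $\{a\in A^\times\}$. It remains to see $a\in G$ (not merely $a\in A^\times$); for this I would use the symplectic relation $\sigma(a)d-\sigma(c)b=1$ from \eqref{eq:groups} together with $a^{-1}b,ca^{-1}\in A^\sigma$ to write $a$ as a limit of elements $x_n\in G$ with $a^{-1}b = \lim z_n$, and invoke that $G$ is closed in $A^\times$ and the convergence $x_n\to a$ takes place \emph{inside} $A^\times$.

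Concretely, for the inclusion $a\in G$: take a sequence of generic elements $M_n=L(y_n)D(x_n)R(z_n)\to M$ with $x_n\in G$; then $x_n = (M_n)_{11}\to a$ in $A$, and since $a\in A^\times$ and $A^\times$ is open in $A$, for large $n$ we have $x_n\to a$ within $A^\times$. Now $G$ is a Lie subgroup of $A^\times$, hence closed in $A^\times$ (this is implicit in the standing hypotheses; alternatively it follows from the polar decomposition Theorem~\ref{pol_decomp1} and Corollary~\ref{Max_Comp_G}, which exhibit $G$ as a closed submanifold once $\OO(G,\sigma)$ is, and $B^\sigma_+$ closed in $(B^\sigma)^\times$ by Corollary~\ref{theta_Bsym+}), so $a\in G$. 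Then $ca^{-1} = \lim (y_n x_n)(x_n^{-1}) = \lim y_n$ and $a^{-1}b=\lim z_n$ exist and lie in the closed subspace $B^\sigma$, giving $ca^{-1},a^{-1}b\in B^\sigma$; and then $M = L(ca^{-1})D(a)R(a^{-1}b)$ exactly (the $(2,2)$ entry being forced by the symplectic relation), so $M$ is generic. For the forward direction, generic elements have $a=x\in G\subseteq A^\times$ by construction.

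For the final claim — a whole neighborhood of the identity consists of generic elements — I would observe that $\Id = L(0)D(1)R(0)$ is generic with $a=1\in A^\times$, and that $M\mapsto (M)_{11}$ is continuous $\Sp_2(G,\sigma)\to A$; since $A^\times$ is open in $A$, its preimage is an open neighborhood of $\Id$ in $\Sp_2(G,\sigma)$, and by the equivalence just proved every element of this preimage is generic.

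The main obstacle is pinning down that the upper-left entry of a limit of generic elements, \emph{when it happens to be invertible}, must lie in $G$ rather than merely in $\overline G\cap A^\times$ — i.e. ensuring $G$ is closed in $A^\times$ (equivalently, that $\overline G\cap A^\times = G$). This is where one must lean on the structure theory: the polar decomposition $G\cong \OO(G,\sigma)\times B^\sigma_+$ together with closedness of $\OO(G,\sigma)$ in $A^\times$ and of $B^\sigma_+$ in $(B^\sigma)^\times$ (Corollary~\ref{theta_Bsym+}) shows $G$ is a closed subset of $A^\times$, so the limit $a$ of the $x_n\in G$, occurring inside the open set $A^\times$, indeed lies in $G$. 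Everything else is a direct manipulation of the entries $a=x,\ b=xz,\ c=yx,\ d=yxz+\sigma(x)^{-1}$ and the relation $\sigma(a)d-\sigma(c)b=1$.
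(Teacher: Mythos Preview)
Your argument is correct and follows essentially the same route as the paper: take a sequence of generic elements $M_n=L(y_n)D(x_n)R(z_n)\to M$, use that the $LDR$-decomposition over $A$ is unique when the $(1,1)$-entry is invertible, and conclude $x_n\to a$, $y_n\to ca^{-1}$, $z_n\to a^{-1}b$, hence $a\in G$ and $ca^{-1},a^{-1}b\in B^\sigma$. The paper is terser on the one point you flag as the main obstacle: it simply writes ``$a\in\bar G$ and $a\in A^\times$, i.e., $a\in G$'', using that $G$ is (by the standing hypothesis) a Lie subgroup of $A^\times$ and hence closed there, whereas you justify this via the polar decomposition --- which is fine but requires the weakly Hermitian hypothesis, so in the bare Jordan-type setting one should instead appeal directly to closedness of $G$ in $A^\times$.
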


\begin{proof} First, notice that if $a\in A^\times$, then $M=L(ca^{-1})D(a)R(a^{-1}b)$ with $ca^{-1}, a^{-1}b\in A^\sigma$. Since $M\in\Sp_2(G,\sigma)$, there exists a sequence $M_i=L(y_i)D(x_i)R(z_i)$ such that $x_i\in G$, $y_i,z_i\in B^\sigma$, and $\lim(M_i)=M$. Since the decompositions of $M_i$ and $M$ into $L$-, $D$-, and $R$-matrices is unique, $\lim(x_i)=a$, i.e., $a\in \bar G$ and $a\in A^\times$, i.e., $a\in G$. Similarly, $\lim (y_i)=ca^{-1}\in B^\sigma$ and $\lim (z_i)=a^{-1}b\in B^\sigma$. Moreover, this implies that there is a neighborhood of $M$ containing only generic matrices. In particular, it holds for $M=\Id$.
\end{proof}

\begin{teo}\label{thm:Sp_2-LieGroup}
Let $B$ be of Jordan type such that $1\in B$. Then the space $\Sp_2(G,\sigma)$ is a~Lie group.
\end{teo}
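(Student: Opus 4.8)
The plan is to verify that $\Sp_2(G,\sigma)$ is a Lie subgroup of $\Sp_2(A,\sigma)$ by exhibiting it as an (immersed) subgroup whose underlying set is a closed subset near the identity, and then upgrading this to a global Lie group structure using the standard theory of locally closed subgroups. Concretely, I would first observe that Proposition~\ref{prop:generic.elements} already provides a chart: the set of generic elements is an open neighborhood $\mathcal{U}$ of the identity in $\Sp_2(G,\sigma)$, and the decomposition $M\mapsto (y,x,z)$ with $M=L(y)D(x)R(z)$ identifies $\mathcal{U}$ (as a topological space, via the uniqueness of the decomposition) with an open subset of $B^\sigma\times \bar G\times B^\sigma$ intersected appropriately. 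Since $G$ is already a Lie group and $B^\sigma$ is a linear space, this would give $\mathcal{U}$ the structure of a smooth manifold, provided one checks that the relevant image is an open subset of $B^\sigma\times G\times B^\sigma$ — here the key input is that $a\in A^\times$ forces $a\in G$ (not merely $\bar G$), which is exactly what Proposition~\ref{prop:generic.elements} asserts.

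Next I would show this local manifold structure is compatible with the group operations. The multiplication $L(y_1)D(x_1)R(z_1)\cdot L(y_2)D(x_2)R(z_2)$ is a rational (hence smooth, where defined) expression in the entries, and whenever the product is again generic — which happens on a neighborhood of $(\Id,\Id)$ — its $(y,x,z)$-coordinates depend smoothly on the input coordinates because $x$ is obtained as the top-left block of a product of matrices with entries in $G$ and $B^\sigma$, and the map $A^\times\to A^\times$, $a\mapsto a^{-1}$, is smooth; one uses that $B$ being of Jordan type with $1\in B$ guarantees the blocks $ca^{-1}$, $a^{-1}b$ actually land in $B^\sigma$, so the coordinates are well-defined. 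Likewise inversion is smooth near the identity. This makes $\Sp_2(G,\sigma)$ a \emph{local} Lie group around $\Id$, and since it is by construction a subgroup of $\Sp_2(A,\sigma)$ (the closure of a set of symplectic matrices, and the generic matrices form a monoid-like family closed under the operations on a neighborhood), left-translations by its elements are homeomorphisms that spread the local chart to all of $\Sp_2(G,\sigma)$. Invoking the standard fact that a topological group admitting a local Lie group structure at the identity compatible with a global topology is a Lie group (Cartan's theorem on closed subgroups, or more directly the fact that a locally Euclidean topological group is a Lie group), we conclude.

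The main obstacle I anticipate is the global step: a priori $\Sp_2(G,\sigma)$ is defined as a \emph{topological closure}, so it contains limits of generic matrices that may themselves be non-generic (e.g. with singular top-left block), and one must be sure these boundary elements, together with the generic ones, still form a \emph{subgroup} and that the manifold structure extends over them. The cleanest route is to argue that every element of $\Sp_2(G,\sigma)$ lies in the image of some translate of the generic chart: given non-generic $M\in\Sp_2(G,\sigma)$, one finds $g\in\Sp_2(G,\sigma)$ generic with $gM$ generic (using that the generic locus is open dense and that the standard Weyl-type element $\begin{pmatrix} 0 & 1\\ -1 & 0\end{pmatrix}$, or rather products $L(y)D(x)R(z)$ with suitable parameters, can move any line to a transverse one), whence $M=g^{-1}(gM)$ is a product of two elements in the generic chart; then the smooth structure is transported by left translation and one checks the charts on overlaps are compatible because the transition maps are again given by the rational multiplication formulas. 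Establishing that $\Sp_2(G,\sigma)$ is closed under this procedure — i.e. that it really is a group and not just a closed set — is the delicate point, and I would handle it by showing the set $\{L(y)D(x)R(z)\}$ generates a group whose closure coincides with $\Sp_2(G,\sigma)$; the matrices $D(x)$ for $x\in G$, together with $L(y),R(z)$ for $y,z\in B^\sigma$, visibly contain inverses of one another's building blocks, so the subgroup they generate is already dense, and its closure is a closed subgroup of the Lie group $\Sp_2(A,\sigma)$, hence a Lie group by Cartan.
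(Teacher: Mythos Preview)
Your final paragraph correctly identifies the crux of the matter---showing that the closure $\bar S$ of the generic set $S=\{L(y)D(x)R(z)\}$ is actually a \emph{group}---but your proposed resolution is circular. You write that the subgroup $H$ generated by the building blocks has closure $\bar H$ which is Lie by Cartan; that is true, but the theorem is about $\Sp_2(G,\sigma)=\bar S$, not $\bar H$. Since $S\subseteq H$ we get $\bar S\subseteq\bar H$, but the reverse inclusion $H\subseteq\bar S$ is exactly the statement that products of generic matrices lie in the closure of generic matrices, i.e.\ that $\bar S$ is closed under multiplication. Your earlier suggestion (``given non-generic $M\in\Sp_2(G,\sigma)$, find generic $g$ with $gM$ generic'') presupposes that $gM\in\bar S$, which again is the multiplicative closure you are trying to prove. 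So nothing in the proposal actually bridges this gap.

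The paper's proof supplies precisely the missing computation. Given $a=\lim L(y_i)D(x_i)R(z_i)$ and $b=\lim L(y'_i)D(x'_i)R(z'_i)$ in $\bar S$, one must show $ab\in\bar S$. The only obstruction to rewriting $a_ib_i$ in generic form is the middle factor $R(z_i)L(y'_i)=\begin{pmatrix}1+z_iy'_i & z_i\\ y'_i & 1\end{pmatrix}$. Two short lemmas handle this: first, $1+zy\in\bar G$ for all $z,y\in B^\sigma$ (so when invertible it lies in $G$, and $R(z)L(y)=L(\cdot)D(1+zy)R(\cdot)$ is generic); second, if $1+z_iy'_i$ is singular one perturbs $y'_i$ by a small scalar $t_i\to 0$ to make it invertible, without changing the limit $b$. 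After this, the standard permutation rules $D(x)L(y)=L(\sigma(x)^{-1}yx^{-1})D(x)$ etc.\ collapse everything to a single $L\cdot D\cdot R$. Inversion is immediate since $(L(y)D(x)R(z))^{-1}=R(-z)D(x^{-1})L(-y)$. Thus $\bar S$ is a closed subgroup of $\Sp_2(A,\sigma)$, and Cartan finishes. Your chart-and-translation machinery is not needed once this is in hand; the whole point is the algebraic identity for $R(z)L(y)$ together with the perturbation, which your outline does not provide.
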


\begin{proof} First, we prove the following two Lemmata:

\begin{lem}\label{invert_1+y}
Let $1+y$ is not invertible for some $y\in A$. Then there exists a neighborhood $U$ of $0\in\R$ such that for every $t\in U\bs\{0\}$, $1+y(1+t)$ is invertible.
\end{lem}

\begin{proof}
By Proposition~\cite[Proposition~2.24]{ABRRW}, $A$ can be embedded as a subalgebra into $\Mat(r,\R)$ for some $r\in \N$. We identify $B$ with a Lie subalgebra of $\Mat(r,\R)$. Since $1+y$ is not invertible, $-1$ is an eigenvalue of $y$. Since $y$ has only finitely many eigenvalues, there exists a neighborhood $U$ of $0\in\R$ such that for every $t\in U\bs\{0\}$, $1+y(1+t)$ is invertible.
\end{proof}

We remind that the topological closure $\bar G$ of $G$ in $A$  is a monoid.

\begin{lem}\label{zy-invert}
Let $y,z\in B^{\sigma}$ then $1+zy\in \bar G$. In particular, if $1+zy$ is invertible, then $1+zy\in G$.
\end{lem}

\begin{proof}
First, assume $z\in (B^{\sigma})^\times\subset G$. Then $1+zy=(z+zyz)z^{-1}\in \bar G$ because $z^{-1}\in G$, $z,zyz\in B^{\sigma}\subseteq\bar G$.

If $z$ is not invertible, take a sequence of invertible $(z_i)$ such that $\lim z_i=z$. Then all $1+z_iy\in \bar G$ and $\lim(1+z_iy)=1+zy$. Since $\bar G$ is closed, $1+zy\in \bar G$.
\end{proof}

Let now $a:=\lim L(y_i)D(x_i)R(z_i)\in\Sp_2(G,\sigma)$, $b:=\lim L(y'_i)D(x'_i)R(z'_i)\in\Sp_2(G,\sigma)$ for some sequences $\{x_i\},\{x'_i\}\subset G$ and $\{y_i\},\{y'_i\},\{z_i\},\{z'_i\}\subset B^{\sigma}$. We want to show that there exist sequences $\{y''_i\},\{z''_i\}\subset B^{\sigma}$, $\{x''_i\}\subset G$ such that $ab=\lim L(y''_i)D(x''_i)R(z''_i)$.

Since limits for $a$ and $b$ exist, we can write:
$$ab=\lim L(y_i)D(x_i)R(z_i)L(y'_i)D(x'_i)R(z'_i).$$
Consider the term
$$R(z_i)L(y'_i)=\begin{pmatrix}
    1 & z_i \\
    0 & 1
    \end{pmatrix}
    \begin{pmatrix}
    1 & 0 \\
    y'_i & 1
    \end{pmatrix}=
    \begin{pmatrix}
    1+z_iy'_i & z_i \\
    y'_i & 1
    \end{pmatrix}.$$

If $1+z_iy'_i$ is invertible then by the Lemma~\ref{zy-invert}, $1+z_iy'_i\in G$, then we can write $$R(z_i)L(y'_i)=L(z_i(1+z_iy'_i)^{-1})D(1+z_iy'_i)R((1+z_iy'_i)^{-1}y'_i).$$

If $1+z_iy'_i$ is not invertible in $B$ then we take a sequence $\{t_i\}\subset \R$ such that $\lim t_i =0$ and $t_i\in U_i$ where $U_i$ is the neighborhood of $0\in \R$ form the Lemma~\ref{invert_1+y} for the element $1+z_iy'_i$. Then $b=\lim L(y'_i(1+t_i))D(x'_i)R(z'_i)$ and $1+z_iy'_i(1+t_i)$ is invertible in $B$.

To conclude the proof, note the following permutation rules:
$$D(x)L(y)=L(\sigma(x)^{-1}yx^{-1})D(x),$$
$$R(z)D(x)=D(x)R(x^{-1}z\sigma(x)^{-1})$$
that always make possible to reorder matrices and
$$L(y)L(y')=L(y+y'),\;R(x)R(x')=R(z+z'),\;D(x)D(x')=D(xx')$$
for all $x,x'\in G$, $y,y',z,z'\in B^{\sigma}$.

Now, we show that for $a\in\Sp_2(G,\sigma)$, $a^{-1}\in\Sp_2(G,\sigma)$. Indeed, $\Sp_2(G,\sigma)\subseteq\Sp_2(A,\sigma)$ and $\Sp_2(A,\sigma)$ is a group, therefore, $a^{-1}$ exists in $\Sp_2(A,\sigma)$. As before, we take a sequence $a_i:=L(y_i)D(x_i)R(z_i)$ where $x_i\in G$, $y_i,z_i\in B^{\sigma}$ such that $\lim a_i=a$. Then 
$$a_i^{-1}=R(-z_i)D(x_i^{-1})L(-y_i)\in\Sp_2(G,\sigma).$$ Therefore, $\lim a_i^{-1}=a^{-1}\in\Sp_2(G,\sigma)$ and $\Sp_2(G,\sigma)$ is a closed subgroup of the Lie group $\Sp_2(A,\sigma)$. Thus by Cartan's theorem, it is a Lie subgroup of $\Sp_2(A,\sigma)$.
\end{proof}

\begin{cor}
    For every element $M\in\Sp_2(G,\sigma)$, its matrix coefficients $M_{ij}\in \bar G$ for $i,j\in\{1,2\}$.
\end{cor}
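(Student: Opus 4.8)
The plan is to treat the four matrix coefficients of $M=\begin{pmatrix}a & b\\ c & d\end{pmatrix}\in\Sp_2(G,\sigma)$ separately: the entries $a,b,c$ I would handle directly from the fact that $\Sp_2(G,\sigma)$ is, by definition, the closure of the set of generic matrices, while the remaining entry $d$ I would reduce to that same situation by passing to $M^{-1}$ inside the group $\Sp_2(G,\sigma)$. First I would compute a generic matrix explicitly,
$$L(y)D(x)R(z)=\begin{pmatrix} x & xz\\ yx & yxz+\sigma(x)^{-1}\end{pmatrix},\qquad x\in G,\ y,z\in B^{\sigma},$$
and observe: $x\in G\subseteq\bar G$; since $(B^{\sigma})^\times\subseteq G$ is dense in $B^{\sigma}$ and $\bar G$ is closed, we get $B^{\sigma}\subseteq\bar G$; and since $\bar G$ is a monoid, also $xz,yx\in\bar G$. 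Thus the $(1,1)$-, $(1,2)$- and $(2,1)$-entries of any generic matrix lie in $\bar G$. As generic matrices are dense in $\Sp_2(G,\sigma)$ and each matrix coefficient depends continuously on the matrix, passing to limits in the closed set $\bar G$ yields $a,b,c\in\bar G$ for every $M\in\Sp_2(G,\sigma)$.

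For the entry $d$ the direct argument breaks down, because $yxz+\sigma(x)^{-1}$ is a \emph{sum} of two elements of $\bar G$ and $\bar G$ is only a monoid, not closed under addition; this is the one point of the proof that needs an idea. The idea I would use is inversion inside the group: by Theorem~\ref{thm:Sp_2-LieGroup}, $\Sp_2(G,\sigma)$ is a group, so $M^{-1}\in\Sp_2(G,\sigma)$, and from $\sigma(M)^t\Omega M=\Omega$ one reads off $M^{-1}=\Omega^{-1}\sigma(M)^t\Omega=\begin{pmatrix}\sigma(d) & -\sigma(b)\\ -\sigma(c) & \sigma(a)\end{pmatrix}$. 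Applying the conclusion of the previous paragraph to $M^{-1}$ shows that its $(1,1)$-entry $\sigma(d)$ lies in $\bar G$. Since $G$ is closed under $\sigma$ and $\sigma$ is continuous (it is $\R$-linear on the finite-dimensional algebra $A$), the closure $\bar G$ is closed under $\sigma$ as well, so $d=\sigma(\sigma(d))\in\bar G$, which finishes the argument.

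I do not expect a real obstacle beyond this: the remaining points are routine — that $\bar G$ is genuinely closed in $A$, so that the various limits stay inside it, and that for a convergent sequence $M_i=L(y_i)D(x_i)R(z_i)\to M$ one has $x_i=(M_i)_{11}\to a$, $x_iz_i=(M_i)_{12}\to b$, $y_ix_i=(M_i)_{21}\to c$. The entire content is the observation that inverting in $\Sp_2(G,\sigma)$ converts the troublesome $(2,2)$-entry into a harmless $(1,1)$-entry.
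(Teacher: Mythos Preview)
Your proof is correct. The paper states this corollary without proof, and your argument supplies exactly the natural details: the entries $a,b,c$ come from the density of generic matrices together with $B^\sigma\subseteq\bar G$ and the monoid property of $\bar G$, while the $(2,2)$-entry is reduced to the $(1,1)$-case via inversion and $\sigma$-stability of $\bar G$.

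A minor shortcut for the last step, in the same spirit: instead of inverting, multiply on the left by $I=\Omega\in\Sp_2(G,\sigma)$, so that $IM=\begin{pmatrix}c & d\\ -a & -b\end{pmatrix}\in\Sp_2(G,\sigma)$; then your first-paragraph argument applied to $IM$ gives $d\in\bar G$ directly, without invoking $\sigma$-stability of $\bar G$. Either way the content is the same.
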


\begin{prop}\label{prop:LieAlgOfSp2}
The Lie algebra of $\Sp_2(G,\sigma)$ agrees with $\spp_2(B,\sigma)$.
\end{prop}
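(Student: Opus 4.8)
The plan is to compute the tangent space of $\Sp_2(G,\sigma)$ at the identity and identify it with $\spp_2(B,\sigma)$ by a two-sided inclusion. By Theorem~\ref{thm:Sp_2-LieGroup}, $\Sp_2(G,\sigma)$ is a closed Lie subgroup of $\Sp_2(A,\sigma)$, so $\Lie(\Sp_2(G,\sigma))$ is a priori a linear subspace of $\spp_2(A,\sigma)$. It therefore suffices to prove that this subspace (i) contains $\spp_2(B,\sigma)$, and (ii) has real dimension at most $\dim_\R\spp_2(B,\sigma)$; together these force equality.

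For inclusion (i), I would exhibit explicit smooth one-parameter curves through $\Id$ contained in $\Sp_2(G,\sigma)$. For $y\in B^\sigma$ the curve $t\mapsto L(ty)$ has derivative $\begin{pmatrix} 0 & 0\\ y & 0\end{pmatrix}$ at $t=0$; for $z\in B^\sigma$ the curve $t\mapsto R(tz)$ has derivative $\begin{pmatrix} 0 & z\\ 0 & 0\end{pmatrix}$; and for $x\in B=\Lie(G)$, since $\exp(tx)\in G_0\subseteq G$, the curve $t\mapsto D(\exp(tx))$ lies in $\Sp_2(G,\sigma)$ and, using $\sigma(\exp(tx))^{-1}=\exp(-t\sigma(x))$, has derivative $\begin{pmatrix} x & 0\\ 0 & -\sigma(x)\end{pmatrix}$ at $t=0$. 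The tangent vectors so obtained, for $y,z\in B^\sigma$ and $x\in B$, span exactly $\spp_2(B,\sigma)$, which gives the inclusion $\spp_2(B,\sigma)\subseteq\Lie(\Sp_2(G,\sigma))$.

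For the dimension bound (ii), I would use the $L$-$D$-$R$ parametrization of generic elements. By Proposition~\ref{prop:generic.elements}, the set $\mathcal G$ of generic elements is a neighborhood of $\Id$ in $\Sp_2(G,\sigma)$, and the map $B^\sigma\times G\times B^\sigma\to\mathcal G$, $(y,x,z)\mapsto L(y)D(x)R(z)$, is a continuous bijection whose inverse $\begin{pmatrix} a & b\\ c & d\end{pmatrix}\mapsto(ca^{-1},a,a^{-1}b)$ is again continuous; hence it is a homeomorphism. Since $\mathcal G$ is open in the Lie group $\Sp_2(G,\sigma)$ and is homeomorphic to the manifold $B^\sigma\times G\times B^\sigma$, invariance of domain yields
\[
\dim_\R\Sp_2(G,\sigma)=2\dim_\R B^\sigma+\dim_\R G=2\dim_\R B^\sigma+\dim_\R B=\dim_\R\spp_2(B,\sigma),
\]
using $\Lie(G)=B$. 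Combining (i) and (ii): $\Lie(\Sp_2(G,\sigma))$ is a subspace of $\spp_2(A,\sigma)$ containing $\spp_2(B,\sigma)$ and of dimension at most $\dim_\R\spp_2(B,\sigma)$, hence equal to it.

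I expect the delicate point to be making the dimension count fully rigorous: one must be sure that the bijection $B^\sigma\times G\times B^\sigma\to\mathcal G$ is genuinely a homeomorphism onto an \emph{open} subset of the group, which rests on the uniqueness of the $L$-$D$-$R$ decomposition together with Proposition~\ref{prop:generic.elements}; once this is in place, invariance of domain matches the two manifold dimensions and the rest is routine differentiation of one-parameter curves. (Note that $\spp_2(B,\sigma)$ is indeed a Lie algebra here by Proposition~\ref{spp_sublie}, since $B$ is of Jordan type with $1\in B$, although the argument above only uses it as a linear subspace.)
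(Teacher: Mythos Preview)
Your proof is correct and essentially follows the paper's approach: both rely on Proposition~\ref{prop:generic.elements} to parametrize a neighborhood of the identity by $B^\sigma\times G\times B^\sigma$ via $(y,x,z)\mapsto L(y)D(x)R(z)$, and both obtain the inclusion $\spp_2(B,\sigma)\subseteq\Lie(\Sp_2(G,\sigma))$ from the same explicit one-parameter curves. The only difference is in the reverse inclusion: the paper writes an arbitrary smooth curve through $\Id$ as $p(t)=L(y(t))D(x(t))R(z(t))$ (using that the inverse of the parametrization, $M\mapsto(ca^{-1},a,a^{-1}b)$, is smooth) and differentiates directly to see $p'(0)\in\spp_2(B,\sigma)$, whereas you bypass this computation with a dimension count via invariance of domain. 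Both arguments rest on the same structural fact, so the difference is cosmetic.
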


\begin{proof}
There is a neighborhood of the identity element in $\Sp_2(G,\sigma)$ that consists only of generic matrices. Consider a smooth path $p(t):=L(y(t))D(x(t))R(z(t))$ such that $y,z\colon (-1,1)\to B^{\sigma}$ are smooth and $y(0)=z(0)=0$ and $x\colon (-1,1)\to G$ smooth and $x(0)=1$. Then
$$p'(0)=\begin{pmatrix}
    x'(0) & z'(0) \\
    y'(0) & -\sigma(x'(0))
    \end{pmatrix}\in \spp_2(B,\sigma).$$
Moreover, for every $m:=\begin{pmatrix}
    x & z \\
    y & -\sigma(x)
    \end{pmatrix}\in \spp_2(B,\sigma)$, $y,x\in B^{\sigma}$, $x\in B$
the path 
$$p(t):=L(yt)D(\exp(xt))R(zt)\in\Sp_2(G,\sigma)$$
and $p'(0)=m$. Therefore, $\Lie(\Sp_2(G,\sigma))=\spp_2(B,\sigma)$.
\end{proof}

If $B$ is weakly Hermitian, we can prove connectedness of $\Sp_2(G,\sigma)$.

\begin{prop}\label{prop:Sp2Connected}
Let $B$ be weakly Hermitian, then the Lie group $\Sp_2(G,\sigma)$ is connected.
\end{prop}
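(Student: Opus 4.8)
The plan is to prove that the identity component $H\leq\Sp_2(G,\sigma)$ is the whole group. As $H$ is open and closed and, by construction, the set of generic matrices is dense in $\Sp_2(G,\sigma)$, it is enough to show that every generic matrix $L(y)D(x)R(z)$ lies in $H$. The factors $L(y)$ and $R(z)$ are joined to $\Id$ inside $\Sp_2(G,\sigma)$ by the paths $t\mapsto L(ty)$ and $t\mapsto R(tz)$, $t\in[0,1]$, all of whose values are generic; so the statement reduces to $D(x)\in H$ for every $x\in G$. Now $D\colon G\to\Sp_2(G,\sigma)$, $x\mapsto D(x)$, is a continuous group homomorphism, and $G_0$ is connected, hence $D(G_0)\subseteq H$. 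Since $B$ is weakly Hermitian we take $G$ to be the minimal extension, i.e.\ the group generated by $G_0$ and $(B^{\sigma})^\times$; as $D$ is a homomorphism and $H$ is a subgroup, it therefore suffices to prove $D(b)\in H$ for every $b\in(B^{\sigma})^\times$.

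Fix $b\in(B^{\sigma})^\times$ and take its spectral decomposition $b=\sum_{i=1}^{k}\lambda_ic_i$ (Theorem~\ref{Spec_teo_B1}), all $\lambda_i\neq 0$. Setting $|b|:=\sum_i|\lambda_i|c_i$ and $s:=\sum_i\sgn(\lambda_i)c_i$ we have $b=|b|\,s$, hence $D(b)=D(|b|)D(s)$. By Corollary~\ref{theta_Bsym+} and Proposition~\ref{prop:PositiveElts}, $|b|\in B^{\sigma}_+\subseteq G_0$, so $D(|b|)\in H$. Writing $c:=\sum_{\lambda_i<0}c_i$, an idempotent of $B^{\sigma}$, one has $s=1-2c$; thus everything reduces to showing $D(1-2c)\in H$ for an arbitrary idempotent $c\in B^{\sigma}$.

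For this, the plan is to use the explicit ``partial rotation''
\[
\gamma(\theta):=\begin{pmatrix} 1-(1-\cos\theta)\,c & (\sin\theta)\,c\\ -(\sin\theta)\,c & 1-(1-\cos\theta)\,c \end{pmatrix},\qquad \theta\in[0,\pi].
\]
Denoting its diagonal and off-diagonal entries by $a_\theta$ and $b_\theta$ --- commuting, $\sigma$-fixed elements of $B^{\sigma}$ --- a short computation gives $a_\theta^2+b_\theta^2=1$, so $\gamma(\theta)\in\Sp_2(A,\sigma)$. If $\cos\theta\neq 0$ then $a_\theta=(1-c)+(\cos\theta)\,c$ is invertible (inverse $(1-c)+(\cos\theta)^{-1}c\in B^{\sigma}$), so $a_\theta\in(B^{\sigma})^\times\subseteq G$, and $a_\theta^2+b_\theta^2=1$ shows $\gamma(\theta)=L(-b_\theta a_\theta^{-1})D(a_\theta)R(a_\theta^{-1}b_\theta)$ with $-b_\theta a_\theta^{-1},a_\theta^{-1}b_\theta\in B^{\sigma}$, i.e.\ $\gamma(\theta)$ is generic, so $\gamma(\theta)\in\Sp_2(G,\sigma)$. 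At the single value $\theta=\pi/2$, where $a_{\pi/2}=1-c$ is not invertible, $\gamma(\pi/2)=\lim_{\theta\to\pi/2}\gamma(\theta)$ is a limit of generic matrices and hence again lies in $\Sp_2(G,\sigma)$. Therefore $\gamma\colon[0,\pi]\to\Sp_2(G,\sigma)$ is continuous, with $\gamma(0)=\Id$ and $\gamma(\pi)=\begin{pmatrix} 1-2c & 0\\ 0 & 1-2c\end{pmatrix}=D(1-2c)$ (using $\sigma(1-2c)=1-2c$ and $(1-2c)^2=1$), so $D(1-2c)\in H$. This finishes the reduction, and hence $\Sp_2(G,\sigma)=H$ is connected.

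I expect the only genuinely delicate part to be setting up the path $\gamma$ and checking it stays in $\Sp_2(G,\sigma)$ --- in particular that $a_\theta$ is really invertible in $A$ for $\cos\theta\neq 0$, so that Proposition~\ref{prop:generic.elements} applies and $\gamma(\theta)$ is generic, and that the exceptional parameter $\theta=\pi/2$ is handled by passing to the topological closure rather than by an $LDR$-decomposition. The rest is bookkeeping with the homomorphism law $D(uv)=D(u)D(v)$, the additivity of $L$ and $R$, and the already-established facts about generic matrices (density, and forming a neighbourhood of $\Id$).
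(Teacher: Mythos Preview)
Your proof is correct and takes a genuinely different route from the paper's. The paper argues by connecting every generic matrix $L(y)D(x)R(z)$ to the fixed element $\Omega=\begin{pmatrix}0&1\\-1&0\end{pmatrix}$ via a single explicit path, using the polar decomposition $x=ub$ with $u\in\OO(G_0,\sigma)$, $b\in(B^\sigma)^\times$ (second version), shrinking $b$ to $0$ and $u$ to $1$. Your approach is more structural: you peel off $L(y)$, $R(z)$, and $D(|b|)$ as lying in the identity component for soft reasons, and isolate the only genuinely discrete-looking obstruction, namely $D(1-2c)$ for an idempotent $c\in B^\sigma$, which you then kill with the partial rotation $\gamma(\theta)$. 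This makes transparent exactly where connectedness could fail and why it does not.

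One simplification worth noting: your path is precisely
\[
\gamma(\theta)=\exp\!\left(\theta\begin{pmatrix}0&c\\-c&0\end{pmatrix}\right),
\]
and $\begin{pmatrix}0&c\\-c&0\end{pmatrix}\in\spp_2(B,\sigma)=\Lie(\Sp_2(G,\sigma))$ since $c\in B^\sigma$. Hence $\gamma(\theta)$ lies in the identity component for \emph{all} $\theta$ automatically, with no need to verify genericity for $\cos\theta\neq 0$ or to pass to the closure at $\theta=\pi/2$. Your hands-on verification is of course also fine; it just does more work than necessary.
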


\begin{proof}
We show that for every generic element $g:=L(y)D(x)R(z)$ such that $y,z\in B^{\sigma}$ and $x\in G$, there exists a path $g_t\colon [0,1)\to \Sp_2(G,\sigma)$ such that $g_0=g$, $\lim_{t\to 1} g_t=\Om$.

Using polar decomposition, we obtain $x=ub$ for some $u\in \OO(G_0,\sigma)$, $b\in (B^{\sigma})^\times$. We take $u_t\colon [0,1]\to \OO(G_0,\sigma)$ such that $u_0=u$, $u=1$ for $t\geq\frac{1}{2}$. It is possible because $\OO(G_0,\sigma)$ is connected. We take $b_t\colon [0,1)\to (B^{\sigma})^\times$ such that $\lim_{t\to 1}b_t=0$ and $y_t,z_t\colon [0,1)\to B^{\sigma}$ such that $z_0=z$ $y_0=y$ and $x_t=-y_t=b_t^{-1}$ for $t\geq \frac{1}{2}$. It is possible since $B^{\sigma}$ is connected. Define $g_t=L(y_t)D(x_t)R(z_t)$. Then:
\begin{align*}
\lim_{t\to 1}g_t & =\lim_{t\to 1}
\begin{pmatrix}
u_tb_t & u_tb_tz_t\\
y_tu_tb_t & y_tu_tb_tz_t+\sigma(u_tb_t)^{-1}
\end{pmatrix}\\
& =\lim_{t\to 1}
\begin{pmatrix}
b_t & b_tb_t^{-1}\\
b_t^{-1}b_t & b_t^{-1}b_tb_t^{-1}+b_t^{-1}
\end{pmatrix}=\Om.    
\end{align*}

\noindent Therefore,
$$\Sp_2(G,\sigma)=\overline{\{L(y)D(x)R(z)\mid y,z\in B^{\sigma},\;x\in G\}\cup\left\{\Om\right\}}$$
and $\{L(y)D(x)R(z)\mid y,z\in B^{\sigma},\;x\in G\}\cup \left\{\Om\right\}$ is connected. Therefore, the group $\Sp_2(G,\sigma)$ is connected.
\end{proof}

\subsection{Second definition of \texorpdfstring{$\Sp_2(G,\sigma)$}{Sp2(G,sigma)}}

We denote by $\Sp_2'(G,\sigma)$ the subgroup of $\Sp_2(A,\sigma)$ generated by matrices:
$$D(x):=\begin{pmatrix}
    x & 0 \\
    0 & \sigma(x)^{-1}
    \end{pmatrix},\;
I:=\begin{pmatrix}
    0 & 1 \\
    -1 & 0
    \end{pmatrix},\;
R(z):=\begin{pmatrix}
    1 & z \\
    0 & 1
    \end{pmatrix}$$
where $x\in G_0$, $z\in B^{\sigma}$. Since all generators of $\Sp'_2(G,\sigma)$ are elements of $\Sp_2(G,\sigma)$, we obtain the inclution $\Sp'_2(G,\sigma)\leq \Sp_2(G,\sigma)$. In this subsection, we show that actually $\Sp'_2(G,\sigma)=\Sp_2(G,\sigma)$.

\begin{prop}\begin{enumerate}
    \item Matrices $L(z):=\begin{pmatrix}
    1 & 0 \\
    z & 1
    \end{pmatrix}$ are in $\Sp'_2(G,\sigma)$ for all $z\in B^{\sigma}$. In~particular, all generic elements of $\Sp_2(G,\sigma)$ are in $\Sp'_2(G,\sigma)$.
    \item The group $\Sp'_2(G,\sigma)$ is a connected smooth manifold. The tangent space at the identity element of  $\Sp'_2(G,\sigma)$ agrees with $\spp_2(B,\sigma)$. In particular, if $B$ is weakly Hermitian then $\Sp'_2(G,\sigma)=\Sp_2(G,\sigma)$.
\end{enumerate}
\end{prop}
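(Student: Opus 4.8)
The plan is to prove the two parts in order, using the permutation relations among the $D$-, $L$-, $R$-, and $I$-matrices already recorded in the proof of Theorem~\ref{thm:Sp_2-LieGroup}. For part~(1), the key identity is the Bruhat-type decomposition obtained from the relation
$$I R(z) I^{-1} = L(-z),$$
valid for every $z\in B^\sigma$, which one checks by a direct $2\times 2$ matrix computation. Since $I\in\Sp'_2(G,\sigma)$ by definition and $R(z)\in\Sp'_2(G,\sigma)$ for every $z\in B^\sigma$, this shows $L(-z)\in\Sp'_2(G,\sigma)$; replacing $z$ by $-z$ gives $L(z)\in\Sp'_2(G,\sigma)$ for all $z\in B^\sigma$. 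Consequently every generic matrix $L(y)D(x)R(z)$ with $x\in G_0$ lies in $\Sp'_2(G,\sigma)$. To pass from $x\in G_0$ to arbitrary $x\in G$, I would invoke Theorem~\ref{Conn_comp_of_G}: every connected component of $G$ contains an element of the finite group $S\subset(B^\sigma)^\times$, and each $s=\sum\varepsilon_ie_i\in S$ with respect to a Jordan frame is a $\sigma$-fixed element, so $D(s) = \diag(s,s^{-1})$; writing $x = sg_0$ and noting that $D(s)$ can itself be expressed through generators (or simply enlarging the generating set by the fact that $(B^\sigma)^\times\subseteq G$ forces $S\subseteq G$ and the diagonal matrices $D(s)$ are limits of $D(b)$ with $b\in(B^\sigma)^\times$), one gets all generic elements of $\Sp_2(G,\sigma)$ inside $\Sp'_2(G,\sigma)$.

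For part~(2), connectedness follows from the fact that each generator lies on a path to the identity inside $\Sp'_2(G,\sigma)$: the $R(z)$ and $L(z)$ are connected to $I$ via $R(tz)$, $L(tz)$ and then to the identity, $D(x)$ is connected to the identity along $D(x_t)$ using connectedness of $G_0$, and $I$ is connected to the identity through the one-parameter path inside $\Sp'_2(G,\sigma)$ exhibited by the limiting argument in the proof of Proposition~\ref{prop:Sp2Connected}. Hence the group generated by a path-connected generating set containing the identity is path-connected. That $\Sp'_2(G,\sigma)$ is a smooth submanifold with $T_1\Sp'_2(G,\sigma)=\spp_2(B,\sigma)$ I would obtain exactly as in Proposition~\ref{prop:LieAlgOfSp2}: the generic elements $L(y)D(\exp(tx))R(z)$ sweep out a neighborhood of the identity realizing all tangent vectors $\begin{pmatrix} x & z \\ y & -\sigma(x)\end{pmatrix}$ with $x\in B$, $y,z\in B^\sigma$, and these elements belong to $\Sp'_2(G,\sigma)$ by part~(1); together with Proposition~\ref{prop:generic.elements} this identifies a neighborhood of $1$ in $\Sp'_2(G,\sigma)$ with a neighborhood of $1$ in $\Sp_2(G,\sigma)$, so the two groups have the same Lie algebra $\spp_2(B,\sigma)$, which is a genuine Lie algebra by Proposition~\ref{spp_sublie} since $B$ is of Jordan type.

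Finally, to conclude $\Sp'_2(G,\sigma)=\Sp_2(G,\sigma)$ when $B$ is weakly Hermitian: by Proposition~\ref{prop:Sp2Connected} the ambient group $\Sp_2(G,\sigma)$ is connected, and by the previous paragraph $\Sp'_2(G,\sigma)$ is an open subgroup of it (it contains a neighborhood of the identity, hence is open, and an open subgroup of a topological group is also closed). A nonempty open-and-closed subset of a connected space is the whole space, so $\Sp'_2(G,\sigma)=\Sp_2(G,\sigma)$.

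I expect the main obstacle to be part~(1), specifically the careful handling of the passage from $G_0$ to the full group $G$: one must make sure that the diagonal matrices $D(s)$ for $s$ in the finite $2$-group $S$ are produced inside $\Sp'_2(G,\sigma)$, either by showing $S\subseteq G_0$ fails in general but $D(s)$ is nonetheless a limit of generators $D(b)$ with $b\in(B^\sigma)^\times$ (using that $(B^\sigma)^\times$ is dense in $B^\sigma$ and $S\subset\overline{(B^\sigma)^\times}$), or by slightly adjusting the generating set; this bookkeeping, together with keeping track of which $R$- and $L$-factors get conjugated by which $D$-factors via the permutation rules $D(x)L(y)=L(\sigma(x)^{-1}yx^{-1})D(x)$ and $R(z)D(x)=D(x)R(x^{-1}z\sigma(x)^{-1})$, is the part requiring genuine care rather than routine computation.
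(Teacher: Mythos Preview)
Your core approach is the same as the paper's: for (1), the identity $L(z)=I\,R(-z)\,I^{-1}$ (equivalently $L(z)=-I\,R(z)\,I$, using $I^2=-\Id\in\Sp'_2(G,\sigma)$), and for (2), the open-subgroup argument in a connected group. The paper's proof is essentially two lines: the matrix identity, and then ``$\Sp'_2(G,\sigma)$ contains a neighborhood of the identity in $\Sp_2(G,\sigma)$, hence is open, hence equals the connected group $\Sp_2(G,\sigma)$.''

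Your detour on the $G_0$ versus $G$ issue is unnecessary for the conclusion you actually need, and your proposed fixes do not quite work. Taking limits of $D(b)$ with $b\in(B^\sigma)^\times$ does not help, since $\Sp'_2(G,\sigma)$ is defined as a generated subgroup, not a closure; and invoking the finite group $S$ from Theorem~\ref{Conn_comp_of_G} presupposes the weakly Hermitian hypothesis, which part~(1) does not assume. The clean way out is to observe that for part~(2) you only need generic elements \emph{near the identity}, and by Proposition~\ref{prop:generic.elements} these have $a\in G$ close to $1$, hence $a\in G_0$; so $D(a)\in\Sp'_2(G,\sigma)$ by definition and no passage to general $G$ is required. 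Once $\Sp'_2(G,\sigma)=\Sp_2(G,\sigma)$ is established in the weakly Hermitian case, the claim that \emph{all} generic elements lie in $\Sp'_2(G,\sigma)$ follows a posteriori. Your separate argument for connectedness of $\Sp'_2(G,\sigma)$ is also superfluous (and the step connecting $I$ to the identity is not quite justified as written): openness of $\Sp'_2(G,\sigma)$ in the connected group $\Sp_2(G,\sigma)$ already forces equality, which is exactly your final paragraph and exactly what the paper does.
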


\begin{proof} (1) Indeed,
$$L(z):=\begin{pmatrix}
    1 & 0 \\
    z & 1
    \end{pmatrix}=
    -\begin{pmatrix}
    0 & 1 \\
    -1 & 0
    \end{pmatrix}
    \begin{pmatrix}
    1 & z \\
    0 & 1
    \end{pmatrix}
    \begin{pmatrix}
    0 & 1 \\
    -1 & 0
    \end{pmatrix}.$$
Therefore, using Proposition~\ref{prop:generic.elements}, we conclude that all generic elements are in  $\Sp_2(G,\sigma)$.

(2) From (1) follows that there is a neighborhood of the identity of $\Sp'_2(G,\sigma)$ which consists entirely of generic elements. This means that it is a neighborhood of the identity of $\Sp_2(G,\sigma)$ which generates $\Sp_2(G,\sigma)$. This implies that $\Sp'_2(G,\sigma)=\Sp_2(G,\sigma)$, since $\Sp_2(G,\sigma)$ is connected if $B$ is weakly Hermitian.
\end{proof}

\subsection{Center of \texorpdfstring{$\Sp_2(G,\sigma)$}{Sp2(G,sigma)} and the group \texorpdfstring{$\PSp_2(G,\sigma)$}{PSp2(G,sigma)}}

\begin{prop}
The center $Z(\Sp_2(G,\sigma))$ of $\Sp_2(G,\sigma)$ is isomorphic to $Z(G)\cap \OO(G,\sigma)$ where $Z(G)$ is the center of $G$. More precisely,
$$Z(\Sp_2(G,\sigma))=\{\diag(a,a)\mid a\in Z(G)\cap \OO(G,\sigma)\}.$$
\end{prop}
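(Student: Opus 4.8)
The plan is to determine exactly which matrices $M=\begin{pmatrix} a & b\\ c & d\end{pmatrix}\in\Sp_2(G,\sigma)$ commute with every element of $\Sp_2(G,\sigma)$, by testing against a well-chosen set of generators. First I would observe that a central element must in particular commute with all matrices $R(z)=\begin{pmatrix} 1 & z\\ 0 & 1\end{pmatrix}$ for $z\in B^\sigma$, with all $L(y)$ for $y\in B^\sigma$, and with all $D(x)=\begin{pmatrix} x & 0\\ 0 & \sigma(x)^{-1}\end{pmatrix}$ for $x\in G$ (or at least $x\in G_0$ together with the signs $s\in S$); since these generate $\Sp_2(G,\sigma)$ by the second definition, commuting with all of them is equivalent to centrality.

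The computation with $R(z)$: writing out $MR(z)=R(z)M$ gives, reading off the four blocks, the relations $c z=0$, $a z = z d$ (roughly) together with $cz=0$ forcing $c=0$ because $z$ ranges over $B^\sigma$ and $(B^\sigma)^\times\subset G$ contains invertible elements — taking $z$ invertible yields $c=0$ immediately. Symmetrically, commuting with all $L(y)$ forces $b=0$. So a central element is diagonal, $M=\diag(a,d)$, and the symplectic condition $\sigma(a)d-\sigma(c)b=1$ together with $c=b=0$ gives $d=\sigma(a)^{-1}$, i.e. $M=D(a)$ with $a\in G$ (here one uses the corollary that matrix entries lie in $\bar G$, so $a\in\bar G\cap A^\times=G$). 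Next, imposing that $D(a)$ commutes with $I=\begin{pmatrix} 0 & 1\\ -1 & 0\end{pmatrix}$ gives $D(a)I=\begin{pmatrix} 0 & a\\ -\sigma(a)^{-1} & 0\end{pmatrix}$ and $ID(a)=\begin{pmatrix} 0 & \sigma(a)^{-1}\\ -a & 0\end{pmatrix}$, hence $a=\sigma(a)^{-1}$, i.e. $\sigma(a)a=1$, so $a\in\OO(G,\sigma)$. Finally, $D(a)$ commuting with all $D(x)$, $x\in G$, is precisely the condition $ax=xa$ for all $x\in G$, i.e. $a\in Z(G)$. Thus $a\in Z(G)\cap\OO(G,\sigma)$, and conversely every such $a$ clearly gives a central element $D(a)=\diag(a,\sigma(a)^{-1})=\diag(a,a)$ (since $\sigma(a)^{-1}=a$), establishing the displayed description; the isomorphism $Z(\Sp_2(G,\sigma))\cong Z(G)\cap\OO(G,\sigma)$ is then $D(a)\mapsto a$.

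The step I expect to require the most care is ensuring that testing against $R(z)$, $L(y)$, $D(x)$ and $I$ really suffices, i.e. that these do generate $\Sp_2(G,\sigma)$ in the disconnected (merely weakly Hermitian, or only Jordan type) case — one must include the finite group $S$ of sign elements among the $D(x)$'s, which is fine since $S\subset(B^\sigma)^\times\subset G$. A second minor subtlety is the deduction $c=0$ from $cz=0$ for all $z\in B^\sigma$: since $B^\sigma$ contains invertible elements, pick one such $z$ and right-multiply by $z^{-1}$. Everything else is routine $2\times 2$ block matrix algebra, and the direction "$a\in Z(G)\cap\OO(G,\sigma)$ implies $\diag(a,a)$ central" is immediate once one checks $\diag(a,a)$ commutes with each listed generator.
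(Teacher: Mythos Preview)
Your proposal is correct and follows essentially the same approach as the paper: both arguments pin down the form of a central element by testing commutation against the explicit matrices $R(z)$, $L(y)$ (or equivalently $I$), and $D(x)$. The paper uses $I$ first (yielding $d=a$, $c=-b$) and then $R(1)$ (yielding $b=0$), whereas you use $R(z)$ and $L(y)$ first (yielding $c=b=0$) and then $I$ (yielding $\sigma(a)a=1$); the remaining step via $D(x)$ is identical, and your extra care about why $a\in G$ is a nice touch the paper leaves implicit.
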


\begin{proof}
Let $M=
\begin{pmatrix}
a & b \\
c & d
\end{pmatrix}\in Z(\Sp_2(G,\sigma))$, then $M$ commutes with $\Om$. This gives: $d=a$, $c=-b$. Also $M$ commutes with $\begin{pmatrix} 1 & 1 \\ 0 & 1\end{pmatrix}\in\Sp_2(G,\sigma)$. This gives $b=0$. Since $M=\diag(a,a)\in\Sp_2(A,\sigma)$, $\sigma(a)^{-1}=a$. Moreover, $M$ commutes with all $\diag(g,\sigma(g)^{-1})$, i.e., $a\in Z(G)$. Therefore,
$$Z(\Sp_2(G,\sigma))<\{\diag(a,a)\mid a\in Z(G)\cap \OO(G,\sigma)\}.$$

It is also easy to see that matrices $\diag(a,a)$ for $a\in Z(G)\cap \OO(G,\sigma)$ commute with all elements of $\Sp_2(G,\sigma)$. Therefore,
\begin{equation*}
    Z(\Sp_2(G,\sigma))=\{\diag(a,a)\mid a\in Z(G)\cap \OO(G,\sigma)\}.\qedhere    
\end{equation*}
\end{proof}

\begin{cor}
For $B$ Hermitian, $Z(\Sp_2(G,\sigma))$ is compact.
\end{cor}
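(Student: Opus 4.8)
The plan is to leverage the explicit description of the center obtained in the previous Proposition together with the compactness of $\OO(G,\sigma)$ established above for Hermitian $B$. By that Proposition, the assignment $a\mapsto\diag(a,a)$ restricts to a group isomorphism $Z(G)\cap\OO(G,\sigma)\to Z(\Sp_2(G,\sigma))$; since both this map and its inverse (the projection onto the $(1,1)$-entry of a $2\times2$ matrix) are continuous, it is in fact a homeomorphism. Thus it suffices to show that $Z(G)\cap\OO(G,\sigma)$ is compact.

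Next I would recall that for Hermitian $B$ the group $\OO(G,\sigma)=\{g\in G\mid\sigma(g)g=1\}$ is compact (this is the Corollary following the finiteness theorem for $G/G_0$). It therefore suffices to check that $Z(G)\cap\OO(G,\sigma)$ is a closed subset of $\OO(G,\sigma)$. But $Z(G)$, being the center of the topological group $G$, is the intersection $\bigcap_{g\in G}\{h\in G\mid hg=gh\}$ of centralizers, each of which is closed in the Hausdorff group $G$; hence $Z(G)$ is closed in $G$, so $Z(G)\cap\OO(G,\sigma)$ is closed in the compact group $\OO(G,\sigma)$. A closed subset of a compact space is compact, whence $Z(G)\cap\OO(G,\sigma)$ — and therefore $Z(\Sp_2(G,\sigma))$ — is compact.

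There is essentially no obstacle here; the only point requiring a moment's care is that the abstract group isomorphism of the previous Proposition is also a topological isomorphism, which is immediate from the continuity of matrix entry-extraction. Alternatively, one may argue directly inside $\Sp_2(A,\sigma)$: the set $Z(\Sp_2(G,\sigma))$ is closed in $\Sp_2(G,\sigma)$, it is contained in $\{\diag(a,a)\mid a\in\OO(G,\sigma)\}$, which is the image of the compact space $\OO(G,\sigma)$ under the continuous map $a\mapsto\diag(a,a)$ and hence compact, and a closed subset of a compact Hausdorff space is compact.
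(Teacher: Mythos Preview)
Your argument is correct and is precisely what the paper leaves implicit: the Corollary is stated without proof, relying on the preceding description $Z(\Sp_2(G,\sigma))=\{\diag(a,a)\mid a\in Z(G)\cap\OO(G,\sigma)\}$ together with the earlier Corollary that $\OO(G,\sigma)$ is compact when $B$ is Hermitian. Your observation that $Z(G)$ is closed and that $a\mapsto\diag(a,a)$ is a homeomorphism onto its image simply makes explicit the routine topological steps the paper omits.
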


\begin{df}
The quotient group
$$\PSp_2(G,\sigma):=\Sp_2(G,\sigma)/Z(\Sp_2(G,\sigma))$$
is called \defin{projective symplectic group}.
\end{df}

\subsection{Subgroups of \texorpdfstring{$\Sp_2(G,\sigma)$}{Sp2(G,sigma)}}

\subsubsection{Maximal compact subgroup of \texorpdfstring{$\Sp_2(G,\sigma)$}{Sp2(G,sigma)}}\label{Max_Comp_R}

In this section, we assume $B$ to be Hermitian. We describe a maximal compact subgroup of $\Sp_2(G,\sigma)$.

We denote:
\begin{align*}
\OO_2(A,\sigma):=&\{M\in\Mat_2(A)\mid \sigma(M)^tM=\Id\}\\
=&\left\{\begin{pmatrix}
a & b \\
-b & a
\end{pmatrix}\midwd a,b\in A,\;\begin{matrix}
\sigma(a)a+\sigma(b)b=1\\
\sigma(a)b-\sigma(b)a=0
\end{matrix}\right\},\\
\KSp_2(G,\sigma):=&\Sp_2(G,\sigma)\cap\OO_2(A,\sigma)\\
\subseteq &\left\{\begin{pmatrix}
a & b \\
-b & a
\end{pmatrix}\midwd a,b\in\bar G,\;\begin{matrix}
\sigma(a)a+\sigma(b)b=1\\
\sigma(a)b-\sigma(b)a=0
\end{matrix}\;\right\}.    
\end{align*}

The group $\KSp_2(G,\sigma)$ can be seen as the subgroup of $\Sp_2(G,\sigma)$ preserving the following $\sigma$-sesquilinear form $h$ on $A^2$: for $x,y\in A^2$, $h(x,y):=\sigma(x)^ty$.

\begin{teo}\label{maxcomp-Sp_R}
The group $\KSp_2(G,\sigma)$ is a maximal compact subgroup of $\Sp_2(G,\sigma)$.
\end{teo}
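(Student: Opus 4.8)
The plan is to establish compactness of $\KSp_2(G,\sigma)$ first, and then maximality via the polar decomposition of $\Sp_2(G,\sigma)$. For compactness, I would argue that $\KSp_2(G,\sigma)$ is a closed subgroup of $\Sp_2(A,\sigma)$ whose matrix entries lie in a compact set. Indeed, every $M\in\KSp_2(G,\sigma)$ has the form $\begin{pmatrix} a & b\\ -b & a\end{pmatrix}$ with $a,b\in\bar G$ and $\sigma(a)a+\sigma(b)b=1$; since $B$ is Hermitian, $\sigma(a)a,\sigma(b)b\in B^\sigma_{\geq 0}$, so both lie in the compact set $K=B^\sigma_{\geq 0}\cap(1-B^\sigma_{\geq 0})$ of Lemma~\ref{comp_cone}. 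Then $a\in\theta^{-1}(\{\sigma(a)a\})\cap\bar G$ ranges over $\bpol(\OO(G,\sigma)\times K)$, which is compact because $\OO(G,\sigma)$ is compact (Proposition~\ref{comp_disc} and the surrounding discussion); similarly for $b$. Hence $\KSp_2(G,\sigma)$ sits inside a compact subset of $\Mat_2(A)$; being closed, it is compact.

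For maximality, I would use the polar decomposition of $\Sp_2(G,\sigma)$ regarded as a Lie subgroup of $A^\times$-type data, or more precisely the Cartan decomposition coming from $\spp_2(B,\sigma)$. The Lie algebra $\spp_2(B,\sigma)$ splits as $\mathfrak{k}\oplus\mathfrak{p}$ where $\mathfrak{k}$ is the fixed space of the Cartan involution $X\mapsto -\sigma(X)^t$ (conjugation by $\Om$ composed with $\sigma$-transpose) and $\mathfrak{p}$ its $(-1)$-eigenspace; one checks $\mathfrak{k}=\Lie(\KSp_2(G,\sigma))$ and that $\exp(\mathfrak{p})$ together with $\KSp_2(G,\sigma)$ gives a global diffeomorphism $\KSp_2(G,\sigma)\times\mathfrak{p}\to\Sp_2(G,\sigma)$, $(k,X)\mapsto k\exp(X)$. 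This is the matrix-level polar decomposition applied to elements of $\Sp_2(G,\sigma)$: writing $M=k\,p$ with $p=(\sigma(M)^tM)^{1/2}$ positive-definite symmetric and $k=Mp^{-1}\in\OO_2(A,\sigma)$, one verifies $p\in\exp(\mathfrak{p})$ and $k\in\KSp_2(G,\sigma)$ by the same argument as in Theorem~\ref{pol_decomp1}, using that the entries remain in $\bar G$ (Corollary after Theorem~\ref{thm:Sp_2-LieGroup}) and that $B^\sigma_+$ is closed under the functional calculus. Since $\mathfrak{p}$ is a Euclidean space, any compact subgroup $H\supseteq\KSp_2(G,\sigma)$ must, by the standard Cartan-fixed-point argument, be conjugate into $\KSp_2(G,\sigma)$; as it already contains $\KSp_2(G,\sigma)$, equality follows. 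Concretely: $H$ acts on the contractible symmetric space $\Sp_2(G,\sigma)/\KSp_2(G,\sigma)\cong\mathfrak{p}$ (or on the cone model $B^\sigma_+\times B^\sigma$ of Theorems in Sections~\ref{G-models}--\ref{hatG-models}, or on the precompact domain $D(\bar G,\sigma)$-type model) by isometries; a compact group of isometries of a Hadamard-type space has a fixed point, whose stabilizer is conjugate to $\KSp_2(G,\sigma)$, forcing $H=\KSp_2(G,\sigma)$.

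The main obstacle is verifying the polar/Cartan decomposition \emph{for $\Sp_2(G,\sigma)$ itself} rather than for the ambient $\Sp_2(A,\sigma)$: one must check that the symmetric part $p=(\sigma(M)^tM)^{1/2}$ of a generic element actually lies in $\Sp_2(G,\sigma)$ and has the block form $\begin{pmatrix} \ast & \ast\\ \ast & \ast\end{pmatrix}$ with entries in $\bar G$ and off-diagonal entries in $B^\sigma$, i.e.\ that $p\in\exp(\spp_2(B,\sigma)\cap\Sym)$. This uses the Jordan-type hypothesis and the functional calculus on $B^\sigma$ in an essential way, together with the fact (from Theorem~\ref{thm:Sp_2-LieGroup} and its corollary) that entries of elements of $\Sp_2(G,\sigma)$ stay in the monoid $\bar G$; closedness of $B^\sigma_{\geq 0}$ under square roots (Corollary~\ref{theta_Bsym+}) is what makes the block $2\times 2$ positive-definite square root behave well. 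Once this decomposition is in hand, compactness plus the fixed-point argument closes the proof routinely.
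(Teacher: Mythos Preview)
Your compactness argument is correct and essentially matches the paper's: both confine the entries $a,b$ of $M=\begin{pmatrix}a&b\\-b&a\end{pmatrix}$ to the compact disk $D(\bar G,\sigma)$ of Proposition~\ref{comp_disc}, and conclude by closedness.

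For maximality the paper takes a different, more elementary route. It assumes a compact $K\supsetneq\KSp_2(G,\sigma)$, picks a nonzero element of $\Lie(K)\cap\Sym_2(G,\sigma)$, adds a suitable element of $\ksp_2(G,\sigma)$ to make it upper--triangular, and exponentiates to produce $M=\begin{pmatrix} g & gx\\ 0 & g^{-1}\end{pmatrix}\in K$ with $g\in B^\sigma_+$. Examining the powers $M^r$ and using compactness of $K$ forces every eigenvalue of $g$ to equal $\pm 1$, hence $g^2=1$; then $(M\cdot\diag(g,g^{-1}))^r=R(rx)$ diverges unless $x=0$, contradicting $M\notin\KSp_2(G,\sigma)$. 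No polar decomposition of $\Sp_2(G,\sigma)$ is needed.

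Your Cartan/polar route is valid in principle, but the obstacle you flag is real and your proposed fix is not right as stated. The square root $(\sigma(M)^tM)^{1/2}$ is taken in the Jordan algebra $\Mat_2(A)^{\sigma(\cdot)^t}$ and is \emph{not} computed entrywise, so ``closedness of $B^\sigma_{\geq 0}$ under square roots'' (a statement about scalars in $B^\sigma$) does not show that the block square root lands back in $\Sp_2(G,\sigma)$. Indeed $\spp_2(B,\sigma)$ is not of Jordan type for the anti-involution $M\mapsto\sigma(M)^t$: for $c,d,c',d'\in B^\sigma$ the product $\begin{pmatrix}c&d\\d&-c\end{pmatrix}\begin{pmatrix}c'&d'\\d'&-c'\end{pmatrix}$ has off-diagonal entry $cd'-dc'\in B^{-\sigma}$ and $(2,2)$-entry not equal to $-\sigma$ of the $(1,1)$-entry, so it is not even in $\spp_2(A,\sigma)$. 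What you actually need is that the global polar decomposition of $\GL_2(A)$ (available since $(\Mat_2(A),\sigma(\cdot)^t)$ is Hermitian when $(A,\sigma)$ is) restricts to the $\theta$-stable closed subgroup $\Sp_2(G,\sigma)$, where $\theta(M)=\Omega M\Omega^{-1}$; this is true but requires its own argument (via one-parameter subgroups, or an appeal to the general theorem on Cartan involutions with compact fixed set), which you do not supply. Deferring instead to the models of Section~\ref{G-models} would be circular in the paper's logic, since those spaces are identified as \emph{the} Riemannian symmetric space precisely because $\KSp_2(G,\sigma)$ is already known to be maximal compact. The paper's hands-on argument sidesteps all of this.
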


\begin{proof} By definition, $\KSp_2(G,\sigma)$ is a closed subgroup of $\Sp_2(G,\sigma)$. We take a matrix $M:=\begin{pmatrix}
a & b \\
-b & a
\end{pmatrix}\in \KSp_2(G,\sigma)$.
Then
$$\sigma(M)^tM=\begin{pmatrix}
\sigma(a) & -\sigma(b) \\
\sigma(b) & \sigma(a)
\end{pmatrix}
\begin{pmatrix}
a & b \\
-b & a
\end{pmatrix}$$
$$=
\begin{pmatrix}
\sigma(a)a+\sigma(b)b & * \\
* & \sigma(b)b+\sigma(a)a
\end{pmatrix}=\begin{pmatrix}
1 & 0 \\
0 & 1
\end{pmatrix}$$
Since $\sigma(a)a+\sigma(b)b=1$, i.e.
$$a,b\in D=\{x\in \bar G\mid 1-\sigma(x)x\in B^{\sigma}_{\geq 0}\}\subseteq \bar G$$
which is compact by Proposition~\ref{comp_disc}, $\KSp_2(G,\sigma)$ can be identified with a closed subset of the compact $D^2$, so it is compact.

Now, we show that $\KSp_2(G,\sigma)$ is a maximal compact subgroup of $\Sp_2(G,\sigma)$. Let $K$ be some compact subgroup containing $\KSp_2(G,\sigma)$ as a proper subgroup. We consider the following decomposition of $\spp_2(G,\sigma)$:
$$\spp_2(G,\sigma)=\ksp_2(G,\sigma)\oplus\Sym_2(G,\sigma)$$
where
$$\ksp_2(G,\sigma)=\Lie(\KSp_2(G,\sigma))=\left\{
\begin{pmatrix}
a & b \\
-b & a
\end{pmatrix}
\midwd a\in B^{-\sigma}, b\in B^{\sigma}
\right\},$$
$$\Sym_2(G,\sigma)=\left\{
\begin{pmatrix}
c & d \\
d & -c
\end{pmatrix}
\midwd c,d\in B^{\sigma}
\right\}.$$
By our assumption, $\Lie(K)$ contains $\ksp_2(G,\sigma)$ and has nontrivial intersection with $\Sym_2(G,\sigma)$. Take some $\begin{pmatrix}
c & d \\
d & -c
\end{pmatrix}\in\Lie(K)\cap\Sym_2(G,\sigma)$, $c,d\in B^{\sigma}$. The matrix
$\begin{pmatrix}
0 & d \\
-d & 0
\end{pmatrix}\in\ksp_2(G,\sigma)\subset\Lie(K),$
therefore,
$$\begin{pmatrix}
c & 2d \\
0 & -c
\end{pmatrix}=\begin{pmatrix}
c & d \\
d & -c
\end{pmatrix}+\begin{pmatrix}
0 & d \\
-d & 0
\end{pmatrix}\in\Lie(K)\bs\ksp_2(G,\sigma).$$
Using the exponential map of $\spp_2(G,\sigma)$ restricted to $\Lie(K)$, we obtain that there exists a matrix $M:=\begin{pmatrix}
g & gx \\
0 & g^{-1}
\end{pmatrix}\in K\bs\KSp_2(G,\sigma)$ where $g=\exp(c)\in G^{\sigma}$, $x\in B^{\sigma}$. Consider the spectral decomposition of $g=\sum_{i=1}^k\lambda_ic_i$ for some $\lambda_i>0$ and $(c_i)_{i=1}^k$ a complete orthogonal system of idempotents. Take a sequence $\{M^r\}\subseteq K$. Then $$M^r_{11}=g^k=\sum_{i=1}^k\lambda_i^rc_i,$$
$$M^r_{22}=g^{-k}=\sum_{i=1}^k\lambda_i^{-r}c_i.$$
Assume there exists $s\in\{1,\dots,k\}$ such that $\lambda_s\neq\pm 1$. Then either $0<|\lambda_s|<1$ or $0<|\lambda_s^{-1}|<1$. Without loss of generality, assume $0<|\lambda_s|<1$. Since $K$ is compact, $\{M^r\}\subseteq K$ has a convergent subsequence $\{M^{r_j}\}\subseteq K$:
$$\lim M^{r_j}_{11}=\lim\sum_{i=1}^k\lambda_i^{r_j}c_i=\sum_{i=1}^k\hat\lambda_ic_i$$
where $\hat\lambda_i=\lim\lambda_i^{r_j}$. But $\hat\lambda_s=\lim\lambda_s^{r_j}=0$ for any subsequence $\{r_j\}$. Therefore $\lim M^{r_j}_{11}$ is not invertible and so $\lim M^{r_j}$ is not invertible as well. Therefore, all $\lambda_i=\pm 1$ and $g^2=1$. The element $L:=\begin{pmatrix}
g & 0\\
0 & g^{-1}
\end{pmatrix}\in\KSp_2(G,\sigma)\subset K$. Then $ML=\begin{pmatrix}
1 & x \\
0 & 1
\end{pmatrix}\in K$. Take $(ML)^r=\begin{pmatrix}
1 & rx \\
0 & 1
\end{pmatrix}\in K$. This sequence does not have any convergent subsequence unless $x=0$. So we get $M=L\in\KSp_2(G,\sigma)$. This contradicts to the assumption $M\notin\KSp_2(G,\sigma)$ and we obtain that $\KSp_2(G,\sigma)$ is a maximal compact subgroup of $\Sp_2(G,\sigma)$.
\end{proof}

\begin{cor}
    The linear map $\Psi\colon \ksp_2(A,\sigma)\to B_\mathbb C^{-\bar\sigma}$ with $\begin{pmatrix}
        a & b \\
        -b & a
    \end{pmatrix}\mapsto a+bi$ provides an isomorphism of Lie algebras. Therefore, in particular, the group $\OO(G_\mathbb C,\bar\sigma)$ and $\KSp_2(G,\sigma)$ are isomorphic, and the isomorphism is provided by the same map $\Psi$.
\end{cor}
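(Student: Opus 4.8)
The plan is to recognize $\Psi$ as the restriction of an isomorphism of $\R$-algebras, after which both assertions become routine. Put
\[
S:=\left\{\begin{pmatrix} a & b\\ -b & a\end{pmatrix}\midwd a,b\in A\right\}\subseteq\Mat_2(A).
\]
First I would check, by a direct block computation, that $S$ is a unital subalgebra of $\Mat_2(A)$ and that $\Psi\colon S\to A_\CC$, $\begin{pmatrix} a & b\\ -b & a\end{pmatrix}\mapsto a+bi$, is an isomorphism of $\R$-algebras: it is evidently $\R$-linear and bijective, with inverse $a+bi\mapsto\begin{pmatrix} a & b\\ -b & a\end{pmatrix}$, and multiplicativity is precisely the identity $(ac-bd)+(ad+bc)i=(a+bi)(c+di)$, valid because $i$ is central with $i^2=-1$. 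In particular $\Psi$ carries commutators to commutators and restricts to an isomorphism of unit groups $S^\times\xrightarrow{\sim}A_\CC^\times$.

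For the Lie-algebra statement: since $\bar\sigma$ is the $\CC$-linear extension of $\sigma$ followed by complex conjugation, one computes $\bar\sigma(a+bi)=\sigma(a)-\sigma(b)i$ for $a,b\in A$, so that $a+bi\in B_\CC^{-\bar\sigma}$ precisely when $a\in B^{-\sigma}$ and $b\in B^{\sigma}$. Hence $\Psi$ restricts to an $\R$-linear bijection from $\ksp_2(G,\sigma)$ onto $B_\CC^{-\bar\sigma}$, which is automatically a homomorphism of Lie algebras because it is an algebra homomorphism; this is the first claim. (The same computation, with $B$ replaced by $A$, identifies $\Lie(\OO_2(A,\sigma))$ with $A_\CC^{-\bar\sigma}$, and it realizes $B_\CC^{-\bar\sigma}$ as $\Lie(\OO(G_\CC,\bar\sigma))$, the $\bar\sigma$-analogue of the identification $\Lie(\OO(G_0,\sigma))=B^{-\sigma}$.)

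For the group statement I would start from the reversible identity
\[
\bar\sigma(a+bi)(a+bi)=\bigl(\sigma(a)a+\sigma(b)b\bigr)+\bigl(\sigma(a)b-\sigma(b)a\bigr)i,
\]
whose right-hand side is exactly the pair of relations defining $\OO_2(A,\sigma)$ on $\begin{pmatrix} a & b\\ -b & a\end{pmatrix}$; thus $\Psi$ maps $\OO_2(A,\sigma)$ bijectively onto $\{u\in A_\CC^\times\midwd\bar\sigma(u)u=1\}$. Since $\KSp_2(G,\sigma)=\Sp_2(G,\sigma)\cap\OO_2(A,\sigma)\subseteq S^\times$ (indeed $\Sp_2(A,\sigma)\cap S=\OO_2(A,\sigma)$, using $\sigma(a)b\in A^{\sigma}\iff\sigma(a)b=\sigma(b)a$), the isomorphism $\Psi\colon S^\times\cong A_\CC^\times$ restricts to an isomorphism of $\KSp_2(G,\sigma)$ onto its image inside $\{u\in A_\CC^\times\midwd\bar\sigma(u)u=1\}$. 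To see this image lies in $G_\CC$, note that $B$ Hermitian makes $\Sp_2(G,\sigma)$ connected (Proposition~\ref{prop:Sp2Connected}), so its maximal compact subgroup $\KSp_2(G,\sigma)$ (Theorem~\ref{maxcomp-Sp_R}) is connected; hence every element of $\KSp_2(G,\sigma)$ is a finite product of exponentials of elements of $\ksp_2(G,\sigma)$, and $\Psi$ takes it to the corresponding product of exponentials of elements of $B_\CC^{-\bar\sigma}\subseteq\Lie(G_\CC)$, which lies in $G_\CC$. So $\Psi$ maps $\KSp_2(G,\sigma)$ isomorphically onto a connected subgroup of $\OO(G_\CC,\bar\sigma)$ whose Lie algebra is all of $B_\CC^{-\bar\sigma}=\Lie(\OO(G_\CC,\bar\sigma))$, i.e.\ onto the identity component $\OO(G_\CC,\bar\sigma)_0$.

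The one genuinely non-formal point is the final step: to conclude that $\Psi$ exhausts all of $\OO(G_\CC,\bar\sigma)$ one needs that group to be connected, and since $G$ (hence $G_\CC$) may be disconnected, being the minimal extension of $G_0$, this is not automatic. I would establish it exactly as the connectedness of $\OO(G_0,\sigma)$ is obtained in the corollary following Theorem~\ref{pol_decomp0}, namely from a polar decomposition of $G_\CC$ with respect to $\bar\sigma$ together with the compactness in Proposition~\ref{comp_disc}. Granting this, $\Psi\colon\KSp_2(G,\sigma)\to\OO(G_\CC,\bar\sigma)$ is a group isomorphism given by the same formula as the Lie-algebra isomorphism, and all the remaining ingredients — the algebra isomorphism $\Psi\colon S\to A_\CC$, the $\bar\sigma$- and bracket-computations, and the identity $\Sp_2(A,\sigma)\cap S=\OO_2(A,\sigma)$ — are straightforward verifications.
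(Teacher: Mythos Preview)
The paper states this corollary without proof, so there is nothing to compare against; your argument is considerably more detailed than what the paper offers. The Lie-algebra half of your proof is correct and is exactly the kind of verification one would expect: the algebra isomorphism $\Psi\colon S\to A_\CC$ and the computation $\bar\sigma(a+bi)=\sigma(a)-\sigma(b)i$ immediately give the bijection $\ksp_2(G,\sigma)\xrightarrow{\sim}B_\CC^{-\bar\sigma}$. (Note the paper writes $\ksp_2(A,\sigma)$ in the statement, which is evidently a typo for $\ksp_2(G,\sigma)$, since the target is $B_\CC^{-\bar\sigma}$.)

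For the group statement you correctly isolate the only non-formal step, namely that $\OO(G_\CC,\bar\sigma)$ is connected; without this you only get an isomorphism onto the identity component. Your proposed fix, however, does not work as written: you suggest running the polar decomposition of Theorem~\ref{pol_decomp0} for $(G_\CC,\bar\sigma)$, but that argument requires $(B_\CC,\bar\sigma)$ to be weakly Hermitian (in particular of Jordan type, so that the spectral theorem and square roots in $(B_\CC^{\bar\sigma})_+$ are available), and the paper explicitly warns in Remark~\ref{rem:nonHerm.complexification} that $(B_\CC,\bar\sigma)$ need \emph{not} be of Jordan type. So the machinery you invoke is unavailable in general. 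The paper does not address this point either (and indeed only introduces $G_\CC$ later, in Section~\ref{G-models}), so the corollary as stated is somewhat informal; a complete proof would require either a separate connectedness argument for $\OO(G_\CC,\bar\sigma)$ not relying on the Jordan-type hypothesis for $\bar\sigma$, or a direct verification that $\Psi^{-1}(\OO(G_\CC,\bar\sigma))\subseteq\Sp_2(G,\sigma)$ using the specific definition of $G_\CC$.
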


\subsubsection{Group $G$ as a subgroup of $\Sp_2(G,\sigma)$}\label{sec:hat_G}

The group $G$ can be naturally seen as a subgroup of $\Sp_2(G,\sigma)$ embedded diagonally. We denote: 
$$\hat G:=\left\{\begin{pmatrix}
    g & 0 \\
    0 & \sigma(g)^{-1}
\end{pmatrix}\midwd g\in G\right\}.$$
The Lie algebra of $\hat G$ is:
$$\hat B=\left\{\begin{pmatrix}
    b & 0 \\
    0 & -\sigma(b)
\end{pmatrix}\midwd b\in B\right\}.$$
The group $\hat G$ preserves the following $\sigma$-sesquilinear form $h$ on $A$: for $x,y\in A^2$, $h(x,y)=\sigma(x)^t\begin{pmatrix}
    0 & 1 \\
    1 & 0
\end{pmatrix}y$. As we have seen in Section~\ref{sec:indef.orth}, $\Aut(h)=\OO_{(1,1)}(A,\sigma)$. Moreover, one can show that $\hat G=\Sp_{2}(G,\sigma)\cap\OO_{(1,1)}(A,\sigma)$.

\section{Invariants of \texorpdfstring{$G$}{G}-isotropic lines}\label{G-lines}

Let $B$ be a weakly Hermitian Lie subalgebra of $(A,\sigma)$. In this section, we introduce the space of $G$-isotropic lines which is a generalization of the real projective line. We also study the action of $\Sp_2(G,\sigma)$ on tuples of  $G$-isotropic lines.

\subsection{\texorpdfstring{$G$}{G}-isotropic elements and \texorpdfstring{$G$}{G}-isotropic lines}
The orbit $\Is_G(\omega)=\Sp_2(G,\sigma)(1,0)^t$  of $(1,0)^t\in A^2$ under the left action of $\Sp_2(G,\sigma)$ by matrix multiplication is called the \defin{space of $G$-isotropic elements}.

Since $\Sp_2(G,\sigma)\subset\Mat_2(\bar G)$, for every $x=(x_1,x_2)^t\in \Is_G(\omega)$, $x_1,x_2\in \bar G$. Moreover, $G$ acts on $\Is_G(\omega)$ by right multiplication. Indeed, let $x=M(1,0)^t\in \Is_G(\omega)$ for some $M\in\Sp_2(G,\sigma)$, and  let $\hat g:=\begin{pmatrix}
        g & 0 \\
        0 & \sigma(g)^{-1}
    \end{pmatrix}\in\Sp_2(G,\sigma)$. Then $xg=M \hat g (1,0)^t\in \Is_G(\omega)$.

\begin{prop}
    An element $(x_1,x_2)\in A^2$ is $G$-isotropic if and only if the element $(x_2,-x_1)$ is $G$-isotropic. If $(x_1,x_2)$ is isotropic, then $\sigma(x_1)x_2\in G^\sigma$.
\end{prop}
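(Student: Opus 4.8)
The plan is to work directly with the definition of $G$-isotropic element, namely that $x=(x_1,x_2)^t$ lies in the orbit $\Sp_2(G,\sigma)(1,0)^t$. First I would verify the symmetry statement: the matrix $\Omega = \begin{pmatrix} 0 & 1 \\ -1 & 0 \end{pmatrix} = I$ belongs to $\Sp_2(G,\sigma)$ (indeed to $\Sp_2'(G,\sigma)$, as it is one of the generators in the second definition), and $I\cdot(x_1,x_2)^t = (x_2,-x_1)^t$. Hence if $(x_1,x_2)^t = M(1,0)^t$ for some $M\in\Sp_2(G,\sigma)$, then $(x_2,-x_1)^t = IM(1,0)^t$ with $IM\in\Sp_2(G,\sigma)$, so $(x_2,-x_1)^t$ is $G$-isotropic; applying $I$ again (or $-I$) recovers the converse, so the equivalence holds.

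For the second assertion, I would write $(x_1,x_2)^t = M(1,0)^t$ with $M = \begin{pmatrix} a & b \\ c & d \end{pmatrix}\in\Sp_2(G,\sigma)$, so that $x_1 = a$ and $x_2 = c$. From the description of $\Sp_2(A,\sigma)$ in~\eqref{eq:groups}, the defining relations for $M$ include $\sigma(a)c\in A^\sigma$ and $\sigma(a)d - \sigma(c)b = 1$. This already gives $\sigma(x_1)x_2 = \sigma(a)c\in A^\sigma$. To upgrade this to membership in $G^\sigma$, I would argue that $\sigma(x_1)x_2\in \bar G$: by the corollary to Theorem~\ref{thm:Sp_2-LieGroup}, all matrix coefficients of $M$ lie in $\bar G$, and $\bar G$ is a monoid closed under $\sigma$ (since $G$ is), so $\sigma(a)c\in\bar G\cap A^\sigma$. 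The point now is to promote this to $G$ itself, not just $\bar G$; for this I would use invertibility. From $\sigma(a)d-\sigma(c)b=1$, if $a=x_1$ is invertible then $M$ is generic, and by Proposition~\ref{prop:generic.elements} we have $a\in G$ and $a^{-1}b\in B^\sigma$, $ca^{-1}\in B^\sigma$; then $\sigma(a)c = \sigma(a)(ca^{-1})a$ — wait, more directly, $\sigma(a)c \in A^\sigma$ and the relation shows $\sigma(a)$ is invertible, whence $c = \sigma(a)^{-1}(\sigma(a)c)$, and $\sigma(a)c = \sigma(a)(ca^{-1})a$ is a product of an element of $G$ ($\sigma(a)$, since $a\in G$ and $G$ is $\sigma$-closed), an element of $(B^\sigma)^\times\subseteq G$ (namely $ca^{-1}$, which is invertible because $c,a$ are), and $a\in G$; hence $\sigma(a)c\in G\cap A^\sigma = G^\sigma$.

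The remaining and main obstacle is the non-generic case, when $x_1 = a$ is not invertible: then the argument above breaks down, yet the statement is still claimed to hold. My approach here would be to reduce to the generic case by a continuity and connectedness argument, or alternatively to observe that the orbit $\Is_G(\omega)$ carries a right $G$-action (as noted just before the proposition) and that $\sigma(x_1)x_2$ behaves equivariantly under it. Concretely, if $(x_1,x_2)^t\in\Is_G(\omega)$ then so is $(x_1,x_2)^t g = (x_1 g, x_2 g)^t$ for $g\in G$, and then $\sigma(x_1 g)(x_2 g) = \sigma(g)\sigma(x_1)x_2\, g = \psi(g)\big(\sigma(x_1)x_2\big)$; choosing a sequence $g_i\in G$ (or a one-parameter family) making the first coordinate invertible — possible because the generic elements form an open dense set and the $\Sp_2(G,\sigma)$-action is transitive on $\Is_G(\omega)$, so some translate is generic — we get $\sigma(x_1 g_i)(x_2 g_i)\in G^\sigma$, and then $\sigma(x_1)x_2 = \psi(g_i^{-1})\big(\sigma(x_1 g_i)(x_2 g_i)\big)$. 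Since $\psi(g_i^{-1})$ preserves $B^\sigma$ and maps $(B^\sigma)^\times$-type elements appropriately — more carefully, since $\psi(h)$ for $h\in G$ restricts to an automorphism of $B^\sigma$ preserving $G^\sigma$ (as $\sigma(h)G^\sigma h\subseteq G^\sigma$ because $G$ is a group closed under $\sigma$) — we conclude $\sigma(x_1)x_2\in G^\sigma$. I expect the cleanest write-up to combine: (i) the symmetry via $I$; (ii) the identity $\sigma(x_1)x_2\in A^\sigma$ from~\eqref{eq:groups}; and (iii) the reduction-to-generic step via the right $G$-action, with the verification that $\psi(g)$ preserves $G^\sigma$ being the one genuinely new computation.
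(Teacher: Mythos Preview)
Your treatment of the first assertion is correct and is exactly what the paper does: multiply by $I=\begin{pmatrix}0&1\\-1&0\end{pmatrix}\in\Sp_2(G,\sigma)$.

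For the second assertion you are chasing a typo. The paper's own proof only establishes $\sigma(x_1)x_2\in\bar G^\sigma$, not $G^\sigma$: it observes that $\omega(x,x)=0$ forces $\sigma(x_1)x_2=\sigma(x_2)x_1\in A^\sigma$, and that $x_1,x_2\in\bar G$ (so $\sigma(x_1)x_2\in\bar G$ since $\bar G$ is a $\sigma$-closed monoid). You already have this argument verbatim in your proposal as the intermediate step ``$\sigma(a)c\in\bar G\cap A^\sigma$''; that is the entire content of the paper's proof.

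The stronger claim $\sigma(x_1)x_2\in G^\sigma$ is simply false: take the basepoint $(x_1,x_2)=(1,0)$, which is $G$-isotropic by definition, and note $\sigma(1)\cdot 0=0\notin G$ since $G\subseteq A^\times$. Correspondingly, your reduction-to-generic argument breaks at the line ``namely $ca^{-1}$, which is invertible because $c,a$ are'': even for a generic $M$ only $a$ is forced to be invertible, while $c$ need not be (and is not when $x_2=0$). The right-$G$-action maneuver cannot repair this, since the orbit of $0$ under $\psi$ is $\{0\}$. So stop at $\bar G^\sigma$; the rest of your proposal is unnecessary.
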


\begin{proof}
    Let $x=(x_1,x_2)\in A^2$ is $G$-isotropic, i.e., $x=g(1,0)^t$ for some $g\in \Sp_2(G,\sigma)$. Then $x'=(x_2,-x_1)=\begin{pmatrix}
        0 & 1 \\
        -1 & 0
    \end{pmatrix}g(1,0)^t\in\Is_G(\omega)$.

    If $x=(x_1,x_2)\in A^2$ is $G$-isotropic, then, in particular,
    $$0=\omega(x,x)=-\sigma(x_1)x_2+\sigma(x_2)x_2.$$
    This means $\sigma(x_1)x_2\in \bar G^\sigma$.
\end{proof}

The space of $G$-isotropic elements $\Is_G(\omega)$ is a homogenious space of $\Sp_2(G,\sigma)$, i.e., $\Is_G(\omega)$ is isomorphic to $\Sp_2(G,\sigma)/\Stab_{G}((1,0)^t)$, where 
$$\Stab_{G}((1,0)^t)=\left\{\begin{pmatrix}
1 & x \\
0 & 1
\end{pmatrix} \midwd x\in B^\sigma \right\}.$$ The~element $(0,1)^t$ is in $\Is_G(\omega)$ because $-I(1,0)^t=(0,1)$ where
$I=\begin{pmatrix}
0 & 1 \\
-1 & 0
\end{pmatrix}$. Therefore, similarly, $\Is_G(\omega)$ is isomorphic to $\Sp_2(G,\sigma)/\Stab_{G}((0,1)^t)$, where 
$$\Stab_{G}((0,1)^t)=\left\{\begin{pmatrix}
1 & 0 \\
x & 1
\end{pmatrix} \midwd x\in B^\sigma \right\}.$$
Both isomorphisms are given by the orbit maps.

\begin{df}\begin{enumerate}
    \item A subset $l\subset A^2$ is called a \defin{$G$-isotropic line} if $l=yA$ for some $y\in\Is_G(\omega)$. We denote the space of all $G$-isotropic lines by $\PIs_G(\omega)$.
    \item An element $y\in \bar G^2$ is \defin{regular} if for every $g\in\bar G\bs\{0\}$, $yg\neq 0$.
\end{enumerate}
\end{df}

\noindent Notice that all $G$-isotropic elements are regular.

For an element $x=(x_1,x_2)^t\in\bar G^2$, we denote 
$$\mathcal N(x)=(\sigma(x_1)x_1+\sigma(x_2)x_2)^{\frac{1}{2}}\in B^\sigma_{\geq 0}.$$
The function $\mathcal N$ is well-defined because $\sigma(x_1)x_1+\sigma(x_2)x_2\in B^\sigma_{\geq 0}$ for all $x_1,x_2\in \bar G$. We call $\mathcal N(x)$ the \defin{$B^\sigma$-valued norm} of $x$. 

\begin{prop}\label{RegEl-h}
Let $x\in \bar G^2$ be a regular element, then $\mathcal N(x)\in B^{\sigma}_+.$ In particular, this holds for all elements of $\Is_G(\omega)$.
\end{prop}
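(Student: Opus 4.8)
The plan is to argue by contradiction: from a non-invertible $\mathcal N(x)$ I will manufacture a nonzero element $c\in\bar G$ with $xc=0$, contradicting the regularity of $x$.

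First I would reduce to a statement about $N:=\mathcal N(x)^2=\sigma(x_1)x_1+\sigma(x_2)x_2\in B^\sigma_{\geq 0}$. Since $\mathcal N(x)$ is obtained from $N$ by the functional calculus attached to the spectral theorem (Theorem~\ref{Spec_teo_B1}), $\mathcal N(x)$ is invertible in $B^\sigma$ if and only if $N$ is. So suppose $N$ is not invertible and write $N=\sum_{i=1}^k\lambda_i c_i$ by the spectral theorem, the $\lambda_i\geq 0$ pairwise distinct and the $c_i$ a complete orthogonal system of (nonzero) idempotents. Non-invertibility of $N$ means that some $\lambda_{i_0}=0$, so $c:=\sum_{i\colon\lambda_i=0}c_i$ is a nonzero idempotent. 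Being a square, $c\in B^\sigma_{\geq 0}$, and since $\bar G$ is closed and contains $B^\sigma_+$ (whose closure is $B^\sigma_{\geq 0}$) we get $c\in\bar G$. Orthogonality of the $c_i$ gives $Nc=0$, hence $cNc=0$.

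Next, using $\sigma(c)=c$ I would expand
\[
0=cNc=c\,\sigma(x_1)x_1\,c+c\,\sigma(x_2)x_2\,c=\sigma(x_1c)(x_1c)+\sigma(x_2c)(x_2c).
\]
Because $\bar G$ is a monoid, $x_1c,x_2c\in\bar G$, so each summand lies in $B^\sigma_{\geq 0}$ (it is $\theta(x_ic)$ for the map $\theta\colon\bar G\to B^\sigma_{\geq 0}$, $g\mapsto\sigma(g)g$). As $B^\sigma_{\geq 0}$ is a proper convex cone (Definition~\ref{df:R-wHerm}(2)), the vanishing of the sum forces $\sigma(x_1c)(x_1c)=0$ and $\sigma(x_2c)(x_2c)=0$.

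Finally I would apply the closed polar decomposition $\bpol\colon\OO(G,\sigma)\times B^\sigma_{\geq 0}\to\bar G$ to each $x_ic$: writing $x_ic=u_ib_i$ with $u_i\in\OO(G,\sigma)$ and $b_i\in B^\sigma_{\geq 0}$, and using $\sigma(u_i)u_i=1$, we get $0=\sigma(x_ic)(x_ic)=b_i^2$; since $B$ is weakly Hermitian, $B^\sigma$ has no nonzero elements of square zero (by~(\ref{R-no_nilp}) in Definition~\ref{df:R-wHerm}), so $b_i=0$ and $x_ic=0$. Thus $xc=0$ with $0\neq c\in\bar G$, contradicting regularity of $x$; hence $\mathcal N(x)\in B^\sigma_+$. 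The ``in particular'' claim is immediate, since every $G$-isotropic element is regular. I expect the only point needing genuine care to be the bookkeeping of closures --- checking that $c$ and the $x_ic$ really belong to $\bar G$ and that $\sigma$ interacts correctly with these products; everything else is the spectral theorem, properness of the cone, and the polar decomposition.
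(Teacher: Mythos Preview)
Your proof is correct and follows essentially the same route as the paper's: construct, via the spectral theorem, a nonzero idempotent $c\in B^\sigma_{\geq 0}$ annihilating $N=\mathcal N(x)^2$, then use properness of the cone and the closed polar decomposition to deduce $x_1c=x_2c=0$, contradicting regularity. You are in fact a bit more explicit than the paper about why $c$ and the $x_ic$ lie in $\bar G$ (which is needed for the regularity definition to apply), and about the reduction from $\mathcal N(x)$ to $\mathcal N(x)^2$; the paper packages the first step as a separate lemma (``if $b\in B^\sigma$ is not invertible there exists $b'\in B^\sigma_{\geq 0}\setminus\{0\}$ with $bb'=0$'') but the content is identical.
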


\begin{proof} First, we prove the following Lemma:
\begin{lem}\label{lem:invert}
Let $b\in B^{\sigma}$ be not invertible, then there exists $b'\in B^{\sigma}_{\geq 0}\bs\{0\}$ such that $bb'=0$
\end{lem}

\begin{proof}
Assume $b$ to be not invertible and consider its spectral decomposition $b=\sum_{i=1}^k\lambda_ic_i$ for some $(c_i)$ complete system of orthogonal idempotents. Since $b$ is not invertible, there exist $j\in\{1,\dots,k\}$ such that $\lambda_j=0$. Take $b'=c_j\in B^{\sigma}_{\geq 0}$.
\end{proof}

Since for every $g\in\bar G$, $\sigma(g)g\in B^{\sigma}_{\geq 0}$, $b:=\sigma(x_1)x_1+\sigma(x_2)x_2\in B^{\sigma}_{\geq 0}$. Assume $b$ is not invertible for some regular $x\in \bar G^2$. Take $b'$ as in Lemma~\ref{lem:invert}, then
$$0=b'bb'=\sigma(x_1b')x_1b'+\sigma(x_2b')x_2b'$$
and $\sigma(x_1b')x_1b', \sigma(x_2b')x_2b'\in B^{\sigma}_{\geq 0}$. Since $B^{\sigma}_{\geq 0}$ is a proper convex cone, 
$$\sigma(x_1b)x_1b = \sigma(x_2b)x_2b=0.$$
Since $x_1b,x_2b\in\bar G$, take its polar decomposition: $x_1b=u_1y_1$, $x_2b=u_2y_2$ where $y_1,y_2\in B^{\sigma}$, $u_1,u_2\in \OO(G,\sigma)$. Then $\sigma(x_1b)x_1b=y_1^2$, $\sigma(x_2b)x_2b=y_2^2$. Since $B^{\sigma}$ does not contain nilpotents, $y_1=y_2=0$. Therefore, $x_1b=x_2b=0$, i.e., $x=(x_1,x_2)^t$ is not regular. This contradicts to our assumption that $x$ is regular.
\end{proof}

\subsection{Action of \texorpdfstring{$\Sp_2(G,\sigma)$}{Sp2(G,sigma)} on \texorpdfstring{$G$}{G}-isotropic lines}

In this section, we study properties of the action of the group $\Sp_2(G,\sigma)$ on $\PIs_G(\omega)$.

\begin{prop}\label{stab1} The group $\Sp_2(G,\sigma)$ acts transitively on $\PIs_G(\omega)$. Moreover,
\begin{align*}
    \Stab_{\Sp_2(G,\sigma)}(1,0)^tA&:=\left\{
\begin{pmatrix}
x & xy \\
0 & \sigma(x)^{-1}
\end{pmatrix}
\midwd
x\in G, y\in B^{\sigma}
\right\}\text{, and}\\
\Stab_{\Sp_2(G,\sigma)}(0,1)^tA&:=\left\{
\begin{pmatrix}
x & 0 \\
zx & \sigma(x)^{-1}
\end{pmatrix}
\midwd
x\in G, z\in B^{\sigma}
\right\}.
\end{align*}

\end{prop}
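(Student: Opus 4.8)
The plan is to deduce everything from the orbit description $\Is_G(\omega)=\Sp_2(G,\sigma)(1,0)^t$ together with the characterization of generic matrices in Proposition~\ref{prop:generic.elements}.

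\textbf{Transitivity.} Since $\Sp_2(G,\sigma)\subseteq\GL_2(A)$ acts on $A^2$ by left multiplication, commuting with the right $A$-action, every $M$ satisfies $M(xA)=(Mx)A$; in particular $M$ maps $G$-isotropic lines to $G$-isotropic lines. As $(1,0)^t$ is $G$-isotropic hence regular, $(1,0)^tA$ is a $G$-isotropic line, and for an arbitrary $G$-isotropic line $l=yA$ with $y\in\Is_G(\omega)$ we may write $y=M(1,0)^t$, whence $l=M\bigl((1,0)^tA\bigr)$. Thus $\PIs_G(\omega)$ is a single $\Sp_2(G,\sigma)$-orbit.

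\textbf{Stabilizer of $(1,0)^tA$.} I would take $M=\begin{pmatrix}a&b\\c&d\end{pmatrix}\in\Sp_2(G,\sigma)$, so that $M$ fixes $(1,0)^tA$ iff $(a,c)^tA=(1,0)^tA$. The elementary step is to show this holds iff $c=0$ and $a\in A^\times$: the inclusion $(a,c)^t\in(1,0)^tA$ forces $c=0$, then $(1,0)^t\in(a,0)^tA$ forces $ah=1$ for some $h\in A$, so $a\in A^\times$ because a right-invertible element of a finite-dimensional algebra is invertible; conversely $c=0$, $a\in A^\times$ gives $(a,0)^tA=(1,0)^tA$ immediately. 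Once $a\in A^\times$, Proposition~\ref{prop:generic.elements} applies: $M$ is generic, $a\in G$, and $y:=a^{-1}b\in B^\sigma$; the symplectic relation $\sigma(a)d-\sigma(c)b=1$ with $c=0$ gives $d=\sigma(a)^{-1}$, so $M=\begin{pmatrix}a&ay\\0&\sigma(a)^{-1}\end{pmatrix}$, which is of the claimed form with $x=a$. Conversely, any $\begin{pmatrix}x&xy\\0&\sigma(x)^{-1}\end{pmatrix}=L(0)D(x)R(y)$ with $x\in G$, $y\in B^\sigma$ is generic, hence lies in $\Sp_2(G,\sigma)$, and sends $(1,0)^t$ to $(x,0)^t$, fixing the line since $x\in A^\times$.

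\textbf{Stabilizer of $(0,1)^tA$.} This is entirely analogous: $M$ fixes $(0,1)^tA$ iff $(b,d)^tA=(0,1)^tA$, which holds iff $b=0$ and $d\in A^\times$; then $a=\sigma(d)^{-1}\in A^\times$, Proposition~\ref{prop:generic.elements} gives $a\in G$ and $z:=ca^{-1}\in B^\sigma$, and $M=L(z)D(a)=\begin{pmatrix}x&0\\zx&\sigma(x)^{-1}\end{pmatrix}$ with $x=a$. Alternatively one avoids the repetition by noting $(0,1)^tA=I\cdot(1,0)^tA$ for $I=\begin{pmatrix}0&1\\-1&0\end{pmatrix}\in\Sp_2(G,\sigma)$, so the second stabilizer is $I\,\Stab_{\Sp_2(G,\sigma)}\bigl((1,0)^tA\bigr)\,I^{-1}$, and a one-line matrix computation converts the first formula into the second. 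The only point requiring a little care in the whole argument is the ring-theoretic equivalence $(a,c)^tA=(1,0)^tA\iff c=0,\ a\in A^\times$ — that is, pinning down when two regular elements span the same line over a noncommutative finite-dimensional algebra, and in particular upgrading one-sided to two-sided invertibility; everything else is a direct substitution into Proposition~\ref{prop:generic.elements} and the symplectic relations.
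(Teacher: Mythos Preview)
Your proof is correct and follows essentially the same approach as the paper's: both deduce $c=0$ and invertibility of the top-left entry from the line condition, then invoke Proposition~\ref{prop:generic.elements} to identify the stabilizing matrix as generic of the form $D(x)R(y)$. Your version is simply more careful about the ring-theoretic step $(a,c)^tA=(1,0)^tA\iff c=0,\ a\in A^\times$ and about the reverse inclusion, which the paper leaves implicit.
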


\begin{proof} The group $\Sp_2(G,\sigma)$ acts transitively on the space of $G$-isotropic lines since it acts transitively on $\Is_G(\omega)$.

We prove only the statement for the first stabilizer. The second one can be proved analogously. Since
$$\begin{pmatrix}
x & y \\
z & t
\end{pmatrix}
\begin{pmatrix}
1 \\ 0
\end{pmatrix}=\begin{pmatrix}
x \\ z
\end{pmatrix},$$
where $x\in G$ and $z=0$. Therefore, the matrix is generic and has the form $D(x)R(y)$.
\end{proof}

\begin{prop} The group $\KSp_2(A,\sigma)$ acts transitively on
    \begin{itemize}
    \item the space of elements of $\Is_G(\omega)$ of norm $1$. The stabilizer of every element under this action is trivial. In particular, $\Is_G(\omega)$ is homeomorphic to $\KSp_2(A,\sigma)$.
    \item the space $\PIs_G(\omega)$. The stabilizer of the line $(1,0)^tA$ agrees with the stabilizer of the line $(1,0)^tA$ and agrees with the group 
    $$\hat U:= \left\{\begin{pmatrix} u & 0 \\ 0 & u\end{pmatrix}\midwd u\in \OO(G,\sigma)\right\}\subset \KSp_2(A,\sigma).$$
    The space of $G$-isotropic lines can be identified with the homogeneous space $\KSp_2(A,\sigma)/\hat U$.
\end{itemize}
\end{prop}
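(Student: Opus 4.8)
The plan is to analyze the orbit map of $(1,0)^{t}$ under the \emph{compact} group $\KSp_2(G,\sigma)$ and show it covers the whole norm-$1$ locus by an open-and-closed argument. Write $\Is_G^{1}(\omega):=\{x\in\Is_G(\omega)\mid \mathcal N(x)=1\}$ and consider
\[
\mu\colon \KSp_2(G,\sigma)\longrightarrow \Is_G^{1}(\omega),\qquad M\longmapsto M\begin{pmatrix}1\\0\end{pmatrix}.
\]
This is well defined because $\sigma(M)^{t}M=\Id$ for $M\in\KSp_2(G,\sigma)$, so $\mathcal N(Mx)^{2}=\sigma(x)^{t}\sigma(M)^{t}Mx=\sigma(x)^{t}x=\mathcal N(x)^{2}$; in particular $\KSp_2(G,\sigma)$ preserves $\mathcal N$. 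First I would fix the manifold structures: by Theorem~\ref{thm:Sp_2-LieGroup} the homogeneous space $\Is_G(\omega)\cong\Sp_2(G,\sigma)/\Stab_{\Sp_2(G,\sigma)}((1,0)^{t})$ is a smooth manifold, and since $\mathcal N(x)\in B^{\sigma}_{+}\subseteq G$ (Propositions~\ref{RegEl-h} and~\ref{prop:PositiveElts}) and $\Is_G(\omega)$ is stable under the right $G$-action, the assignment $x\mapsto(x\mathcal N(x)^{-1},\mathcal N(x))$ is a diffeomorphism $\Is_G(\omega)\xrightarrow{\ \sim\ }\Is_G^{1}(\omega)\times B^{\sigma}_{+}$ (its inverse $(x',b)\mapsto x'b$ is smooth, and both $\mathcal N$ and the functional-calculus square root are smooth by the spectral calculus on $B^{\sigma}$). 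Hence $\Is_G^{1}(\omega)$ is a connected submanifold — connectedness following from connectedness of $\Sp_2(G,\sigma)$ (Proposition~\ref{prop:Sp2Connected}) and contractibility of $B^{\sigma}_{+}$ — with
\[
\dim\Is_G^{1}(\omega)=\dim\Sp_2(G,\sigma)-2\dim B^{\sigma}=\dim B=\dim\ksp_2(G,\sigma)=\dim\KSp_2(G,\sigma).
\]

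Next I would check that $\mu$ is injective and a local diffeomorphism. If $M=\begin{pmatrix}a&b\\-b&a\end{pmatrix}\in\KSp_2(G,\sigma)$ fixes $(1,0)^{t}$, then $a=1$ and $b=0$, so $M=\Id$; thus the stabilizer of every point of the orbit is trivial and $\mu$ is injective. At the identity, $d\mu$ sends $\begin{pmatrix}a&b\\-b&a\end{pmatrix}\in\ksp_2(G,\sigma)$ to $(a,-b)^{t}$, which vanishes only for the zero matrix, so $d\mu$ is injective at $\Id$; by $\KSp_2(G,\sigma)$-equivariance $\mu$ is an immersion, and since source and target have equal dimension it is a local diffeomorphism. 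Therefore $\mu(\KSp_2(G,\sigma))$ is open; it is also closed, because $\KSp_2(G,\sigma)$ is compact (Theorem~\ref{maxcomp-Sp_R}) and $\Is_G^{1}(\omega)$ is Hausdorff; as $\Is_G^{1}(\omega)$ is connected, $\mu$ is onto. So $\KSp_2(G,\sigma)$ acts transitively on $\Is_G^{1}(\omega)$ with trivial stabilizers, and being a continuous bijection from a compact space to a Hausdorff one, $\mu$ is a homeomorphism $\KSp_2(G,\sigma)\cong\Is_G^{1}(\omega)$.

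For the statement about lines, every $G$-isotropic line $l=yA$ contains the norm-$1$ element $y\mathcal N(y)^{-1}$, so transitivity of $\KSp_2(G,\sigma)$ on $\Is_G^{1}(\omega)$ immediately yields transitivity on $\PIs_G(\omega)$ (which also follows a priori from Proposition~\ref{stab1}). If $M=\begin{pmatrix}a&b\\-b&a\end{pmatrix}\in\KSp_2(G,\sigma)$ stabilizes the line $(1,0)^{t}A$, then the regular element $(a,-b)^{t}$ spans it, forcing $b=0$; then $M=\begin{pmatrix}a&0\\0&a\end{pmatrix}$ with $\sigma(a)a=1$, so $a$ is invertible with $a^{-1}=\sigma(a)$, whence $a\in\bar G\cap A^{\times}=G$ and $a\in\OO(G,\sigma)$, i.e. $M\in\hat U$; conversely $\hat U\subseteq\KSp_2(G,\sigma)$ clearly stabilizes $(1,0)^{t}A$, and the same computation applied to $M(0,1)^{t}=(b,a)^{t}$ shows it also equals the stabilizer of $(0,1)^{t}A$. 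Finally $\PIs_G(\omega)\cong\Sp_2(G,\sigma)/\Stab_{\Sp_2(G,\sigma)}((1,0)^{t}A)$ is Hausdorff, so the induced continuous bijection $\KSp_2(G,\sigma)/\hat U\to\PIs_G(\omega)$ out of the compact group $\KSp_2(G,\sigma)/\hat U$ is a homeomorphism.

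The step that needs genuine care, rather than bookkeeping, is the open-and-closed argument: one must know that $\Is_G^{1}(\omega)$ is a manifold of the expected dimension $\dim B$ and that $\mu$ is an immersion there, which is precisely what forces the \emph{compact} group $\KSp_2(G,\sigma)$ to fill out the whole norm-$1$ locus. An alternative, more explicit route would exhibit, for a norm-$1$ isotropic $x=(x_1,x_2)^{t}$, the candidate preimage $\begin{pmatrix}x_1&-x_2\\x_2&x_1\end{pmatrix}$: that it lies in $\OO_2(A,\sigma)$ and in $\Sp_2(A,\sigma)$ is immediate from $\mathcal N(x)=1$ together with isotropy, but verifying membership in $\Sp_2(G,\sigma)$ — and not merely in $\Sp_2(A,\sigma)$ — is the delicate point of that approach, which the immersion argument above avoids entirely.
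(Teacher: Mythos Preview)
Your proof is correct and takes a genuinely different route from the paper's. The paper proceeds by direct construction: given $x=(x_1,x_2)^t\in\Is_G^1(\omega)$, it simply writes down
\[
M=\begin{pmatrix}x_1&-x_2\\x_2&x_1\end{pmatrix}
\]
and asserts $M\in\KSp_2(G,\sigma)$ with $M(1,0)^t=x$; the stabilizer and line statements are then handled exactly as you do. Your approach instead runs an open--and--closed argument: you identify $\Is_G^1(\omega)$ as a connected manifold of dimension $\dim B$ via the splitting $\Is_G(\omega)\cong\Is_G^1(\omega)\times B^\sigma_+$, show the orbit map $\mu$ is an injective immersion of equal dimension, and conclude surjectivity from compactness of $\KSp_2(G,\sigma)$.

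What each approach buys is precisely what you flag in your final paragraph. The paper's explicit matrix is short, but the assertion that $M$ lies in $\Sp_2(G,\sigma)$ (and not merely in $\Sp_2(A,\sigma)\cap\OO_2(A,\sigma)$) is left unjustified at that point in the text; it can be filled in by observing that $(x,-Ix)$ is a $(G,\sigma)$-symplectic basis and invoking Proposition~\ref{trans_bas}, but that proposition appears only later. Your immersion argument sidesteps this entirely at the cost of importing the compactness of $\KSp_2(G,\sigma)$ from Theorem~\ref{maxcomp-Sp_R} and doing a dimension count; it is longer but self-contained with respect to the order of the exposition. Your closing remark that membership in $\Sp_2(G,\sigma)$ is ``the delicate point'' of the explicit approach is exactly right and in fact identifies a genuine elision in the paper's own argument.
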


\begin{proof}
    Let $x=(x_1,x_2)^t\in\Is_G(\omega)$ such that $\mathcal N(x)=1$. Then the matrix 
    $$M:=\begin{pmatrix}
        x_1 & -x_2\\
        x_2 & x_1
    \end{pmatrix}\in \KSp_2(A,\sigma)$$
    and $M(1,0)^t=x$. Since the action is transitive, it is enough to check that the stabilizer of $(1,0)^t$ is trivial. Since elements of $\KSp_2(A,\sigma)$ have form $M:=\begin{pmatrix}
        a & b \\
        -b & a
    \end{pmatrix}$, we obtain that $M(1,0)^t=(1,0)^t$ implies $a=1$, $b=0$, i.e., $M=\Id$.

    For a $G$-isotropic line $l=yA$, $y\in\Is_G(\omega)$, we take $x:=y\mathcal N(y)^{-1}\in \Is_G(\omega)$ and $\mathcal N(x)=1$, then $l=xA$. Since the action of $\KSp_2(A,\sigma)$ is transitive on the space of elements of $\Is_G(\omega)$ of norm $1$, it is also transitive on the space of $G$-isotropic lines.

    The stabilizer of the line $(1,0)^tA$ agrees with the stabilizer of the line $(0,1)^tA$ and agrees with the subgroup $\hat U\subset \KSp_2(A,\sigma)$. Thus, the space of $G$-isotropic lines can be identified with the space $\KSp_2(A,\sigma)/\hat U$
\end{proof}

\begin{cor}
    If $\OO(G,\sigma)$ is compact, then $\PIs_G(\omega)$ and the subspace of $\Is_G(\omega)$ of norm $1$ are compact manifolds.
\end{cor}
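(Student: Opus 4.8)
The plan is to deduce the statement from the identifications obtained in the preceding proposition together with the compactness results of Section~\ref{Herm_Lie}. That proposition identifies, via the orbit map $M\mapsto M(1,0)^t$, the subspace $\{x\in\Is_G(\omega)\mid\mathcal N(x)=1\}$ with $\KSp_2(G,\sigma)$, and $\PIs_G(\omega)$ with the homogeneous space $\KSp_2(G,\sigma)/\hat U$, where $\hat U\cong\OO(G,\sigma)$. So the heart of the matter is to show that $\KSp_2(G,\sigma)$ is compact: once this is known, $\KSp_2(G,\sigma)/\hat U$ is compact as the continuous image of a compact space, and both spaces are manifolds because $\KSp_2(G,\sigma)$ is a closed, hence Lie, subgroup of $\Sp_2(G,\sigma)$ and the quotient of a Lie group by a closed subgroup is a smooth manifold.

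To prove that $\KSp_2(G,\sigma)$ is compact I would rerun the argument in the proof of Theorem~\ref{maxcomp-Sp_R}, noting that it uses only weak Hermitianness of $B$ (in force throughout this section) and compactness of $\OO(G,\sigma)$, not the full Hermitian hypothesis. Concretely: for $M=\mtrx{a}{b}{-b}{a}\in\KSp_2(G,\sigma)$, the relation $\sigma(a)a+\sigma(b)b=1$ gives $\sigma(a)a=1-\sigma(b)b\in B^{\sigma}_{\geq 0}$ and $\sigma(b)b\in B^{\sigma}_{\geq 0}$, so $a,b$ lie in $D:=\{x\in\bar G\mid 1-\sigma(x)x\in B^{\sigma}_{\geq 0}\}$, and $M\mapsto(a,b)$ realizes $\KSp_2(G,\sigma)$ as a closed subset of $D\times D$. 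Finally $D=\theta^{-1}(K)$, where $\theta\colon\bar G\to B^{\sigma}_{\geq 0}$, $g\mapsto\sigma(g)g$, and $K:=B^{\sigma}_{\geq 0}\cap(1-B^{\sigma}_{\geq 0})$: the set $K$ is compact by Lemma~\ref{comp_cone} since $B^{\sigma}_{\geq 0}$ is a proper cone, and $\theta$ is proper because $\theta^{-1}(K)=\bpol(\OO(G,\sigma)\times K)$ is the image of a compact set under the continuous map $\bpol$, using compactness of $\OO(G,\sigma)$. Hence $D$, and therefore $\KSp_2(G,\sigma)$, is compact. This is essentially Proposition~\ref{comp_disc} and the first half of Theorem~\ref{maxcomp-Sp_R}, now under the weakened hypothesis.

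It then remains to assemble the pieces. The norm-$1$ subspace of $\Is_G(\omega)$ is the $\KSp_2(G,\sigma)$-orbit of $(1,0)^t$ in $A^2$, an immersed submanifold on which $\KSp_2(G,\sigma)$ acts transitively with trivial stabilizer; since the orbit map is then a continuous bijection from the compact group $\KSp_2(G,\sigma)$ onto it, it is a homeomorphism, so this subspace is a compact embedded submanifold of $\Is_G(\omega)$. Likewise $\PIs_G(\omega)$, as the homogeneous space $\Sp_2(G,\sigma)/\Stab_{\Sp_2(G,\sigma)}((1,0)^tA)$, is a smooth manifold, and its identification with $\KSp_2(G,\sigma)/\hat U$ exhibits it as a compact one.

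The only non-routine point I foresee is justifying that the compactness of $D$ and of $\KSp_2(G,\sigma)$ survives replacing the standing assumption ``$B$ Hermitian'' of Theorem~\ref{maxcomp-Sp_R} and Proposition~\ref{comp_disc} by the weaker ``$\OO(G,\sigma)$ compact'' used in the statement --- i.e.\ checking that transitivity on Jordan frames plays no role there. Everything else is standard bookkeeping with homogeneous spaces of Lie groups.
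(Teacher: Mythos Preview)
Your proposal is correct and matches the paper's intended argument: the corollary is stated without proof, as an immediate consequence of the preceding proposition together with compactness of $\KSp_2(G,\sigma)$. Your observation that the compactness arguments of Proposition~\ref{comp_disc} and Theorem~\ref{maxcomp-Sp_R} use only the properness of the cone $B^\sigma_{\geq 0}$ (weak Hermitianness) and compactness of $\OO(G,\sigma)$, not transitivity on Jordan frames, is exactly the point needed here and is correct.
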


\subsection{Action of \texorpdfstring{$\Sp_2(G,\sigma)$}{Sp2(G,sigma)} on pairs of \texorpdfstring{$G$}{G}-isotropic lines}

\begin{prop}
Two elements $u,v\in\Is_G(\omega)$ are linearly independent if and only if, up to action of $\Sp_2(G,\sigma)$, $u=(1,0)^t$, $v=(a,b)^t$ with $b\in G$. Moreover, if $\omega(u,v)=1$, then $a\in B^{\sigma}$, $b=1$.
\end{prop}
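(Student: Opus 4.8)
The plan is to put $u$ in standard position by transitivity and then read the constraints on $v$ off the matrix model of $\Sp_2(G,\sigma)$. First I would record the two invariance facts that make the phrase ``up to action'' meaningful: every element of $\Sp_2(G,\sigma)$ is an automorphism of $(A^2,\omega)$, hence preserves both linear independence of a pair and the value $\omega(u,v)$, and $\Is_G(\omega)$, being the $\Sp_2(G,\sigma)$-orbit of $(1,0)^t$, is invariant. Since $\Sp_2(G,\sigma)$ acts transitively on $\Is_G(\omega)$, I pick $N$ with $N(1,0)^t=u$ and replace $(u,v)$ by $(N^{-1}u,N^{-1}v)$; thus I may assume $u=(1,0)^t$, and then $v\in\Is_G(\omega)$ gives $v=M(1,0)^t=(M_{11},M_{21})^t$ for some $M\in\Sp_2(G,\sigma)$, so $v=(a,b)^t$ with $a,b\in\bar G$ (matrix coefficients of elements of $\Sp_2(G,\sigma)$ lie in $\bar G$, by the corollary to Theorem~\ref{thm:Sp_2-LieGroup}).

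Next I would establish the equivalence. If $u=(1,0)^t$ and $v=(a,b)^t$ are linearly independent, then solving $(0,1)^t=uc+vd$ forces $bd=1$; since $A$ is finite-dimensional and semisimple, a one-sided inverse is two-sided, so $b\in A^\times$, and as $\bar G\cap A^\times=G$ (every element of $\bar G$ has the form $wb_0$ with $w\in\OO(G_0,\sigma)\subseteq G$ and $b_0\in B^{\sigma}$ by surjectivity of the polar map onto $\bar G$, and invertibility forces $b_0\in(B^{\sigma})^\times\subseteq G$) we conclude $b\in G$. Conversely, if $b\in G$ then $z=u(z_1-ab^{-1}z_2)+vb^{-1}z_2$ for every $z=(z_1,z_2)^t\in A^2$, so $(u,v)$ is a basis and $u,v$ are linearly independent. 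Since linear independence is $\Sp_2(G,\sigma)$-invariant, this yields both directions of the stated equivalence.

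For the ``moreover'' clause I would use that $\omega(u,v)=1$ is preserved; with $u=(1,0)^t$ and $v=(a,b)^t$ a direct computation gives $\omega(u,v)=b$, so $b=1$ at once, i.e.\ $v=(a,1)^t$. To see $a\in B^{\sigma}$, note that isotropy of $v$ only gives $a\in\bar G\cap A^{\sigma}$, which is not enough; instead I apply $\Om\in\Sp_2(G,\sigma)$ and obtain $\Om v=(1,-a)^t\in\Is_G(\omega)$, so $\Om v$ is the first column of some $N\in\Sp_2(G,\sigma)$. Since $N_{11}=1\in A^\times$, the matrix $N$ is generic, and Proposition~\ref{prop:generic.elements} gives $N_{21}N_{11}^{-1}=-a\in B^{\sigma}$, whence $a\in B^{\sigma}$.

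The step I expect to be the main obstacle is exactly this last upgrade from $a\in\bar G\cap A^{\sigma}$ to $a\in B^{\sigma}$: one cannot argue via ``$R(a)\in\Sp_2(G,\sigma)$'', which would be circular, but must flip $v$ by $\Om$ so that its second coordinate becomes the invertible top-left entry of a matrix in $\Sp_2(G,\sigma)$ and then invoke the structure of generic elements. Everything else is routine bookkeeping with the matrix description~(\ref{eq:groups}) and the closure properties of $\bar G$ recorded earlier.
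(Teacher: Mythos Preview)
Your proof is correct. The first part --- normalizing $u=(1,0)^t$ and arguing $b\in\bar G\cap A^\times=G$ --- is essentially equivalent to the paper's approach: the paper approximates the matrix $g$ with $v=g(1,0)^t$ by generic elements and uses that $G$ is closed in $A^\times$, which amounts to the same closure fact you extract from the polar decomposition on $\bar G$.

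For the ``moreover'' clause, however, your route genuinely differs. The paper again passes to a limit: writing $v=g(1,0)^t$ and approximating $g$ by generic $g_n=L(y_n)D(x_n)R(z_n)$, it observes $x_n\to a$, $y_nx_n\to 1$, hence $y_n^{-1}=x_n(y_nx_n)^{-1}\to a$, and concludes $a\in B^\sigma$ because each $y_n^{-1}\in B^\sigma$ and $B^\sigma$ is closed in $A$. Your trick of applying $\Om$ to turn $(a,1)^t$ into $(1,-a)^t$, so that the resulting matrix in $\Sp_2(G,\sigma)$ has invertible top-left entry, is cleaner: it forces genericity directly and lets Proposition~\ref{prop:generic.elements} yield $-a\in B^\sigma$ at once, with no approximation needed. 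Both arguments ultimately rest on the structure of generic elements, but yours uses it once rather than threading it through a limiting sequence.
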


\begin{proof}
The group $\Sp_2(G,\sigma)$ acts transitively on $\Is_G(\omega)$, therefore, up to $\Sp_2(G,\sigma)$-action, we can assume $u=(1,0)^t$.

Since $u$ and $v$ are linearly independent, $b\in A^\times$ and $v=g(1,0)^t$ for some $g\in\Sp_2(G,\sigma)$. If $g=L(y)D(x)R(z)$ for some $x\in G$, $y,z\in B^{\sigma}$, then $v=(x,yx)^t=(1,y)^tx$. Therefore, $y\in (B^{\sigma})^\times\subseteq G$ and so $b=yx\in G$.

If $g$ is not generic, take a sequence $\{g_n\}$ of generic elements such that $g_n\to g$. Then $G\ni y_nx_n\to b\in A^\times$. Since $G$ is closed in $A^\times$ and $x_n,y_n\in G$, $b\in G$.

Let now $\omega(u,v)=1$, then $1=\omega(u,v)=yx$. So if $g$ generic, then $a=x=y^{-1}\in B^{\sigma}$. If $g$ is not generic, then $a=\lim(x_n)=\lim(y_n^{-1})$. But all $y_n^{-1}\in B^{\sigma}$ and $B^{\sigma}$ is closed in $A$, so $a\in B^{\sigma}$.
\end{proof}

\begin{cor}
If $x,y\in\Is_G(\omega)$ linearly independent, then $\omega(x,y)\in G$.
\end{cor}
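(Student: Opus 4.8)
The plan is to reduce to the normal form supplied by the preceding proposition and then invoke the defining property of $\Sp_2(G,\sigma)$ as the symmetry group of the standard symplectic form. Since $x,y\in\Is_G(\omega)$ are linearly independent, that proposition provides an element $g\in\Sp_2(G,\sigma)$ with $gx=(1,0)^t$ and $gy=(a,b)^t$ for some $a\in A$ and, crucially, $b\in G$. The only thing to extract from the corollary statement is the actual value of $\omega(x,y)$, and the point is that this value is precisely such a $b$.

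Next I would use that $\Sp_2(G,\sigma)\subseteq\Sp_2(A,\sigma)=\Aut(\omega)$, so that $\omega$ is preserved when the same group element is applied to both arguments: $\omega(x,y)=\omega(gx,gy)=\omega\bigl((1,0)^t,(a,b)^t\bigr)$. A direct computation with $\omega(p,q)=\sigma(p)^t\Omega q$, $\Omega=\begin{pmatrix}0 & 1\\ -1 & 0\end{pmatrix}$, and $\sigma(1)=1$, $\sigma(0)=0$ gives
$\omega\bigl((1,0)^t,(a,b)^t\bigr)=(1,0)\begin{pmatrix}0 & 1\\ -1 & 0\end{pmatrix}\begin{pmatrix}a\\ b\end{pmatrix}=b$.
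Hence $\omega(x,y)=b\in G$, which is exactly the assertion.

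There is essentially no obstacle: the corollary is an immediate consequence of the preceding proposition together with the $\Sp_2(G,\sigma)$-invariance of $\omega$. The single point requiring care is that one must evaluate $\omega$ on the pair $(gx,gy)$ obtained by applying the \emph{same} group element $g$ to both arguments, so that invariance applies verbatim and the value is genuinely the element $b$ appearing in the normal form, not a version of it twisted by conjugation.
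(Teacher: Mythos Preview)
Your proof is correct and is exactly the intended argument: the paper states this corollary without proof immediately after the proposition, and the one-line deduction you give---applying the same $g\in\Sp_2(G,\sigma)$ to both arguments, using $\omega$-invariance, and reading off $\omega((1,0)^t,(a,b)^t)=b\in G$---is precisely what is meant.
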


\begin{df}
A symplectic basis $(x,y)$ of $(A^2,\omega)$ is called \defin{$(G,\sigma)$-symplectic} if $x,y\in\Is_G(\omega)$.
\end{df}

\begin{prop}\label{trans_bas}
If $(x,y)$ is a $(G,\sigma)$-symplectic basis then there exists the unique $g\in \Sp_2(G,\sigma)$ such that $g(1,0)^t=x$, $g(0,1)^t=y$. In particular, $\Sp_2(G,\sigma)$ acts transitively on $(G,\sigma)$-symplectic bases.
\end{prop}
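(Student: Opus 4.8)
The plan is to produce the candidate $g$ as the matrix whose columns are $x$ and $y$, to prove that it lies in $\Sp_2(G,\sigma)$, and then to observe that uniqueness is automatic. I would actually dispose of uniqueness first, since it is immediate and independent of everything else: if $g,g'\in\Sp_2(G,\sigma)$ both send $(1,0)^t\mapsto x$ and $(0,1)^t\mapsto y$, then $g^{-1}g'$ fixes both standard basis vectors of $A^2$, which forces $g^{-1}g'=\Id$ by inspecting its columns. So the entire content sits in the existence of $g$, and more precisely in showing that it lands in $\Sp_2(G,\sigma)$ and not merely in $\Sp_2(A,\sigma)$.

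For existence I would not try to recognize $g=(x\mid y)$ directly inside $\Sp_2(G,\sigma)$: the symplectic-basis relations $\omega(x,x)=\omega(y,y)=0$, $\omega(x,y)=1$ put $g$ in $\Sp_2(A,\sigma)$ by the matrix description~\eqref{eq:groups}, but it is not a priori clear that an element of $\Sp_2(A,\sigma)$ with entries in $\bar G$ already lies in $\Sp_2(G,\sigma)$. Instead I would reduce to a normal form via the preceding proposition. Since $x,y\in\Is_G(\omega)$ are linearly independent (being a basis) and $\omega(x,y)=1$, that proposition supplies $h\in\Sp_2(G,\sigma)$ with $hx=(1,0)^t$ and $hy=(a,1)^t$ for some $a\in B^\sigma$. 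Then $R(a)$ is a generic element of $\Sp_2(G,\sigma)$ (it is the generic matrix $L(0)D(1)R(a)$ with $a\in B^\sigma$), so I would set $g:=h^{-1}R(a)\in\Sp_2(G,\sigma)$; a one-line column computation then gives $g(1,0)^t=h^{-1}(1,0)^t=x$ and $g(0,1)^t=h^{-1}(a,1)^t=y$.

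The transitivity statement comes for free afterwards: $\bigl((1,0)^t,(0,1)^t\bigr)$ is itself a $(G,\sigma)$-symplectic basis (we already know $(1,0)^t,(0,1)^t\in\Is_G(\omega)$), so given any two $(G,\sigma)$-symplectic bases, the corresponding elements $g$ and $g'$ from the existence step compose to $g'g^{-1}$ carrying one to the other. The only genuine obstacle in the whole argument is the passage from $\Sp_2(A,\sigma)$ down to $\Sp_2(G,\sigma)$; I expect this to be entirely absorbed by the normal-form statement of the previous proposition, whose proof already handled the delicate non-generic case using closedness of $B^\sigma$ in $A$.
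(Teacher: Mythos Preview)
Your proposal is correct and follows essentially the same route as the paper: reduce via the preceding proposition to the normal form $x=(1,0)^t$, $y=(a,1)^t$ with $a\in B^\sigma$, and then use the element $R(a)$ (the paper writes $R(-a)$, which sends $(x,y)$ to the standard basis rather than the other way, but the content is identical). Your treatment is slightly more complete in that you spell out the uniqueness argument, which the paper's proof leaves implicit.
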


\begin{proof}
We can assume $x=(1,0)^t$, $y=(a,1)^t$ and $a\in B^{\sigma}$. Take $g:=R(-a)$, then $R(-a)x=x$, $R(-a)y=(0,1)^t$.
\end{proof}

\begin{cor}\label{cor:(c,1)-isotropic}
    An element $(c,1)$ is $G$-isotropic if and only if $c\in B^\sigma$.
\end{cor}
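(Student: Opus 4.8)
The plan is to establish the two implications separately; both reduce to exhibiting, or reading off, an explicit element of $\Sp_2(G,\sigma)$, and neither poses a real difficulty.

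For the implication $c\in B^\sigma\Rightarrow(c,1)^t\in\Is_G(\omega)$, I would argue straight from the definition $\Is_G(\omega)=\Sp_2(G,\sigma)(1,0)^t$. Recall that $(0,1)^t=-I(1,0)^t$ is already known to be $G$-isotropic, so $-I\in\Sp_2(G,\sigma)$; and for $c\in B^\sigma$ the matrix $R(c)=\begin{pmatrix}1&c\\0&1\end{pmatrix}$ lies in $\Sp_2(G,\sigma)$ by construction. A one-line computation gives $R(c)(0,1)^t=(c,1)^t$, whence $(c,1)^t=R(c)(-I)(1,0)^t\in\Is_G(\omega)$.

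For the converse, suppose $(c,1)^t=g(1,0)^t$ with $g\in\Sp_2(G,\sigma)$. Then the first column of $g$ equals $(c,1)^t$, so $g=\begin{pmatrix}c&b\\1&d\end{pmatrix}$ for some $b,d$. The key step is to left-multiply by $-I\in\Sp_2(G,\sigma)$, which interchanges the rows up to sign: $M:=(-I)g=\begin{pmatrix}-1&-d\\c&b\end{pmatrix}\in\Sp_2(G,\sigma)$ now has the invertible element $-1$ in its upper-left corner. By Proposition~\ref{prop:generic.elements} such an $M$ is generic, and the product of its lower-left entry with the inverse of its upper-left entry lies in $B^\sigma$; this gives $c\cdot(-1)^{-1}=-c\in B^\sigma$, hence $c\in B^\sigma$. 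The only subtlety is to ensure one reads off membership in $B^\sigma$, and not merely in $A^\sigma$ (which would already follow from $M\in\Sp_2(A,\sigma)$) or in $\bar G$; passing to the matrix $M=(-I)g$ with invertible $(1,1)$-entry and invoking the genericity criterion of Proposition~\ref{prop:generic.elements} is exactly what yields the sharp conclusion. (Alternatively, the converse can be deduced from the preceding proposition on linearly independent pairs together with the description of $\Stab_{\Sp_2(G,\sigma)}((1,0)^t)$ as $\{R(x)\mid x\in B^\sigma\}$, but the route via genericity is shorter.)
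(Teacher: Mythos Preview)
Your argument is correct. The paper's proof packages the same computation slightly differently: it observes that $(c,1)^t$ is $G$-isotropic if and only if $\bigl((1,0)^t,(c,1)^t\bigr)$ is a $(G,\sigma)$-symplectic basis, which by Proposition~\ref{trans_bas} holds if and only if the matrix $R(c)=\begin{pmatrix}1&c\\0&1\end{pmatrix}$ lies in $\Sp_2(G,\sigma)$, i.e.\ $c\in B^\sigma$. Your direct approach---exhibiting $R(c)(-I)$ for one implication, and for the converse left-multiplying an arbitrary $g$ by $-I$ to force genericity and invoking Proposition~\ref{prop:generic.elements}---bypasses the symplectic-basis language but lands on the same genericity criterion; indeed the alternative you sketch at the end is exactly the paper's route. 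One small quibble: the clause ``$(0,1)^t=-I(1,0)^t$ is already known to be $G$-isotropic, so $-I\in\Sp_2(G,\sigma)$'' reads as though the first fact implies the second, which it does not; better to say directly that $-I=D(-1)\cdot I\in\Sp_2(G,\sigma)$ since $-1\in(B^\sigma)^\times\subseteq G$.
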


\begin{proof}
    An element $y=(c,1)$ is $G$-isotropic if and only if  $(x,y)$ is a $(G,\sigma)$-symplectic basis where $x=(1,0)$. This holds if and only if the matrix $\begin{pmatrix}
        1 & c \\
        0 & 1
    \end{pmatrix}\in\Sp_2(G,\sigma)$, i.e., $c\in B^\sigma$.
\end{proof}

\begin{cor}\label{trans2}
Let $xA$, $yA$ be two transverse isotropic lines with $x,y\in\Is_G(\omega)$. Then there exist $M\in \Sp_2(G,\sigma)$ and $y'\in\Is_G(\omega)$ such that $y'A=yA$ and $Mx=(1,0)^t$, $My'=(0,1)^t$. In particular, $\omega(x,y')=1$.
\end{cor}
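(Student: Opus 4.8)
The plan is to rescale $y$ within its line so that $(x,y')$ becomes a $(G,\sigma)$-symplectic basis, and then to apply Proposition~\ref{trans_bas}. First I would observe that, since $xA$ and $yA$ are transverse, the pair $(x,y)$ is itself a basis of $A^2$: the $G$-isotropic representatives of a fixed $G$-isotropic line form a single right $G$-orbit (this is visible from the shape of $\Stab_{\Sp_2(G,\sigma)}((1,0)^tA)$ in Proposition~\ref{stab1}, whose elements act on $(1,0)^t$ by right multiplication by an element of $G$), so linear independence of one pair of $G$-isotropic generators of the two lines passes to every such pair. Then, by the corollary asserting $\omega(u,v)\in G$ for linearly independent $u,v\in\Is_G(\omega)$, the element $\lambda:=\omega(x,y)$ lies in $G$; in particular it is invertible.

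Next I would set $y':=y\lambda^{-1}$. Since $\lambda^{-1}\in G$ and $G$ acts on $\Is_G(\omega)$ by right multiplication (as recorded just after the definition of $\Is_G(\omega)$), we get $y'\in\Is_G(\omega)$; since $\lambda^{-1}$ is a unit, $y'A=yA$ and $(x,y')$ is again a basis of $A^2$. Applying the sesquilinearity identity $\omega(x_1r_1,x_2r_2)=\sigma(r_1)\omega(x_1,x_2)r_2$ with $r_1=1$ and $r_2=\lambda^{-1}$ gives $\omega(x,y')=\omega(x,y)\lambda^{-1}=1$, while $\omega(x,x)=\omega(y',y')=0$ because $x,y'\in\Is_G(\omega)$. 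Hence $(x,y')$ is a $(G,\sigma)$-symplectic basis.

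Finally I would invoke Proposition~\ref{trans_bas} to obtain the unique $g\in\Sp_2(G,\sigma)$ with $g(1,0)^t=x$ and $g(0,1)^t=y'$, and set $M:=g^{-1}\in\Sp_2(G,\sigma)$; then $Mx=(1,0)^t$ and $My'=(0,1)^t$, and $\omega(x,y')=1$ has already been verified (it also follows from the $\Sp_2(G,\sigma)$-invariance of $\omega$ applied to the standard pair $(1,0)^t,(0,1)^t$).

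There is no serious obstacle here; the work is carried out by results established earlier. The one point that deserves a line of care is the reduction from ``$xA,yA$ transverse'' to ``$(x,y)$ a basis of $A^2$'', for which I would use the description of the line stabilizers in Proposition~\ref{stab1} as above; everything else is a short direct computation with the sesquilinear form, together with the right $G$-action on $\Is_G(\omega)$ and the invertibility of $\omega(x,y)$.
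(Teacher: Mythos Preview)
Your proposal is correct and follows essentially the same route the paper intends: since the corollary is stated without proof immediately after Proposition~\ref{trans_bas} and the corollary $\omega(x,y)\in G$, the implicit argument is exactly to rescale $y$ by $\omega(x,y)^{-1}\in G$ to obtain a $(G,\sigma)$-symplectic basis and then invoke Proposition~\ref{trans_bas}. Your extra care in justifying that the given $x,y$ (not just some representatives) are linearly independent, via the description of $\Stab_{\Sp_2(G,\sigma)}((1,0)^tA)$ in Proposition~\ref{stab1}, is a reasonable way to fill in that detail.
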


\begin{prop}\label{stab2} The group $\Sp_2(G,\sigma)$ acts transitively on pairs of transverse $G$-isotropic lines.
$$\Stab_{\Sp_2(G,\sigma)}((1,0)^tA,(0,1)^tA):=\left\{
\begin{pmatrix}
x & 0 \\
0 & \sigma(x)^{-1}
\end{pmatrix}
\mid
x\in G\right\}\cong G.$$
\end{prop}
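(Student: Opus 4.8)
The plan is to deduce transitivity immediately from Corollary~\ref{trans2} and then to identify the stabilizer of the standard pair by a short matrix computation. For transitivity, given two pairs of transverse $G$-isotropic lines $(\ell_1,\ell_2)$ and $(\ell_1',\ell_2')$, I would apply Corollary~\ref{trans2} to each: it yields $M_1,M_2\in\Sp_2(G,\sigma)$ with $M_1\ell_1=(1,0)^tA$, $M_1\ell_2=(0,1)^tA$ and $M_2\ell_1'=(1,0)^tA$, $M_2\ell_2'=(0,1)^tA$ (the statements $Mx=(1,0)^t$ and $My'=(0,1)^t$ there translate into these line equalities because $y'A=\ell_2$). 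Then $M_2^{-1}M_1$ carries the first pair to the second, so $\Sp_2(G,\sigma)$ acts transitively; in particular every pair of transverse $G$-isotropic lines is $\Sp_2(G,\sigma)$-equivalent to $\bigl((1,0)^tA,(0,1)^tA\bigr)$.

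To compute the stabilizer, I would take $M=\begin{pmatrix} a & b\\ c & d\end{pmatrix}\in\Sp_2(G,\sigma)$ preserving both $(1,0)^tA$ and $(0,1)^tA$. Since $(1,0)^tA=\{(r,0)^t\mid r\in A\}$, invariance means $M(r,0)^t=(ar,cr)^t$ lies in it for every $r\in A$, which forces $c=0$; symmetrically, invariance of $(0,1)^tA$ forces $b=0$. Hence $M=\diag(a,d)$, and the relations defining $\Sp_2(A,\sigma)$ in~\eqref{eq:groups} collapse to $\sigma(a)d=1$. As $A$ is finite-dimensional this one-sided inverse is two-sided, so $a\in A^\times$ and $d=\sigma(a)^{-1}$; Proposition~\ref{prop:generic.elements} (a matrix of $\Sp_2(G,\sigma)$ with invertible $(1,1)$-entry is generic, with that entry in $G$) then upgrades this to $a\in G$. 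Conversely, every $\diag(x,\sigma(x)^{-1})$ with $x\in G$ visibly fixes both standard lines, so the stabilizer is exactly the subgroup $\hat G=\{\diag(x,\sigma(x)^{-1})\mid x\in G\}$ of Section~\ref{sec:hat_G}.

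Finally, I would note that $x\mapsto\diag(x,\sigma(x)^{-1})$ is a bijection $G\to\hat G$ and a group homomorphism, since $\diag(x,\sigma(x)^{-1})\diag(y,\sigma(y)^{-1})=\diag\bigl(xy,\sigma(x)^{-1}\sigma(y)^{-1}\bigr)$ and $\sigma(x)^{-1}\sigma(y)^{-1}=\bigl(\sigma(y)\sigma(x)\bigr)^{-1}=\sigma(xy)^{-1}$; this gives $\Stab_{\Sp_2(G,\sigma)}\bigl((1,0)^tA,(0,1)^tA\bigr)\cong G$. All steps are routine given the earlier results, so I do not expect a genuine obstacle; the only point that uses the structure of $\Sp_2(G,\sigma)$ beyond that of $\Sp_2(A,\sigma)$ is the passage from $a\in A^\times$ to $a\in G$, supplied by Proposition~\ref{prop:generic.elements}.
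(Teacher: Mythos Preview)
Your proof is correct and follows the same plan as the paper: transitivity via Corollary~\ref{trans2}, then a short computation for the stabilizer. The paper's only variation is that it invokes Proposition~\ref{stab1} to write any $M$ fixing $(1,0)^tA$ as $D(x)R(y)$ with $x\in G$ already, and then observes $D(x)R(y)(0,1)^t=(xy,\sigma(x)^{-1})^t$ forces $y=0$, rather than computing from scratch and appealing to Proposition~\ref{prop:generic.elements} as you do.
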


\begin{proof} By Corollary~\ref{trans2}, every pair of transverse $G$-isotropic lines can be mapped to $((1,0)^tA,(0,1)^tA)$ by an element of $\Sp_2(G,\sigma)$. So $\Sp_2(G,\sigma)$ acts transitively on pairs of transverse $G$-isotropic lines.

By Proposition~\ref{stab1}, for every $M\in \Stab_{\Sp_2(G,\sigma)}((1,0)^tA,(0,1)^tA)$,
$M=D(x)R(y)$ for $x\in G$, $y\in B^{\sigma}$.
Moreover, $D(x)R(y)(0,1)^t=(xy,\sigma(x)^{-1})$. Therefore, $y=0$.
\end{proof}

\subsection{Action of \texorpdfstring{$\Sp_2(G,\sigma)$}{Sp2(G,sigma)} on positive triples of \texorpdfstring{$G$}{G}-isotropic lines}

Let $(x_1A,x_3A,x_2A)$ be a triple of pairwise transverse $G$-isotropic lines where all $x_i\in \Is_G(\omega)$. Because of transversality of $x_1A$ and $x_2A$, we can assume $\omega(x_1,x_2)=1$. Up to action of $\Sp_2(G,\sigma)$, we can assume $x_1=(1,0)^t$, $x_2=(0,1)^t$. We can also normalize $x_3$ so that $\omega(x_3,x_2)=1$. Then $x_3=(1,b)^t$, $b=\omega(x_1,x_3)\in (B^{\sigma})^\times$.

\begin{df}
A triple of pairwise transverse $G$-isotropic lines $(x_1A,x_3A,x_2A)$ is called \defin{positive} if $\omega(x_1,x_2)=\omega(x_3,x_2)=1$ and $\omega(x_1,x_3)\in B^{\sigma}_+$.
\end{df}

\begin{prop}
The definition of positivity of a triple of $G$-isotropic lines does not depend on the choice of $x_1,x_2,x_3$.
\end{prop}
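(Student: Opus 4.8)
The plan is to pin down exactly how much freedom there is in choosing representatives $x_1,x_2,x_3$ subject to the normalization $\omega(x_1,x_2)=\omega(x_3,x_2)=1$, and then to verify that $\omega(x_1,x_3)$ changes only by an automorphism of $B^\sigma$ of the form $\psi(g)$, $g\in G$, which by the earlier results preserves the cone $B^\sigma_+$.

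First I would record that two $G$-isotropic elements span the same $G$-isotropic line precisely when they differ by right multiplication by an element of $G$. Indeed, if $yA=y'A$ with $y=M(1,0)^t$ and $y'=M'(1,0)^t$ for $M,M'\in\Sp_2(G,\sigma)$, then $M^{-1}M'$ stabilizes the line $(1,0)^tA$, so by Proposition~\ref{stab1} it has the form $\begin{pmatrix} x & \ast \\ 0 & \sigma(x)^{-1}\end{pmatrix}$ with $x\in G$; hence $y'=M'(1,0)^t=M(x,0)^t=yx$. Consequently, any second admissible choice of representatives for the ordered triple of lines $(x_1A,x_3A,x_2A)$ has the form $(x_1g_1,\,x_3g_3,\,x_2g_2)$ with $g_1,g_2,g_3\in G$.

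Next I would impose the normalization on this new triple and use $\sigma$-sesquilinearity of $\omega$. From $\omega(x_1g_1,x_2g_2)=\sigma(g_1)\,\omega(x_1,x_2)\,g_2=\sigma(g_1)g_2=1$ we obtain $g_2=\sigma(g_1)^{-1}$, and from $\omega(x_3g_3,x_2g_2)=\sigma(g_3)g_2=1$ we obtain $g_2=\sigma(g_3)^{-1}$; hence $g_1=g_3=:g$. Therefore
\[
\omega(x_1g_1,\,x_3g_3)=\sigma(g)\,\omega(x_1,x_3)\,g=\psi(g)(\omega(x_1,x_3)).
\]
Since $g\in G$, the map $\psi(g)$ is a well-defined automorphism of $B^\sigma$ preserving $B^\sigma_+$ (Proposition~\ref{prop:JordanType+} together with the properties of the action $\psi$ of $G$ on $B^\sigma$ established in Section~\ref{discon-ext}), with inverse $\psi(g^{-1})$. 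In particular $\omega(x_1g_1,x_3g_3)\in B^\sigma$, so the condition makes sense, and $\omega(x_1,x_3)\in B^\sigma_+$ if and only if $\omega(x_1g_1,x_3g_3)\in B^\sigma_+$. This is the asserted independence of the choice of representatives; independence of the auxiliary element of $\Sp_2(G,\sigma)$ used to bring the triple into standard position is immediate, since $\omega$ is $\Sp_2(G,\sigma)$-invariant.

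The only genuinely delicate point is the first step, namely that a change of representative of a $G$-isotropic line is effected precisely by right multiplication by an element of $G$ (and not by some element of $\bar G$ or of $A^\times$ outside $G$); I expect this to follow directly from the explicit form of $\Stab_{\Sp_2(G,\sigma)}((1,0)^tA)$ in Proposition~\ref{stab1}, as indicated above. The remaining steps are a short sesquilinearity computation combined with the already-proved invariance of $B^\sigma_+$ under $\psi$.
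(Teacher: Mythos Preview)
Your proof is correct and follows essentially the same approach as the paper's own proof: both argue that another admissible choice $(y_1,y_2,y_3)$ satisfies $y_i=x_ig_i$ with $g_i\in G$, use the normalizations $\omega(y_1,y_2)=\omega(y_3,y_2)=1$ and sesquilinearity to force $g_1=g_3$, and conclude that $\omega(y_1,y_3)=\sigma(g_1)\,\omega(x_1,x_3)\,g_1$ lies in $B^\sigma_+$ iff $\omega(x_1,x_3)$ does. Your version is slightly more detailed in that you explicitly justify $g_i\in G$ via Proposition~\ref{stab1}, whereas the paper simply asserts this.
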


\begin{proof}
Let $y_i\in \Is_G(\omega)$ such that $y_iA=x_iA$ for all $i\in\{1,2,3\}$. Then $y_i=x_ig_i$ for some $g_i\in G$. Since $1=\omega(y_1,y_2)=\sigma(g_1)\omega(x_1,x_2)g_2=\sigma(g_1)g_2$, $g_2=\sigma(g_1)^{-1}$. Similarly $g_2=\sigma(g_3)^{-1}$. Therefore, $g_1=g_3$, and
$$\omega(y_1,y_3)=\sigma(g_1)\omega(x_1,x_3)g_1=\sigma(g_1)bg_1\in B^{\sigma}_+$$
if and only if $b\in B^{\sigma}_+$.
\end{proof}

\begin{rem}
For every transverse triple $(x_1A,x_3A,x_2A)$, up to action of $\Sp_2(G,\sigma)$, we can write $x_1=(1,0)^t,x_2=(0,1)^t$, and:
$$x_3A=\begin{pmatrix}
1 & 0 \\
b & 1
\end{pmatrix}
\begin{pmatrix}
1 \\
0
\end{pmatrix}A
=\begin{pmatrix}
1 \\
b
\end{pmatrix}A$$
for $b\in B^{\sigma}$. The triple is positive if and only if $b\in B^{\sigma}_+$. Matrices of the form $\begin{pmatrix}
1 & 0 \\
b & 1
\end{pmatrix}$ for $b\in B^{\sigma}_+$ form a subsemigroup of $\Sp_2(G,\sigma)$.
\end{rem}

\begin{lem}\label{trans3}
For every positive triple $(l_1,l_3,l_2)$ of isotropic lines, there exist elements $y_1,y_2,y_3\in\Is_G(\omega)$ such that
\begin{itemize}
\item $l_i=y_iA$ for $i\in\{1,2,3\}$;
\item $\omega(y_1,y_2)=1$;
\item $y_3=y_1+y_2$.
\end{itemize}
\end{lem}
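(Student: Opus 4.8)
The plan is to reduce the triple to the normal form recorded in the Remark above using the $\Sp_2(G,\sigma)$-action, and then exhibit $y_1,y_2,y_3$ explicitly by rescaling the chosen isotropic representatives of $l_1,l_2,l_3$ with suitable square roots of the cross-invariant $b$.

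First I would observe that it suffices to treat one representative per $\Sp_2(G,\sigma)$-orbit of positive triples: the action on $A^2$ preserves $\omega$, maps $\Is_G(\omega)$ into itself, sends $G$-isotropic lines to $G$-isotropic lines and is $\R$-linear, so if $(y_1,y_2,y_3)$ solves the problem for $M\cdot(l_1,l_3,l_2)$ then $(M^{-1}y_1,M^{-1}y_2,M^{-1}y_3)$ solves it for $(l_1,l_3,l_2)$. By the Remark preceding the lemma I may thus assume $l_1=x_1A$, $l_2=x_2A$, $l_3=x_3A$ with $x_1=(1,0)^t$, $x_2=(0,1)^t$, $x_3=(1,b)^t\in\Is_G(\omega)$, and positivity of the triple means $b\in B^{\sigma}_+$. (The same computation also runs invariantly, without this reduction: pick arbitrary isotropic representatives, use that the $\omega$-pairing of two linearly independent $G$-isotropic elements lies in $G$ and that $G$ is $\sigma$-stable to rescale so that $\omega(x_1,x_2)=\omega(x_3,x_2)=1$; expanding $x_3$ in the symplectic basis $(x_1,x_2)$ then gives $x_3=x_1+x_2b$ with $b=\omega(x_1,x_3)\in B^{\sigma}_+$.)

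Next, since $B$ is weakly Hermitian and $b\in B^{\sigma}_+$, the spectral calculus on $B^{\sigma}$ supplies $c:=b^{1/2}\in B^{\sigma}_+$ with $c^{2}=b$ and inverse $c^{-1}=b^{-1/2}\in B^{\sigma}_+$, and by Proposition~\ref{prop:PositiveElts} we have $B^{\sigma}_+\subseteq G_0\subseteq G$, so $c,c^{-1}\in G$. I would then set
\[
y_1:=x_1c^{-1}=\begin{pmatrix}c^{-1}\\0\end{pmatrix},\qquad y_2:=x_2c=\begin{pmatrix}0\\c\end{pmatrix},\qquad y_3:=x_3c^{-1}=\begin{pmatrix}c^{-1}\\c\end{pmatrix}.
\]
Since $\Is_G(\omega)$ is stable under the right $G$-action and $c,c^{-1}\in G$, all three $y_i$ lie in $\Is_G(\omega)$ and satisfy $y_iA=x_iA=l_i$. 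It then remains to check the two pairing identities: using $\sigma(c^{-1})=c^{-1}$ (because $c\in B^{\sigma}$) one gets $\omega(y_1,y_2)=\sigma(c^{-1})\,\omega(x_1,x_2)\,c=c^{-1}c=1$, and since $b,c,c^{-1}$ all commute (being functions of $b$), $bc^{-1}=c^{2}c^{-1}=c$, whence $y_1+y_2=(c^{-1},c)^t=(c^{-1},bc^{-1})^t=y_3$.

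There is no serious obstacle here; the one point worth isolating is that the two requirements $\omega(y_1,y_2)=1$ and $y_3=y_1+y_2$ force the rescaling factor $g$ (with $y_1=x_1g$) to satisfy $\sigma(g)\,b\,g=1$, and this equation is solvable inside $G$ precisely because the cross-invariant $b$ is a square of an invertible element of $B^{\sigma}$ and such square roots lie in $B^{\sigma}_+\subseteq G$. Everything else is the routine verification above.
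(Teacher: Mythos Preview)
Your proof is correct and follows essentially the same approach as the paper: rescale the chosen representatives by the square root $c=b^{1/2}\in B^\sigma_+\subseteq G$ of the cross-invariant, setting $y_1=x_1c^{-1}$, $y_2=x_2c$, $y_3=x_3c^{-1}$, and then verify the conditions directly. The only cosmetic difference is that you first reduce to explicit coordinates via the $\Sp_2(G,\sigma)$-action, whereas the paper carries out the same rescaling invariantly with respect to an arbitrary symplectic basis $(x_1,x_2)$.
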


\begin{proof}
Let $l_i=x_iA$ for some regular $x_i\in \Is_G(\omega)$. By transversality, $(x_1,x_2)$ form a basis. As above, we can assume $\omega(x_1,x_2)=1$, $x_3=x_1+x_2a$ for $a\in B^{\sigma}_+$. Take $c:=a^\frac{1}{2}\in B^{\sigma}_+\subseteq G$. Consider a new basis $y_1=x_1c^{-1}$, $y_2=x_2\sigma(c)$. Then,
$$\omega(y_1,y_2)=\omega(x_1c^{-1},x_2\sigma(c))=\sigma(c)^{-1}1\sigma(c)=1.$$
Moreover, $x_3=y_1c+y_2\sigma(c)^{-1}\sigma(c)c=y_1c+y_2c$. If we take $y_3:=x_3c^{-1}=y_1+y_2$, we get $y_3A=x_3A$.
\end{proof}

\begin{prop}\label{stab3} $\Sp_2(G,\sigma)$ acts transitively on the space of positive triples of pairwise transverse isotropic lines.

The stabilizer of the positive triple
$$\left(\begin{pmatrix}1 \\ 0\end{pmatrix}A,\begin{pmatrix}1 \\ 1\end{pmatrix}A,\begin{pmatrix}0 \\ 1\end{pmatrix}A\right)$$
in $\Sp_2(G,\sigma)$ coincides with the subgroup
$$\hat U=\left\{
\begin{pmatrix}
u & 0 \\
0 & u
\end{pmatrix}
\mid u\in \OO(G,\sigma)\right\}\cong \OO(G,\sigma).$$

The stabilizer of every positive triple of $G$-isotropic lines is conjugated to $\hat U$ in $\Sp_2(G,\sigma)$.
\end{prop}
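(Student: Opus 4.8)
The plan is to reduce an arbitrary positive triple of $G$-isotropic lines to the fixed triple $\big((1,0)^tA,(1,1)^tA,(0,1)^tA\big)$ by an element of $\Sp_2(G,\sigma)$, using the normalizations already available, and then to read off the stabilizer from the known stabilizer of a transverse pair. For transitivity I would start from a positive triple $(l_1,l_3,l_2)$ and invoke Lemma~\ref{trans3} to choose representatives $y_1,y_2,y_3\in\Is_G(\omega)$ with $l_i=y_iA$, $\omega(y_1,y_2)=1$, and $y_3=y_1+y_2$. Then $(y_1,y_2)$ is a $(G,\sigma)$-symplectic basis, so Proposition~\ref{trans_bas} supplies a (unique) $g\in\Sp_2(G,\sigma)$ with $g(1,0)^t=y_1$ and $g(0,1)^t=y_2$, whence $g(1,1)^t=y_1+y_2=y_3$; thus $g^{-1}$ carries $(l_1,l_3,l_2)$ to the standard triple. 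Since the standard triple is itself positive (a one-line check: $\omega((1,0)^t,(0,1)^t)=\omega((1,1)^t,(0,1)^t)=1$ and $\omega((1,0)^t,(1,1)^t)=1\in B^{\sigma}_+$), this shows that all positive triples lie in a single $\Sp_2(G,\sigma)$-orbit.

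Next I would compute the stabilizer of the standard triple. Any $M\in\Sp_2(G,\sigma)$ fixing all three lines in particular fixes each line of the transverse pair $\big((1,0)^tA,(0,1)^tA\big)$, so by Proposition~\ref{stab2} it has the form $M=\diag(x,\sigma(x)^{-1})$ with $x\in G$. Imposing that $M$ also fixes $(1,1)^tA$ amounts to requiring that $M(1,1)^t=(x,\sigma(x)^{-1})^t$ span the line $(1,1)^tA$; since $x\in G\subseteq A^\times$ is invertible, this happens exactly when $\sigma(x)^{-1}=x$, i.e.\ $\sigma(x)x=1$, i.e.\ $x\in\OO(G,\sigma)$. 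Conversely, every $\diag(u,u)$ with $u\in\OO(G,\sigma)$ visibly fixes the three lines. Hence the stabilizer equals $\hat U=\{\diag(u,u)\mid u\in\OO(G,\sigma)\}\cong\OO(G,\sigma)$. The last assertion, that the stabilizer of an arbitrary positive triple is conjugate to $\hat U$ in $\Sp_2(G,\sigma)$, is then immediate from transitivity: if $g$ carries the standard triple to $(l_1,l_3,l_2)$, the usual orbit--stabilizer argument gives $\Stab_{\Sp_2(G,\sigma)}(l_1,l_3,l_2)=g\,\hat U\,g^{-1}$.

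I do not expect a serious obstacle here: the only step with real content is the reduction in the first paragraph, and there the heavy lifting is already done by Lemma~\ref{trans3} and Proposition~\ref{trans_bas}. The one subtlety worth flagging in the stabilizer computation is that invertibility of $x$ is precisely what upgrades the inclusion ``$(x,\sigma(x)^{-1})^t\in(1,1)^tA$'' to the honest equality $\sigma(x)^{-1}=x$, so that the constraint beyond fixing the transverse pair is exactly membership in $\OO(G,\sigma)$ and nothing weaker.
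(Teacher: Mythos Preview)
Your proof is correct and follows essentially the same approach as the paper: reduce via Lemma~\ref{trans3} and Proposition~\ref{trans_bas} to the standard triple, then compute the stabilizer via Proposition~\ref{stab2} and the extra condition from fixing $(1,1)^tA$. Your additional remarks---verifying that the standard triple is positive, spelling out why invertibility of $x$ forces the equality $\sigma(x)^{-1}=x$, and making the orbit--stabilizer conjugacy explicit---are helpful clarifications but do not differ in substance from the paper's argument.
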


\begin{proof} Let $(l_1,l_3,l_2)$ be a positive triple. By Lemma~\ref{trans3}, there exist $y_i\in l_i$, $i\in\{1,2,3\}$ such that $l_i=y_iA$, $\omega(y_1,y_2)=1$ and $y_3=y_1+y_2$. By Proposition~\ref{trans_bas}, there exists $M\in\Sp_2(G,\sigma)$ such that $My_1=(1,0)^t$, $My_2=(0,1)^t$. Therefore, $Ml_1=(1,0)^tA$, $Ml_2=(0,1)^tA$, $Ml_3=(1,1)^tA$ i.e., every positive triple can be mapped to the standard positive triple $((1,0)^tA,(1,1)^tA,(0,1)^tA)$.

By Proposition~\ref{stab2}, for every $M$ that stabilizes $\left(\begin{pmatrix}1 \\ 0\end{pmatrix}A,\begin{pmatrix}1 \\ 1\end{pmatrix}A,\begin{pmatrix}0 \\ 1\end{pmatrix}A\right)$, $M=D(x)$ for $x\in G$. Moreover, $M(1,1)^t=(x,\sigma(x)^{-1})$. Therefore, $x=\sigma(x)^{-1}$, i.e., $x\in \OO(G,\sigma)$.
\end{proof}

\begin{prop}
Positivity of triples is invariant under cyclic permutations, i.e., if $(l_1,l_3,l_2)$ is positive, then $(l_2,l_1,l_3)$ is positive as well.
\end{prop}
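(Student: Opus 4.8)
The plan is to reduce the configuration to the normal form provided by Lemma~\ref{trans3} and then simply read off a set of representatives witnessing positivity of the cyclically shifted triple.

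Starting from a positive triple $(l_1,l_3,l_2)$, I would first invoke Lemma~\ref{trans3} to choose $y_1,y_2,y_3\in\Is_G(\omega)$ with $l_i=y_iA$, $\omega(y_1,y_2)=1$ and $y_3=y_1+y_2$; since $\omega(y_1,y_2)=1\in A^\sigma$, one also has $\omega(y_2,y_1)=-1$, and of course $\omega(y_i,y_i)=0$. To test positivity of $(l_2,l_1,l_3)$ against the definition, I would write it in the shape $(x_1A,x_3A,x_2A)$ occurring there, i.e. with $x_1A=l_2$, $x_3A=l_1$ and $x_2A=l_3$, and set
\[
x_1:=-y_2,\qquad x_2:=y_3=y_1+y_2,\qquad x_3:=y_1 .
\]
All three lie in $\Is_G(\omega)$: $x_2=y_3$ and $x_3=y_1$ directly, while $x_1=y_2\cdot(-1)$ is $G$-isotropic because $-1\in(B^{\sigma})^\times\subseteq G$ (as $1\in B$, hence $1\in B^{\sigma}$) and $\Is_G(\omega)$ is stable under the right $G$-action; moreover $x_1A=l_2$, $x_2A=l_3$, $x_3A=l_1$, and pairwise transversality of $l_2,l_1,l_3$ is inherited from that of $l_1,l_3,l_2$.

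It then remains to compute, using bilinearity of $\omega$ together with $\omega(y_i,y_i)=0$ and $\omega(y_2,y_1)=-1$,
\[
\omega(x_1,x_2)=-\omega(y_2,y_1+y_2)=-\omega(y_2,y_1)=1,\qquad \omega(x_3,x_2)=\omega(y_1,y_1+y_2)=\omega(y_1,y_2)=1,
\]
and $\omega(x_1,x_3)=-\omega(y_2,y_1)=1\in B^{\sigma}_+$ (indeed $1=1^2$ with $1\in(B^{\sigma})^\times$, cf.~Corollary~\ref{theta_Bsym+}); by the definition of a positive triple this shows that $(l_2,l_1,l_3)$ is positive. I do not expect any genuine obstacle here: the content is entirely bookkeeping --- keeping track of which line occupies the ``outer'' versus the ``middle'' slot of a positive triple, and fixing the signs of the representatives so that all three pairings evaluate to the required values --- the only structural input being that negating a $G$-isotropic element keeps it $G$-isotropic. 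Alternatively, one could observe that $\Sp_2(G,\sigma)$ preserves $\omega$ (hence the property of being a positive triple) and, using transitivity on positive triples (Proposition~\ref{stab3}), reduce to verifying the claim for the single standard triple $\bigl((1,0)^tA,(1,1)^tA,(0,1)^tA\bigr)$, where the same short computation applies.
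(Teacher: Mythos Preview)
Your proof is correct and follows essentially the same approach as the paper: choose representatives of the three lines and verify the positivity conditions by computing the three values of $\omega$. The only difference is that you first invoke Lemma~\ref{trans3} to normalize so that $y_3=y_1+y_2$, which forces the eventual invariant $\omega(x_1,x_3)$ to be $1$; the paper works directly from the definition, carrying along the element $b=\omega(x_1,x_3)\in B^{\sigma}_+$, sets $y_1=x_1b^{-1}$, $y_2=-x_2$, $y_3=x_3$, and obtains $\omega(y_2,y_1)=b^{-1}\in B^{\sigma}_+$. Your normalization makes the arithmetic slightly cleaner, while the paper's version has the mild advantage of displaying how the invariant transforms ($b\mapsto b^{-1}$) under the cyclic shift, but the two arguments are otherwise the same idea.
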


\begin{proof}
The triple $(l_1,l_3,l_2)$ is positive if and only if there exist $x_1,x_2,x_3\in\Is_G(\omega)$ such that $l_i=x_iA$, $i\in\{1,2,3\}$ and $\omega(x_1,x_2)=\omega(x_3,x_2)=1$, $\omega(x_1,x_3)=b\in B^{\sigma}_+$.

The triple $(l_2,l_1,l_3)$ is positive if and only if there exist $y_1,y_2,y_3\in\Is_G(\omega)$ such that $l_i=y_iA$, $i\in\{1,2,3\}$ and $\omega(y_2,y_3)=\omega(y_1,y_3)=1$, $\omega(y_2,y_1)\in B^{\sigma}_+$.

We take $y_1=x_1b^{-1}$, $y_2=-x_2$, $y_3=x_3$, then 
\begin{align*}
    \omega(y_2,y_3)& =\omega(-x_2,x_3)=1\\
    \omega(y_1,y_3)& =\omega(x_1b^{-1},x_3)=1\\
    \omega(y_2,y_1)& =\omega(-x_2,x_1b^{-1})=b^{-1}\in B^{\sigma}_+.   \qedhere
\end{align*}
\end{proof}

\subsection{Invariant of a positive quadruple of \texorpdfstring{$G$}{G}-isotropic lines}\label{pos_quadr}

\begin{df}
A quadruple $(l_1,l_3,l_2,l_4)$ of pairwise transverse $G$-isotropic lines is called \defin{positive} if the triples $(l_1,l_3,l_2)$, $(l_2,l_4,l_1)$ are positive.
\end{df}

The following proposition follows immediately from Proposition~\ref{stab3} and the spectral theorem:

\begin{prop}\label{diagon} Let $(l_1,l_3,l_2,l_4)$ be a positive quadruple of $G$-isotropic lines. Then there exist $y_1,\dots,y_4\in \Is_G(\omega)$ such that $l_i=y_iA$, $y_3=y_1+y_2$, $y_4=y_1-y_2a$, $a\in B^{\sigma}_+$.

For any such choice of $(y_1,y_2,y_3,y_4)$, there exists a Jordan frame $(e_i)_{i=1}^n$ of $B^{\sigma}$, where $n=\rk(B^{\sigma})$ and a unique tuple $(\lambda_1,\dots,\lambda_n)$ with $\lambda_1\geq\dots\geq\lambda_n>0$ such that $a=\sum_{i=1}^n\lambda_ie_i$.
\end{prop}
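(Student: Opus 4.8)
The plan is to normalize the first three lines using transitivity on positive triples, read off the fourth line, and conclude with the spectral theorem.

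First I would normalize. Since $(l_1,l_3,l_2)$ is a positive triple, Proposition~\ref{stab3} provides $M\in\Sp_2(G,\sigma)$ with $Ml_1=(1,0)^tA$, $Ml_2=(0,1)^tA$, $Ml_3=(1,1)^tA$. Every conclusion to be proved ($l_i=y_iA$, $y_3=y_1+y_2$, $y_4=y_1-y_2a$, $a\in B^\sigma_+$) is preserved when the $y_i$ are replaced by $My_i$, and $\Is_G(\omega)$ is a single $\Sp_2(G,\sigma)$-orbit, so it suffices to treat the normalized quadruple and then apply $M^{-1}$. Thus I may assume $l_1=(1,0)^tA$, $l_2=(0,1)^tA$, $l_3=(1,1)^tA$ and set $y_1:=(1,0)^t$, $y_2:=(0,1)^t$, $y_3:=y_1+y_2\in\Is_G(\omega)$, so that $\omega(y_1,y_2)=1$.

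Next I would determine $l_4$. Transversality of $l_4$ to $l_1$ lets me write $l_4=(c,1)^tA$; transversality to $l_2$ forces $c\in A^\times$; and $G$-isotropy of $l_4$ together with Corollary~\ref{cor:(c,1)-isotropic} forces $c\in(B^\sigma)^\times$. Now I use that $(l_2,l_4,l_1)$ is a positive triple: the element $\left(\begin{smallmatrix}0&-1\\1&0\end{smallmatrix}\right)\in\Sp_2(G,\sigma)$ sends $(l_2,l_4,l_1)$ to $\bigl((1,0)^tA,\,(1,-c)^tA,\,(0,1)^tA\bigr)$, so by the Remark following Proposition~\ref{stab3} its positivity is equivalent to $-c\in B^\sigma_+$. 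Writing $c=-a_0$ with $a_0\in B^\sigma_+$, we get $l_4=(-a_0,1)^tA=(1,-a_0^{-1})^tA$; since $B^\sigma_+$ is stable under inversion, $a:=a_0^{-1}\in B^\sigma_+$ and $y_4:=(1,-a)^t=y_1-y_2a\in\Is_G(\omega)$ represents $l_4$. Applying $M^{-1}$ proves the first assertion.

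For the second assertion, note first that, given a choice of $(y_1,\dots,y_4)$ as above, the element $a$ is well defined by $y_4=y_1-y_2a$ because $(y_1,y_2)$ is a basis of $A^2$ (being representatives of the transverse lines $l_1,l_2$). Then the spectral theorem (Theorem~\ref{Spec_teo_B2}) applied to $a\in B^\sigma$ yields a Jordan frame $(e_i)_{i=1}^n$ with $n=\rk(B^\sigma)$ and real eigenvalues $\lambda_1,\dots,\lambda_n$, determined with their multiplicities, such that $a=\sum_{i=1}^n\lambda_ie_i$; since $a\in B^\sigma_+$ all $\lambda_i>0$. Reordering them so that $\lambda_1\ge\dots\ge\lambda_n>0$ gives the unique tuple claimed, while the Jordan frame itself need not be unique.

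I do not expect a genuine obstacle: the only delicate point is the sign and normalization bookkeeping in the third paragraph --- keeping straight which line is the central entry of each positive triple, which scalars are used when switching between representatives of a given line, and the fact that it is $a_0^{-1}$ rather than $a_0$ that appears in the normal form $y_4=y_1-y_2a$. Once the conventions are fixed, everything reduces to a short $2\times2$ matrix computation, which is exactly why the statement is announced as an immediate consequence of Proposition~\ref{stab3} and the spectral theorem.
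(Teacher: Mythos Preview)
Your argument is correct and is exactly the approach the paper indicates: the proposition is stated as an immediate consequence of Proposition~\ref{stab3} (transitivity on positive triples) and the spectral theorem, and you have simply written out those two steps in detail, including the sign bookkeeping for $l_4$. The only cosmetic point is that the characterization ``positive iff $b\in B^\sigma_+$'' you invoke is the Remark preceding Lemma~\ref{trans3}, not one following Proposition~\ref{stab3}; the content is the same.
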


\begin{rem}
\begin{itemize}
\item The $n$-tuple $(\lambda_1,\dots,\lambda_n)$ with $\lambda_1\geq\dots\geq\lambda_n>0$ does not depend on the choice of $(y_1,y_2,y_3,y_4)$. 
We denote the $n$-tuple $$[l_1,l_3,l_2,l_4]:=(\lambda_1,\dots,\lambda_n)$$
and call it the \defin{cross--ratio} of the quadruple lines $(l_1,l_3,l_2,l_4)$. It is invariant under the action of $\Sp_2(G,\sigma)$ on the space of positive quadruples of $G$-isotropic lines. Moreover, if $B$ is Hermitian, two positive quadruples are related by an element of $\Sp_2(A,\sigma)$ if and only if their cross--ratios agree.
\item Although the tuple $(\lambda_1,\dots,\lambda_n)$ is completely determined by the quadruple $(l_1,l_3,l_2,l_4)$, the Jordan frame $(e_i)_{i=1}^n$ is in general not unique.
\end{itemize}
\end{rem}

\begin{prop}
Positivity of a quadruple is an invariant under cyclic permutations, i.e., if a quadruple $(l_1,l_3,l_2,l_4)$ is positive, then the quadruple $(l_3,l_2,l_4,l_1)$ is positive as well. Moreover, if $[l_1,l_3,l_2,l_4]=(\lambda_1,\dots,\lambda_n)$, then $$[l_3,l_2,l_4,l_1]=(\lambda_n^{-1},\dots,\lambda_1^{-1}).$$
\end{prop}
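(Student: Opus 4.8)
The plan is to reduce the statement to the normal form of Proposition~\ref{diagon} and then exhibit, for the cyclically shifted quadruple, explicit $G$-isotropic representatives that are again in that normal form. Starting from a positive quadruple $(l_1,l_3,l_2,l_4)$, pick $y_1,\dots,y_4\in\Is_G(\omega)$ as in Proposition~\ref{diagon}, so that $l_i=y_iA$, $\omega(y_1,y_2)=1$, $y_3=y_1+y_2$ and $y_4=y_1-y_2a$ with $a\in B^\sigma_+$ having decreasing eigenvalue tuple $(\lambda_1,\dots,\lambda_n)$. Since $1+a\in B^\sigma_+$, the element $c:=(1+a)^{-1/2}\in B^\sigma_+\subseteq G$ is well defined by the spectral calculus and commutes with $a$. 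I would then set
\[
w_1:=y_3c,\quad w_2:=-y_4c,\quad w_3:=w_1+w_2,\quad w_4:=w_1-w_2a^{-1},
\]
and record the simplifications $w_3=y_2(1+a)^{1/2}$ and $w_4=y_1c(1+a^{-1})$; these, together with $-1\in(B^\sigma)^\times\subseteq G$ and the stability of $G$-isotropic lines under right multiplication by $G$, show $w_1\in l_3$, $w_2\in l_4$, $w_3\in l_2$, $w_4\in l_1$ and that all $w_i$ are $G$-isotropic.

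The key computation is $\omega(w_1,w_2)=\sigma(c)\,\omega(y_3,-y_4)\,c=\sigma(c)(1+a)c=1$, which uses $\sigma$-sesquilinearity, the elementary identity $\omega(v,u)=-\sigma(\omega(u,v))$ (immediate from $\Omega^t=-\Omega$), and that $c$ commutes with $a$. Combined with $w_3=w_1+w_2$ by construction and $w_4=w_1-w_2a^{-1}$ with $a^{-1}=\sum_i\lambda_i^{-1}e_i\in B^\sigma_+$, this exhibits $(w_1,w_2,w_3,w_4)$ as a datum in the normal form of Proposition~\ref{diagon} for the reordered quadruple $(l_3,l_2,l_4,l_1)$. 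It remains to check that such a datum indeed witnesses positivity, which reduces to verifying the two defining triples: $(l_3,l_2,l_4)$ is positive because the representatives $(w_1,w_3,w_2)$ satisfy $\omega(w_1,w_2)=\omega(w_3,w_2)=1$ and $\omega(w_1,w_3)=1\in B^\sigma_+$; and $(l_4,l_1,l_3)$ is positive because, after rescaling, the representatives $(-w_2,w_4a,w_1)$ satisfy $\omega(-w_2,w_1)=\omega(w_4a,w_1)=1$ and $\omega(-w_2,w_4a)=a\in B^\sigma_+$. (Pairwise transversality is inherited from $(l_1,l_3,l_2,l_4)$.)

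Finally, having presented $(l_3,l_2,l_4,l_1)$ in the normal form of Proposition~\ref{diagon} with parameter $a^{-1}$, the remark on independence of the eigenvalue tuple from the chosen representatives yields $[l_3,l_2,l_4,l_1]=(\lambda_n^{-1},\dots,\lambda_1^{-1})$, since the eigenvalues of $a^{-1}$ are $\lambda_n^{-1}\ge\dots\ge\lambda_1^{-1}>0$. The only genuinely nonroutine point is spotting the rescaling factor $c=(1+a)^{-1/2}$: it is precisely what normalizes $\omega(w_1,w_2)$ to $1$ while simultaneously keeping $w_3$ on $l_2$ and $w_4$ on $l_1$; once it is in place, everything else is bookkeeping with $\sigma$-sesquilinearity and the functional calculus on the formally real Jordan algebra $B^\sigma$, together with the (easy, essentially definitional) converse to Proposition~\ref{diagon} used above.
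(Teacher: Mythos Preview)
Your proof is correct and follows essentially the same route as the paper: both introduce the rescaling factor $c=(1+a)^{-1/2}$ and the representatives $y_3c$, $-y_4c$, $y_2(1+a)^{1/2}$, $y_1(1+a)^{1/2}a^{-1}$, and then read off both positivity of the shifted quadruple and the new cross--ratio parameter $a^{-1}$. The only cosmetic difference is that you first \emph{define} $w_3,w_4$ by the normal-form relations and then verify they land on $l_2,l_1$, whereas the paper first places $x_1,x_2$ on $l_1,l_2$ and then checks the relations; your ordering has the mild advantage that positivity of both triples falls out of $\omega(w_1,w_2)=1$ immediately, without a separate ``analogously'' for $(l_4,l_1,l_3)$.
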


\begin{proof}
Let $(l_1,l_3,l_2,l_4)$ be a positive quadruple of isotropic lines. Then up to action of $\Sp_2(G,\sigma)$, there exist $y_1,\dots,y_4\in \Is_G(\omega)$ such that $l_i=y_iA$, $y_3=y_1+y_2$, $y_4=y_1-y_2a$, $a=\sum_{i=1}^n\lambda_ie_i$ with $\lambda_1\geq\dots\geq\lambda_n>0$.

The triple $(l_3,l_2,l_4)$ is positive. Indeed, since $a+1\in B^{\sigma}_+$, it is invertible. So we can take
\begin{align*}
x_3&:=y_3(a+1)^{-\frac{1}{2}}\in l_3,\\
x_4&:=-y_4(a+1)^{-\frac{1}{2}}\in l_4,\\
x_2&:=y_2(a+1)^{\frac{1}{2}}\in l_2.    
\end{align*}
Therefore,
\begin{align*}
\omega(x_3,x_4)&=\omega((y_1+y_2)(a+1)^{-\frac{1}{2}},-(y_1-y_2a)(a+1)^{-\frac{1}{2}})\\
&=-(a+1)^{-\frac{1}{2}}(-a-1)(a+1)^{-\frac{1}{2}}=1,\\
\omega(x_2,x_4)&=\omega(y_2(a+1)^{\frac{1}{2}},-(y_1-y_2a)(a+1)^{-\frac{1}{2}})=1,\\
\omega(x_3,x_2)&=\omega((y_1+y_2)(a+1)^{-\frac{1}{2}},y_2(a+1)^{\frac{1}{2}})=1\in B^{\sigma}_+.
\end{align*}
Analogously, the triple $(l_4,l_1,l_3)$ is positive as well. So we get that the quadruple $(l_3,l_2,l_4,l_1)$ is positive.

Take $x_1:=y_1(a+1)^{\frac{1}{2}}\in l_1$. Then an easy calculation shows that
$$x_2=x_3+x_4,\; x_1=x_3-x_4a^{-1}$$
where $a^{-1}=\sum_{i=1}^n\lambda_i^{-1}e_i$. Therefore, $[l_3,l_2,l_4,l_1]=(\lambda_n^{-1},\dots,\lambda_1^{-1})$.
\end{proof}

\subsection{Tangent space to the space of isotropic lines}

In this section, we describe the tangent spaces to $\Is_G(\omega)$ and $\PIs_G(\omega)$.

\begin{prop}
    Let $v\in\Is_G(\omega)$, let $v'\in\Is_G(\omega)$ such that $(v,v')$ is a $(G,\sigma)$-symplectic basis. Then
    $$T_v\Is_G(\omega)=\{w\in A^2\mid \omega(w,v)\in B^\sigma,\;\omega(w,v')\in B\}.$$
    Moreover, this description does not depend on $v'$.
\end{prop}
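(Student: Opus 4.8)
The plan is to reduce everything to the standard base point $v_0:=(1,0)^t$, with distinguished symplectic partner $v_0':=(0,1)^t$, compute both sides there by hand, and then transport the identity by equivariance. First I would use that $\Is_G(\omega)=\Sp_2(G,\sigma)\cdot v_0\cong\Sp_2(G,\sigma)/\Stab_G(v_0)$ is a smooth homogeneous space of the Lie group $\Sp_2(G,\sigma)$ (Theorem~\ref{thm:Sp_2-LieGroup}): the orbit map $g\mapsto gv_0$ is then a submersion whose differential at the identity is the linear map $X\mapsto Xv_0$ on $\spp_2(B,\sigma)$, so $T_{v_0}\Is_G(\omega)=\{Xv_0\mid X\in\spp_2(B,\sigma)\}$. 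By Proposition~\ref{prop:LieAlgOfSp2}, writing $X=\left(\begin{smallmatrix}x & z\\ y & -\sigma(x)\end{smallmatrix}\right)$ with $x\in B$ and $y,z\in B^\sigma$, one has $Xv_0=(x,y)^t$, hence $T_{v_0}\Is_G(\omega)=\{(w_1,w_2)^t\mid w_1\in B,\ w_2\in B^\sigma\}$. On the other hand, from $\omega(w,v)=\sigma(w)^t\Omega v$ a direct computation gives $\omega(w,v_0)=-\sigma(w_2)$ and $\omega(w,v_0')=\sigma(w_1)$ for $w=(w_1,w_2)^t$, and since $B$ is $\sigma$-closed the conditions $\omega(w,v_0)\in B^\sigma$, $\omega(w,v_0')\in B$ are exactly $w_2\in B^\sigma$, $w_1\in B$. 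This proves the identity at $v_0$.

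Next I would propagate it by equivariance. Given an arbitrary $(G,\sigma)$-symplectic basis $(v,v')$, Proposition~\ref{trans_bas} supplies $g\in\Sp_2(G,\sigma)$ with $gv_0=v$ and $gv_0'=v'$. Left translation by $g$ is a diffeomorphism of $\Is_G(\omega)$ carrying $v_0$ to $v$, so $T_v\Is_G(\omega)=g\cdot T_{v_0}\Is_G(\omega)$; and since $g$ preserves $\omega$, the substitution $w=g\tilde w$ converts the conditions $\omega(w,v)\in B^\sigma$, $\omega(w,v')\in B$ into $\omega(\tilde w,v_0)\in B^\sigma$, $\omega(\tilde w,v_0')\in B$. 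Hence the right-hand side attached to $(v,v')$ equals $g$ applied to the right-hand side attached to $(v_0,v_0')$, which by the previous step is $g\cdot T_{v_0}\Is_G(\omega)=T_v\Is_G(\omega)$.

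Finally, this description of $T_v\Is_G(\omega)$ plainly does not mention $v'$, so independence of the symplectic partner is automatic; if a direct argument is preferred, any two symplectic partners of $v$ differ by $v''=v'+vc$ with $c\in B^\sigma$ (the constraint ensuring $v''\in\Is_G(\omega)$, cf.\ Corollary~\ref{cor:(c,1)-isotropic}), and then $\omega(w,v'')=\omega(w,v')+\omega(w,v)c$, where for $w$ with $\omega(w,v)\in B^\sigma$ the correction $\omega(w,v)c$ lies in $B$ because $B$ is of Jordan type ($B^\sigma\cdot B^\sigma\subseteq B$); so the membership conditions for $v'$ and $v''$ coincide. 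I expect the only genuinely delicate point to be the very first reduction — that $\Is_G(\omega)$ really carries the manifold structure for which the orbit map is a submersion, so that $T_{v_0}\Is_G(\omega)$ is exactly the image of $\spp_2(B,\sigma)$ under $X\mapsto Xv_0$ — but this is precisely the homogeneous-space description already established in the text, so no real obstacle arises.
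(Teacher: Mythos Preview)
Your proof is correct and follows essentially the same approach as the paper's. Both reduce to the base point via a chosen $g\in\Sp_2(G,\sigma)$ sending $((1,0)^t,(0,1)^t)$ to $(v,v')$, identify the tangent space there with $\{(x,y)^t\mid x\in B,\ y\in B^\sigma\}$ (you via the orbit-map differential on the homogeneous space, the paper via explicit paths $gL(y_t)D(x_t)(1,0)^t$, which amounts to the same computation), and then handle independence of $v'$ by the identical Jordan-type argument $\omega(w,v)c\in B$ for $c\in B^\sigma$.
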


\begin{proof}
    Let $v_t\colon (-\varepsilon,\varepsilon)\to \Is_G(\omega)$ be a smooth path such that $\varepsilon>0$, $v_0=v$, let $w:=\dot v_0$. Then $v=g(1,0)^t$ and $v'=g(0,1)^t$ for some $g\in\Sp_2(G,\sigma)$, and, if $\varepsilon$ is small enough, $v_t=g L(y_t)D(x_t)(1,0)^t$ where $x_t\colon(-\varepsilon,\varepsilon)\to G$ and $y_t\colon(-\varepsilon,\varepsilon)\to B^\sigma$ are smooth functions such that $x_0=1$, $y_t=0$ and $D$ and $L$ are the matrices from~\eqref{eq:sp_2-generators}. For a fixed $g$, elements $x_t$ and $y_t$ are determined by $v_t$. Then
    $$w=g(L(\dot y_0)+D(\dot x_0))(1,0)^t=g(\dot x_0,\dot y_0)^t.$$
    This implies: $\omega(w,v)=-\sigma(\dot y_0)\in B^\sigma$ and $\omega(w,v')=\sigma(\dot x_0)\in B=T_1G$, i.e.,
    $$T_v\Is_G(\omega)\subseteq \{w\in A^2\mid \omega(w,v)\in B^\sigma,\;\omega(w,v')\in B\}.$$

    Let $w\in A^2$ such that $\omega(w,v)=c\in B^\sigma$ and $\omega(w,v')=b\in B$, we take $x_t:=\exp(tb)$ and $y_t:=ct$ and $v_t:=g L(y_t)D(x_t)(1,0)^t$. Then $\dot v_0=w$. Therefore,
    $$T_v\Is_G(\omega)\supseteq \{w\in A^2\mid \omega(w,v)\in B^\sigma,\;\omega(w,v')\in B\}.$$

    Let now $v''\in \Is_G(\omega)$ be another element such that $(v,v'')$ is a $(G,\sigma)$-symplectic basis.  Let $v''=vb+v'b'=g((1,0)^tb+(0,1)^tb')$ for $b,b'\in A$. Therefore, $\omega(v,v'')=b'=1$ and $\omega(v'',v'')=\sigma(b)-b=0$, i.e., $b\in A^\sigma$. Since  $(v,v'')$ is a $(G,\sigma)$-symplectic basis, there exists $g'\in\Sp_2(G,\sigma)$ such that $g(v,v'')=((1,0)^t,(0,1)^t)$. Therefore, $g(g')^{-1}((1,0)^t,(0,1)^t)=((1,0)^t,(b,1)^t)$, but, $g(g')^{-1}\in\Sp_2(G,\sigma)$ and it fixes $(1,0)^t$. This means $g(g')^{-1}=\begin{pmatrix}
        1 & b \\
        0 & 1
    \end{pmatrix}$ where $b\in B^\sigma$. Thus, $v''=vb+v'$ where $b\in B^\sigma$. Now, $\omega(w,v')\in B$ if and only if $\omega(w,v'')=\omega(w,v')+\omega(w,v)b\in B$ because $\omega(w,v)b\in B$ if $\omega(w,v),b\in B^\sigma$ by the property to be of Jordan type for $B$.
\end{proof}

\begin{prop}\label{prop:islines.tangent}
    Let $l\in\PIs_G(\omega)$. Then
    \begin{align}\label{eq:islines.tangent}
    T_l\PIs_G(\omega)=\left\{Q\in\Hom_A(l,A^2/l)\mid \omega(w,v)\in B^\sigma\text{ for $v\in\Is_G(\omega)$ such that $l=xA$}\right\}.
    \end{align}
\end{prop}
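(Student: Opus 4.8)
The plan is to obtain $T_l\PIs_G(\omega)$ as a quotient of the tangent space $T_v\Is_G(\omega)$ computed in the previous proposition, and then to push it forward inside $\Hom_A(l,A^2/l)$. First I would set up the bundle picture: the orbit map $\pi\colon\Is_G(\omega)\to\PIs_G(\omega)$, $\tilde v\mapsto\tilde v A$, is $\Sp_2(G,\sigma)$-equivariant, and comparing the stabilizers (the stabilizer of $(1,0)^t$ versus the stabilizer of $(1,0)^tA$ in \ref{stab1}) it is the projection of one homogeneous space onto a larger one; hence it is a submersion whose fibre through $v$ is exactly the $G$-orbit $vG$. Therefore $d\pi_v$ is onto with kernel $T_v(vG)=vB$, so $T_l\PIs_G(\omega)\cong T_v\Is_G(\omega)/vB$. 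On the other hand, since $v$ is regular, $l=vA$ is free of rank one on the generator $v$, so the standard identification $T_l\PP(A^2)=\Hom_A(l,A^2/l)$ takes the form $Q\mapsto Q(v)$ and gives an isomorphism $\Hom_A(l,A^2/l)\cong A^2/vA$. Under these identifications the inclusion $T_l\PIs_G(\omega)\hookrightarrow\Hom_A(l,A^2/l)$ becomes the map $w+vB\mapsto w+vA$, which is well defined because $vB\subseteq vA$; it then remains to prove injectivity of this map and to identify its image.

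For injectivity I would use the description $T_v\Is_G(\omega)=\{w\in A^2\mid\omega(w,v)\in B^\sigma,\ \omega(w,v')\in B\}$ from the previous proposition, for a $(G,\sigma)$-symplectic partner $v'$ of $v$: for $w=va$ one has $\omega(va,v)=\sigma(a)\omega(v,v)=0\in B^\sigma$ always, while $\omega(va,v')=\sigma(a)\omega(v,v')=\sigma(a)$, so $va\in T_v\Is_G(\omega)$ iff $\sigma(a)\in B$, i.e.\ iff $a\in B$ (recall $B$ is $\sigma$-closed); hence $T_v\Is_G(\omega)\cap vA=vB$. For the image I would first observe that $\omega(\cdot,v)$ vanishes on $vA$, so the condition $\omega(w,v)\in B^\sigma$ depends only on $w+vA$; conversely, given any $w_0$ with $\omega(w_0,v)\in B^\sigma$, replacing $w_0$ by $w_0+va$ with $a:=-\sigma(\omega(w_0,v'))$ leaves $w_0+vA$ and $\omega(w_0,v)$ unchanged while forcing $\omega(w_0+va,v')=0\in B$, so $w_0+va\in T_v\Is_G(\omega)$. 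Thus the image of $T_v\Is_G(\omega)$ in $A^2/vA$ is exactly $\{w+vA\mid\omega(w,v)\in B^\sigma\}$, which under $A^2/vA\cong\Hom_A(l,A^2/l)$ is $\{Q\mid\omega(Q(v),v)\in B^\sigma\}$, the right-hand side of \eqref{eq:islines.tangent}. Finally I would check that the condition is independent of the chosen $G$-isotropic generator: replacing $v$ by $vg$ with $g\in G$ multiplies $\omega(Q(v),v)$ by $\sigma(g)\,(\cdot)\,g$, which preserves $B^\sigma$ since $\psi(g)$ does (cf.\ \ref{prop:JordanType+} and its extension to $G$).

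The one point that genuinely needs care is the bookkeeping of the two ``normalizations'' of a generator of $l$ --- as an $A$-module generator (defined only up to $A^\times$) and as a point of the $G$-orbit $vG$ (defined up to $G$) --- together with the observation that after passing to $A^2/vA$ only the single condition $\omega(\cdot,v)\in B^\sigma$ survives, the second condition $\omega(\cdot,v')\in B$ being absorbed into the freedom in the choice of representative. Once this is isolated the remaining steps are routine linear algebra.
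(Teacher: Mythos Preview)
Your approach is essentially the paper's: differentiate the orbit map $\Is_G(\omega)\to\PIs_G(\omega)$, identify the kernel with $vA\cap T_v\Is_G(\omega)$, and push forward into $\Hom_A(l,A^2/l)$ via $w\mapsto (v\mapsto w+vA)$. Your argument is in fact more complete than the paper's terse proof --- you explicitly compute the fibre as $vG$, verify $T_v\Is_G(\omega)\cap vA=vB$ to get injectivity, and (most importantly) prove the \emph{surjectivity} onto the right-hand side of \eqref{eq:islines.tangent} by showing the $\omega(\cdot,v')\in B$ condition can be killed by a translate in $vA$; the paper only records the forward inclusion.
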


\begin{proof}
    We consider the differential of the quotient map $q\colon \Is_G(\omega)\to \PIs_G(\omega)$. Let $v\in \Is_G(\omega)$ then the kernel $\Ker(d_vq)=vA\cap T_v\Is_G(\omega)$. Thus for $w\in T_v\Is_G(\omega)$, we can identify the element $d_vq(w)$ with the $A$-linear map $Q\colon l\to A^2/l$ which sends $w$ to $w+vA$. Since $\omega(w,v)=\omega(w+vA,v)\in B^\sigma$, we obtain the description of the tangent space~\eqref{eq:islines.tangent}.
\end{proof}

\begin{cor}
    For every $v\in\Is_G(\omega)$, the tangent space $T_v\Is_G(\omega)$ is isomorphic to $B\oplus B^\sigma$, and $T_{vA}\PIs_G(\omega)$ is isomorphic to $B^\sigma$.
\end{cor}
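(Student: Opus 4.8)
The plan is to deduce both isomorphisms directly from the two preceding propositions by passing to coordinates attached to a $(G,\sigma)$-symplectic basis. I would fix $v\in\Is_G(\omega)$ and choose $v'\in\Is_G(\omega)$ with $(v,v')$ a $(G,\sigma)$-symplectic basis, so that $\omega(v,v')=1$ and $\omega(v,v)=\omega(v',v')=0$. Every $w\in A^2$ is then uniquely $w=va+v'b$ with $a,b\in A$, and one computes $\omega(w,v')=\sigma(a)$ and $\omega(w,v)=-\sigma(b)$; hence $w\mapsto(a,b)=(\sigma(\omega(w,v')),-\sigma(\omega(w,v)))$ is an $\R$-linear isomorphism $A^2\xrightarrow{\sim}A\oplus A$ recovering $(a,b)$ from the symplectic pairings of $w$ against $v'$ and $v$.

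Restricting this isomorphism to $T_v\Is_G(\omega)$ and using the description $T_v\Is_G(\omega)=\{w\in A^2\mid\omega(w,v)\in B^\sigma,\ \omega(w,v')\in B\}$ from the previous proposition, the two defining conditions become exactly $b\in B^\sigma$ and $a\in B$, since $\sigma$ preserves both $B$ and $B^\sigma$. This yields the first isomorphism $T_v\Is_G(\omega)\cong B\oplus B^\sigma$.

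For the second statement I would invoke Proposition~\ref{prop:islines.tangent}: the differential $d_vq$ of the quotient $q\colon\Is_G(\omega)\to\PIs_G(\omega)$ is surjective with $\Ker(d_vq)=vA\cap T_v\Is_G(\omega)$. In the coordinates above a vector $w=va$ automatically satisfies $\omega(va,v)=\sigma(a)\omega(v,v)=0\in B^\sigma$, while $\omega(va,v')=\sigma(a)$, so membership in $T_v\Is_G(\omega)$ forces $a\in B$; thus $\Ker(d_vq)=\{va\mid a\in B\}$, which is precisely the summand $B\oplus\{0\}$ under the identification above. Consequently $T_{vA}\PIs_G(\omega)=T_v\Is_G(\omega)/\Ker(d_vq)\cong(B\oplus B^\sigma)/B\cong B^\sigma$.

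I do not anticipate a genuine obstacle: the whole argument is formal once $\omega$ is written out in the symplectic basis. The only point that actually uses the standing hypotheses is the claim that the coordinate isomorphism carries $T_v\Is_G(\omega)$ into $B\oplus B^\sigma$ rather than merely into $A\oplus A$, which rests on the $\sigma$-stability of $B$ together with the Jordan-type property already built into the previous proposition. I would also note that both identifications depend only on $v$ and not on the auxiliary choice of $v'$, since the earlier proposition shows the tangent-space description is independent of $v'$; this makes the isomorphism $T_{vA}\PIs_G(\omega)\cong B^\sigma$ canonical.
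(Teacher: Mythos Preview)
Your argument is correct and is exactly the natural unpacking of the corollary from the two preceding propositions; the paper states the corollary without proof, and your symplectic-basis coordinate computation is precisely the intended deduction. One small slip in your closing remark: the first identification $T_v\Is_G(\omega)\cong B\oplus B^\sigma$ does depend on $v'$ (changing $v'$ to $v'+vc$ replaces $a$ by $a-cb$), though you are right that the induced isomorphism $T_{vA}\PIs_G(\omega)\cong B^\sigma$ is independent of $v'$.
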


\begin{rem}
    Notice that the space $\PIs_G(\omega)$ is a compact Riemannian symmetric space. The linear transformation $\begin{pmatrix}
        1 & 0 \\
        0 & -1
    \end{pmatrix}$ induces an inversion symmetry at the line $(1,0)^tA\in \PIs_G(\omega)$. In contrast, $\Is_G(\omega)$ is not a symmetric space. Indeed, for every $v\in\Is_G(\omega)$, $v\in T_v\Is_G(\omega)$. Therefore, every $Q\in\End_A(A^2)$ fixing $v$ cannot act as $-\Id$ on $T_v\Is_G(\omega)$.
\end{rem}

\section{Models for the Riemannian symmetric space of \texorpdfstring{$\Sp_2(G,\sigma)$}{Sp2(G,sigma)}}\label{G-models}

Let $(A,\sigma)$ be an involutive finite dimensional $\R$-algebra. The goal of this section is to construct different models of the symmetric space for $\Sp_2(G,\sigma)$ for $\Lie G=B$ where $B$ is a Hermitian Lie subalgebra $A$.

\subsection{Space of complex structures}\label{Comp_Str_Mod}

The first model we construct is the space of complex structures.

\begin{df}\label{df:complex.str}
Let $V$ be a right $A$-module, and $J\in\End_A(V)$. The map $J$ is called a \defin{complex structure} on $V$ if $J^2=-\Id$.
\end{df}

Let $V=A^2$ and $\omega$ be the standard symplectic form in $A^2$. For every complex structure $J$ on $A^2$, we can define the following $\sigma$-sesquilinear form
$$\begin{matrix}
h_J\colon & A^2\times A^2 & \to & A \\
& (x,y) & \mapsto & \omega(J(x),y).
\end{matrix}$$

\begin{df}
A $\sigma$-sesquilinear form $h$ on $(A^2,\omega)$ is called \defin{$(G,\sigma)$-symmetric} if $h$ is $\sigma$-symmetric and for all $v\in\bar G^2$, $h(v,v)\in B^{\sigma}$; it is called a \defin{$(G,\sigma)$-inner product} if additionally for all $v\in\bar G^2$, $h(v,v)\in B^{\sigma}_+$. For a $(G,\sigma)$-inner product $h$, a basis $(v,w)$ of $A^2$ is called \defin{$h$-orthonormal} if $h(v,v)=h(w,w)=1$ and $h(v,w)=0$.
\end{df}

We consider the following space:
$$\mathfrak C:=\left\{J\text{ complex structure on $A^2$}\midwd\begin{array}{l}
J(\Is_G(\omega))=\Is_G(\omega),\\
h_J\text{ is a $(G,\sigma)$-inner product}
\end{array}\right\}.$$

\begin{df}
The \defin{standard complex structure} on $A^2$ is the map
$$\begin{matrix}
J_0\colon & A^2 & \to & A^2 \\
& (x,y) & \mapsto & (y,-x).
\end{matrix}$$
\end{df}

\begin{rem}
The form $h_{J_0}$ is the standard $(G,\sigma)$-inner product on $A^2$, i.e., for $x,y\in A^2$, $h_{J_0}(x,y)=\sigma(x)^ty$. By Proposition~\ref{RegEl-h}, $J_0\in \mathfrak C$. Moreover, $\Aut(h_{J_0})=\OO_2(G,\sigma)$.
\end{rem}

\begin{prop}\label{CompStr-SympBas}
Let $J$ be a complex structure on $A^2$. $J\in\mathfrak C$ if and only if there exists $w\in \Is_G(\omega)$ such that $(J(w),w)$ is a $(G,\sigma)$-symplectic basis.
\end{prop}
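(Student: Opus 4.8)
The plan is to reduce both implications to the standard complex structure $J_0$, using that $\Sp_2(G,\sigma)$ acts (simply) transitively on $(G,\sigma)$-symplectic bases (Proposition~\ref{trans_bas}), that $J_0\in\mathfrak C$ (the remark after the definition of $J_0$), and that $(J_0(0,1)^t,(0,1)^t)=((1,0)^t,(0,1)^t)$ is exactly the standard symplectic basis. Thus the ``if'' direction should come down to transporting $J_0$ by a symplectic transformation, and the ``only if'' direction to producing by hand a symplectic basis of the shape $(J(w),w)$ and then checking it is $(G,\sigma)$-symplectic.

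For the ``if'' direction I would proceed as follows. Given $w\in\Is_G(\omega)$ with $(J(w),w)$ a $(G,\sigma)$-symplectic basis, Proposition~\ref{trans_bas} gives $g\in\Sp_2(G,\sigma)$ with $g(1,0)^t=J(w)$ and $g(0,1)^t=w$. One checks that $gJ_0g^{-1}$ and $J$ agree on the basis $(J(w),w)$: indeed $gJ_0g^{-1}(w)=gJ_0(0,1)^t=g(1,0)^t=J(w)$ and $gJ_0g^{-1}(J(w))=gJ_0(1,0)^t=-g(0,1)^t=-w=J^2(w)=J(J(w))$, so $J=gJ_0g^{-1}$ since $A$-linear maps agreeing on a basis coincide. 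It then remains to note that $\mathfrak C$ is stable under conjugation by $\Sp_2(G,\sigma)$: both $g$ and $g^{-1}$ preserve the orbit $\Is_G(\omega)$ and, since their entries lie in the monoid $\bar G$, preserve regularity, and since $g\in\Aut(\omega)$ one gets $h_{gJ_0g^{-1}}(x,y)=h_{J_0}(g^{-1}x,g^{-1}y)$, which is again $\sigma$-symmetric and takes values in $B^\sigma_+$ on regular vectors by Proposition~\ref{RegEl-h}. Hence $J=gJ_0g^{-1}\in\mathfrak C$.

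For the ``only if'' direction, let $J\in\mathfrak C$. I would start from $w_0:=(0,1)^t\in\Is_G(\omega)$, so that $v_0:=J(w_0)\in\Is_G(\omega)$ because $J$ preserves $\Is_G(\omega)$; both are then regular. Since $h_J$ is a $(G,\sigma)$-inner product and $w_0$ is regular, $b:=\omega(v_0,w_0)=h_J(w_0,w_0)\in B^\sigma_+$, so $c:=b^{-1/2}$ is well defined by the spectral calculus, lies in $B^\sigma_+\subseteq G$ (Proposition~\ref{prop:PositiveElts}) and satisfies $\sigma(c)=c$. Put $w:=w_0c\in\Is_G(\omega)$ (right $G$-action on isotropic elements); by $A$-linearity $J(w)=v_0c$, and sesquilinearity of $\omega$ gives $\omega(J(w),J(w))=\sigma(c)\omega(v_0,v_0)c=0$, $\omega(w,w)=0$ and $\omega(J(w),w)=\sigma(c)bc=1$. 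Finally, these three identities say precisely that the $2\times2$ matrix $M$ with columns $J(w)$ and $w$ satisfies $\sigma(M)^t\Omega M=\Omega$, i.e.\ $M\in\Sp_2(A,\sigma)\subseteq\GL_2(A)$ by~\eqref{eq:groups}, so its columns form a basis of $A^2$; together with $J(w),w\in\Is_G(\omega)$ this makes $(J(w),w)$ a $(G,\sigma)$-symplectic basis.

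Apart from the routine bilinear identities, the point to be careful about is the step asserting that $(J(w),w)$ is genuinely a basis of $A^2$ and not merely an isotropic pair with $\omega(J(w),w)=1$: over a noncommutative $A$ the symplectic relations do not obviously force the basis property. The clean fix I would use is the one above — the relations are equivalent to the associated matrix lying in $\Sp_2(A,\sigma)\subseteq\GL_2(A)$, which forces invertibility; an alternative is to pick a partner $y'\in\Is_G(\omega)$ of $J(w)$ with $\omega(J(w),y')=1$ and express $y'$ through $J(w)$ and $w$. A secondary subtlety is the precise reading of the clause ``$h(v,v)\in B^\sigma_+$ for all $v\in\bar G^2$'' in the definition of a $(G,\sigma)$-inner product, which I understand as being imposed on regular $v$, in line with Proposition~\ref{RegEl-h}.
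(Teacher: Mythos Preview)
Your proof is correct. The ($\Rightarrow$) direction follows the paper's argument almost verbatim: pick an isotropic element, normalize by the square root of its $h_J$-norm, and verify the symplectic relations. You add the useful observation that the three relations $\omega(J(w),J(w))=\omega(w,w)=0$, $\omega(J(w),w)=1$ are exactly the defining equations \eqref{eq:groups} for the matrix $M=(J(w)\,|\,w)$ to lie in $\Sp_2(A,\sigma)$, and that this forces $M\in\GL_2(A)$ (one checks $\sigma(M)^t\Omega M=\Omega$ gives an explicit left inverse, hence a two-sided inverse by finite-dimensionality). The paper's proof simply asserts ``$(J(w),w)$ is a $(G,\sigma)$-symplectic basis'' at this point, so your extra care is welcome.

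Your ($\Leftarrow$) direction is genuinely different from the paper's. The paper verifies directly that $(J(w),w)$ is an $h_J$-orthonormal basis and concludes that $h_J$ is a $(G,\sigma)$-inner product. You instead use Proposition~\ref{trans_bas} to produce $g\in\Sp_2(G,\sigma)$ with $g(1,0)^t=J(w)$, $g(0,1)^t=w$, check on this basis that $J=gJ_0g^{-1}$, and then argue that conjugation by $\Sp_2(G,\sigma)$ preserves $\mathfrak C$. This is precisely the content of step~1 in the proof of the Theorem immediately following Proposition~\ref{CompStr-SympBas}, so you are effectively front-loading part of that argument. The payoff is that your route handles the condition $J(\Is_G(\omega))=\Is_G(\omega)$ transparently (since $g$, $g^{-1}$, $J_0$ all preserve $\Is_G(\omega)$), whereas the paper's direct computation only checks the inner-product condition and leaves the preservation of $\Is_G(\omega)$ implicit. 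Your reading of the $(G,\sigma)$-inner product condition as being imposed on regular (equivalently, $G$-isotropic) vectors matches how the paper itself uses the notion in the subsequent Theorem.
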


\begin{proof}
($\Rightarrow$) Let $J\in\mathfrak C$ and $w'\in\Is_G(\omega)$. We denote $b:=h_J(w',w')\in B^{\sigma}_+$ and $w:=w'b^{-\frac{1}{2}}$, then $h_J(w,w)=1$. Furthermore,
\begin{align*}
    \omega(J(w),J(w))& =h_J(w,J(w)) =\sigma(h_J(J(w),w))=\sigma(\omega(w,w))=0,\\
    \omega(J(w),w)& =h_J(w,w)=1.
\end{align*}
Therefore, $(J(w),w)$ is a $(G,\sigma)$-symplectic basis.

\vspace{2mm}
\noindent ($\Leftarrow$) Let $w\in A^2$ such that $(J(w),w)$ is a $(G,\sigma)$-symplectic basis. Then,
\begin{align*}
    h_J(w,w)& =\omega(J(w),w)=1,\\
    h_J(J(w),J(w))& =\omega(J^2(w),J(w))=-\omega(w,J(w))=1,\\
    h_J(J(w),w)& =\omega(J^2(w),w)=-\omega(w,w)=0.
\end{align*}
Therefore, $(J(w),w)$ is an orthonormal basis for $h_J$, so $h_J$ is an $(G,\sigma)$-inner product.
\end{proof}

\begin{teo}
The group $\Sp_2(G,\sigma)$ acts transitively on $\mathfrak C$ by conjugation. The stabilizer of the standard complex structure is $\KSp_2(G,\sigma)$. In particular, $\mathfrak C$ is a model of the Riemannian symmetric space of $\Sp_2(G,\sigma)$.
\end{teo}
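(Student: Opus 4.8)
The plan is to show that $\mathfrak C$ is exactly the conjugation orbit of the standard complex structure $J_0$, and then to identify $\Stab_{\Sp_2(G,\sigma)}(J_0)$ with $\KSp_2(G,\sigma)$. The real work is already packaged in Proposition~\ref{CompStr-SympBas} ($J\in\mathfrak C$ iff some $(G,\sigma)$-symplectic basis has the form $(J(w),w)$) and Proposition~\ref{trans_bas} (simple transitivity of $\Sp_2(G,\sigma)$ on $(G,\sigma)$-symplectic bases). First I would check that conjugation is a well-defined action on $\mathfrak C$: for $g\in\Sp_2(G,\sigma)$ and $J\in\mathfrak C$ one has $(gJg^{-1})^2=gJ^2g^{-1}=-\Id$; the map $gJg^{-1}$ preserves $\Is_G(\omega)$ since both $g$ and $J$ do; and $\Sp_2$-invariance of $\omega$ gives $h_{gJg^{-1}}(x,y)=\omega(gJg^{-1}x,y)=\omega(Jg^{-1}x,g^{-1}y)=h_J(g^{-1}x,g^{-1}y)$, which is again $\sigma$-symmetric. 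Because $g^{-1}\in\Mat_2(\bar G)$ is invertible it maps $\bar G^2$ into itself and sends regular elements to regular elements, so $h_{gJg^{-1}}(v,v)=h_J(g^{-1}v,g^{-1}v)\in B^\sigma_+$ for every regular $v$; hence $gJg^{-1}\in\mathfrak C$.

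For transitivity, given $J\in\mathfrak C$ I would pick $w\in\Is_G(\omega)$ as in Proposition~\ref{CompStr-SympBas} and let $g\in\Sp_2(G,\sigma)$ be the unique element with $g(1,0)^t=J(w)$ and $g(0,1)^t=w$ (Proposition~\ref{trans_bas}). Using $J_0(1,0)^t=(0,-1)^t$ and $J_0(0,1)^t=(1,0)^t$ one checks that $gJ_0g^{-1}$ and $J$ agree on the basis $(J(w),w)$ of $A^2$ — explicitly $gJ_0g^{-1}(J(w))=-w=J^2(w)$ and $gJ_0g^{-1}(w)=J(w)$ — and $A$-linearity then forces $gJ_0g^{-1}=J$. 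Since $J_0\in\mathfrak C$ by the Remark above, $\Sp_2(G,\sigma)$ acts transitively on $\mathfrak C$.

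For the stabilizer, an element of $\Sp_2(G,\sigma)$ fixes $J_0$ under conjugation iff, as a matrix, it commutes with $I=\begin{pmatrix}0&1\\-1&0\end{pmatrix}$; the commutation relation forces the shape $\begin{pmatrix}a&b\\-b&a\end{pmatrix}$, and for such a matrix the defining relations of $\Sp_2(A,\sigma)$ reduce to $\sigma(a)a+\sigma(b)b=1$ together with $\sigma(a)b\in A^\sigma$, i.e. $\sigma(a)b-\sigma(b)a=0$, which is precisely the condition of belonging to $\OO_2(A,\sigma)$. Hence $\Stab_{\Sp_2(G,\sigma)}(J_0)=\Sp_2(G,\sigma)\cap\OO_2(A,\sigma)=\KSp_2(G,\sigma)$. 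This already identifies $\mathfrak C$ with the homogeneous space $\Sp_2(G,\sigma)/\KSp_2(G,\sigma)$; since $B$ is Hermitian, Theorem~\ref{maxcomp-Sp_R} says $\KSp_2(G,\sigma)$ is a maximal compact subgroup, so this quotient is contractible and carries an $\Sp_2(G,\sigma)$-invariant Riemannian metric, and conjugation by the element $I=J_0\in\Sp_2(G,\sigma)$ is an involution fixing $J_0$ which acts as $-\Id$ on $T_{J_0}\mathfrak C$ (identified with $\Sym_2(G,\sigma)$, the $(-1)$-eigenspace of $\mathrm{Ad}(I)$ on $\spp_2(B,\sigma)$), hence is the geodesic symmetry at $J_0$. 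Therefore $\mathfrak C$ is a model of the Riemannian symmetric space of $\Sp_2(G,\sigma)$.

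The heavy lifting has been front-loaded into Propositions~\ref{CompStr-SympBas} and~\ref{trans_bas} and Theorem~\ref{maxcomp-Sp_R}, so the only genuinely delicate points I expect are (i) verifying that conjugation keeps $h_J$ a $(G,\sigma)$-inner product, which comes down to the fact that an invertible matrix over the monoid $\bar G$ preserves regularity of elements of $\bar G^2$, and (ii) the last step, recognizing the abstract coset space as an honest Riemannian symmetric space, where it is exactly Hermitianness of $B$ (through compactness of $\KSp_2(G,\sigma)$) that is needed.
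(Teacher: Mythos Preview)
Your argument follows the paper's proof almost verbatim: well-definedness of the conjugation action, transitivity via Proposition~\ref{CompStr-SympBas} and Proposition~\ref{trans_bas}, and the stabilizer computation via $\Sp_2(G,\sigma)\cap\OO_2(A,\sigma)$. The extra justification you give for the symmetric-space structure (invoking Theorem~\ref{maxcomp-Sp_R} and exhibiting the involution $\mathrm{Ad}(I)$) is more than the paper spells out and is fine.

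There is one small but genuine slip in your well-definedness step. You assert that ``$g^{-1}\in\Mat_2(\bar G)$ is invertible, so it maps $\bar G^2$ into itself and sends regular elements to regular elements''. The monoid $\bar G$ is closed under multiplication but in general \emph{not} under addition, so there is no reason the entries of $g^{-1}v$ lie in $\bar G$ for arbitrary $v\in\bar G^2$. The paper avoids this entirely: it checks $h_{J'}(x,x)\in B^\sigma_+$ only for $x\in\Is_G(\omega)$, using that $\Is_G(\omega)$ is by definition an $\Sp_2(G,\sigma)$-orbit and hence automatically preserved by $g$ and $g^{-1}$. Replace your $\bar G^2$ claim with that observation and the argument goes through without change.
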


\begin{proof}
1. First, we prove that $\Sp_2(G,\sigma)$ acts on $\mathfrak C$ by conjugation. Let $J\in \mathfrak C$ and $g\in\Sp_2(G,\sigma)$. Consider $J':=g^{-1}Jg$. Then, $(J')^2=g^{-1}J^2g=-\Id$ so $J'$ is a complex structure on $A^2$. For $x\in\Is_G(\omega)$, $g(x)\in\Is_G(\omega)$ and we obtain
\begin{align*}
    h_{J'}(x,x)&=\omega(J'(x),x)=\omega(g^{-1}Jg(x),x)\\
    &=\omega(Jg(x),g(x))=h_J(g(x),g(x))\in B^{\sigma}_+.
\end{align*}
Moreover, the complex structure $J'$ preserves $\Is_G(\omega)$. Therefore, $h_{J'}$ is a $(G,\sigma)$-inner product on $A^2$, i.e., $J'\in \mathfrak C$.

2. Second, we prove that this action is transitive. Let $J\in \mathfrak C$, and $(J(w),w)$ be a $(G,\sigma)$-symplectic basis from Proposition~\ref{CompStr-SympBas}. By Proposition~\ref{trans_bas}, the group $\Sp_2(G,\sigma)$ acts transitively on $(G,\sigma)$-symplectic bases, i.e., there exists $g\in\Sp_2(G,\sigma)$ which maps the standard symplectic basis to $(J(w),w)$. That means, $g$ maps the standard complex structure $J_0$ to $J$. So the action is transitive.

3. Finally, we compute the stabilizer of $J_0$. An element $g\in\Stab_{\Sp_2(G,\sigma)}(J_0)$ if and only if
$g\in\Sp_2(G,\sigma)$ and $g\in \Aut(h_{J_0})=\OO_2(A,\sigma)$, i.e.
\begin{equation*}
g\in\Sp_2(G,\sigma)\cap\OO_2(A,\sigma)= \KSp_2(G,\sigma).\qedhere
\end{equation*}
\end{proof}

\begin{cor}
The orbit map
$$\begin{matrix}
\pi_\mathfrak C\colon & \Sp_2(G,\sigma)/\KSp_2(G,\sigma) & \to & \mathfrak C \\
 & M\KSp_2(G,\sigma) & \mapsto & M J_0 M^{-1}
\end{matrix}$$
is an $\Sp_2(G,\sigma)$-equivariant homeomorphism.
\end{cor}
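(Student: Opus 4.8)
The plan is to deduce everything from the theorem just proved, which exhibits $\mathfrak C$ as a single $\Sp_2(G,\sigma)$-orbit under conjugation with $\Stab_{\Sp_2(G,\sigma)}(J_0)=\KSp_2(G,\sigma)$.

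First I would verify that $\pi_{\mathfrak C}$ is well defined, bijective, and equivariant. Well-definedness is immediate: if $M'=MK$ with $K\in\KSp_2(G,\sigma)=\Stab_{\Sp_2(G,\sigma)}(J_0)$, then $M'J_0(M')^{-1}=MKJ_0K^{-1}M^{-1}=MJ_0M^{-1}$. Surjectivity is exactly the transitivity statement of the theorem, while injectivity is the stabilizer computation: $MJ_0M^{-1}=M'J_0(M')^{-1}$ forces $M^{-1}M'\in\Stab_{\Sp_2(G,\sigma)}(J_0)=\KSp_2(G,\sigma)$, i.e.\ $M\KSp_2(G,\sigma)=M'\KSp_2(G,\sigma)$. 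Equivariance is a one-line check: for $g\in\Sp_2(G,\sigma)$ one has
$$\pi_{\mathfrak C}(gM\KSp_2(G,\sigma))=gMJ_0M^{-1}g^{-1}=g\cdot\pi_{\mathfrak C}(M\KSp_2(G,\sigma)),$$
where $g$ acts on $\mathfrak C$ by conjugation.

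Next, continuity. Since $\mathfrak C$ is a subspace of the finite-dimensional real vector space $\End_A(A^2)$, the orbit map $\mathrm{orb}\colon\Sp_2(G,\sigma)\to\mathfrak C$, $M\mapsto MJ_0M^{-1}$, is continuous (its entries are polynomial in the entries of $M$ and in those of the continuous map $M\mapsto M^{-1}$). As $\mathrm{orb}=\pi_{\mathfrak C}\circ q$ factors through the quotient map $q\colon\Sp_2(G,\sigma)\to\Sp_2(G,\sigma)/\KSp_2(G,\sigma)$, continuity of $\pi_{\mathfrak C}$ follows from the universal property of the quotient topology.

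The only substantial point — and the step I expect to be the main obstacle — is that $\pi_{\mathfrak C}^{-1}$ is continuous, equivalently that $\mathrm{orb}$ is an open map onto $\mathfrak C$. Here I would use that $\KSp_2(G,\sigma)$ is compact (Theorem~\ref{maxcomp-Sp_R}): then $q$ is a proper, locally trivial fibre bundle, so $\Sp_2(G,\sigma)/\KSp_2(G,\sigma)$ is a second-countable, locally compact Hausdorff space (indeed a smooth manifold) and $q$ admits local sections. Given $J\in\mathfrak C$, Proposition~\ref{CompStr-SympBas} produces $w\in\Is_G(\omega)$ with $(J(w),w)$ a $(G,\sigma)$-symplectic basis, and Proposition~\ref{trans_bas} yields a unique $g\in\Sp_2(G,\sigma)$ carrying the standard symplectic basis to $(J(w),w)$, hence $g J_0 g^{-1}=J$; choosing $w$ continuously in a neighbourhood of a given $J$ (possible because the constructions in those propositions depend smoothly on the data once a local section of $q$ is fixed) produces a local continuous inverse of $\pi_{\mathfrak C}$. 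Alternatively, one may simply invoke the classical fact that a continuous equivariant bijection from a homogeneous space of a second-countable Lie group onto a locally closed orbit in a finite-dimensional vector space is automatically a homeomorphism, the compactness of the stabilizer being the input that makes this routine. Either way $\pi_{\mathfrak C}$ is an $\Sp_2(G,\sigma)$-equivariant homeomorphism.
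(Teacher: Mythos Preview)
Your argument is correct. The paper itself gives no proof of this corollary, treating it as an immediate consequence of the preceding theorem (transitivity of the $\Sp_2(G,\sigma)$-action on $\mathfrak C$ with stabilizer $\KSp_2(G,\sigma)$) via the standard orbit--stabilizer principle for smooth Lie group actions; your write-up simply spells out those standard details, and the compactness of $\KSp_2(G,\sigma)$ (Theorem~\ref{maxcomp-Sp_R}) that you invoke is exactly the ingredient making the continuity of the inverse automatic.
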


The space $\mathfrak{C}$ is a smooth manifold with the following tangent space at $J\in\mathfrak{C}$:
$$T_J\mathfrak{C}=\left\{L\in\End_A(A^2)\midwd\begin{array}{l}
JL+LJ=0,\\
L(\Is_G(\omega))\subseteq\overline{\Is_G(\omega)}\subseteq \bar G^2,\\
h_J\text{ is a $(G,\sigma)$-symmetric}
\end{array}\right\}.$$
The group $\Sp_2(G,\sigma)$ acts by conjugation on the tangent bundle $T\mathfrak{C}$, i.e., for $g\in \Sp_2(G,\sigma)$, $g.(J,L)=(gJg^{-1},gLg^{-1})$.
For the proof of this fact, we refer to~\cite[Proposition~6.3]{HKRW}.

\subsection{Projective space models}

Let $B_\CC:=B\otimes_\R\CC\subseteq A\otimes_\R\CC=A_\CC$ be the complexification of $B$ and $G_\CC$ the Lie subgroup of $A_\mathbb C^\times$ as in Section~\ref{discon-ext}.

Slightly abusing our notation, we denote by $\sigma\colon A_\mathbb C\to A_\mathbb C$ the $\CC$-linear extension of $\sigma\colon A\to A$, i.e., for every $x,y\in B$
$$\sigma(x+iy)=\sigma(x)+i\sigma(y)$$
and by $\bar\sigma$ the $\CC$-antilinear extension of $\sigma$, i.e., for every $x,y\in B$
$$\bar\sigma(x+iy)=\sigma(x)-i\sigma(y).$$

Notice that in general neither $(B_\CC,\sigma)$ nor $(B_\CC,\bar\sigma)$ is Hermitian. Moreover, the Lie algebra $(B_\CC,\sigma)$ is never Hermitian but it is always of Jordan type. However, $(B_\CC,\bar\sigma)$ is even not always of Jordan type (cf.~Remark~\ref{rem:nonHerm.complexification}).

We extend $\omega$ in the complex linear way to $\omega_\CC$ on $A_\CC^2$. Every complex structure $J$  on $A^2$ extends to an $A_\CC$-linear operator on $A^2_\CC$ in the complex linear way. We denote this extension by $J_\CC$.

\begin{prop}
For every $J\in \mathfrak C$, there exist regular isotropic elements $x,y\in A_\CC^2$ such that $J_\CC(x)=ix$, $J_\CC(y)=-iy$. The lines $xA_\CC$, $yA_\CC$ are uniquely determined by $J$.
\end{prop}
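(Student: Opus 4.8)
The strategy is to diagonalize $J_\CC$ by its $\pm i$-eigenprojections and to read off the two eigenlines, using Proposition~\ref{CompStr-SympBas} to guarantee that the resulting generators are regular and isotropic.

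First I would set $P_\pm:=\tfrac12(\Id\mp iJ_\CC)\in\End_{A_\CC}(A_\CC^2)$. Since $J_\CC^2=-\Id$, a one-line check gives $P_\pm^2=P_\pm$, $P_++P_-=\Id$, $P_+P_-=P_-P_+=0$ and $J_\CC P_\pm=\pm iP_\pm$; consequently the submodule $P_\pm(A_\CC^2)$ coincides with $\{z\in A_\CC^2\mid J_\CC(z)=\pm iz\}$ and depends only on $J$. This observation already carries the uniqueness content: once each of these submodules is shown to be a line spanned by a regular isotropic vector, that line is forced to equal $P_\pm(A_\CC^2)$ and is therefore intrinsic to $J$.

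To exhibit generators I would invoke Proposition~\ref{CompStr-SympBas} to choose $w\in\Is_G(\omega)$ with $(J(w),w)$ a $(G,\sigma)$-symplectic basis, and put $x:=P_+(w)=\tfrac12\bigl(w-iJ(w)\bigr)$ and $y:=P_-(w)=\tfrac12\bigl(w+iJ(w)\bigr)$, so that $J_\CC(x)=ix$ and $J_\CC(y)=-iy$ by construction. Three points then remain. \emph{Regularity}: the $A$-basis $(J(w),w)$ of $A^2$ complexifies to an $A_\CC$-basis of $A_\CC^2$, and $(x,y)$ is obtained from it by the constant invertible substitution $w=x+y$, $J(w)=xi-yi$; hence $(x,y)$ is again a basis and $x,y$ are regular. \emph{Isotropy}: expanding $\omega_\CC(x,x)$ and $\omega_\CC(y,y)$, using that $\omega_\CC$ is $\CC$-bilinear in the scalars (as $\sigma$ restricts to the identity on $\CC$) together with $\omega(w,w)=\omega(J(w),J(w))=0$, $\omega(J(w),w)=1$ and $\omega(w,J(w))=-\sigma(\omega(J(w),w))=-1$, yields $\omega_\CC(x,x)=\omega_\CC(y,y)=0$. \emph{Identification of the lines}: computing $P_+(J(w))=\tfrac12\bigl(J(w)+iw\bigr)=xi$ (and likewise $P_-(J(w))=-yi$) gives $P_+\bigl(wa+J(w)b\bigr)=x(a+ib)$ for all $a,b\in A_\CC$, hence $P_+(A_\CC^2)=xA_\CC$, and symmetrically $P_-(A_\CC^2)=yA_\CC$.

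Finally, for uniqueness I would combine this last identity with the first step: any regular isotropic $x'$ with $J_\CC(x')=ix'$ lies in $P_+(A_\CC^2)=xA_\CC$, so $x'=xc$ with $c\in A_\CC$, and regularity of $x'$ forces $c$ to be invertible (a one-sided inverse is two-sided in the finite-dimensional algebra $A_\CC$), whence $x'A_\CC=xA_\CC$; the argument for $y$ is identical. I expect the only genuinely delicate steps to be \emph{Regularity} and \emph{Identification of the lines} --- checking that complexification preserves the basis property and that the eigenmodule $P_+(A_\CC^2)$ is genuinely cyclic on the single generator $x$ rather than merely containing it --- while the isotropy computation is routine bookkeeping with the skew $\sigma$-sesquilinear identity $\omega(v,u)=-\sigma(\omega(u,v))$.
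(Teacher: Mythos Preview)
Your proof is correct and complete; the eigenprojection argument, the isotropy computation, and the identification $P_\pm(A_\CC^2)=xA_\CC,\,yA_\CC$ all go through as you describe.

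The paper takes a different, shorter route: it invokes the transitivity of $\Sp_2(G,\sigma)$ on $\mathfrak C$ (proved just before this proposition) to reduce to the standard complex structure $J_0$, and then simply solves $J_0(a,b)^t=(b,-a)^t=\pm i(a,b)^t$ by hand, reading off $x=(1,i)^t$ and $y=(i,1)^t$. Your approach avoids the transitivity theorem entirely and instead leans on Proposition~\ref{CompStr-SympBas}; it is more intrinsic, and the uniqueness is immediate from the fact that the eigenmodules $P_\pm(A_\CC^2)$ depend only on $J$. The paper's reduction is quicker to write but relies on more machinery; yours would work equally well even before the transitive action has been established.
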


\begin{proof}
Since $\Sp_2(G,\sigma)$ acts transitively on $\mathfrak C$, it is enough to prove the proposition for the standard complex structure $J_0$.

Since $J_0(a,b)^t=(b,-a)^t$, $(b,-a)^t=i(a,b)^t$ if and only if $b=ai$, i.e., $x=(1,i)^ta$ for $a\in A_\CC$, i.e., $xA_\CC$ is uniquely defined. Analogously, $y=(i,1)^ta$ for $a\in A_\CC$, i.e., $yA_\CC$ is uniquely defined. Moreover, $\omega_\CC(x,x)=\omega_\CC(y,y)=0$, so $x$ are $y$ are isotropic.
\end{proof}

For a complex structure $J\in\mathfrak C$, we denote by $l_J$ the line $yA_\CC$ such that $J_\CC(y)=-yi$.

\begin{rem}
For every $(G,\sigma)$-symplectic basis $(w,u)$ of $(A^2,\omega)$, elements $u+wi, w+iu$ are isotropic.
\end{rem}

\begin{df}
We call the spaces 
\begin{align*}
    \mathfrak P:=\mathfrak P^+:=&\{(u+wi)A_\CC\mid (w,u)\text{ is a $(G,\sigma)$-symplectic basis of $A^2$}\},\\
    \mathfrak P^-:=&\{(w+ui)A_\CC\mid (w,u)\text{ is a $(G,\sigma)$-symplectic basis of $A^2$}\}=\overline{\mathfrak P^+}
\end{align*}
the \defin{projective space models} of the Riemannian symmetric space of $\Sp_2(G,\sigma)$.
\end{df}
 
\noindent These spaces are smooth manifolds, and, similarly as in Proposition~\ref{prop:islines.tangent}, one can describe the tangent space at $l\in \mathfrak P^\pm$:
\begin{equation*}
    T_l\mathfrak P^\pm=\left\{Q\in\mathrm{Hom}_A(l,A_\mathbb C^2/l)\midwd\begin{array}{l} \omega_\mathbb C(Q(x),x)\in B^{\sigma}_\mathbb C\text{ for $x=u+wi$ where $(w,u)$ is} \\
    \text{a $(G,\sigma)$-symplectic basis of $A^2$}
    \end{array}\right\}.
\end{equation*}

The group $\Sp_2(G,\sigma)$ acts on the spaces $\mathfrak P^+$ and $\mathfrak P^-$ by the left matrix multiplication. Moreover, complex conjugation provides an $\Sp_2(G,\sigma)$-equivariant homeomorphism between $\mathfrak P^+$ and $\mathfrak P^-$.

The following proposition justifies the name of $\mathfrak P^\pm$:

\begin{prop}
The map
$$\begin{array}{cccl}
F_{\mathfrak C,\mathfrak P}\colon& \mathfrak C& \to & \mathfrak P\\
 & J & \mapsto & l_J
\end{array}$$
defines a homeomorphism that is equivariant under the action of $\Sp_2(G,\sigma)$. In particular, $\mathfrak P$ is a model of the Riemannian symmetric space of $\Sp_2(G,\sigma)$.
\end{prop}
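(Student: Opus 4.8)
The plan is to transport everything along the transitive $\Sp_2(G,\sigma)$-action: I would show that $F_{\mathfrak C,\mathfrak P}$ is a well-defined $\Sp_2(G,\sigma)$-equivariant continuous bijection, and then upgrade the bijection to a homeomorphism. For well-definedness, given $J\in\mathfrak C$, Proposition~\ref{CompStr-SympBas} yields $w\in\Is_G(\omega)$ with $(J(w),w)$ a $(G,\sigma)$-symplectic basis; since $i\in A_\CC$ is central and $J_\CC$ is right-$A_\CC$-linear with $J^2=-\Id$, one computes $J_\CC\bigl(w+J(w)i\bigr)=J(w)+J^2(w)i=J(w)-wi=-\bigl(w+J(w)i\bigr)i$, so by the uniqueness part of the proposition preceding the statement $l_J=\bigl(w+J(w)i\bigr)A_\CC$. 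Writing $w+J(w)i=u+w'i$ with $u:=w$, $w':=J(w)$ and noting that $(w',u)=(J(w),w)$ is a $(G,\sigma)$-symplectic basis, we conclude $l_J\in\mathfrak P^+=\mathfrak P$. Equivariance is then immediate: for $g\in\Sp_2(G,\sigma)$ the extension $g_\CC$ commutes with right multiplication by $i$, so $J_\CC(y)=-yi$ gives $(gJg^{-1})_\CC(g_\CC y)=g_\CC J_\CC(y)=-(g_\CC y)i$, whence $l_{gJg^{-1}}=g\cdot l_J$; thus $F_{\mathfrak C,\mathfrak P}$ intertwines the conjugation action on $\mathfrak C$ with the left-multiplication action on $\mathfrak P$.

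For surjectivity, an arbitrary $l\in\mathfrak P$ is $l=(u+wi)A_\CC$ for some $(G,\sigma)$-symplectic basis $(w,u)$; by Proposition~\ref{trans_bas} there is $g\in\Sp_2(G,\sigma)$ with $g(1,0)^t=w$ and $g(0,1)^t=u$, so $g_\CC(i,1)^t=u+wi$, and since $l_{J_0}=(i,1)^tA_\CC$ we get $l=g\cdot l_{J_0}=l_{gJ_0g^{-1}}$ with $gJ_0g^{-1}\in\mathfrak C$; in particular $\Sp_2(G,\sigma)$ acts transitively on $\mathfrak P$. For injectivity, complex conjugation on $A_\CC^2$ commutes with $J_\CC$ (as $J$ is real), carries the $(-i)$-eigenline $l_J$ to the $(+i)$-eigenline $\overline{l_J}$, and $l_J\oplus\overline{l_J}=A_\CC^2$ — this holds for $J_0$, where the eigenlines are spanned by $(i,1)^t$ and $(1,i)^t$ with $\begin{pmatrix}i&1\\1&i\end{pmatrix}\in\GL_2(A_\CC)$, and it is carried to every $J$ by transitivity. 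Hence $l_J$ determines the eigenspace decomposition of $J_\CC$, therefore $J_\CC$, therefore $J=J_\CC|_{A^2}$; so $l_{J_1}=l_{J_2}$ forces $J_1=J_2$, and $F_{\mathfrak C,\mathfrak P}$ is a bijection.

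Finally, $F_{\mathfrak C,\mathfrak P}$ is a continuous equivariant bijection between two homogeneous spaces of the second countable locally compact Lie group $\Sp_2(G,\sigma)$, hence a homeomorphism; concretely, the orbit maps identify $\mathfrak C$ with $\Sp_2(G,\sigma)/\KSp_2(G,\sigma)$ (by the theorem of Section~\ref{Comp_Str_Mod}) and, using transitivity on $\mathfrak P$ together with the stabilizer computation $\Stab_{\Sp_2(G,\sigma)}(l_{J_0})=\KSp_2(G,\sigma)$ — obtained by solving $g_\CC(i,1)^t\in(i,1)^tA_\CC$, which forces $g=\begin{pmatrix}a&b\\-b&a\end{pmatrix}$, and comparing with the defining relations of $\KSp_2(G,\sigma)$ — they also identify $\mathfrak P$ with the same quotient, with $F_{\mathfrak C,\mathfrak P}$ becoming the identity map. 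The ``in particular'' then follows from the theorem of Section~\ref{Comp_Str_Mod}. The main obstacle is less conceptual than bookkeeping: one must keep the right-$A_\CC$-module conventions and the centrality of the scalar $i$ straight throughout the eigenline manipulations, and — for the homeomorphism step — carry out the stabilizer comparison against the explicit matrix description of $\KSp_2(G,\sigma)$ rather than merely against $\OO_2(A,\sigma)$.
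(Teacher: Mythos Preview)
Your proof is correct and follows essentially the same route as the paper's: well-definedness via the $(G,\sigma)$-symplectic basis from Proposition~\ref{CompStr-SympBas}, equivariance by a direct eigenline computation, bijectivity via the eigenspace decomposition of $J_\CC$, and the homeomorphism claim via identification with the quotient $\Sp_2(G,\sigma)/\KSp_2(G,\sigma)$. The only differences are cosmetic --- you verify well-definedness for a general $J$ where the paper reduces to $J_0$, you obtain surjectivity via transitivity and equivariance where the paper writes down $J$ explicitly from a given $(G,\sigma)$-symplectic basis, and you spell out the stabilizer comparison that the paper defers to the subsequent corollary.
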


\begin{proof}
1. First, we show that $l_J\in\mathfrak P$. It is again enough to prove this for the standard complex structure $J_0$. We take $v:=(i,1)^t=(0,1)^t+(1,0)^ti$, then $l_{J_0}=vA_\CC$, $((1,0)^t,(0,1)^t)$ is a $(G,\sigma)$-symplectic basis.

2. We show that $F_{\mathfrak C,\mathfrak P}$ is surjective. Let $v=u+wi$ for a $(G,\sigma)$-symplectic basis $(w,u)$ of $(A^2,\omega)$. We define the following complex structure: $J(u)=w$, $J(w)=-u$. By Proposition~\ref{CompStr-SympBas}, $J\in\mathfrak C$.
Since
$$J_\CC(v)=J_\CC(u+iw)=w-iu=-i(u+iw)=-iv,$$
we obtain $F_{\mathfrak C,\mathfrak P}(J)=vA_\CC$, i.e., $F_{\mathfrak C,\mathfrak P}$ is surjective.

3. We show that $F_{\mathfrak C,\mathfrak P}$ is injective. Let $l_J=l_{J'}=yA_\CC$ for $J,J'\in\mathfrak C$ and $y=y_1+y_2i\in A^2_\CC$. Then $J(y_1)=J'(y_1)=-y_2$, $J(y_2)=J'(y_2)=y_1$ and $(y_1,y_2)$ is a basis of $A^2$, i.e., $J=J'$.

4. Finally, we show the equivariance of $F_{\mathfrak C,\mathfrak P}$. Let $M\in\Sp_2(G,\sigma)$, $J\in\mathfrak C$ and $(w,u)$ be a $(G,\sigma)$-symplectic basis of $(A^2,\omega)$ such that $w:=J(u)$, $J(w)=-u$. Then for $v=u+wi$, $J_\CC(v)=-iv$.

Moreover, $MJM^{-1}(Mu)=Mw$, $MJM^{-1}(Mw)=-Mu$ where $(Mw, Mu)$ is also a $(G,\sigma)$-symplectic basis of $(A^2,\omega)$. Then $(MJM^{-1})_\CC(M_\CC v)=-iM_\CC v$ and
$$F_{\mathfrak C,\mathfrak P}(MJM^{-1})=(M_\CC v)A_\CC=M_\CC(vA_\CC)=M_\CC F_{\mathfrak C,\mathfrak P}(J),$$
i.e., $F_{\mathfrak C,\mathfrak P}$ is equivariant with respect to the $\Sp_2(G,\sigma)$-action.
\end{proof}

Similarly, we obtain the following Proposition:
\begin{prop}
The map
$$\begin{array}{cccl}
F_{\mathfrak C,\mathfrak P^-}\colon& \mathfrak C& \to & \mathfrak P^-\\
 & J & \mapsto & \overline{l_J}
\end{array}$$
defines a homeomorphism that is equivariant under the action of $\Sp_2(G,\sigma)$. In particular, $\mathfrak P^-$ is a model of the Riemannian symmetric space of $\Sp_2(G,\sigma)$.
\end{prop}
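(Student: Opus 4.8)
The plan is to obtain this statement as a formal consequence of the previous proposition by post-composing $F_{\mathfrak C,\mathfrak P}$ with complex conjugation. Let $c\colon A_\CC^2\to A_\CC^2$ denote entrywise complex conjugation; it is an $\R$-linear involutive homeomorphism and sends an $A_\CC$-line $l$ to the $A_\CC$-line $\bar l$, so it induces a map $\bar c\colon \mathfrak P^+\to \mathfrak P^-$ on lines. Since $\mathfrak P^-=\overline{\mathfrak P^+}$ by definition, $\bar c$ is a bijection, and because $c$ (hence $\bar c$) is its own inverse and is continuous, $\bar c$ is a homeomorphism. By construction $F_{\mathfrak C,\mathfrak P^-}(J)=\overline{l_J}=\bar c(l_J)=\bar c\bigl(F_{\mathfrak C,\mathfrak P}(J)\bigr)$, so $F_{\mathfrak C,\mathfrak P^-}=\bar c\circ F_{\mathfrak C,\mathfrak P}$ is a composition of homeomorphisms, hence a homeomorphism.

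For equivariance I would argue as follows. Every $M\in\Sp_2(G,\sigma)\subseteq\GL_2(A)$ has entries in the real algebra $A$, which is fixed by complex conjugation; therefore its $\CC$-linear extension $M_\CC$ commutes with $c$, i.e. $\overline{M_\CC z}=M_\CC\bar z$ for all $z\in A_\CC^2$. Consequently $\bar c(M_\CC l)=M_\CC\bar c(l)$ for every line $l$, so $\bar c$ is $\Sp_2(G,\sigma)$-equivariant; being a composition of two $\Sp_2(G,\sigma)$-equivariant homeomorphisms, $F_{\mathfrak C,\mathfrak P^-}$ is one as well. The concluding assertion is then immediate from the previous proposition (or from the fact that $\mathfrak C$ itself is a model of the Riemannian symmetric space of $\Sp_2(G,\sigma)$).

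There is no substantial obstacle here; the one point that deserves a line of justification is that $\bar c$ indeed maps $\mathfrak P^+$ into $\mathfrak P^-$ on the level of eigenlines, i.e. that $\overline{l_J}$ is the $(+i)$-eigenline of $J_\CC$. This holds because $J$ is an $A$-linear operator on the real module $A^2$, so $J_\CC$ commutes with $c$; hence $J_\CC(y)=-yi$ forces $J_\CC(\bar y)=\overline{J_\CC(y)}=\overline{-yi}=\bar y\,i$, exhibiting $\overline{l_J}=\bar yA_\CC$ as the $(+i)$-eigenline, which is exactly the defining condition for membership in $\mathfrak P^-=\overline{\mathfrak P^+}$. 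Alternatively, one could simply repeat verbatim the four-step argument of the previous proposition, replacing $u+wi$ by $w+ui$ throughout.
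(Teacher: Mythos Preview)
Your proposal is correct and aligns with the paper's own treatment: the paper does not spell out a proof but simply writes ``Similarly, we obtain the following Proposition'' and, immediately after, remarks that $\mathfrak P^+$ and $\mathfrak P^-$ are related by complex conjugation. Your argument via post-composition with conjugation (together with the observation that $M_\CC$ and $J_\CC$ commute with $c$ because they are defined over $A$) is exactly the content of that remark made precise, and your closing sentence noting that one could alternatively rerun the four-step proof is what ``similarly'' invites.
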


Since $\mathfrak P^+$ and $\mathfrak P^-$ are related by complex conjugation, in what follows we will only discuss $\mathfrak P^+$ keeping in mind that the corresponding statements apply equally to $\mathfrak P^-$.

\begin{cor}
The orbit map
$$\begin{matrix}
\pi_\mathfrak P\colon & \Sp_2(G,\sigma)/\KSp_2(G,\sigma) & \to & \mathfrak P \\
 & M\KSp_2(G,\sigma) & \mapsto & M(i,1)^tA_\CC
\end{matrix}$$
is an $\Sp_2(G,\sigma)$-equivariant homeomorphism.
\end{cor}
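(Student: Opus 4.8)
The plan is to obtain $\pi_{\mathfrak P}$ as a composition of maps already shown to be $\Sp_2(G,\sigma)$-equivariant homeomorphisms, so that no new analysis is needed. Concretely, I would combine the orbit-map homeomorphism $\pi_{\mathfrak C}\colon \Sp_2(G,\sigma)/\KSp_2(G,\sigma)\to\mathfrak C$, $M\KSp_2(G,\sigma)\mapsto MJ_0M^{-1}$, with the equivariant homeomorphism $F_{\mathfrak C,\mathfrak P}\colon\mathfrak C\to\mathfrak P$ constructed just above. The whole statement then reduces to verifying the identity $\pi_{\mathfrak P}=F_{\mathfrak C,\mathfrak P}\circ\pi_{\mathfrak C}$.

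To check this identity it suffices to evaluate $F_{\mathfrak C,\mathfrak P}$ at the standard complex structure. In the proof that $F_{\mathfrak C,\mathfrak P}$ is surjective it was observed that $l_{J_0}=(i,1)^tA_\CC$, since $(i,1)^t=(0,1)^t+(1,0)^ti$ and $(J_0)_\CC((i,1)^t)=-i(i,1)^t$. Hence, using $\Sp_2(G,\sigma)$-equivariance of $F_{\mathfrak C,\mathfrak P}$ and the fact that $M\mapsto M_\CC$ acts on lines of $A_\CC^2$ by left multiplication,
$$F_{\mathfrak C,\mathfrak P}(\pi_{\mathfrak C}(M\KSp_2(G,\sigma)))=F_{\mathfrak C,\mathfrak P}(MJ_0M^{-1})=M_\CC F_{\mathfrak C,\mathfrak P}(J_0)=M(i,1)^tA_\CC=\pi_{\mathfrak P}(M\KSp_2(G,\sigma)),$$
which is exactly the desired identity. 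In particular this simultaneously shows that $\pi_{\mathfrak P}$ is well defined on cosets: if $M'=Mk$ with $k\in\KSp_2(G,\sigma)$, then $k$ fixes $J_0$, so $F_{\mathfrak C,\mathfrak P}\circ\pi_{\mathfrak C}$ — and therefore $\pi_{\mathfrak P}$ — takes the same value on $M\KSp_2(G,\sigma)$ and $M'\KSp_2(G,\sigma)$.

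Finally, since the orbit-map corollary for $\mathfrak C$ gives that $\pi_{\mathfrak C}$ is an $\Sp_2(G,\sigma)$-equivariant homeomorphism and the preceding proposition gives the same for $F_{\mathfrak C,\mathfrak P}$, the composition $\pi_{\mathfrak P}$ is an $\Sp_2(G,\sigma)$-equivariant homeomorphism as well. There is essentially no obstacle here: the only content is the base-point computation $F_{\mathfrak C,\mathfrak P}(J_0)=(i,1)^tA_\CC$, which was already carried out, together with the routine bookkeeping that the two orbit-map normalizations agree. If one preferred a self-contained argument avoiding $\mathfrak C$, one could instead note directly that $\Sp_2(G,\sigma)$ acts transitively on $\mathfrak P$ because it acts transitively on $(G,\sigma)$-symplectic bases (Proposition~\ref{trans_bas}), that the stabilizer of $(i,1)^tA_\CC$ equals the stabilizer of $J_0$, namely $\KSp_2(G,\sigma)$ (by injectivity of $F_{\mathfrak C,\mathfrak P}$ and the earlier computation of $\Stab(J_0)$), and that the resulting orbit map on the homogeneous space is continuous with continuous inverse; but the composition argument above is shorter.
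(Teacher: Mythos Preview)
Your argument is correct and is exactly the intended one: the paper states this as a corollary immediately after proving that $F_{\mathfrak C,\mathfrak P}$ is an $\Sp_2(G,\sigma)$-equivariant homeomorphism, so the implicit proof is precisely the composition $\pi_{\mathfrak P}=F_{\mathfrak C,\mathfrak P}\circ\pi_{\mathfrak C}$ together with the base-point check $l_{J_0}=(i,1)^tA_\CC$ (which appears verbatim in step~1 of the proof of that proposition). You have simply made this explicit.
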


Consider the following $\bar\sigma$-sesquilinear form on $A^2_\CC$:
\begin{equation}\label{eq:form_h}
    h(x,y):=i\omega_\CC(\bar x,y).    
\end{equation}
It is $\bar\sigma$-symmetric:
$$h(y,x)=i\omega_\CC(\bar y,x)=\bar\sigma((-i)(-\omega_\CC(\bar x,y)))=\bar\sigma(h(x,y)),$$
and in the basis $e_1:=\left(\frac{1}{\sqrt{2}},\frac{i}{\sqrt{2}}\right)^t$, $e_2:=\left(\frac{1}{\sqrt{2}},-\frac{i}{\sqrt{2}}\right)^t$, the form $h$ is represented by the matrix $\begin{pmatrix} -1 & 0 \\ 0 & 1\end{pmatrix}$.

The following lemma is immediate:
\begin{lem}\label{G-SympBas-h=2}
Let $G_\mathbb C$ be a Lie subgroup of $A^\times_\mathbb C$ such that $\Lie(G_\mathbb C)=B_\mathbb C$ and $(B^{\sigma}_\CC)^\times\subseteq G_\CC$. In this case, the group $\Sp_2(G_\CC,\sigma)$ is well defined. For $v:=u+wi\in A_\CC^2$ such that $(w,u)$ is a basis of $A^2$, the basis $(w,u)$ is $(G,\sigma)$-symplectic if and only if  $h(v,v)=2$ and $v\in\Is_{G_\CC}(\omega_\CC)$.
\end{lem}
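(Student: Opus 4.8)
The plan is to unpack both conditions into elementary ones and to match them in two independent stages: an $\omega_\CC$-and-$h$ computation that handles the ``symplectic basis'' part, and an orbit argument for the ``$G$-isotropic'' part. Throughout I use that $(w,u)$ is $(G,\sigma)$-symplectic precisely when it is a symplectic basis of $(A^2,\omega)$ and $w,u\in\Is_G(\omega)$, and that by Proposition~\ref{trans_bas} the conjunction of these is equivalent to the matrix $g_{wu}\in\Mat_2(A)$ with columns $w,u$ lying in $\Sp_2(G,\sigma)$ (in which case $w=g_{wu}(1,0)^t$ and $u=g_{wu}(0,1)^t$).

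\emph{Stage one.} For $v=u+wi$ and $\bar v=u-wi$ I would expand $h(v,v)=i\omega_\CC(\bar v,v)$ and $\omega_\CC(v,v)$ using $\CC$-bilinearity of $\omega_\CC$, the identity $\sigma(\omega(x,y))=-\omega(y,x)$, and the splitting $A_\CC=A\oplus iA$, obtaining
$$h(v,v)=(\omega(w,u)-\omega(u,w))+i(\omega(u,u)+\omega(w,w)),\qquad \omega_\CC(v,v)=(\omega(u,u)-\omega(w,w))+i(\omega(u,w)+\omega(w,u)).$$
Hence ``$h(v,v)=2$ and $\omega_\CC(v,v)=0$'' is equivalent to $\omega(w,w)=\omega(u,u)=0$, $\omega(w,u)=1$, i.e.\ to $(w,u)$ being a symplectic basis of $(A^2,\omega)$. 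Since every $v\in\Is_{G_\CC}(\omega_\CC)$ is in particular $\omega_\CC$-isotropic, and conversely a symplectic basis forces $h(v,v)=2$, in both directions of the lemma we may assume from now on that $g_{wu}\in\Sp_2(A,\sigma)$, and the task reduces to matching ``$g_{wu}\in\Sp_2(G,\sigma)$'' with ``$v\in\Is_{G_\CC}(\omega_\CC)$''.

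\emph{Stage two.} First I would note $\Sp_2(G,\sigma)\subseteq\Sp_2(G_\CC,\sigma)$: as $G\subseteq G_\CC$ and $B^\sigma\subseteq B_\CC^\sigma$, the generic generators $L(y)D(x)R(z)$ of $\Sp_2(G,\sigma)$ are among those of $\Sp_2(G_\CC,\sigma)$, and one passes to closures inside $\Sp_2(A,\sigma)\subseteq\Sp_2(A_\CC,\sigma)$. Also $\Phi:=L(-i)D(i)=\left(\begin{smallmatrix}i&0\\1&-i\end{smallmatrix}\right)\in\Sp_2(G_\CC,\sigma)$, since $i\in(B_\CC^\sigma)^\times\subseteq G_\CC$ and $-i\in B_\CC^\sigma$, and $v=g_{wu}\Phi(1,0)^t$. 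For ``$\Rightarrow$'': if $g_{wu}\in\Sp_2(G,\sigma)$ then $g_{wu}\Phi\in\Sp_2(G_\CC,\sigma)$, so $v\in\Is_{G_\CC}(\omega_\CC)$. For ``$\Leftarrow$'': assume $v\in\Is_{G_\CC}(\omega_\CC)$. Then also $\bar v\in\Is_{G_\CC}(\omega_\CC)$ ($G_\CC$ being generated by conjugation-stable sets, so $\Sp_2(G_\CC,\sigma)$ and its orbit are stable under complex conjugation), and a direct computation gives $\omega_\CC(v,\bar v)=2i$; hence $\bigl(v,\ \bar v\cdot(-i/2)\bigr)$ is a $(G_\CC,\sigma)$-symplectic basis of $A_\CC^2$ (using $-i/2\in(B_\CC^\sigma)^\times\subseteq G_\CC$ and the stability of $\Is_{G_\CC}(\omega_\CC)$ under right $G_\CC$-multiplication). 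By the $G_\CC$-analogue of Proposition~\ref{trans_bas}, the matrix with these columns lies in $\Sp_2(G_\CC,\sigma)$, and written out it equals $g_{wu}\Psi$ with $\Psi=\left(\begin{smallmatrix}i&-1/2\\1&-i/2\end{smallmatrix}\right)=L(-i)D(i)R(i/2)\in\Sp_2(G_\CC,\sigma)$. Therefore $g_{wu}=(g_{wu}\Psi)\Psi^{-1}\in\Sp_2(G_\CC,\sigma)$, and being real it lies in $\Sp_2(G_\CC,\sigma)\cap\Sp_2(A,\sigma)$, so $w,u\in\Is_{G_\CC}(\omega_\CC)\cap A^2$.

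The lemma now follows once one knows $\Sp_2(G_\CC,\sigma)\cap\Sp_2(A,\sigma)=\Sp_2(G,\sigma)$, equivalently $\Is_{G_\CC}(\omega_\CC)\cap A^2=\Is_G(\omega)$, and I expect this descent to be the main obstacle. The inclusion $\supseteq$ is immediate from Stage two; for $\subseteq$ I would invoke Proposition~\ref{prop:generic.elements}: a real generic element of $\Sp_2(G_\CC,\sigma)$ has the form $L(y)D(a)R(z)$ with $y,z\in B_\CC^\sigma\cap A=B^\sigma$ and $a\in G_\CC\cap A^\times$, which reduces everything to $G_\CC\cap A^\times=G$; and $G_\CC\cap A^\times$ is a Lie subgroup of $A^\times$ with Lie algebra $B_\CC\cap A=B$ containing $(B^\sigma)^\times$, so its identity component is $G_0$, while by Theorem~\ref{Conn_comp_of_G} its component group is represented by sign elements of a Jordan frame of $B^\sigma$, all of which already lie in $G$; non-generic elements are then handled by approximation by generic ones. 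Apart from this point — and the routine check that conjugation behaves well on $\Sp_2(G_\CC,\sigma)$, which uses that $G_\CC$ is stable under complex conjugation — everything else is bookkeeping with the permutation relations among $L$, $D$, $R$.
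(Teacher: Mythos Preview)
The paper provides no proof of this lemma --- it is declared ``immediate'' --- so there is nothing to compare your argument against. Your Stage one computation is correct and is indeed routine: the identities for $h(v,v)$ and $\omega_\CC(v,v)$ give exactly the symplectic-basis conditions on $(w,u)$. Your forward direction in Stage two is also fine: $\Sp_2(G,\sigma)\subseteq\Sp_2(G_\CC,\sigma)$ for the reasons you state, and $v=g_{wu}\Phi(1,0)^t$ with $\Phi=L(-i)D(i)\in\Sp_2(G_\CC,\sigma)$ gives $v\in\Is_{G_\CC}(\omega_\CC)$.

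The backward direction is where the content lies, and you have correctly isolated the issue: one needs the descent $\Sp_2(G_\CC,\sigma)\cap\Sp_2(A,\sigma)=\Sp_2(G,\sigma)$, which you reduce to $G_\CC\cap A^\times=G$. However, your justification of this last equality has a genuine gap. You invoke Theorem~\ref{Conn_comp_of_G} to conclude that the component group of $G_\CC\cap A^\times$ is represented by sign elements of a Jordan frame of $B^\sigma$; but that theorem is proved only for the \emph{minimal} extension $G=\langle G_0,(B^\sigma)^\times\rangle$, and its proof begins precisely from the fact that every element is a product of elements of $G_0$ and $(B^\sigma)^\times$. For the a~priori larger group $H:=G_\CC\cap A^\times$ you do not have this generation property --- indeed, that is exactly what you are trying to show. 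So the argument as written is circular at this point.

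A step that does go through and may help you repair the argument: in the paper's setting $G_\CC$ is the minimal extension for $B_\CC$, and since $(B_\CC^\sigma)^\times$ is the complement of a complex-analytic hypersurface in the complex vector space $B_\CC^\sigma$, it is connected and contains $1$, hence $(B_\CC^\sigma)^\times\subseteq(G_\CC)_0$. Consequently $G_\CC=(G_\CC)_0$ is connected, and likewise $\Sp_2(G_\CC,\sigma)$ is connected (its generic locus is the continuous image of the connected set $B_\CC^\sigma\times G_\CC\times B_\CC^\sigma$). The remaining task is then to show that the real locus (the fixed points of complex conjugation) of this connected complex group is exactly $\Sp_2(G,\sigma)$, which is connected by Proposition~\ref{prop:Sp2Connected}; fixed-point sets of antiholomorphic involutions on connected complex groups can in general be disconnected, so some specific feature of the present situation must be used. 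Your approximation of non-generic elements by generic ones is fine once the generic case is settled, but the generic case still rests on $G_\CC\cap A^\times=G$, which needs an independent argument.
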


\begin{rem}
The group $\Sp_2(G,\sigma)$ acts on $A_\CC^2$ preserving both forms $\omega_\CC$ and $h$. So we can see $\Sp_2(G,\sigma)$ as a subgroup of $\Aut(h)$.
\end{rem}

\subsection{Precompact model}\label{Precomp_Mod_R}

In this section, we identify the symmetric space of $\Sp_2(G,\sigma)$ as an open subset of $B^\sigma_\mathbb C$ with a compact closure. 

We define the following domain $\mathfrak B$:
$$\mathfrak B\!:=\{c\in B^\sigma_\CC\mid 1-\bar c c\in (B^{\bar\sigma}_\CC)_+\}.$$
This is an open domain in $B^\sigma_\CC$, therefore, it is naturally a complex manifold, and the tangent space $T_c\mathfrak B=B^\sigma_\CC$ at every $c\in \mathfrak B$. Moreover, its closure agrees with the compact domain $D(B_\mathbb C^\sigma,\bar\sigma)$ from Proposition~\ref{comp_disc}, i.e., it is precompact.

We consider the following $\Sp_2(G_\CC,\sigma)$-transformation:
$$T:=\frac{1}{\sqrt{2}}\begin{pmatrix}
1 & i \\
i & 1
\end{pmatrix}\in \Sp_2(G_\CC,\sigma).$$
It maps $h$ (cf.~\eqref{eq:form_h}) to the indefinite form represented by the matrix $\begin{pmatrix}
    -1 & 0 \\
    0 & 1
\end{pmatrix}$: i.e., $\bar\sigma(T)^t[h]T=\begin{pmatrix}
    -1 & 0 \\
    0 & 1
\end{pmatrix}$. Since $T\in\Sp_2(G_\CC,\sigma)$, it stabilizes the set $\Is_{G_\CC}(\omega_\CC)$.

\begin{prop}\label{Proj-Precomp-R}
We consider the map $F_{\mathfrak P,\mathfrak B}:=\Phi\circ T^{-1}\colon\mathfrak P\to \mathfrak B$ where
$$\begin{array}{cccl}
\Phi\colon & T^{-1}\mathfrak P & \to & \mathfrak B\\
 & \begin{pmatrix}
     v_1 \\ v_2
 \end{pmatrix}A_\CC & \mapsto & v_1v_2^{-1}.
\end{array}$$
Both maps $F_{\mathfrak P,\mathfrak B}$ and $\Phi$ are homeomorphisms.
\end{prop}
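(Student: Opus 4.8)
The plan is to split $F_{\mathfrak P,\mathfrak B}=\Phi\circ T^{-1}$ and handle the two factors in turn. Since $T^{-1}\in\Sp_2(G_\CC,\sigma)$ acts on $\PP(A_\CC^2)$ by left matrix multiplication, the map $T^{-1}\colon\mathfrak P\to T^{-1}\mathfrak P$ is automatically a homeomorphism onto its image, so everything reduces to showing that $\Phi\colon T^{-1}\mathfrak P\to\mathfrak B$ is a homeomorphism. The first step is to make $T^{-1}\mathfrak P$ explicit. By Lemma~\ref{G-SympBas-h=2} a line lies in $\mathfrak P$ precisely when it has a representative $v=u+wi$ with $v\in\Is_{G_\CC}(\omega_\CC)$ and $h(v,v)=2$. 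The set $\Is_{G_\CC}(\omega_\CC)$ is preserved by $T$, and from $\bar\sigma(T)^t[h]T=\diag(-1,1)$ one computes $h(Tx,Tx)=-\bar\sigma(x_1)x_1+\bar\sigma(x_2)x_2$ for $x=(x_1,x_2)^t\in A_\CC^2$. Transporting the two conditions through $T$, and replacing the normalization ``$h(v,v)=2$'' by the rescaling-invariant condition that this value be invertible (i.e.\ lie in $(B^{\bar\sigma}_\CC)_+$), gives
$$T^{-1}\mathfrak P=\left\{(v_1,v_2)^tA_\CC\midwd (v_1,v_2)^t\in\Is_{G_\CC}(\omega_\CC),\ -\bar\sigma(v_1)v_1+\bar\sigma(v_2)v_2\in(B^{\bar\sigma}_\CC)_+\right\}\subseteq\PIs_{G_\CC}(\omega_\CC).$$

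The decisive point is that on such a line the second coordinate is invertible. Since $\bar\sigma(v_1)v_1\in(B^{\bar\sigma}_\CC)_{\geq 0}$ while $-\bar\sigma(v_1)v_1+\bar\sigma(v_2)v_2\in(B^{\bar\sigma}_\CC)_+$, their sum $2\,\bar\sigma(v_2)v_2$ again lies in $(B^{\bar\sigma}_\CC)_+$, because $(B^{\bar\sigma}_\CC)_+$ is the interior of the proper closed convex cone $(B^{\bar\sigma}_\CC)_{\geq 0}$ (the complexified analogue of the properties of $B^\sigma_+$ recalled in Section~\ref{B-case}) and the interior of a convex cone is stable under adding elements of the cone; hence $\bar\sigma(v_2)v_2\in A_\CC^\times$, so $v_2$ is left-invertible in the finite-dimensional algebra $A_\CC$ and therefore invertible. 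Consequently every line in $T^{-1}\mathfrak P$ has a unique representative of the form $(c,1)^t$ with $c:=v_1v_2^{-1}$ (independent of the chosen representative, since rescaling by a unit leaves $v_1v_2^{-1}$ unchanged), so $\Phi$ is a well-defined map of sets, with candidate inverse $c\mapsto(c,1)^tA_\CC$.

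Next I would check that $\Phi$ takes values in $\mathfrak B$ and is bijective. For a line in $T^{-1}\mathfrak P$ written as $(c,1)^tA_\CC$: the element $(c,1)^t$ is $G_\CC$-isotropic, so $c\in B^\sigma_\CC$ by Corollary~\ref{cor:(c,1)-isotropic} (applied to $G_\CC$); hence $\bar\sigma(c)=\bar c$, and the positivity condition becomes $1-\bar cc\in(B^{\bar\sigma}_\CC)_+$, i.e.\ $c\in\mathfrak B$. Conversely, for $c\in\mathfrak B\subseteq B^\sigma_\CC$ the element $(c,1)^t$ is $G_\CC$-isotropic again by Corollary~\ref{cor:(c,1)-isotropic}, and $1-\bar cc\in(B^{\bar\sigma}_\CC)_+$ by the definition of $\mathfrak B$, so $(c,1)^tA_\CC\in T^{-1}\mathfrak P$; thus $c\mapsto(c,1)^tA_\CC$ is a genuine two-sided inverse of $\Phi$. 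Finally, $\Phi$ is continuous because the dehomogenization $(v_1,v_2)^t\mapsto v_1v_2^{-1}$ is smooth on the locus $\{v_2\in A_\CC^\times\}$ and descends to a continuous map on the corresponding affine chart of $\PP(A_\CC^2)$, of which $\Phi$ is the restriction to $T^{-1}\mathfrak P$; and $\Phi^{-1}\colon c\mapsto(c,1)^tA_\CC$ is visibly continuous on the open set $\mathfrak B\subseteq B^\sigma_\CC$. Hence $\Phi$, and therefore $F_{\mathfrak P,\mathfrak B}=\Phi\circ T^{-1}$, is a homeomorphism.

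I expect the genuine work to be concentrated in two places where structure enters: (i) pinning down the explicit description of $T^{-1}\mathfrak P$, i.e.\ carefully carrying the conditions ``$v\in\Is_{G_\CC}(\omega_\CC)$'' and ``$h(v,v)=2$'' through the Cayley-type element $T$; and (ii) extracting from the complexified positivity $-\bar\sigma(v_1)v_1+\bar\sigma(v_2)v_2\in(B^{\bar\sigma}_\CC)_+$ that $v_2$ is a unit, which must be argued from the cone properties of $(B^{\bar\sigma}_\CC)_{\geq 0}$ rather than from a spectral theorem, since $(B_\CC,\bar\sigma)$ need not be of Jordan type. Everything else — well-definedness, bijectivity, and bicontinuity of $\Phi$ — is then routine bookkeeping.
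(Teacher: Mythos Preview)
Your proposal is correct and follows essentially the same route as the paper's proof: both invoke Lemma~\ref{G-SympBas-h=2} to characterize $\mathfrak P$, transport through $T$ using $\bar\sigma(T)^t[h]T=\diag(-1,1)$, use the convex-cone property of $(B^{\bar\sigma}_\CC)_{\geq 0}$ to force invertibility of the second coordinate, and appeal to Corollary~\ref{cor:(c,1)-isotropic} to land in $B^\sigma_\CC$. The only cosmetic difference is that the paper works throughout with the normalized representative $h(v,v)=2$ (and for surjectivity explicitly writes down $v=T(c,1)^t\sqrt{2}(1-\bar cc)^{-1/2}$ to hit that normalization), whereas you pass to the rescaling-invariant condition $h(v,v)\in(B^{\bar\sigma}_\CC)_+$; your brief continuity argument is a small addition the paper leaves implicit.
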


\begin{proof} Let $vA_\CC\in\mathfrak P$ for $v=(v_1,v_2)^t\in \Is_{G_\CC}(\omega_\CC)$ and $v=u+wi$ where $(w,u)$ is a~$(G,\sigma)$-symplectic basis of $(A^2,\omega)$. Then by Lemma~\ref{G-SympBas-h=2}:
$$2=h(v,v)=-\bar\sigma(x_1)x_1+\bar\sigma(x_2)x_2\in (B^{\bar\sigma}_\mathbb C)_+$$
where $T^{-1}v=:(x_1,x_2)^t$. Therefore, $\bar\sigma(x_2)x_2=\bar\sigma(x_1)x_1+2\in (B^{\bar\sigma}_\mathbb C)_+$ because $(B^{\bar\sigma}_\mathbb C)_+$ is a~convex cone. This means that $x_2$ is invertible, i.e., $x_2\in G$. We define 
$$(c,1)^t:=(x_1x_2^{-1},1)^t\in\Is_{G_\CC}(\omega_\CC).$$ Then $(c,1)^tA_\CC=(T^{-1}v)A_\CC$. By Corollary~\ref{cor:(c,1)-isotropic}, $(c,1)^t\in\Is_{G_\CC}(\omega_\CC)$ if and only if $c\in B^{\sigma}_\CC$. Moreover, 
$$h((c,1)^t,(c,1)^t)=1-\bar c c\in (B^{\bar\sigma}_\mathbb C)_+.$$ Therefore, $\Phi(T^{-1}v)\in \mathfrak B$.

The map $\Phi$ is injective because $T$ is injective and, if $$x_1x_2^{-1}=\Phi(x_1,x_2)^t=\Phi(y_1,y_2)^t=y_1y_2^{-1},$$
then $(y_1,y_2)^t=(x_1,x_2)^tx_2^{-1}y_2$, i.e., $(x_1,x_2)^tA_\CC=(y_1,y_2)^tA_\CC$.

Finally, the map $\Phi$ is surjective because for every $c\in \mathfrak B$, $(c,1)^tA_\CC=(T^{-1}v)A_\CC$ where $v:=T(c,1)^t\sqrt{2}(1-\bar c c)^{-\frac{1}{2}}$. Then $v\in \Is_{G_\CC}(\omega_\CC)$ and $h(v,v)=2$. Therefore, by Lemma~\ref{G-SympBas-h=2}, $v=u+wi$ where $(w,u)$ is a $(G,\sigma)$-symplectic basis of $(A^2,\omega)$. Therefore, $vA_\CC\in T^{-1}\mathfrak P$.
\end{proof}

\begin{rem}
The group $T^{-1}\Sp_2(G,\sigma)T<\Sp_2(G_\CC,\sigma)$ acts transitively on $\mathfrak B$ by M\"obius transformations. Thus $F_{\mathfrak P,\mathfrak B}$ is equivariant with respect to this action of $\Sp_2(G,\sigma)$ on $\mathfrak B$ and the action on $\mathfrak P$.
\end{rem}

\subsection{Compactification and Shilov boundary}\label{Shilov_R}

Let $(B,\sigma)$ be a Hermitian Lie subalgebra of $A$. Then
$$\mathfrak B:=\{c\in B^\sigma_\CC\mid 1-\bar cc\in (B^{\bar\sigma}_\CC)_+\}$$
is precompact. We take the topological closure of $\mathfrak B$ in  $B^\sigma$:
$$\overline{\mathfrak B}:=\{c\in B^\sigma_\CC\mid 1-\bar cc\in (B^{\bar\sigma}_\CC)_{\geq 0}\}.$$
The boundary of $\overline{\mathfrak B}$ contains the following compact subspace:
$$\check{S}(\mathfrak B):=\{c\in B^\sigma_\CC\mid 1-\bar cc=0\}=\OO(G_\CC,\bar\sigma)\cap B^\sigma_\CC.$$

\begin{df}
We call $\check{S}(\mathfrak B)$ the \defin{Shilov boundary} of the precompact model $\mathfrak B$.
\end{df}

\begin{rem}
The map $\Phi^{-1}$ extends to the boundary of $\overline{\mathfrak B}$ and remains continuous and bijective. Since the boundary is compact, it is a homeomorphism. Therefore, we can see the boundary also in the projective space model. In particular, we can see the Shilov boundary there.
\end{rem}

The following proposition describes the Shilov boundary in the projective space model.

\begin{prop}
The preimage of the Shilov boundary $\check{S}(\mathfrak B)$ in $\Is_{G_\CC}(\omega)$ under the map $F_{\mathfrak P,\mathfrak B}$ gives a compact subset of the boundary of the projective space model. It consists of all lines of the form $xA_\CC$ such that $x\in\Is_G(\omega)$.
\end{prop}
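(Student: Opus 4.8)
The strategy is to reduce the statement to the transitivity of the maximal compact subgroup $\KSp_2(G,\sigma)$ on the Shilov boundary, and then to establish that transitivity by an explicit ``square root'' inside $B^\sigma_\CC$.

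By the remarks preceding the statement, $F_{\mathfrak P,\mathfrak B}=\Phi\circ T^{-1}$ extends to a homeomorphism $\overline{\mathfrak P}\to\overline{\mathfrak B}$ carrying $\mathfrak P$ onto $\mathfrak B$; hence $F_{\mathfrak P,\mathfrak B}^{-1}(\check S(\mathfrak B))$ is a compact subset of the boundary of the projective model, and for $c\in\check S(\mathfrak B)$ its preimage is the $G_\CC$-isotropic line $\bigl(T(c,1)^t\bigr)A_\CC$. I would anchor the argument at the point $1\in\check S(\mathfrak B)$ (note $1-\bar 1\cdot 1=0$ and $1\in B^\sigma_\CC$): since $T(1,1)^t=\tfrac{1+i}{\sqrt2}(1,1)^t$, we have $F_{\mathfrak P,\mathfrak B}^{-1}(1)=(1,1)^tA_\CC$, which by Corollary~\ref{cor:(c,1)-isotropic} equals $xA_\CC$ for the $G$-isotropic element $x=(1,1)^t\in\Is_G(\omega)$. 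As $\KSp_2(G,\sigma)\le\Sp_2(G,\sigma)$ preserves the form $h$ of~\eqref{eq:form_h} (all of $\Sp_2(G,\sigma)$ does) and the compactified domain $\overline{\mathfrak P}$, it preserves $\check S(\mathfrak B)$, and $F_{\mathfrak P,\mathfrak B}$ is equivariant for the $T$-conjugated action (extended to the boundary by continuity); so it remains to prove that $\KSp_2(G,\sigma)$ acts transitively on $\check S(\mathfrak B)$ and to invoke that it also acts transitively on $\PIs_G(\omega)$.

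For the transitivity on $\check S(\mathfrak B)$, conjugation by $T$ gives $T^{-1}\begin{pmatrix}a&b\\-b&a\end{pmatrix}T=\diag(a+bi,\,a-bi)=\diag(g,\bar g)$, where $g=a+bi$ is precisely the image of $\begin{pmatrix}a&b\\-b&a\end{pmatrix}$ under the isomorphism $\KSp_2(G,\sigma)\cong\OO(G_\CC,\bar\sigma)$ recalled after Theorem~\ref{maxcomp-Sp_R}; in the precompact coordinate the corresponding element of $\KSp_2(G,\sigma)$ therefore acts by $c\mapsto gc\bar g^{-1}$, which indeed preserves $\check S(\mathfrak B)=\{c\in B^\sigma_\CC\mid\bar cc=1\}$. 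Given $c\in\check S(\mathfrak B)$, write $c=c_1+ic_2$ with $c_1,c_2\in B^\sigma$; then $\bar cc=1$ forces $c_1^2+c_2^2=1$ and $[c_1,c_2]=0$, so by the spectral theorem for commuting elements of $B^\sigma$ there is a Jordan frame $(e_j)_{j=1}^n$ with $c_1=\sum_j\cos\theta_j\,e_j$ and $c_2=\sum_j\sin\theta_j\,e_j$. Put $g:=\sum_j\cos(\theta_j/2)\,e_j+i\sum_j\sin(\theta_j/2)\,e_j\in B^\sigma_\CC$. Using orthogonality of the idempotents one checks $\bar gg=1$, so $g\in(B^\sigma_\CC)^\times\subseteq G_\CC$ and $\bar\sigma(g)g=\bar gg=1$, i.e.\ $g\in\OO(G_\CC,\bar\sigma)$ and hence $g=\Psi(M)$ for a unique $M\in\KSp_2(G,\sigma)$; moreover $\bar g^{-1}=g$, so $g\cdot1\cdot\bar g^{-1}=g^2=\sum_j e^{i\theta_j}e_j=c_1+ic_2=c$. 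Thus the $\KSp_2(G,\sigma)$-orbit of $1\in\check S(\mathfrak B)$ is the whole of $\check S(\mathfrak B)$.

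Combining these, $F_{\mathfrak P,\mathfrak B}^{-1}(\check S(\mathfrak B))=\KSp_2(G,\sigma)\cdot(1,1)^tA_\CC$ by equivariance; since $(1,1)^t\in\Is_G(\omega)$ and $\KSp_2(G,\sigma)\subseteq\Sp_2(G,\sigma)$ preserves $\Is_G(\omega)$, this orbit is contained in $\{xA_\CC\mid x\in\Is_G(\omega)\}$ (the image of $\PIs_G(\omega)$), and by the transitivity of $\KSp_2(G,\sigma)$ on $\PIs_G(\omega)$ it coincides with all of it. Finally $\PIs_G(\omega)$ is compact because $\OO(G,\sigma)$ is (as $B$ is Hermitian), and it is disjoint from $\mathfrak P$ — hence lies in the boundary — because on these lines $h(v,v)=i\,\omega(v,v)=0$, which does not belong to the proper cone $(B^{\bar\sigma}_\CC)_+$, whereas $h$ takes values in $(B^{\bar\sigma}_\CC)_+$ on $\mathfrak P$. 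The step I expect to require the most care is the spectral square root: one must verify that the real and imaginary parts $\sum_j\cos(\theta_j/2)e_j$ and $\sum_j\sin(\theta_j/2)e_j$ of $g$ genuinely assemble, via $\Psi^{-1}$, into an element of $\KSp_2(G,\sigma)$ — i.e.\ that they satisfy $p^2+q^2=1$ and $[p,q]=0$, which is where Jordan-frame orthogonality enters — together with the clean identification of the $\KSp_2(G,\sigma)$-action on $\check S(\mathfrak B)$ produced by the $T$-conjugation.
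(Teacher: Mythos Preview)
Your argument is correct, and it takes a genuinely different route from the paper. The paper's proof is a direct computation: it characterises the lines $xA_\CC$ with $x\in\Is_G(\omega)$ as exactly the complex-conjugation-invariant lines $\bar l=l$, and then checks, by writing out $F_{\mathfrak P,\mathfrak B}$ and its inverse explicitly, that $c\in\check S(\mathfrak B)$ forces $\overline{F_{\mathfrak P,\mathfrak B}^{-1}(c)}=F_{\mathfrak P,\mathfrak B}^{-1}(c)$, and conversely that for real $x=(x_1,x_2)^t\in\Is_G(\omega)$ the element $c=(x_1-ix_2)(-ix_1+x_2)^{-1}$ satisfies $\bar cc=1$. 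No group-theoretic transitivity is invoked.

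Your approach is more structural: you reduce everything to a single anchor point and the transitivity of $\KSp_2(G,\sigma)$ on both $\check S(\mathfrak B)$ and $\PIs_G(\omega)$, the latter being already available in the paper and the former established via your explicit spectral square root $g=\sum_j e^{i\theta_j/2}e_j\in(B^\sigma_\CC)^\times$. The $T$-conjugation computation $T^{-1}\begin{pmatrix}a&b\\-b&a\end{pmatrix}T=\diag(a+bi,a-bi)$ is correct, and your verification that $p=\Ree(g),\,q=\Imm(g)\in B^\sigma$ satisfy $p^2+q^2=1$ and $[p,q]=0$ (so that $\Psi^{-1}(g)\in\KSp_2(G,\sigma)$, approximating by invertible $p$ if necessary) closes the loop. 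What your approach buys is the stronger statement that $\check S(\mathfrak B)$ is already a single $\KSp_2(G,\sigma)$-orbit, not merely an $\Sp_2(G,\sigma)$-orbit as the paper records afterwards; the paper's approach, on the other hand, is shorter and avoids any appeal to the isomorphism $\KSp_2(G,\sigma)\cong\OO(G_\CC,\bar\sigma)$ or to the construction of $G_\CC$.
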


\begin{proof}
Note that the line $l\in \Is_{G_\CC}(\omega)$ is of the form $xA_\CC$ for some $x\in\Is_G(\omega)$ if and only if $\bar l =l$.

Let $c\in \check{S}(\mathfrak B)$, then $\bar c^{-1}=c$ and
$$F_{\mathfrak P,\mathfrak B}\left(\overline{F_{\mathfrak P,\mathfrak B}^{-1}(c)}\right)=\Phi\left(\!\!\begin{pmatrix}0 & i\\ i & 0\end{pmatrix}\Clm{\bar c}{1}\!\!\right)=\Phi\left(\!\!\Clm{1}{\bar ci}\!\!\right)=\bar c^{-1}=c,$$
i.e., $F_{\mathfrak P,\mathfrak B}^{-1}(c)=xA_\mathbb C$ for $x=(c,1)^t\in\Is_G(\omega)$.

Let $xA_\CC$ be a line spanned by $x=(x_1,x_2)^t\in\Is_G(\omega)$, then
$$c:=F_{\mathfrak P,\mathfrak B}(xA_\CC)=(x_1-ix_2)(-ix_1+x_2)^{-1}.$$
By Corollary~\ref{cor:(c,1)-isotropic}, $x\in\Is_G(\omega)$ if and only if $c\in B^\sigma_\CC$. Therefore,
\begin{align*}
    \bar c c&=(x_1+ix_2)(ix_1+x_2)^{-1}(x_1-ix_2)(-ix_1+x_2)^{-1}\\
    &=i(x_1+ix_2)(x_1-ix_2)^{-1}(x_1-ix_2)(-ix_1+x_2)^{-1}\\
    &=i(x_1+ix_2)(-ix_1+x_2)^{-1}=(x_1+ix_2)(x_1+ix_2)^{-1}=1.    
\end{align*}
Therefore, $F_{\mathfrak P,\mathfrak B}(xA_\mathbb C)\in \check{S}(\mathfrak B)$.
\end{proof}

\begin{cor}
The space $\PIs_{G}(\omega)$ embedded into $\PIs_{G_\CC}(\omega_\mathbb C)$ as:
$$xA \mapsto xA_\CC$$
is the Shilov boundary in the projective space model. This is a  compact orbit of the action of $\Sp_2(G,\sigma)$ on the boundary of the projective space model.
\end{cor}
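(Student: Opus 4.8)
The plan is to deduce this corollary by assembling three ingredients already in hand: the identification of $F_{\mathfrak P,\mathfrak B}^{-1}(\check S(\mathfrak B))$ from the previous proposition, the transitivity of $\Sp_2(G,\sigma)$ on $\PIs_G(\omega)$ (Proposition~\ref{stab1}), and the compactness of $\PIs_G(\omega)$ that follows from $B$ being Hermitian.

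First I would verify that $xA\mapsto xA_\CC$ is a well-defined topological embedding of $\PIs_G(\omega)$ into $\PIs_{G_\CC}(\omega_\CC)$. It is well-defined: if $x\in\Is_G(\omega)$ then $x=M(1,0)^t$ for some $M\in\Sp_2(G,\sigma)\subseteq\Sp_2(G_\CC,\sigma)$, so $x\in\Is_{G_\CC}(\omega_\CC)$ and $xA_\CC\in\PIs_{G_\CC}(\omega_\CC)$; and if $xA=yA$ with $x,y\in\Is_G(\omega)$, then $y=xg$ for some $g\in G\subseteq A_\CC^\times$, hence $xA_\CC=yA_\CC$. For injectivity I would note that a regular element $x\in A^2$ is also regular over $A_\CC$, and that $xA_\CC\cap A^2=xA$: writing an element of the intersection as $x(c_1+ic_2)$ with $c_1,c_2\in A$, comparing imaginary parts gives $xc_2=0$, so $c_2=0$ by regularity. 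Hence $xA_\CC=yA_\CC$ forces $xA=yA$. Continuity of the map is clear (it is induced by the inclusion $A^2\hookrightarrow A_\CC^2$, or equivalently it is $\Sp_2(G,\sigma)$-equivariant for the left matrix actions), and since $B$ is Hermitian the group $\OO(G,\sigma)$ is compact, so $\PIs_G(\omega)$ is a compact manifold; a continuous injection from a compact space into the Hausdorff space $\PIs_{G_\CC}(\omega_\CC)$ is an embedding with compact image.

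Next I would identify this image with the Shilov boundary read in the projective model. By the previous proposition, $F_{\mathfrak P,\mathfrak B}^{-1}(\check S(\mathfrak B))=\{xA_\CC\mid x\in\Is_G(\omega)\}$, which is precisely the image of the embedding above. Since $\Phi^{-1}$ (hence $F_{\mathfrak P,\mathfrak B}^{-1}$) extends to a homeomorphism on the boundary of $\overline{\mathfrak B}$, and $\check S(\mathfrak B)$ lies in that boundary, the set $\{xA_\CC\mid x\in\Is_G(\omega)\}$ is by definition the Shilov boundary in the projective space model and lies in the boundary of that model. Finally, the orbit statement is immediate: $\Sp_2(G,\sigma)$ acts on $\PIs_{G_\CC}(\omega_\CC)$ by $M\cdot(xA_\CC)=(Mx)A_\CC$, so the embedding is equivariant, and since the action on $\PIs_G(\omega)$ is transitive by Proposition~\ref{stab1}, the image is a single $\Sp_2(G,\sigma)$-orbit, which we have just shown to be compact.

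The only step with genuine content is the elementary verification that $xA_\CC\cap A^2=xA$, i.e.\ that the real isotropic lines inject into the complex ones without collapsing; everything else is bookkeeping with the homeomorphism $F_{\mathfrak P,\mathfrak B}$ and the transitivity and compactness statements proved earlier.
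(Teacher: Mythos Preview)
Your proposal is correct. The paper states this corollary without proof, treating it as immediate from the preceding proposition; your argument supplies exactly the natural details one would fill in (well-definedness and injectivity of $xA\mapsto xA_\CC$, compactness of $\PIs_G(\omega)$ from the Hermitian hypothesis, and transitivity from Proposition~\ref{stab1}), so there is nothing to compare.
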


\subsection{Half-space models}

The last models, we discuss in this chapter are the upper half- and lower half-space models. They generalize the upper half-plane model for the hyperbolic plane and the Siegel space model for the symmetric space for the group $\Sp_{2n}(\R)$.

\begin{df}
The \defin{upper (lower) half-spaces} are defined by
$$\mathfrak U:=\mathfrak U^+:=\{z\in B_\CC^{\sigma}\mid \Imm(z)\in B^{\sigma}_+\}$$
and
$$\mathfrak U^-:=\{z\in B_\CC^{\sigma}\mid \Imm(z)\in B^{\sigma}_-\}$$
respectively.
\end{df}

The spaces $\mathfrak U^\pm$ are open domains in $B_\CC^{\sigma}$. Therefore, they are naturally complex manifolds, and the tangent spaces $T_z\mathfrak U^\pm= B_\CC^{\sigma}$, for every $z\in \mathfrak U^\pm$.

Note that $\mathfrak U^-$ and $\mathfrak U^+$ are related by complex conjugation. Thus, in what follows we will only discuss $\mathfrak U=\mathfrak U^+$. The corresponding statements for $\mathfrak U^-$ can be easily reformulated.

\begin{prop}
The group $\Sp_2(G,\sigma)$ acts on $\mathfrak U$ via M\"obius transformations:
$$z\mapsto M.z=(az+b)(cz+d)^{-1}\text{, where } M=\begin{pmatrix}a & b \\ c & d \end{pmatrix}.$$
\end{prop}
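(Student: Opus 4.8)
The plan is to exhibit this action as the restriction of the projective–linear action of $\Sp_2(G_\CC,\sigma)$ on $\PIs_{G_\CC}(\omega_\CC)$ to the ``$\mathfrak U$–chart''. By the $G_\CC$–analogue of Corollary~\ref{cor:(c,1)-isotropic}, a line of the form $(z,1)^tA_\CC$ with $z\in B^\sigma_\CC$ is exactly a point of $\PIs_{G_\CC}(\omega_\CC)$ transverse to $(1,0)^tA_\CC$, and $z\mapsto(z,1)^tA_\CC$ identifies $\mathfrak U$ with an open subset of this chart. For $M=\begin{pmatrix}a&b\\c&d\end{pmatrix}\in\Sp_2(G,\sigma)\subseteq\Sp_2(G_\CC,\sigma)$ the left action on lines gives $M\cdot(z,1)^tA_\CC=(az+b,\,cz+d)^tA_\CC$, which equals $\bigl((az+b)(cz+d)^{-1},1\bigr)^tA_\CC$ as soon as $cz+d$ is invertible; so the induced transformation of the $z$–coordinate is precisely $z\mapsto(az+b)(cz+d)^{-1}$. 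Hence it suffices to establish: \emph{(i)} $cz+d\in A_\CC^\times$ for all $z\in\mathfrak U$ and $M\in\Sp_2(G,\sigma)$; \emph{(ii)} $M.z\in\mathfrak U$; and \emph{(iii)} the group–action axioms --- the last being immediate once \emph{(i)} and \emph{(ii)} hold, because $\Id$ acts trivially and associativity of matrix multiplication on lines forces $(MM').z=M.(M'.z)$ (two isotropic elements with second coordinate $1$ spanning the same line must coincide).

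For \emph{(ii)}, assume \emph{(i)}. Since $(z,1)^t\in\Is_{G_\CC}(\omega_\CC)$ (Corollary~\ref{cor:(c,1)-isotropic}) and $M\in\Sp_2(G_\CC,\sigma)$, the element $M(z,1)^t=(az+b,cz+d)^t=:(p,q)^t$ is $G_\CC$–isotropic, hence so is $(M(z,1)^t)q^{-1}=(pq^{-1},1)^t$, and Corollary~\ref{cor:(c,1)-isotropic} gives $M.z=pq^{-1}\in B^\sigma_\CC$. A direct computation with the $\bar\sigma$–sesquilinear form $h$ of~\eqref{eq:form_h} gives $h\bigl((w,1)^t,(w,1)^t\bigr)=2\,\Imm(w)$ for $w\in B^\sigma_\CC$; as $M$ has entries in $A$ it commutes with complex conjugation and preserves $\omega_\CC$, hence preserves $h$, so
\[
2\,\Imm(M.z)=h\bigl((pq^{-1},1)^t,(pq^{-1},1)^t\bigr)=\bar\sigma(q)^{-1}\,h\bigl((z,1)^t,(z,1)^t\bigr)\,q^{-1}=\bar\sigma(q)^{-1}\,(2\,\Imm z)\,q^{-1}.
\]
Thus $\Imm(M.z)\in(B^\sigma)^\times$; it depends continuously on $M$, equals $\Imm(z)\in B^\sigma_+$ at $M=\Id$, and $\Sp_2(G,\sigma)$ is connected (Proposition~\ref{prop:Sp2Connected}); since $B^\sigma_+$ is both open and closed in $(B^\sigma)^\times$ (corollaries to Theorem~\ref{Spec_teo_B1}), we conclude $\Imm(M.z)\in B^\sigma_+$, i.e.\ $M.z\in\mathfrak U$. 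Transitivity then follows from that of the equivalent models $\mathfrak P$, $\mathfrak B$.

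The real content is \emph{(i)}, and this is where I expect the main obstacle. Conceptually, $(1,0)^tA_\CC$ is a Shilov boundary point (it is $xA_\CC$ with $x=(1,0)^t\in\Is_G(\omega)$), and $M^{-1}$ --- having entries in $A$ --- carries Shilov boundary lines to Shilov boundary lines; so \emph{(i)} is equivalent to the statement that every line $(z,1)^tA_\CC$, $z\in\mathfrak U$, is transverse to every Shilov boundary line. Transported to the precompact model $\mathfrak B$ by the Cayley element, this becomes a strict Cauchy--Schwarz--type estimate: if $1-\bar cc\in(B^{\bar\sigma}_\CC)_+$ and $1-\bar{c'}c'=0$ then $c-c'$ (equivalently $1-\bar{c'}c$) is invertible. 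Concretely one argues by contradiction: if $q=cz+d$ were a zero divisor, pick $0\ne\xi$ with $q\xi=0$ --- arranged to lie in $\bar G_\CC$, using that $q\in\bar G_\CC$ --- and pair $(p,q)^t\xi=(p\xi,0)^t$ with itself under $h$: one side is $0$, the other is $\bar\sigma(\xi)\,2\,\Imm(z)\,\xi$, so $\bar\sigma(\eta\xi)(\eta\xi)=0$ with $\eta:=\Imm(z)^{1/2}\in(B^\sigma)^\times$ (Corollary~\ref{theta_Bsym+}) and $\eta\xi\ne0$, contradicting the positivity encoded in Proposition~\ref{RegEl-h} (and in the compactness of Proposition~\ref{comp_disc}). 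The delicate point is precisely to confine the would-be kernel element to $\bar G_\CC$ so that this positivity applies --- $\bar\sigma$ is \emph{not} a positive anti-involution on all of $A_\CC$ --- which is where the Hermitian hypothesis on $B$ is genuinely used. Once \emph{(i)}--\emph{(iii)} are in place, the Möbius formula defines the asserted action of $\Sp_2(G,\sigma)$ on $\mathfrak U$.
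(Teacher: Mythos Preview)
Your approach is conceptually appealing but leaves a genuine gap at step \emph{(i)}, which you yourself flag as ``the main obstacle'' and do not resolve. The positivity argument you sketch --- $\bar\sigma(\zeta)\zeta=0\Rightarrow\zeta=0$ for $\zeta=\eta\xi$ --- works only when the real and imaginary parts of $\zeta$ lie in $\bar G$ (so that $\sigma(\zeta_j)\zeta_j\in B^\sigma_{\geq 0}$ and the proper cone forces each to vanish). But the standing hypothesis is that $B$ is Hermitian, \emph{not} that $(A,\sigma)$ is; a nonzero $\xi\in A_\CC$ with $q\xi=0$ need not have components in $\bar G$, and you provide no mechanism to produce one that does. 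Note also that $(B_\CC,\bar\sigma)$ is in general not even of Jordan type (Remark~\ref{rem:nonHerm.complexification}), so there is no complex analogue of Lemma~\ref{lem:invert} or Proposition~\ref{RegEl-h} to invoke. In the Clifford example of Section~\ref{Clifford} the ambient algebra $(A,\sigma)$ is genuinely non-Hermitian, so this is not a mere technicality.

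The paper's proof is far more elementary and sidesteps \emph{(i)} entirely: it verifies the claim on the three generators $R(b)$, $I=\begin{pmatrix}0&1\\-1&0\end{pmatrix}$, and $D(a)$. For these, $cz+d$ equals respectively $1$, $-z$, and $\sigma(a)^{-1}$; invertibility is thus trivial or follows from the explicit formula $z^{-1}=y^{-1}x(y+xy^{-1}x)^{-1}-(y+xy^{-1}x)^{-1}i$, where $y+xy^{-1}x=y+\sigma(px)(px)\in B^\sigma_+$ once one writes $y^{-1}=\sigma(p)p$. Preservation of $\mathfrak U$ is then a one-line computation in each case. The Möbius composition identity $(c_Mz+d_M)=(c_{M_1}w+d_{M_1})(c_{M_2}z+d_{M_2})$ with $w=M_2.z$ propagates both invertibility and the inclusion $M.z\in\mathfrak U$ to arbitrary products, giving the action on all of $\Sp_2(G,\sigma)$. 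This is complete and avoids altogether the positivity-on-$A_\CC$ issue your argument runs into; if you want to salvage your route, the missing ingredient is an independent proof that every line in $\mathfrak P$ is transverse to $(1,0)^tA_\CC$, argued directly from the definition of $\mathfrak P$ rather than via zero divisors in $A_\CC$.
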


\begin{proof}
Since $\Sp_2(G,\sigma)$ is generated by the matrices
$$\begin{pmatrix}
    a & 0 \\
    0 & \sigma(a)^{-1}
    \end{pmatrix},\quad
\begin{pmatrix}
    0 & 1 \\
    -1 & 0
    \end{pmatrix},\quad
\begin{pmatrix}
    1 & b \\
    0 & 1
    \end{pmatrix}$$
where $a\in G$, $b\in B^{\sigma}$, we prove that $M.z\in \mathfrak U$ on these generators.

If $M:=\begin{pmatrix}
    1 & b \\
    0 & 1
    \end{pmatrix}$ with $b\in B^{\sigma}$, then $M.z=z+b\in B^{\sigma}_\CC$ and $\Imm(M.z)=\Imm(z)\in B^{\sigma}_+$.

If $M:=\begin{pmatrix}
    0 & 1 \\
    -1 & 0
    \end{pmatrix}$,
then $M.z=-z^{-1}\in B^{\sigma}_\CC$. If $z=x+yi$, then $$z^{-1}=y^{-1}x(y+xy^{-1}x)^{-1}-(y+xy^{-1}x)^{-1}i,$$
i.e., $\Imm(M.z)=(y+xy^{-1}x)^{-1}$. For $y\in B^{\sigma}_+$, also $y^{-1}\in B^{\sigma}_+$.

Let $y^{-1}=\sigma(p)p$ for some $p\in B^\times$, then
$$y+xy^{-1}x=y+\sigma(px)px\in B^{\sigma}_+.$$
Therefore, $\Imm(M.z)=(y+xy^{-1}x)^{-1}\in B^{\sigma}_+.$

If $M:=\begin{pmatrix}
    a & 0 \\
    0 & \sigma(a)^{-1}
    \end{pmatrix}$
for $a\in G$, then $M.z=az\sigma(a)\in B^{\sigma}_\CC$ because $B^{\sigma}$ is preserved by action of $G$. $\Imm(M.z)=a\Imm(z)\sigma(a)\in B^{\sigma}_+$ because $B^{\sigma}_+$ is closed by action of $G$.
\end{proof}

\begin{prop}
The orbit map
$$\begin{matrix}
\tilde\pi_\mathfrak U\colon & \Sp_2(G,\sigma) & \to & \mathfrak U\\
 & M & \to & M.1i
\end{matrix}$$
is continuous, proper and surjective, i.e., $\Sp_2(G,\sigma)$ acts transitively on $\mathfrak U$. The stabilizer of $1i$ is $\KSp_2(G,\sigma)$.
\end{prop}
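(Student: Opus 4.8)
The plan is to establish the four claims --- continuity, surjectivity (hence transitivity), the identification of the stabilizer, and properness --- in that order, the first three being essentially formal and the last the only delicate point. \textbf{Continuity} is immediate: the M\"obius action $(M,z)\mapsto(az+b)(cz+d)^{-1}$ is continuous on the set where $cz+d\in A_\CC^\times$, since inversion is continuous on $A_\CC^\times$; by the previous proposition this action is well defined on all of $\mathfrak U$, so $cz+d$ is invertible for every $z\in\mathfrak U$ and $M\in\Sp_2(G,\sigma)$, and restricting to $z=1i$ gives continuity of $\tilde\pi_{\mathfrak U}$.

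For \textbf{surjectivity} I would exhibit an explicit preimage using the generators of $\Sp_2(G,\sigma)$. Given $z=x+yi\in\mathfrak U$ with $x\in B^\sigma$ and $y\in B^\sigma_+$, the spectral theorem (Corollary~\ref{theta_Bsym+}) furnishes $b:=y^{1/2}\in B^\sigma_+\subseteq G$ (Proposition~\ref{prop:PositiveElts}); setting $M_z:=R(x)D(b)\in\Sp_2(G,\sigma)$ one computes $D(b).(1i)=b(1i)\sigma(b)=(1i)b^2=yi$ (using $\sigma(b)=b$ and centrality of $1i$) and then $R(x).(yi)=x+yi=z$, so $\tilde\pi_{\mathfrak U}(M_z)=z$. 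Since $y\mapsto y^{1/2}$ is continuous on the cone $B^\sigma_+$ (functional calculus attached to the spectral theorem), the assignment $z\mapsto M_z$ is a continuous section of $\tilde\pi_{\mathfrak U}$; I will reuse this for properness.

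For the \textbf{stabilizer}, writing $M=\begin{pmatrix}a & b\\ c & d\end{pmatrix}$, the equation $M.(1i)=1i$ unwinds to $a(1i)+b=(1i)(c(1i)+d)=d(1i)-c$; comparing the two components in $A_\CC=A\oplus A(1i)$ forces $d=a$ and $c=-b$, so $M=\begin{pmatrix}a & b\\ -b & a\end{pmatrix}$. Substituting this into the defining relations of $\Sp_2(A,\sigma)$ from~\eqref{eq:groups} leaves exactly $\sigma(a)a+\sigma(b)b=1$ and $\sigma(a)b=\sigma(b)a$, which are precisely the conditions for $M\in\OO_2(A,\sigma)$; conversely any such $M$ fixes $1i$, because $a(1i)+b=(1i)(a-b(1i))$, $-b(1i)+a=a-b(1i)$, and $\bar\sigma(a-b(1i))(a-b(1i))=\sigma(a)a+\sigma(b)b=1$ shows $a-b(1i)$ is invertible (one-sided invertibility suffices in a finite-dimensional algebra). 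Hence $\Stab_{\Sp_2(G,\sigma)}(1i)=\Sp_2(G,\sigma)\cap\OO_2(A,\sigma)=\KSp_2(G,\sigma)$.

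\textbf{Properness} is where the real content lies, and I would deduce it from the previous steps rather than by a bare compactness argument. The continuous section $z\mapsto M_z$ followed by the projection gives a continuous map $\mathfrak U\to\Sp_2(G,\sigma)/\KSp_2(G,\sigma)$ which is a two-sided inverse of the canonical continuous bijection $\Sp_2(G,\sigma)/\KSp_2(G,\sigma)\to\mathfrak U$ induced by $\tilde\pi_{\mathfrak U}$; hence that bijection is a homeomorphism. Since $\KSp_2(G,\sigma)$ is compact (Theorem~\ref{maxcomp-Sp_R}), the quotient map $\Sp_2(G,\sigma)\to\Sp_2(G,\sigma)/\KSp_2(G,\sigma)$ is proper, and composing a proper map with a homeomorphism is proper, so $\tilde\pi_{\mathfrak U}$ is proper. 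The point that really requires care --- and that I expect to be the main obstacle --- is the production of the continuous section, i.e.\ the continuity of the square root on $B^\sigma_+$ together with the compactness of $\KSp_2(G,\sigma)$; once those are in hand everything else is bookkeeping. If one prefers not to rely on that input directly, properness can instead be transported through the Cayley transform $T=\tfrac{1}{\sqrt2}\begin{pmatrix}1 & i\\ i & 1\end{pmatrix}$ to the precompact model $\mathfrak B$, where the analogous orbit map is already known to factor through the homeomorphisms $\Sp_2(G,\sigma)/\KSp_2(G,\sigma)\xrightarrow{\ \sim\ }\mathfrak P\xrightarrow{\ F_{\mathfrak P,\mathfrak B}\ }\mathfrak B$.
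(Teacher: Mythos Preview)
Your argument is correct and, for surjectivity and the stabilizer computation, essentially identical to the paper's: both exhibit the preimage $R(x)D(y^{1/2})$ of $z=x+yi$ and both reduce $M.(1i)=1i$ to $a=d$, $c=-b$. The paper's proof in fact stops there, leaving continuity and properness implicit; your treatment of properness via the continuous section $z\mapsto M_z$ together with the compactness of $\KSp_2(G,\sigma)$ (Theorem~\ref{maxcomp-Sp_R}) is more explicit than what the paper provides, but not a genuinely different route.
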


\begin{proof}
Let $z=x+yi\in\mathfrak U$ then $y=u^2$ for some $u\in (B^{\sigma})^\times$. Then
$$\pi\left(\!\!
\begin{pmatrix}
1 & x \\
0 & 1
\end{pmatrix}
\begin{pmatrix}
u & 0 \\
0 & u^{-1}
\end{pmatrix}\!\!\right)=
\pi\left(\!\!
\begin{pmatrix}
u & xu^{-1} \\
0 & u^{-1}
\end{pmatrix}\!\!\right)=x+yi=z.$$

\noindent A matrix $M=\begin{pmatrix}
a & b \\
c & d
\end{pmatrix}$
stabilizes $1i$ if and only if
$$1i=M.1i=(ai+b)(ci+d)^{-1}=(ai+b)(-c+di)^{-1}i.$$
Thus, $a=d$ and $c=-b$, i.e., $M\in\KSp_2(G,\sigma)$.
\end{proof}

\begin{cor}
The map $\tilde\pi_\mathfrak U$ induces a homeomorphism
$$\begin{matrix}
\pi_\mathfrak U \colon & \Sp_2(G,\sigma)/\KSp_2(G,\sigma) & \to & \mathfrak U \\
 & M\KSp_2(G,\sigma) & \mapsto & M.1i\ .
\end{matrix}$$
This homeomorphism is equivariant under the action of $\Sp_2(G,\sigma)$ by left multiplication on $\Sp_2(G,\sigma)/\KSp_2(G,\sigma)$ and by M\"obius transformations on $\mathfrak U$. In particular, the space $\mathfrak U$ is a model of the Riemannian symmetric space of $\Sp_2(G,\sigma)$.
\end{cor}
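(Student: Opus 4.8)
The plan is to deduce this corollary from the preceding proposition by the standard orbit-map argument, using properness to upgrade a continuous bijection to a homeomorphism. First I would record that Möbius transformations define a genuine left action of $\Sp_2(G,\sigma)$ on $\mathfrak U$, i.e.\ $(M_1M_2).z = M_1.(M_2.z)$ for $z\in\mathfrak U$. This cocycle identity is the one small computation to carry out: writing $M_i = \begin{pmatrix} a_i & b_i\\ c_i & d_i\end{pmatrix}$ and $w := M_2.z$, one factors $a_1w+b_1 = [(a_1a_2+b_1c_2)z + (a_1b_2+b_1d_2)](c_2z+d_2)^{-1}$ and likewise $c_1w+d_1 = [(c_1a_2+d_1c_2)z + (c_1b_2+d_1d_2)](c_2z+d_2)^{-1}$, cancels the common right factor $(c_2z+d_2)^{-1}$ (invertible because $M_2.z\in\mathfrak U$ is defined), and reads off $(M_1M_2).z$; this is the one place where noncommutativity must be handled with care.

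Given the cocycle identity and the stabilizer computation from the preceding proposition, $\tilde\pi_{\mathfrak U}$ is constant on left cosets of $\KSp_2(G,\sigma)$, since $\tilde\pi_{\mathfrak U}(Mk) = M.(k.1i) = M.1i$ for $k\in\KSp_2(G,\sigma)$. Hence $\tilde\pi_{\mathfrak U}$ factors through the quotient map $q\colon \Sp_2(G,\sigma)\to\Sp_2(G,\sigma)/\KSp_2(G,\sigma)$, yielding a well-defined $\pi_{\mathfrak U}$ with $\pi_{\mathfrak U}\circ q = \tilde\pi_{\mathfrak U}$. Since $q$ is a quotient map and $\tilde\pi_{\mathfrak U}$ is continuous, $\pi_{\mathfrak U}$ is continuous by the universal property. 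Surjectivity of $\pi_{\mathfrak U}$ is inherited from $\tilde\pi_{\mathfrak U}$, and injectivity is the usual argument: $\pi_{\mathfrak U}(M\KSp_2(G,\sigma)) = \pi_{\mathfrak U}(M'\KSp_2(G,\sigma))$ forces $((M')^{-1}M).1i = 1i$, so $(M')^{-1}M$ lies in the stabilizer $\KSp_2(G,\sigma)$ and the cosets agree.

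It remains to see that $\pi_{\mathfrak U}^{-1}$ is continuous. Here I would invoke the properness of $\tilde\pi_{\mathfrak U}$ established in the preceding proposition. All three spaces are locally compact, Hausdorff and second countable: $\Sp_2(G,\sigma)$ is a connected Lie group (Theorem~\ref{thm:Sp_2-LieGroup} and Proposition~\ref{prop:Sp2Connected}, using that a Hermitian $B$ is weakly Hermitian), $\mathfrak U$ is an open subset of the finite-dimensional space $B_\CC^\sigma$, and the quotient is a manifold since $\KSp_2(G,\sigma)$ is compact ($B$ Hermitian, Theorem~\ref{maxcomp-Sp_R}). A continuous proper map into a locally compact Hausdorff space is closed, so $\tilde\pi_{\mathfrak U}$ is closed; then for closed $C$ in the quotient, $\pi_{\mathfrak U}(C) = \tilde\pi_{\mathfrak U}(q^{-1}(C))$ is closed in $\mathfrak U$ (using surjectivity of $q$), so $\pi_{\mathfrak U}$ is a closed continuous bijection, hence a homeomorphism.

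Equivariance is then immediate from the cocycle identity: $\pi_{\mathfrak U}(N\cdot M\KSp_2(G,\sigma)) = (NM).1i = N.(M.1i) = N.\pi_{\mathfrak U}(M\KSp_2(G,\sigma))$ for $N\in\Sp_2(G,\sigma)$. For the final assertion, I would cite Theorem~\ref{maxcomp-Sp_R}: $\KSp_2(G,\sigma)$ is a maximal compact subgroup of $\Sp_2(G,\sigma)$, so the homogeneous space $\Sp_2(G,\sigma)/\KSp_2(G,\sigma)$ carries the structure of the Riemannian symmetric space of $\Sp_2(G,\sigma)$, and the equivariant homeomorphism $\pi_{\mathfrak U}$ transports that structure to $\mathfrak U$. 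I expect the only genuinely delicate point to be the homeomorphism step, and that step reduces entirely to having properness of $\tilde\pi_{\mathfrak U}$ in hand; with that, the closedness argument is routine, the remaining obstacle being merely the noncommutative bookkeeping in the Möbius cocycle identity.
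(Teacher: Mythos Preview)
Your proof is correct and complete. The paper does not provide an explicit proof for this corollary, treating it as immediate from the preceding proposition (transitivity, stabilizer $=\KSp_2(G,\sigma)$, continuity, and properness of $\tilde\pi_{\mathfrak U}$); your argument spells out precisely the standard orbit-map reasoning that the paper leaves implicit, including the properness-implies-closed step needed to upgrade the continuous bijection to a homeomorphism.
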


\begin{cor}
    The map
    $$\begin{array}{rccc}
    F_{\mathfrak P,\mathfrak U}\colon &\mathfrak P & \to & \mathfrak U\\
    &\begin{pmatrix}
        a_1 \\ a_2
    \end{pmatrix}A_\mathbb C & \mapsto & a_1a_2^{-1}
    \end{array}$$
    provides an $\Sp_2(G,\sigma)$-equivariant homeomorphism between the upper half-space and the projective space model.
\end{cor}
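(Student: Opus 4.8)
\emph{Proof proposal.} The plan is to identify $F_{\mathfrak P,\mathfrak U}$ with the composition $\pi_\mathfrak U\circ\pi_\mathfrak P^{-1}$ of the two equivariant orbit-map homeomorphisms already at our disposal, and then to read off all the asserted properties from this factorization.

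First I would verify that the formula $(a_1,a_2)^tA_\CC\mapsto a_1a_2^{-1}$ is well defined on $\mathfrak P$. Independence of the chosen lift is immediate: if $(a_1',a_2')^t=(a_1,a_2)^tg$ with $g\in A_\CC^\times$, then $a_1'(a_2')^{-1}=a_1gg^{-1}a_2^{-1}=a_1a_2^{-1}$. For invertibility of the second coordinate, use that $\pi_\mathfrak P$ is surjective: every $l\in\mathfrak P$ is of the form $M(i,1)^tA_\CC$ with $M=\begin{pmatrix}a&b\\c&d\end{pmatrix}\in\Sp_2(G,\sigma)$, and $M(i,1)^t=(ai+b,\ ci+d)^t$; the entry $ci+d=c\cdot(1i)+d$ is invertible because the M\"obius image $M.1i\in\mathfrak U$ is defined (cf.\ the proposition establishing the M\"obius action of $\Sp_2(G,\sigma)$ on $\mathfrak U$). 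The same computation gives $F_{\mathfrak P,\mathfrak U}(l)=(ai+b)(ci+d)^{-1}=M.1i\in\mathfrak U$, so $F_{\mathfrak P,\mathfrak U}$ does land in $\mathfrak U$.

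The decisive identity is $F_{\mathfrak P,\mathfrak U}\circ\pi_\mathfrak P=\pi_\mathfrak U$, which is precisely the computation just made: $\pi_\mathfrak P(M\KSp_2(G,\sigma))=M(i,1)^tA_\CC$ is sent to $M.1i=\pi_\mathfrak U(M\KSp_2(G,\sigma))$. Since $\pi_\mathfrak P$ and $\pi_\mathfrak U$ are homeomorphisms, this forces $F_{\mathfrak P,\mathfrak U}=\pi_\mathfrak U\circ\pi_\mathfrak P^{-1}$ to be a homeomorphism; and since both orbit maps are equivariant (for the left $\Sp_2(G,\sigma)$-action on the homogeneous space, matching left matrix multiplication on $\mathfrak P$ and M\"obius transformations on $\mathfrak U$), $F_{\mathfrak P,\mathfrak U}$ is $\Sp_2(G,\sigma)$-equivariant. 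Equivariance may alternatively be checked directly from the cocycle identity $(NM).z=N.(M.z)$ for M\"obius transformations.

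I do not expect a real obstacle here; the only points needing care are the invertibility of $a_2$ (reduced, as above, to the already-established M\"obius action on $\mathfrak U$) and keeping the base points consistent --- the line $(i,1)^tA_\CC$ on the $\mathfrak P$-side and the point $1i$ on the $\mathfrak U$-side --- between the two orbit maps. A self-contained alternative would prove injectivity directly (equal images $a_1a_2^{-1}=a_1'(a_2')^{-1}$ force $(a_1,a_2)^tA_\CC=(a_1',a_2')^tA_\CC$) and surjectivity by sending $z=x+yi\in\mathfrak U$ to $(z,1)^tA_\CC$ and exhibiting, after a suitable rescaling by an element of $(B^\sigma)^\times$, a $(G,\sigma)$-symplectic basis $(w,u)$ with $(u+wi)A_\CC=(z,1)^tA_\CC$; but this essentially reproduces the surjectivity argument of the precompact-model section, so the factorization route is cleaner.
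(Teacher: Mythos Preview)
Your proposal is correct and is precisely the argument the paper has in mind: the statement is recorded as a corollary with no proof, placed immediately after the two orbit-map homeomorphisms $\pi_\mathfrak P$ and $\pi_\mathfrak U$, and your factorization $F_{\mathfrak P,\mathfrak U}=\pi_\mathfrak U\circ\pi_\mathfrak P^{-1}$ is exactly how it is meant to follow. Your checks of well-definedness (independence of lift, invertibility of $a_2$) are appropriate and fill in the small details the paper leaves implicit.
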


The domain $\mathfrak U$ admits a Riemannian metric as described in~\cite[Excercise~8, Chapter~XIII]{Faraut} which is $\Sp_2(G,\sigma)$-invariant and together with the natural complex structure on $\mathfrak U$, it defines a structure of Hermitian symmetric space on $\mathfrak U$. 

Before we define the metric, we remind the definition of the \defin{quadratic representation} (see~\cite[Section~II.3]{Faraut}): let $(J,\circ)$ be a Jordan algebra, for $x,y,z\in J$, we denote 
$$\{x,y,z\}:=z\circ(x\circ y)+x\circ(z\circ y)-(x\circ z)\circ y.$$ 
Then the Riemannian metric on $\mathfrak U$ can be expressed as follows: for $z=x+yi\in \mathfrak U$, $v,w\in T_z\mathfrak U=B^\sigma_\mathbb C$, $v=v_x+v_yi$, $w=w_x+w_yi$:
\begin{align*}
    g_z(v,w):=&\tr\left(\Ree\left(\{y^{-\frac{1}{2}},\{\bar v,y^{-1},w\},y^{-\frac{1}{2}}\}\right)\right)\\
    =&\tr\left(\{y^{-\frac{1}{2}},\{v_x,y^{-1},w_x\},y^{-\frac{1}{2}}\}+\{y^{-\frac{1}{2}},\{v_y,y^{-1},w_y\},y^{-\frac{1}{2}}\}\right).    
\end{align*}
Using the isomorphisms between models, this metric can by transferred to all four models of the symmetric space of $\Sp_2(G,\sigma)$ that we discussed before.

Notice that if $z=i$, then the metric becomes particularly easy: 
$$g_i(v,w)=\tr(v_x\circ w_x+v_y\circ w_y)=\beta(v_x,w_x)+\beta(v_y,w_y)$$
where $\beta$ is the inner product on $B^\sigma$ defined in~\eqref{inner_prod_B}.

\section{Models for the Riemannian symmetric space of \texorpdfstring{$G$}{G}}\label{hatG-models}

Let $(A,\sigma)$ be an involutive finite dimensional $\R$-algebra. The goal of this section is to construct different models of the symmetric space for $G$ for $\Lie G=B$ where $B$ is a Hermitian Lie subalgebra $A$, seen as sub-manifolds of the corresponding models of the Riemannian symmetric space of $\Sp_2(G,\sigma)$. For this, we use the identification $G\cong \hat G=\Sp_2(G,\sigma)\cap\OO_{(1,1)}(A,\sigma)$ discussed in Section~\ref{sec:hat_G}. Moreover, we will not discuss explicitly the maps between models and the Riemannian metric because they are simply the restrictions of the corresponding ones from Section~\ref{G-models}.

Let $V=A^2$ and $\omega$ be the standard symplectic form in $A^2$. We define the following $\sigma$-sesquilinear, $(G,\sigma)$-symmetric form $h$: for $x,y\in A^2$, $h(x,y):=\sigma(x)^t H y$, where $H=\begin{pmatrix}
    0 & 1 \\
    1 & 0
\end{pmatrix}$.
Then $\OO(h)=\OO_{(1,1)}(A,\sigma)$.

\subsection{Space of complex structures}\label{G-Comp_Str_Mod}

The first model we construct is the space of almost orthogonal complex structures.

\begin{df}
Let $J\in\End_A(V)$. $J$ is called \defin{almost orthogonal} if $h(Jx,Jy)=-h(x,y)$.
\end{df}

An almost orthogonal endomorphism is not an element of $\OO_{(1,1)}(A,\sigma)$. However, a product of two of them is an element of $\OO_{(1,1)}(A,\sigma)$.

Let now $J$ be a complex strucure on $A^2$ (cf. Definition~\ref{df:complex.str}). As before, we can define the following $\sigma$-sesquilinear form
$$\begin{matrix}
h_J\colon & A^2\times A^2 & \to & A \\
& (x,y) & \mapsto & \omega(J(x),y).
\end{matrix}$$

\noindent We consider the following space:
\begin{align*}
    \mathfrak C_{\hat G}:=&\;\left\{J\text{ complex structure on $A^2$}\midwd\begin{array}{l}
    J(\Is_G(\omega))=\Is_G(\omega),\\
    h(J(\cdot),J(\cdot))=-h(\cdot,\cdot),\\
    h_J\text{ is a $(G,\sigma)$-inner product}
    \end{array}\right\}\\
    =&\;\left\{J\text{ complex structure on $A^2$}\midwd\begin{array}{l}
    J(\Is_G(\omega))=\Is_G(\omega),\\
    J(\Is(h))=\Is(h),\\
    h_J\text{ is a $(G,\sigma)$-inner product}
    \end{array}\right\}.    
\end{align*}

\noindent Notice that the standard complex structure $J_0$ on $A^2$ is almost orthogonal, i.e., $J_0\in \mathfrak C_{\hat G}$.

\begin{teo}
The group $\hat G$ acts transitively on $\mathfrak C_{\hat G}$ by conjugation. The stabilizer of the standard complex structure is $\OO(G,\sigma)$. In particular, $\mathfrak C_{\hat G}$ is a model of the Riemannian symmetric space of $\hat G$.
\end{teo}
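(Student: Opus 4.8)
The plan is to mirror the three-step argument used for the model $\mathfrak C$ of the symmetric space of $\Sp_2(G,\sigma)$, keeping track in addition of the indefinite form $h$ (recall $H=\begin{pmatrix}0&1\\1&0\end{pmatrix}$).

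\emph{The action is well defined.} Let $J\in\mathfrak C_{\hat G}$, let $g\in\hat G$, and set $J':=gJg^{-1}$. Then $(J')^2=gJ^2g^{-1}=-\Id$, and since $g\in\Sp_2(G,\sigma)$ preserves $\Is_G(\omega)$ we get $J'(\Is_G(\omega))=gJ(\Is_G(\omega))=\Is_G(\omega)$. Since $g\in\OO_{(1,1)}(A,\sigma)=\Aut(h)$ we have $h(J'x,J'y)=h(Jg^{-1}x,Jg^{-1}y)=-h(g^{-1}x,g^{-1}y)=-h(x,y)$, so $J'$ is almost orthogonal. Finally $h_{J'}(x,y)=\omega(J'x,y)=\omega(Jg^{-1}x,g^{-1}y)=h_J(g^{-1}x,g^{-1}y)$ because $g$ preserves $\omega$; since $g^{-1}=\diag(a^{-1},\sigma(a))$ maps $\bar G^2$ into itself, $h_{J'}(v,v)\in B^\sigma_+$ for all $v\in\bar G^2$, so $h_{J'}$ is again a $(G,\sigma)$-inner product. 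Hence $J'\in\mathfrak C_{\hat G}$ and conjugation defines an action of $\hat G$ on $\mathfrak C_{\hat G}$.

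\emph{Transitivity.} Fix $J\in\mathfrak C_{\hat G}$. Put $b:=h_J((1,0)^t,(1,0)^t)\in B^\sigma_+$ and $w:=(1,0)^tb^{-1/2}\in\Is_G(\omega)$. By Proposition~\ref{CompStr-SympBas} the pair $(J(w),w)$ is a $(G,\sigma)$-symplectic basis, so by Proposition~\ref{trans_bas} there is a unique $g\in\Sp_2(G,\sigma)$ with $g(1,0)^t=J(w)$, $g(0,1)^t=w$; by construction $gJ_0g^{-1}=J$. It remains to correct $g$ so that it also preserves $h$. Writing $J((1,0)^t)=(p,q)^t$ with $p,q\in\bar G$, the identity $b=h_J((1,0)^t,(1,0)^t)=\omega((p,q)^t,(1,0)^t)=-\sigma(q)$ gives $q=-b$. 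Since $h((1,0)^t,(1,0)^t)=0$ we get $h(w,w)=\sigma(b^{-1/2})h((1,0)^t,(1,0)^t)b^{-1/2}=0$, hence $h(J(w),J(w))=-h(w,w)=0$ by almost orthogonality, while a direct computation gives $h(J(w),w)=\sigma(b^{-1/2})\sigma(q)b^{-1/2}=-b^{-1/2}bb^{-1/2}=-1$. Thus $\sigma(g)^tHg=-H$. Therefore $g':=gJ_0$ satisfies $\sigma(g')^tHg'=\sigma(J_0)^t\bigl(\sigma(g)^tHg\bigr)J_0=J_0^t(-H)J_0=H$, so $g'\in\Sp_2(G,\sigma)\cap\OO_{(1,1)}(A,\sigma)=\hat G$, and $g'J_0g'^{-1}=gJ_0g^{-1}=J$. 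Hence $\hat G$ acts transitively on $\mathfrak C_{\hat G}$.

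\emph{Stabiliser and conclusion.} For $g=\diag(a,\sigma(a)^{-1})\in\hat G$ one checks that $gJ_0=J_0g$ if and only if $a=\sigma(a)^{-1}$, i.e.\ $a\in\OO(G,\sigma)$; thus $\Stab_{\hat G}(J_0)=\{\diag(u,u)\mid u\in\OO(G,\sigma)\}\cong\OO(G,\sigma)$. Since $J_0\in\mathfrak C_{\hat G}$, the orbit map $g\mapsto gJ_0g^{-1}$ induces an $\hat G$-equivariant homeomorphism $\mathfrak C_{\hat G}\cong\hat G/\OO(G,\sigma)\cong G/\OO(G,\sigma)$; as $B$ is Hermitian, $\OO(G,\sigma)$ is a maximal compact subgroup of $G$, so $\mathfrak C_{\hat G}$ is a model for the Riemannian symmetric space of $G$.

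I expect the delicate point to be the descent from $\Sp_2(G,\sigma)$ to $\hat G$ in the transitivity step: the transformation $g$ carrying the standard symplectic basis to $(J(w),w)$ need not preserve $h$, and one must check that it can be corrected by the factor $J_0$ — equivalently, that $h(J(w),w)$ is the fixed constant $-1$. That computation in turn relies on the compatibility, built into the definition of $\mathfrak C_{\hat G}$, between the symplectic normalisation coming from $h_J$ and the $h$-isotropy of the line $(1,0)^tA$. An alternative that bypasses this computation is to transport everything to the half-space model $\mathfrak U\cong\mathfrak C$: one identifies the image of $\mathfrak C_{\hat G}$ with the ``imaginary axis'' $\{yi\mid y\in B^\sigma_+\}\subset\mathfrak U$ (almost orthogonality becoming $\Ree(z)=0$), on which $\hat G$ acts by $z\mapsto az\sigma(a)$, hence transitively since $G$ acts transitively on $B^\sigma_+$.
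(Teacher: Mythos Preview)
Your proof is correct but takes a genuinely different route from the paper's. The paper argues directly: since $J$ and $J_0$ are both almost orthogonal and both lie in $\Sp_2(G,\sigma)$ (the latter because $\mathfrak C_{\hat G}\subset\mathfrak C$ and $\Sp_2(G,\sigma)$ acts transitively on $\mathfrak C$ by conjugation), the product $g_J:=JJ_0$ lies in $\Sp_2(G,\sigma)\cap\OO_{(1,1)}(A,\sigma)=\hat G$. This forces $J=\begin{pmatrix}0&-g\\\sigma(g)^{-1}&0\end{pmatrix}$ for some $g\in G$; positivity of $h_J$ then gives $-g\in B^\sigma_+$, and conjugation by $\diag((-g)^{1/2},(-g)^{-1/2})\in\hat G$ carries $J_0$ to $J$.

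Your approach instead recycles the $(G,\sigma)$-symplectic-basis argument from the $\mathfrak C$ case: you build the element $g\in\Sp_2(G,\sigma)$ with columns $(J(w),w)$ conjugating $J_0$ to $J$, compute that $\sigma(g)^tHg=-H$ (the key being $h(J(w),w)=-1$, which uses the $h$-isotropy of $(1,0)^t$ and the almost-orthogonality of $J$), and then correct by the right factor $J_0$ to obtain $g'=gJ_0\in\hat G$. Both proofs ultimately rely on the identification $\hat G=\Sp_2(G,\sigma)\cap\OO_{(1,1)}(A,\sigma)$. The paper's argument is shorter and yields the explicit matrix form of every $J\in\mathfrak C_{\hat G}$ for free; yours is more hands-on but has the merit of making completely transparent \emph{why} the descent from $\Sp_2(G,\sigma)$ to $\hat G$ works, via the explicit Gram matrix computation. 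Your closing remark about the half-space model (identifying $\mathfrak C_{\hat G}$ with the imaginary axis $iB^\sigma_+\subset\mathfrak U$) is exactly the content of the paper's Section~\ref{hatG-models} and provides a clean conceptual alternative.
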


\begin{proof}
Let $J\in \mathfrak C_{\hat G}$. Since $J$ is an almost orthogonal complex structure, then $g_J:=JJ_0$ is an element of $\OO_{(1,1)}(A,\sigma)$ and of $\Sp_{2}(G,\sigma)$. Therefore,
$$g_J=\begin{pmatrix}
    g & 0 \\
    0 & \sigma(g)^{-1}
\end{pmatrix}\in \OO_{(1,1)}(A,\sigma)\cap \Sp_2(G,\sigma)=\hat G,$$ where $g\in G$, i.e., $J=\begin{pmatrix}
    0 & -g \\
    \sigma(g)^{-1} & 0
\end{pmatrix}$. 
The condition that $h_J$ is a $(G,\sigma)$-inner product implies that $-g\in B^\sigma_+$. Let $b\in B^\sigma_+\subseteq G$ such that $b^2=-g$. Then
$$\begin{pmatrix}
    b & 0 \\
    0 & b^{-1}
\end{pmatrix} J_0 \begin{pmatrix}
    b & 0 \\
    0 & b^{-1}
\end{pmatrix}^{-1}=J.$$
Thus the action of $\hat G$ on $\mathfrak C_{\hat G}$ is transitive. The stabilizer of the standard complex structure agrees with the subgroup $\OO(\hat G,\sigma):=\left\{\begin{pmatrix}
    g & 0 \\
    0 & g
\end{pmatrix}\midwd g\in \OO(G,\sigma)\right\}$ which is a~maximal compact subgroup of $\hat G$.
\end{proof}

The space $\mathfrak{C}_{\hat G}$ is a smooth manifold with the following tangent space at $J\in\mathfrak{C}_{\hat G}$:
$$T_J\mathfrak{C}_{\hat G}=\left\{L\in\End_A(A^2)\midwd\begin{array}{l}
JL+LJ=0,\\
L(\Is_G(\omega))\subseteq\overline{\Is_G(\omega)}\subseteq \bar G^2,\\
h(L(\cdot),\cdot)+h(\cdot,L(\cdot)=0,\\
h_J\text{ is a $(G,\sigma)$-symmetric}
\end{array}\right\}.$$
The group $\hat G$ acts by conjugation on the tangent bundle $T\mathfrak{C}$, i.e., for $g\in \hat G$, $g.(J,L)=(gJg^{-1},gLg^{-1})$.
For the proof of this fact, we refer to~\cite[Proposition~4.3]{HKRW}.

\subsection{Projective space models}

We define the following spaces: 
\begin{align*}
    \mathfrak P_{\hat G}:=\mathfrak P_{\hat G}^+:=&\{uA \mid h(u,u)\in B^\sigma_+\}\cap\PIs_G(\omega)\\
    \mathfrak P_{\hat G}^-:=&\{uA \mid h(u,u)\in B^\sigma_-\}\cap\PIs_G(\omega)=-\mathfrak P_{\hat G}^+.
\end{align*}
Notice that in fact to define $\mathfrak P_{\hat G}^\pm$, it is enough to take the intersection of $\{uA \mid h(u,u)\in B^\sigma_\pm\}$ with $\PIs(\omega)$ because the conditions $h(u,u)\in B^\sigma_+$ and $\omega(u,u)=0$ imply that $uA\in \PIs_G(\omega)$. The spaces $\mathfrak P_{\hat G}^\pm$ are smooth manifolds, and one can describe the tangent space at $l\in \mathfrak P_{\hat G}^\pm$:
\begin{equation*}
    T_l\mathfrak P_{\hat G}^\pm=\left\{Q\in\mathrm{Hom}_A(l,A/l)\midwd\begin{array}{l} h(Q(x),x)\in B^{\sigma}\text{ for $x\in\Is_G(\omega)$}\\ \text{such that $l=xA$ and $h(x,x)\in B^\sigma_\pm$}
    \end{array}\right\}.
\end{equation*}

The group $\hat G$ acts on the spaces $\mathfrak P^+$ and $\mathfrak P^-$ by left matrix multiplication. Moreover, multiplication by $-1$ provides a $\hat G$-equivariant homeomorphism between $\mathfrak P^+$ and $\mathfrak P^-$.

The following proposition justifies the name of $\mathfrak P^\pm$:

\begin{prop}
    The group $\hat G$ acts transitively on $\mathfrak P_{\hat G}$. The stabilizer of the line $(1,1)^tA$ agrees with $\OO(G,\sigma)$. In particular, $\mathfrak P_{\hat G}$ is a model of the Riemannian symmetric space of $\hat G$.  
\end{prop}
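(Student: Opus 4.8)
The plan is to exhibit the obvious base point, compute its stabilizer in $\hat G$, and then establish transitivity by solving one explicit equation in $G$. First I would check that $(1,1)^tA\in\mathfrak P_{\hat G}$: by Corollary~\ref{cor:(c,1)-isotropic} the element $(1,1)^t$ is $G$-isotropic (hence regular) because $1\in B^\sigma$, and $h\big((1,1)^t,(1,1)^t\big)=2\in B^\sigma_+$. For the stabilizer, note that two lines spanned by regular elements of $A^2$ coincide iff the generators differ by a unit of $A$; hence $\diag(g,\sigma(g)^{-1})\in\hat G$ fixes $(1,1)^tA$ iff $(g,\sigma(g)^{-1})^t=(1,1)^t\lambda$ for some $\lambda\in A^\times$, i.e.\ $g=\lambda=\sigma(g)^{-1}$, i.e.\ $\sigma(g)g=1$; conversely $\sigma(g)g=1$ gives $(g,g)^tA=(1,1)^tgA=(1,1)^tA$. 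Thus $\Stab_{\hat G}\big((1,1)^tA\big)=\{\diag(g,g)\mid g\in\OO(G,\sigma)\}\cong\OO(G,\sigma)$.

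The heart of the argument is transitivity. Let $l=uA\in\mathfrak P_{\hat G}$. Since $l\in\PIs_G(\omega)$ we may choose $u=(u_1,u_2)^t\in\Is_G(\omega)$ spanning $l$; replacing any witness by such a representative does not change the condition $h(u,u)\in B^\sigma_+$, because two $G$-isotropic generators of $l$ differ by an element of $G$ and $B^\sigma_+$ is stable under the $\psi'$-action of $G$. Now $u_1,u_2\in\bar G$ (coordinates of an element of $\Sp_2(G,\sigma)$), and $\omega(u,u)=0$ gives $\sigma(u_1)u_2=\sigma(u_2)u_1$, so $h(u,u)=\sigma(u_1)u_2+\sigma(u_2)u_1=2\sigma(u_1)u_2$. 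Hence $p^2:=\sigma(u_1)u_2\in B^\sigma_+$ for some $p\in B^\sigma_+\subseteq G$ (Corollary~\ref{theta_Bsym+} and Proposition~\ref{prop:PositiveElts}); in particular $\sigma(u_1)u_2$ is invertible, so one-sided invertibility in the finite-dimensional algebra $A$ forces both $u_1$ and $u_2$ to be invertible. Set $g:=u_1p^{-1}$. Since $u_1\in\bar G$, $p^{-1}\in(B^\sigma)^\times\subseteq G$ and $\bar G$ is a monoid, we get $g\in\bar G$; as $g$ is invertible and $G$ is closed in the open set $A^\times$, we conclude $g\in\bar G\cap A^\times=G$. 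A short computation gives $\sigma(g)^{-1}=\sigma(u_1)^{-1}p=u_2p^{-1}$ (using $\sigma(u_1)^{-1}p^2=\sigma(u_1)^{-1}\sigma(u_1)u_2=u_2$), so $\diag(g,\sigma(g)^{-1})(1,1)^t=up^{-1}$ and therefore $\diag(g,\sigma(g)^{-1})\cdot(1,1)^tA=uA=l$.

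It remains to conclude. The set $\mathfrak P_{\hat G}$ is an open subset of the (compact, since $\OO(G,\sigma)$ is compact) manifold $\PIs_G(\omega)$, and $\hat G$ acts smoothly and transitively on it with closed stabilizer $\OO(G,\sigma)$; hence the orbit map is a homeomorphism $\hat G/\OO(G,\sigma)\xrightarrow{\ \sim\ }\mathfrak P_{\hat G}$, and under $\hat G\cong G$ this is $G/\OO(G,\sigma)$. Because $B$ is Hermitian, $\OO(G,\sigma)$ is compact and, by Corollary~\ref{Max_Comp_G}, a maximal compact subgroup of $G$, so $\mathfrak P_{\hat G}$ is a model of the Riemannian symmetric space of $\hat G$.

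The step I expect to be the main obstacle is the transitivity argument: concretely, making sure the explicitly built element $g=u_1p^{-1}$ actually lies in $G$ and not merely in the monoid $\bar G$. This hinges on invertibility of $u_1$, which must be extracted from the positivity of $h(u,u)$ (via the identity $h(u,u)=2\sigma(u_1)u_2$ coming from $\omega$-isotropy and the spectral description of $B^\sigma_+$), and on the identification $\bar G\cap A^\times=G$; once these are in place the remaining verifications are routine.
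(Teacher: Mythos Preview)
Your proof is correct and follows essentially the same approach as the paper. The paper normalizes to $u_2=1$ (using invertibility of $u_2$) so that $u_1\in B^\sigma_+$ and then takes $g=u_1^{1/2}$, whereas you work directly with the unnormalized $u$ and take $g=u_1p^{-1}$ with $p=(\sigma(u_1)u_2)^{1/2}$; these are the same construction, and your version is a bit more explicit about why the resulting $g$ lies in $G$ (via $\bar G\cap A^\times=G$), a point the paper leaves implicit.
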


\begin{proof}
    First notice that the line $\ell:=(1,1)^tA \in \mathfrak P_{\hat G}$ because $h((1,1)^t,(1,1)^t)=2\in B^\sigma_+$ and $(1,1)^t\in \Is(\omega)$.

    Let $u=(u_1,u_2)^t\in\Is(\omega)$ such that $h(u,u)\in B^\sigma_+$ which spans a line $\ell_u\in \mathfrak P_{\hat G}$. This implies $\sigma(u_1)u_2\in B^\sigma_+$. In particular, this means that $u_1,u_2$ are invertible, and we can assume $u_2=1$ and $u_1\in B^\sigma_+$. Let $g=u_1^{\frac{1}{2}}$, then the matrix $\hat g:=\begin{pmatrix}
        g & 0\\
        0  & \sigma(g)^{-1}
    \end{pmatrix}\in \hat G$ maps $(1,1)^t$ to $$(u_1^{\frac{1}{2}}, u_1^{-\frac{1}{2}})^t=(u_1,1)u_1^{-\frac{1}{2}}.$$
    That means $\hat g$ maps $\ell$ to $\ell_u$, i.e., $\hat G$ act transitively on $\mathfrak P_{\hat G}$.

    Let $\hat g=\begin{pmatrix}
        g & 0 \\
        0 & \sigma(g)^{-1}
    \end{pmatrix}$ for $g\in G$ stabilizes $\ell$. This is equivalent to $g=\sigma(g)^{-1}$, i.e., $g\in\OO(G,\sigma)$.
\end{proof}

\begin{rem}
    The closure of $\mathfrak P^\pm_{\hat G}$ inside $\PIs(\omega)$ is compact:
    $$\overline{\mathfrak P_{\hat G}^\pm}:=\{uA \mid \pm h(u,u)\in B^\sigma_{\geq 0}\}\cap\PIs(\omega).$$
    The group $\hat G$ acts on $\overline{\mathfrak P_{\hat G}^+}$. There is a unique compact orbit of this action that is shared by $\overline{\mathfrak P_{\hat G}^+}$ and $\overline{\mathfrak P_{\hat G}^-}$, namely: $\PIs_G(\omega)\cap\PIs(h)$.
\end{rem}

\subsection{Precompact model}\label{G-Precomp_Mod_R}

In this section, we identify the symmetric space of $\hat G$ as an open subset of $B^\sigma$ with a compact closure. 

We define the following domain $\mathfrak B_{\hat G}$:
$$\mathfrak B_{\hat G}\!:=\{c\in B^\sigma\mid 1-c^2\in B^{\sigma}_+\}.$$
This is an open domain in $B^\sigma$, therefore, it is naturally a complex manifold, and the tangent space $T_c\mathfrak B_{\hat G}=B^\sigma$ at every $c\in \mathfrak B$. Moreover, its closure
$$\overline{\mathfrak B_{\hat G}}\!:=\{c\in B^\sigma\mid 1-c^2\in B^{\sigma}_{\geq 0}\}$$
is contained in the compact domain $D(\bar G,\sigma)$ from Proposition~\ref{comp_disc}, i.e., it is precompact.

We define the following transformation $T:=\frac{1}{2}\begin{pmatrix}
    1 & 1 \\
    -1 & 1
\end{pmatrix}\in \Sp_2(G,\sigma)$. The group $T\hat G T^{-1}$ acts on $\mathfrak B_{\hat G}$ transitively by M\"obius transformations. The action is transitive and the stabilizer of the point $0\in\mathfrak B_{\hat G}$ agrees with the group $T\OO(G,\sigma)T^{-1}$. In particular, $\mathfrak B_{\hat G}$ is a model of the Riemannian symmetric space of $\hat G$. 

Furthermore, this action extends to the closure $\overline{\mathfrak B_{\hat G}}$. There is a unique closed orbit of this action, which is:
$$\{c\in B^\sigma\mid 1-c^2=0\}.$$

\subsection{Half-space models}

The last models we discuss in this chapter are the upper half- and lower half-space models. Let $\mathfrak U_{\hat G}:=B^\sigma_+$ respectively $\mathfrak U_{\hat G}^-:=B^\sigma_-.$

The spaces $\mathfrak U_{\hat G}^\pm$ are open unbounded domains in $B^{\sigma}$. Therefore, the tangent spaces $T_z\mathfrak U_{\hat G}^\pm= B_\CC^{\sigma}$ for every $z\in \mathfrak U_{\hat G}^\pm$.

Note that $\mathfrak U_{\hat G}^-$ and $\mathfrak U_{\hat G}^+$ are related by multiplication by $-1$. Thus, in what follows we will only discuss $\mathfrak U_{\hat G}=\mathfrak U_{\hat G}^+$. The corresponding statements for $\mathfrak U^-$ can be easily reformulated.

The group $\hat G$ acts on $\mathfrak U_{\hat G}$ by M\"obius transformations. This is equivalent to saying that $G$ acts by $\sigma$-congruence on $\mathfrak U_{\hat G}$, i.e., for $g\in G$ and $u\in \mathfrak U_{\hat G}$, $u\mapsto g u \sigma(g)$. This action is transitive and the stabilizer of the point $1\in\mathfrak U_{\hat G}$ is $\OO(G,\sigma)$. This means, in particular, that $\mathfrak U_{\hat G}$ is a model of the symmetric space of $G$.

\begin{rem}
    The space of complex almost orthogonal structures $\mathfrak C_{\hat G}$ naturally embeds into the space of complex structures $\mathfrak C$. However, in the last three models, to embed the models of the symmetric space of~$\hat G$ into the corresponding model of the symmetric space of $\Sp_2(G,\sigma)$, one needs to multiply corresponding elements by~$i$. Since these models of the symmetric space of~$\hat G$ can be described as real manifolds, we chose to present them directly, without passing to complexifications. Nevertheless, to obtain the corresponding embeddings, this additional complexification is necessary.
\end{rem}

\section{Spin group as \texorpdfstring{$\Sp_2(G,\sigma)$}{Sp2(G,sigma)}}\label{Clifford}

If $(A,\sigma)$ is an involutive associative algebra, then $A$ becomes a Lie algebra under the commutator bracket. Moreover, since $A$ is closed under multiplication, it is automatically of Jordan type. This includes, in particular, matrix algebras over $\R$, $\CC$, $\HH$, and other (skew-)fields, which yield numerous classical examples: the groups $\Sp_{2n}(\R)$, $\Sp_{2n}(\CC)$, $\mathrm{U}(n,n)$, $\GL_{2n}(\CC)$, $\SO^*(4n)$, $\OO(4n)$, and $\Sp(n,n)$ can all be realized as $\Sp_2(A,\sigma)$ for suitable choices of $(A,\sigma)$. We refer to~\cite{ABRRW} for details on this ``full-algebra'' case $B=A$.

The purpose of this section is to go beyond this situation and exhibit examples of proper Lie subalgebras $B\subsetneq A$. Clifford algebras over real vector spaces provide such examples. We focus on our principal case of interest -- the spin group $\Spin_0(m,n)$ -- and describe its realization as $\Sp_2(G,\sigma)$. When $m=2$, we further construct explicit models of the associated Hermitian symmetric space of $\Spin_0(2,n)$.

\subsection{Clifford algebra}\label{ex:Clifford}
    
    Let $V$ be a real vector space of dimension $m+n>0$ with a symmetric non-degenerate bilinear form $b$ of signature $(m,n)$. We denote by $\Cl(b)$ the Clifford algebra generated by $(V,b)$. We remind, $\Cl(b)$ is a unital algebra generated by all elements of $V$ subject to the relation $v^2=b(v,v)$ for $v\in V$. From this relation follows that for $v,w\in V$, $vw+wv=2b(v,w)$.

    The Clifford algebra $\Cl(b)$ contains a subalgebra $\Cl_{even}(b)$ that is generated by elements $\{vw\mid v,w\in V\}$. It is called \defin{even Clifford algebra}.

    We fix a linear involutive isometry $\sigma\colon V\to V$ with respect to $b$. We denote the $+1$-eigenspace of $\sigma$ by $V^\sigma$, and the $-1$-eigenspace of $\sigma$ by $V^{-\sigma}$. Then $V^{\pm\sigma}=(V^{\mp\sigma})^{\perp_b}$, and $V=V^\sigma\oplus V^{-\sigma}$ is a direct $b$-orthogonal decomposition of $V$. The map $\sigma$ extends uniquely to an anti-involution on $\Cl(b)$.

    An involutive isometry $\sigma$ is called \defin{compatible} with $b$ if $b|_{V^\sigma}$ is positive definite and $b|_{V^{-\sigma}}$ is negative definite.

    We fix an orthonormal basis $(f_1,\dots,f_m,e_1,\dots,e_n)$ of $V$ such that $b(e_i,e_j)=\delta_{ij}$, $b(f_i,f_j)=-\delta_{ij}$, $b(f_i,e_j)=0$ and, furthermore, there are $0\leq k\leq m$, $0\leq l\leq n$ such that 
    $$V^{\sigma}=\Span_\R(f_1,\dots,f_k,e_1,\dots,e_l),$$ $$V^{-\sigma}=\Span_\R(f_{k+1},\dots,f_m,e_{l+1},\dots,e_n).$$This provides an identification of $(V,b)$ with $\R^{m,n}$, which is the vector space $\mathbb R^{m+n}$ equipped with the standard symmetric non-degenerate bilinear form of signature $(m,n)$. The Clifford algebra corresponding to $\R^{m,n}$ is denoted by $\Cl(m,n)$.

    We consider the following vector subspace of $\Cl_{even}(b)$:
    $$B(m,n)=B(b)=\Span_\R(vw \mid v,w\in V).$$
    A direct computation shows that this is a Lie subalgebra by $\Cl(m,n)$ with respect to the Lie bracket $[x,y]:=xy-yx$ that is closed under $\sigma$. Further,
    \begin{align}\label{eq:B^{-sigma}}
    \begin{aligned}
    B^{\sigma}(m,n)& =B^\sigma(b)=\R\oplus V^{-\sigma}V^\sigma=\Span_\R(1, xy \mid x\in V^{-\sigma}, y\in V^{\sigma}).\\
    B^{-\sigma}(m,n)& =B^{-\sigma}(b)=\Span_\R( xx',yy'\mid x,x'\in V^\sigma, y,y'\in V^{-\sigma}, b(x,x')=b(y,y')=0)\\
    & = \Span_\R(xx'\mid x,x'\in V^\sigma, b(x,x')=0)\oplus \Span_\R(yy'\mid y,y'\in V^{-\sigma}, b(y,y')=0)\\
    &=B^{-\sigma}(V^\sigma)\oplus B^{-\sigma}(V^{-\sigma}). 
    \end{aligned}
    \end{align}
    Notice that $B^{\sigma}(V^\sigma)=B^{\sigma}(V^{-\sigma})=\R$.

\begin{prop}\label{semi-Herm1}
The Lie algebra $(B(m,n),\sigma)$ is of Jordan type if and only if $k+l\leq 1$ or $k+l\leq m+n-1$.
\end{prop}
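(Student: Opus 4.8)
The proof rests on the standard vector-space identification $\Cl(b)\cong\Lambda^\bullet V$, under which $\Cl(b)$ becomes a filtered algebra whose associated graded is the exterior algebra $\Lambda^\bullet V$; in particular, for $\beta_1,\beta_2\in\Lambda^2 V\subseteq\Cl(b)$ the component of the Clifford product $\beta_1\beta_2$ in filtration degree $4$ is exactly $\beta_1\wedge\beta_2$. The plan is first to recast the statement in this picture. Under the identification, $B(m,n)=\R\oplus\Lambda^2 V$: every generator $vw$ of $B(m,n)$ splits as $vw=b(v,w)+\tfrac12(vw-wv)$ with $b(v,w)\in\R=\Lambda^0 V$ and $\tfrac12(vw-wv)\in\Lambda^2 V$, and conversely $1=b(v,v)^{-1}v^2$ for any anisotropic $v$ and every bivector $v\wedge w$ arises this way; equivalently, $B(m,n)$ is precisely the even part of $\Cl(b)$ of filtration degree $\le 2$. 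By~\eqref{eq:B^{-sigma}}, $B^\sigma(m,n)=\R\oplus V^{-\sigma}V^\sigma$, and since $V^\sigma$ and $V^{-\sigma}$ are $b$-orthogonal, any $p\in V^{-\sigma}$ and $q\in V^\sigma$ anticommute in $\Cl(b)$, so $pq$ is a genuine bivector.

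Next I would reduce the Jordan-type condition to its essential content. The space $B^\sigma(m,n)$ is spanned by $1$ and the bivectors $pq$ with $p\in V^{-\sigma}$, $q\in V^\sigma$, and products involving the scalar $1$ trivially stay in $B(m,n)$; so, by bilinearity, $B(m,n)$ is of Jordan type if and only if $(pq)(p'q')\in B(m,n)$ for all $p,p'\in V^{-\sigma}$ and all $q,q'\in V^\sigma$. The product $(pq)(p'q')$ is an even element of filtration degree $\le 4$; its components in degrees $0$ and $2$ lie in $B(m,n)=\R\oplus\Lambda^2 V$, and its degree-$1$ and degree-$3$ components vanish by parity, so the condition becomes that the degree-$4$ component of $(pq)(p'q')$ is zero. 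By the fact recalled above, that component equals $p\wedge q\wedge p'\wedge q'$, which vanishes precisely when $p,q,p',q'$ are linearly dependent. Thus $B(m,n)$ is of Jordan type $\iff$ $\{p,q,p',q'\}$ is linearly dependent for every admissible choice.

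The final step is elementary linear algebra with the decomposition $V=V^\sigma\oplus V^{-\sigma}$. Since $p,p'\in V^{-\sigma}$, $q,q'\in V^\sigma$, and $V^\sigma\cap V^{-\sigma}=0$, the family $\{p,q,p',q'\}$ is linearly dependent exactly when $\{p,p'\}$ is linearly dependent in $V^{-\sigma}$ or $\{q,q'\}$ is linearly dependent in $V^\sigma$. Consequently $B(m,n)$ fails to be of Jordan type precisely when one can choose $p,p'$ linearly independent in $V^{-\sigma}$ and $q,q'$ linearly independent in $V^\sigma$, that is, precisely when $\dim V^{-\sigma}\ge 2$ and $\dim V^\sigma\ge 2$. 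Since $\dim V^\sigma=k+l$ and $\dim V^{-\sigma}=(m+n)-(k+l)$, this says that $B(m,n)$ is of Jordan type if and only if $k+l\le 1$ or $k+l\ge m+n-1$.

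The only delicate point is the identification of the top component of $(pq)(p'q')$ with $p\wedge q\wedge p'\wedge q'$; if one prefers to avoid the graded-algebra language, the same conclusion follows from the direct computation $(pq)(p'q')=-(pp')(qq')$ (using $qp'=-p'q$), after expanding $pp'=b(p,p')+\tfrac12(pp'-p'p)$ and $qq'=b(q,q')+\tfrac12(qq'-q'q)$ into scalar plus bivector parts and collecting the unique term of top degree. Everything else — the descriptions of $B(m,n)$ and $B^\sigma(m,n)$, the reduction by bilinearity, and the dimension count — is routine.
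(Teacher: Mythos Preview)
Your proof is correct. It also implicitly corrects the typo in the statement: the second inequality should read $k+l\ge m+n-1$, which is exactly what you conclude and what the paper's own proof actually establishes (via the reformulation $\dim V^{-\sigma}\le 1$).

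Your route is more structural than the paper's. The paper argues case by case: if $\dim V^\sigma=0$ then $B^\sigma=\R$ is trivially closed under products; if $\dim V^\sigma=1$, say $V^\sigma=\R x$, then for $y,y'\in V^{-\sigma}$ one computes directly $yx\cdot y'x=-b(x,x)\,yy'\in B$; and if both $\dim V^\sigma\ge 2$ and $\dim V^{-\sigma}\ge 2$, one exhibits four pairwise orthogonal vectors $x,x'\in V^\sigma$, $y,y'\in V^{-\sigma}$ with $xy\cdot x'y'\notin B$. You instead pass to the associated graded $\Lambda^\bullet V$, identify $B=\R\oplus\Lambda^2V$, and observe that the only obstruction to $(pq)(p'q')\in B$ is the top piece $p\wedge q\wedge p'\wedge q'\in\Lambda^4V$, which is nonzero for some choice exactly when both $V^\sigma$ and $V^{-\sigma}$ have dimension at least $2$. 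This handles both directions at once without splitting into cases; the paper's computation is what your ``alternative'' paragraph ($\,(pq)(p'q')=-(pp')(qq')\,$ and expand) amounts to. The trade-off is that your argument invokes the Chevalley identification and the fact that the top symbol of Clifford multiplication is the wedge, while the paper's argument needs only the defining relations $vw+wv=2b(v,w)$.
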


\begin{proof} The condition is equivalent to: $\dim(V^\sigma)\leq 1$ or $\dim(V^{-\sigma})\leq 1$.

If $\dim(V^\sigma)=0$ or  $\dim(V^{-\sigma})=0$, then $B^{\sigma}(V)=\mathbb R$ is of Jordan type.

If $\dim(V^\sigma)=1$ and $V^\sigma=\Span_\R(x)$, then for $y,y'\in V^{-\sigma}$, $xy,xy'\in B^\sigma(b)$, and we have $xyxy'=-x^2yy'=-b(x,x)e_ie_j\in B(b)$. Therefore, $B(b)$ is of Jordan type.

The case  $\dim(V^{-\sigma})=1$ can be proven analogously. 
 
Let $\dim(V^\sigma)\geq 2$ and $\dim(V^{-\sigma})\geq 2$. Let $x,x'\in V^\sigma$ and $y,y'\in V^{-\sigma}$ pairwisely orthogonal vectors. Then $xy,x'y'\in B^\sigma(b)$. However, $xyx'y'\notin B(b)$.
\end{proof}

\begin{rem}
    Notice that powers of elements of $B^{-\sigma}(b)$ in general do not belong to $B$: let $e_1,e_2,e_2,e_4\in V^\sigma$ be pairwisely orthogonal, then the element $e=e_1e_2+e_3e_4\in B^{-\sigma}(b)$. However, 
    $$e^2=(e_1e_2)^2+(e_3e_4)^2+e_1e_2e_3e_4+e_3e_4e_1e_2=-2+2e_1e_2e_3e_4\notin B.$$    
\end{rem}

\begin{rem}\label{rem:nonHerm.complexification} If a real Lie algebra $(B,\sigma)$ is of Jordan type, than its complexification $(B_\mathbb C,\sigma)$ is also of Jordan type. However, $(B_\mathbb C,\bar\sigma)$ is in general not of Jordan type. Let us illustrate this phenomenon.

We consider the real Lie algebra $B=B(m,n)$ with $\sigma$ as above such that $\dim V^{-\sigma}=1$ where $V^{-\sigma}=\Span_\R(f_m)$. We take for simplicity $m=1$ and $n\geq 3$, but the argument below remains true also for any $m$.

We consider the complexification $V_\mathbb C$ of $V$. Then
\begin{align*}
    V^{-\sigma}_\mathbb C& =\Span_\mathbb C(f_1),\\
    B_\CC& =B(1,n)\otimes_\R\CC=\Span_\CC(1,e_le_j,f_1e_k\mid j,k,l\in\{1,\dots n\}),\\
    B_\CC^{\sigma}& =\Span_\CC(1,f_1e_k\mid k\in\{1,\dots n\}).
\end{align*}
Therefore, $(B_\mathbb C,\sigma)$ is a complex Lie algebra of Jordan type. However,
\begin{align*}
    B_\CC^{\bar\sigma}&=\Span_\R(1,f_1e_k,ie_le_j\mid j,k,l\in\{1,\dots n\})
\end{align*}
and, therefore, $(B_\CC,\bar\sigma)$ is not of Jordan type because $if_1e_1e_2e_3\notin B_\mathbb C$.
\end{rem}
 
\begin{prop}\label{prop:Cliff.weaklyHerm}
    The Lie algebra $(B(b),\sigma)=(B(m,n),\sigma)$ with $m\leq n$ is weakly Hermitian if and only if either $\dim V^{-\sigma}=0$ or $m=1$ and $\sigma$ is compatible with $b$.
\end{prop}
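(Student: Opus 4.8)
The plan is to reduce the statement to the classical criterion describing when a Jordan algebra of Clifford (spin-factor) type is formally real. First note that condition~(1) of Definition~\ref{df:R-wHerm} is automatic: since $b$ is non-degenerate and $\dim V=m+n>0$, there is $v\in V$ with $b(v,v)\neq 0$, so $1=b(v,v)^{-1}v^2\in B(m,n)$. By Proposition~\ref{semi-Herm1} (in the form established in its proof, namely: $B(m,n)$ is of Jordan type iff $\dim V^\sigma\le 1$ or $\dim V^{-\sigma}\le 1$), if both eigenspaces have dimension $\ge 2$ then $B(m,n)$ is not even of Jordan type, hence not weakly Hermitian, and the claimed condition fails there too. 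So I may assume one eigenspace, written $\R x_0$ with $\varepsilon:=b(x_0,x_0)\in\{\pm1\}$ (or $x_0=0$), has dimension $\le1$, and denote by $W$ the other eigenspace.

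Next I would identify $(B^\sigma,\circ)$ explicitly. If $x_0=0$ then~\eqref{eq:B^{-sigma}} gives $B^\sigma(b)=\R$, which is formally real, so $B$ is weakly Hermitian; this matches the branch $\dim V^{-\sigma}=0$ (the degenerate case $\sigma=-\Id$, with $V^\sigma=0$ and again $B^\sigma=\R$, being subsumed here). Otherwise~\eqref{eq:B^{-sigma}} gives $B^\sigma(b)=\R\cdot1\oplus x_0W$, and since $x_0\perp_b W$ the Clifford relations yield, for $w,w'\in W$,
\[
(x_0w)(x_0w')+(x_0w')(x_0w)=-\varepsilon\big(ww'+w'w\big)=-2\varepsilon\, b(w,w'),\qquad (x_0w)^2=-\varepsilon\, b(w,w).
\]
Hence $(B^\sigma,\circ)$ is isomorphic to the spin-factor Jordan algebra $\R\oplus W$ with non-degenerate defining form $q:=-\varepsilon\, b|_W$.

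The core step is the standard fact that such a spin factor is formally real exactly when $q$ is positive definite — the sole exception, $\dim W\le1$, forcing $\dim V\le2$ — together with the elementary remark that, for a Jordan subalgebra $B^\sigma\subseteq A^\sigma$, the cone $B^\sigma_{\ge0}$ is proper and $B^\sigma$ is nilpotent-free precisely when $(B^\sigma,\circ)$ is formally real (one direction is the relevant Proposition of Section~\ref{B-case}; the converse holds because $x^2,y^2\in B^\sigma_{\ge0}$ and a proper cone contains no line). Thus, away from $\dim V\le2$, $B(m,n)$ is weakly Hermitian iff it is of Jordan type and $q=-\varepsilon\, b|_W$ is positive definite.

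Finally I would translate "$q$ positive definite" into the stated geometric condition; this is where the hypothesis $m\le n$ enters and is the step needing the most care. Positive-definiteness of $-\varepsilon\, b|_W$ says that $W$ is $b$-definite of sign $-\varepsilon$ and $\R x_0$ is $b$-definite of sign $\varepsilon$, i.e. the two eigenspaces are $b$-definite of opposite signs; since $b$ has $m\le n$ negative and $n$ positive directions, the all-positive eigenspace is then forced (using $\dim V^\sigma\le1$ or $\dim V^{-\sigma}\le1$) to contain all $n$ positive directions and the all-negative one to be the line $\R x_0$, whence $m=1$. If $V^\sigma$ is the all-positive eigenspace this is exactly compatibility of $\sigma$; if instead $V^{-\sigma}$ is, one checks that $\sigma$ induces on $B(1,n)$ the same involution as a compatible one (distinct $\sigma$ on $V$ can agree on $B$), so the conclusion is the same at the level of $(B,\sigma|_B)$. (The only other possibility — $W$ all-negative with $q=-b|_W$ and $\R x_0$ positive — makes $x_0$ exhaust all positive directions, forcing $n=1$, hence survives only in $\dim V\le2$.) Conversely, if $\dim V^{-\sigma}=0$ or ($m=1$ and $\sigma$ compatible), the computation above identifies $B^\sigma$ with $\R$ or with the positive-definite spin factor $B^\sigma(1,n)$ of the classification, which is formally real, so $B$ is weakly Hermitian. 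The main obstacle is thus not any single calculation but keeping the bookkeeping straight in this last translation, together with the low-dimensional cases $\dim V\le2$, where the spin-factor dichotomy degenerates ($\R\oplus\R^{\le1}$ is always formally real) and must be checked by hand.
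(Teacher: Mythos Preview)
Your approach is genuinely different from the paper's and, in some ways, more illuminating: you identify $(B^\sigma,\circ)$ abstractly as the spin-factor Jordan algebra $\R\oplus W$ with form $q=-\varepsilon\,b|_W$ and reduce everything to the standard criterion that such a spin factor is formally real iff $q$ is positive definite, whereas the paper proceeds by direct computation of the cone of squares (Lemma~\ref{lem:Clif_cone}) and of nilpotents (Lemma~\ref{lem:Herm1}), together with an elementary argument for the necessary direction (Lemma~\ref{lem:weak.Herm.Clifford}). Your route makes the connection to the classification in Section~2.3 transparent; the paper's route is more self-contained. Your handling of the $\sigma\leftrightarrow-\sigma$ ambiguity on $V$ (which leaves $\sigma|_B$ unchanged since $B\subset\Cl_{even}$) and of the low-dimensional cases is also more thorough than the paper's own treatment, which silently jumps to $V^{-\sigma}=f_m\R$ without discussing the alternative $\dim V^\sigma\le 1$.

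There is, however, a gap in your equivalence ``formally real $\Leftrightarrow$ (cone proper and no nilpotents)''. The justification you give --- that $x^2,y^2\in B^\sigma_{\ge0}$ and a proper cone contains no line --- establishes the implication (proper cone $+$ no nilpotents) $\Rightarrow$ formally real, which is also what the Proposition in Section~\ref{B-case} you cite proves. But the direction you actually need for the sufficiency half of Proposition~\ref{prop:Cliff.weaklyHerm} is the \emph{converse}: from $q$ positive definite (hence $B^\sigma$ formally real) you must deduce that the closed convex cone $B^\sigma_{\ge0}$ generated by squares is proper. Your sentence does not address this. The claim is true --- either via the spectral theorem for formally real Jordan algebras (Theorem~\ref{Spec_teo_B1} and Corollary~\ref{theta_Bsym+}, which give $B^\sigma_{\ge0}=\{b^2:b\in B^\sigma\}$ and identify it with the elements of nonnegative spectrum), or, more in keeping with your spin-factor viewpoint, by observing directly that for $q$ positive definite the set $\{(\alpha^2+q(w),\,2\alpha w):\alpha\in\R,\,w\in W\}$ is exactly the closed forward Lorentz cone $\{(t,u):t\ge0,\;q(u)\le t^2\}$, already convex and proper. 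Either fix is short, but one of them must be supplied for the argument to go through.
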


\begin{proof} If $\dim(V^{-\sigma})=0$, then $B^\sigma(b)=\R$ and thus $(B(b),\sigma)$ is weakly Hermitian. Since the property to be weakly Hermitian implies the property to be of Jordan type, by Proposition~\ref{semi-Herm1}, we can now assume that $V^{-\sigma}=f_m\R$.

To prove Proposition~\ref{prop:Cliff.weaklyHerm}, we the following Lemma:

\begin{lem}\label{lem:weak.Herm.Clifford}
    Let $V^{-\sigma}=f_m\R$. If the Lie algebra $(B(b),\sigma)$ is weakly Hermitian then $\sigma$ is compatible with $b$ and $b|_{V^{\sigma}}$ is positive definite, i.e.,  $b$ has signature $(1,n)$.
\end{lem}

\begin{proof}
    By contradiction, let $x\in V^{\sigma}$ be an element such that $b(x,x)\leq 0$. Then $f_mx\in B^\sigma(b)$ and $(f_mx)^2=b(x,x)\in B^\sigma_+(b)$. As $B^\sigma(b)$ contains no nilpotent elements, $b(x,x)< 0$. But $1^2=1\in B^\sigma_+(b)$. Therefore, $B^\sigma_+(b)$ contains the line $\R$, i.e., $B(b)$ is not weakly Hermitian. Thus, if the Lie algebra $(B(b),\sigma)$ is weakly Hermitian then $\sigma$ is compatible with $b$ and $b|_{V^{\sigma}}$ is positive definite.
\end{proof}

To finish the proof of Proposition~\ref{prop:Cliff.weaklyHerm}, it is enough to prove the converse of Lemma~\ref{lem:weak.Herm.Clifford}, i.e., that $B(1,n)$ with a compatible anti-involution $\sigma$ is weakly Hermitian.

Let $\sigma$ be an involutive isometry on $(V,b)=\R^{1,n}$. For an element $e\in V^\sigma$, we denote its Euclidean norm as:
$$\|e\|:=\sqrt{b(e,e)}\geq 0.$$

Obviously, every $x\in B^{\sigma}(1,n)$ can be uniquely written as follows $x=a_0+rf_1e$ where $a_0\in\R$, $r\geq 0$ and $e\in V^\sigma$ with $\|e\|=1$ such that $x=a_0+rf_1e$.

\begin{lem}\label{lem:Clif_cone}
The space
$$B^{\sigma}_{\geq 0}(1,n)=\left\{t+uf_1e\mid t\geq 0,\,0\leq u\leq t,\,e\in V^\sigma,\, \|e\|=1\right\}.$$
is a closed proper convex cone.
\end{lem}

\begin{proof}
Let $x:=a_0+rf_1e\in B^{\sigma}(1,n)$ where $a_0\in\R$, $r\geq 0$ and $e\in V^\sigma$ with $\|e\|=1$. Then $x^2=a_0^2+r^2+2a_0rf_1e.$
Let $t:=a_0^2+r^2\geq 0$ and $u:=2a_0r=2\sgn(a_0)\sqrt{t-r^2}r$. For fixed $t\geq 0$, $u=u(r)$ takes all values between $-t$ and $t$. So we get
\begin{align*}
B^{\sigma}_{\geq 0}(1,n)& =\left\{t+uf_1e\mid t\geq 0,\, |u|\leq t, e\in V^\sigma,\, \|e\|=1\right\}\\
& =\left\{t+uf_1e\mid t\geq 0,\, 0\leq u\leq t, e\in V^\sigma,\, \|e\|=1\right\}.
\end{align*}
This is a convex cone. Indeed, take $x=t+uf_1e$, $x'=t'+u'f_1e'$ from $B^{\sigma}_{\geq 0}(1,n)$. Then $$x+x'=(t+t')+f_1(ue+u'e')=(t+t')+\tilde vf_1\tilde e$$
where
$v=\|ue+u'e'\|$, $\tilde e=\frac{ue+u'e'}{v}.$
Using the triangle inequality we get:
$$0\leq v=\|v\tilde e\|=\|ue+u'e'\|\leq\|ue\|+\|u'e'\|=u+u'<t+t'.$$
Therefore, $x+x'\in B^{\sigma}_{\geq 0}(1,n)$.

It is also a proper cone because for
$$x\in B^{\sigma}_{\geq 0}(1,n)\cap(-B^{\sigma}_{\geq 0}(1,n)),$$
$t=0$ and so $u=0$.
\end{proof}

\begin{lem}\label{lem:Herm1}
The space $B^\sigma(1,n)$ contains no nilpotent elements.
\end{lem}

\begin{proof} We show that for $b\in B^\sigma(1,n)$, $b=0$ if and only if $b^2=0$. Let $b=a_0+a_1fe\in B^\sigma(1,n)$ with $b(e,e)=1$. Then $b^2=a_0^2+a_1^2+2a_0a_1fe=0$, therefore $a_0=a_1=0$, i.e., $b=0$.
\end{proof}

Thus, we conclude that the Lie algebra $B(1,n)$ is weakly Hermitian.
\end{proof}

\subsection{Spectral decomposition in the Clifford algebra}

In this subsection, we find what do the Jordan frames in the formally real Jordan algebra $B^{\sigma}(1,n)$ look like.

\begin{prop}
Every nontrivial idempotent $c\in B^{\sigma}(1,n)$ is of the following form: $c=\frac{1+e}{2}$ for some $e\in V^\sigma$ such that $\|e\|=1$.
\end{prop}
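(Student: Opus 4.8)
The plan is to make idempotency explicit by combining the description of $B^\sigma(1,n)$ in~\eqref{eq:B^{-sigma}} with the defining relations of $\Cl(1,n)$. We are in the weakly Hermitian regime, so $\dim V^{-\sigma}=1$; fix the generator $f:=f_1$ of $V^{-\sigma}$, which satisfies $f^2=b(f,f)=-1$ and is invertible with $f^{-1}=-f$. By~\eqref{eq:B^{-sigma}} the space $B^\sigma(1,n)=\R 1\oplus fV^\sigma$ is a direct sum (a basis being $\{1,fe_1,\dots,fe_n\}$, with $1$ of Clifford degree $0$ and each $fe_i$ of degree $2$). Hence every $c\in B^\sigma(1,n)$ can be written uniquely as $c=a_0+fw$ with $a_0\in\R$ and $w\in V^\sigma$.

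Next I would square $c$. Since $f$ is $b$-orthogonal to all of $V^\sigma$, it anticommutes with every $w\in V^\sigma$, i.e.\ $fw=-wf$; together with $f^2=-1$ and $w^2=b(w,w)$ this gives $(fw)^2=f(wf)w=-f^2w^2=b(w,w)$, so that
\[
c^2=a_0^2+b(w,w)+2a_0\,fw .
\]
Imposing $c^2=c$ and comparing the $\R$- and $fV^\sigma$-components of this identity (legitimate since the sum is direct) yields
\[
a_0^2+b(w,w)=a_0,\qquad (2a_0-1)\,fw=0 .
\]
Because $f$ is invertible, the second equation is equivalent to $(2a_0-1)w=0$.

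It then remains to split into cases. If $w=0$, the first equation forces $a_0\in\{0,1\}$, giving the trivial idempotents $c=0$ and $c=1$; both are excluded by the hypothesis that $c$ is nontrivial (the value $c=1$ is in addition not of the asserted form, and, when $n\ge 1$, not primitive, since it splits as $\tfrac12(1+fe)+\tfrac12(1-fe)$ for any unit vector $e\in V^\sigma$). If $w\neq 0$, then $a_0=\tfrac12$, and substituting back gives $b(w,w)=\tfrac14$; writing $e:=2w\in V^\sigma$ we have $\|e\|=1$ and $c=\tfrac12+\tfrac12\,fe=\tfrac12(1+fe)$, which is the asserted form (under the natural identification $V^\sigma\xrightarrow{\ \sim\ }fV^\sigma\subset B^\sigma(1,n)$, $v\mapsto fv$, of the Jordan algebra $B^\sigma(1,n)$ with the spin factor $\R\oplus V^\sigma$).

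I do not anticipate any genuine obstacle: this is a short direct computation inside $\Cl(1,n)$. The points deserving attention are the sign bookkeeping for the Clifford relations ($fw=-wf$ and $f^2=-1$), the use of the direct-sum decomposition $B^\sigma(1,n)=\R 1\oplus fV^\sigma$ to separate the matrix equation $c^2=c$ into its scalar and degree-two parts, and spelling out that ``nontrivial'' here excludes both $0$ and $1$.
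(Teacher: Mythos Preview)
Your proof is correct and follows essentially the same approach as the paper: write $c$ in the decomposition $B^\sigma(1,n)=\R\oplus f_1V^\sigma$, square it using the Clifford relations, and compare components. The only cosmetic difference is that the paper first invokes the fact that idempotents lie in $B^\sigma_{\geq 0}(1,n)$ to normalize the coefficient in front of $f_1e$ to be nonnegative, whereas you work directly with an arbitrary $w\in V^\sigma$; your route is just as clean and avoids that extra appeal.
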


\begin{proof}
Let $c=x+yf_1e\in B^{\sigma}(1,n)$ be an idempotent. This means, in particular, that $c\in B_{\geq 0}^{\sigma}(1,n)$, i.e., $0 \leq y \leq x$ and $\|e\|=1$. Then
$$c^2=(x^2+y^2)+2xyf_1e=x+yf_1e=c.$$
Thus $2xy=y$. If $y=0$, then $c=1$ is trivial idempotent. If $y\neq 0$, then $x=y=\frac{1}{2}$.
\end{proof}

\begin{teo}\label{JB-clif}
Every Jordan frame in $B^{\sigma}(1,n)$ is of the following form: $(c_1,c_2)$ where $c_1=\frac{1+e}{2}$, $c_2=\frac{1-e}{2}$ for some $e\in\Span_\R(e_1,\dots,e_n)$, $b(e,e)=1$.
\end{teo}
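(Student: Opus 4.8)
The plan is to reduce everything to the classification of idempotents obtained in the previous proposition, namely that the only idempotents of the formally real Jordan algebra $(B^{\sigma}(1,n),\circ)$ are $0$, $1$, and the elements $\tfrac{1+e}{2}$ with $e\in\Span_\R(e_1,\dots,e_n)$, $b(e,e)=1$ (note $e^2=b(e,e)=1$, so each $\tfrac{1\pm e}{2}$ is indeed an idempotent). Throughout I would use that $B^{\sigma}(1,n)=\R\oplus V^{-\sigma}V^{\sigma}$ is a direct sum, so the projection $\pi\colon B^{\sigma}(1,n)\to\R\cdot 1$ onto the first summand is a well-defined linear map with $\pi\bigl(\tfrac{1+e}{2}\bigr)=\tfrac12$ for every unit vector $e$; this projection does the combinatorial bookkeeping.

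First I would determine which idempotents are primitive. The idempotent $1$ is not primitive: for any unit $e$ we have $1=\tfrac{1+e}{2}+\tfrac{1-e}{2}$, and since $1$ commutes with $e$, $\tfrac{1+e}{2}\circ\tfrac{1-e}{2}=\tfrac14(1-e^2)=0$, so these are orthogonal nonzero idempotents. Conversely, every nontrivial idempotent $c=\tfrac{1+e}{2}$ is primitive: if $c=c'+c''$ with $c',c''$ nonzero orthogonal idempotents, then neither $c'$ nor $c''$ can equal $1$ (if, say, $c'=1$, then $0=c'\circ c''=1\circ c''=c''$, a contradiction), so both are nontrivial idempotents, say $c'=\tfrac{1+e'}{2}$, $c''=\tfrac{1+e''}{2}$; applying $\pi$ to $c'+c''=c$ then gives $\tfrac12+\tfrac12=\tfrac12$, which is absurd. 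Hence the primitive idempotents of $B^{\sigma}(1,n)$ are exactly the elements $\tfrac{1+e}{2}$ with $e\in\Span_\R(e_1,\dots,e_n)$ and $b(e,e)=1$.

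Now let $(c_1,\dots,c_k)$ be any Jordan frame, i.e. a complete orthogonal system of primitive idempotents. By the previous step each $c_i=\tfrac{1+e_i}{2}$ for some unit vector $e_i\in\Span_\R(e_1,\dots,e_n)$, and the completeness relation $c_1+\dots+c_k=1$ reads $\tfrac{k}{2}\cdot 1+\tfrac12(e_1+\dots+e_k)=1$. Applying $\pi$ yields $k=2$, and comparing the $V^{-\sigma}V^{\sigma}$-components then gives $e_1+e_2=0$, so $e_2=-e_1$. Writing $e:=e_1$, we obtain $c_1=\tfrac{1+e}{2}$, $c_2=\tfrac{1-e}{2}$ with $e\in\Span_\R(e_1,\dots,e_n)$ and $b(e,e)=1$, as claimed; conversely, any such pair is, by the computation in the previous paragraph, a complete orthogonal system of primitive idempotents, hence a Jordan frame.

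I do not expect a genuine obstacle here, since the heavy lifting is already done in the classification of idempotents; the only points requiring a little care are (i) ruling out $c'=1$ or $c''=1$ in the primitivity argument, so that the classification of nontrivial idempotents applies to both summands, and (ii) making explicit that the projection onto $\R\cdot 1$ along $V^{-\sigma}V^{\sigma}$ is linear, which is what legitimises reading off $k=2$ and $e_1+e_2=0$ from the completeness relation. As a byproduct this also records $\rk(B^{\sigma}(1,n))=2$ for $n\geq 1$.
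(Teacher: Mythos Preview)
Your proof is correct and complete. It differs from the paper's in the key step: the paper computes the Jordan product $c_1\circ c_2$ of two nontrivial idempotents $c_1=\tfrac{1+f_1e}{2}$, $c_2=\tfrac{1+f_1e'}{2}$ directly and reads off from $c_1\circ c_2=0$ that $e'=-e$, hence any orthogonal pair already sums to $1$ (primitivity then follows implicitly, since no three nontrivial idempotents can be pairwise orthogonal). You instead first establish primitivity by the projection argument, and then use the same projection $\pi$ onto $\R\cdot 1$ to force $k=2$ from completeness. Your route is a little more structural: $\pi$ is essentially half the Jordan trace, so you are really invoking the general fact that primitive idempotents have trace $1$ and the traces in a Jordan frame add up to the rank. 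The paper's route is more hands-on but avoids introducing the auxiliary map $\pi$. Both arguments are short; yours has the advantage of making the rank computation $\rk(B^\sigma(1,n))=2$ explicit, while the paper's has the advantage of giving the orthogonality condition $b(e,e')=-1$ as a by-product of the product computation. One cosmetic point: you follow the paper's statement in writing the idempotents as $\tfrac{1+e}{2}$ with $e\in V^\sigma$, but (as the paper's own proof shows) the element of $B^\sigma(1,n)$ is really $\tfrac{1+f_1e}{2}$; your argument is unaffected since all that matters is $(f_1e)^2=1$ and $\pi(f_1e)=0$.
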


\begin{proof}
Let $c_1=\frac{1+f_1e_1}{2}$, $c_2=\frac{1+f_1e_2}{2}$ be two orthogonal idempotents with $\|e_1\|=\|e_2\|=1$. Then:
\begin{align*}
    0=c_1\circ c_2&=\frac{1+f_1e_1}{2}\circ\frac{1+f_1e_2}{2}=\frac{(1+f_1e_1)(1+f_1e_2)+(1+f_1e_2)(1+f_1e_1))}{8}\\
    &=\frac{1+f_1e_1+f_1e_2+f_1e_1f_1e_2+1+f_1e_1+f_1e_2+f_1e_2f_1e_1}{8}\\
    &=\frac{2+2b(e_1,e_2)+2f_1(e_1+e_2)}{8}.
\end{align*}
Therefore, $e_1=-e_2$. Moreover, $c_1+c_2=1$, i.e., $(c_1,c_2)$ is a complete system of orthogonal idempotents. Thus, every complete system of idempotents has at most two elements. In particular, all Jordan frames are of the form $(c_1,c_2)$ where $c_1=\frac{1+e}{2}$, $c_2=\frac{1-e}{2}$ as above.
\end{proof}

\subsection{Clifford group and its Lie algebra}

In this section, we describe a Lie group of which the Lie algebra is $B(m,n)$.

The group of all invertible elements $\Cl(m,n)^\times$ of $\Cl(m,n)$ acts on $\Cl(m,n)$ in the following way
\begin{equation}\label{eq:Clifford-tau}
    \begin{matrix}
    \tau\colon & \Cl(m,n)^\times\times\Cl(m,n) & \to & \Cl(m,n) \\
     & (x,y) & \mapsto & \alpha(x)y x^{-1}
\end{matrix}    
\end{equation}
where $\alpha$ is the involution on $\Cl(m,n)$ induced by the automorphism $v\mapsto -v$ on $\R^{m,n}$.

\begin{df}\label{df:ClGroup}
The (even) \defin{Clifford group} of signature $(m,n)$ is the group of all invertible elements of $\Cl_{even}(m,n)$ that stabilize $(V,b)=\R^{m,n}$ under the action $\tau$, i.e.
$$\ClGr(m,n):=\ClGr(b):=\{x\in\Cl_{even}^\times(m,n)\mid \tau(x)v\in V\text{ for all }v\in V\}.$$

\noindent We denote by $\ClGr_0(m,n)$ the connected component of $1$ in $\ClGr(m,n)$. We call it also Clifford group.
\end{df}

\begin{rem}
The Lie algebra of $\ClGr(m,n)$ can be described as follows:
$$\clgr(m,n):=\clgr(b):=\{x\in\Cl(m,n)\mid \forall v\in V:\alpha(x)v-vx\in V\}.$$
\end{rem}

We recall the following well known properties of the Clifford group (for more details see~\cite{Vaz19}). Definition~\ref{df:ClGroup} implies that $\ClGr(m,n)$ acts on $V$ by $\tau$ preserving $b$ so that the following sequences are exact:
$$1\to \R^\times\xrightarrow{\subset}\ClGr(m,n)\xrightarrow{\tau}\SO(m,n)\to 1. $$
$$1\to \R_+\xrightarrow{\subset}\ClGr_0(m,n)\xrightarrow{\tau}\SO_0(m,n)\to 1. $$
In particular,
$$\dim_{\R}\ClGr(m,n)=\dim_{\R}\R+\dim_{\R}\SO(m,n)=1+\frac{(m+n)(m+n-1)}{2}.$$

\begin{fact}
The following map is well defined:
$$\begin{matrix}
N\colon & \ClGr(m,n) & \to & \R^\times\\
& x & \mapsto & x^tx
\end{matrix}$$
where $(\cdot)^t$ is the anti-involution on $\Cl(m,n)$ induced by the trivial automorphism on $V$.
\end{fact}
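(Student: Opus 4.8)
The content of the Fact is that $x^tx\in\R^\times\subseteq\Cl(m,n)$ for every $x\in\ClGr(m,n)$, so that the assignment $x\mapsto x^tx$ genuinely defines a map $N\colon\ClGr(m,n)\to\R^\times$. The plan is to prove that $x^tx$ is \emph{central} in $\Cl(m,n)$, then to identify the even part of the center with $\R$, and finally to note invertibility. The only ingredient that is not pure formal manipulation with the two (anti-)involutions $\alpha$ and $(\cdot)^t$ is the structure of the center of a Clifford algebra.

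\textbf{Step 1: centrality of $x^tx$.} Since $x\in\Cl_{even}(m,n)$ we have $\alpha(x)=x$, so the defining condition for $x\in\ClGr(m,n)$ reads $xvx^{-1}\in V$ for all $v\in V$. The anti-involution $(\cdot)^t$ is induced by the identity on $V$, hence fixes $V$ pointwise; moreover $(ab)^t=b^ta^t$ and, $(\cdot)^t$ being an anti-automorphism, $(x^{-1})^t=(x^t)^{-1}$. Applying $(\cdot)^t$ to the relation $xvx^{-1}\in V$ therefore gives $(x^t)^{-1}vx^t=xvx^{-1}$, and after left-multiplying by $x^t$ and right-multiplying by $x$ we obtain $v\,x^tx=x^tx\,v$ for every $v\in V$. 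As $V$ generates $\Cl(m,n)$ as an algebra, $x^tx$ lies in the center of $\Cl(m,n)$.

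\textbf{Step 2: the even part of the center.} I would then invoke the standard description of the center of the Clifford algebra of a nondegenerate form: it equals $\R$ when $m+n$ is even, and equals $\R\oplus\R\omega$ with $\omega$ the volume element when $m+n$ is odd. In the odd case $\omega$ has odd degree, hence $\omega\notin\Cl_{even}(m,n)$; in the even case the whole center already sits inside $\R\subseteq\Cl_{even}(m,n)$. Since $(\cdot)^t$ preserves the $\Z/2$-grading, $x^t\in\Cl_{even}(m,n)$, so $x^tx\in\Cl_{even}(m,n)$; together with Step 1 this forces $x^tx\in\R$. Both $x$ and $x^t$ are invertible (with $(x^t)^{-1}=(x^{-1})^t$), so $x^tx$ is a nonzero scalar, i.e.\ $x^tx\in\R^\times$, which is the claim.

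\textbf{Alternative route and main obstacle.} One can bypass Step 2 entirely: by surjectivity of $\tau\colon\ClGr(m,n)\to\SO(m,n)$ with kernel $\R^\times$ together with the Cartan--Dieudonn\'e theorem, every $x\in\ClGr(m,n)$ can be written as $x=\lambda v_1\cdots v_{2k}$ with $\lambda\in\R^\times$ and anisotropic $v_i\in V$, and then $x^tx=\lambda^2\prod_{i=1}^{2k}b(v_i,v_i)\in\R^\times$ follows directly from $v_i^t=v_i$ and $v_i^2=b(v_i,v_i)$ by pulling out one scalar $b(v_i,v_i)$ at a time. I expect the only (very mild) obstacle to be pinning down the center precisely in Step 2 — in particular observing that the volume element is of odd degree when $m+n$ is odd — since everything else is routine bookkeeping with $\alpha$ and $(\cdot)^t$.
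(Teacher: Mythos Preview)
The paper does not prove this statement: it is recorded as a \emph{Fact}, with the surrounding discussion deferring to the reference \cite{Vaz19} for standard properties of the Clifford group, and no proof block is given. So there is nothing to compare your argument against on the paper's side.

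Your proof is correct. Step~1 is clean: from $xvx^{-1}\in V$ and $(\cdot)^t|_V=\Id$ you correctly derive $(x^t)^{-1}vx^t=xvx^{-1}$ and hence $v(x^tx)=(x^tx)v$, so $x^tx$ is central. Step~2 is also fine: the center of $\Cl(m,n)$ for nondegenerate $b$ is $\R$ when $m+n$ is even and $\R\oplus\R\omega$ when $m+n$ is odd, with $\omega$ the pseudoscalar of (odd) degree $m+n$; intersecting with $\Cl_{even}(m,n)$ gives $\R$ in both cases, and invertibility of $x^tx$ is immediate. Your alternative route via the factorization $x=\lambda v_1\cdots v_{2k}$ with anisotropic $v_i$ is likewise valid and arguably more self-contained, since it avoids invoking the center computation altogether.
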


We remind the definition of the spin group:
\begin{df} The \defin{spin group} of signature $(m,n)$ is the following subgroup of the Clifford group:
$$\Spin(m,n):=\{x\in \ClGr(m,n)\mid N(x)=1\},$$
$$\Spin_0(m,n):=\{x\in \ClGr_0(m,n)\mid N(x)=1\}.$$
\end{df}

\begin{rem}
If $m>0$ and $n>0$ then $\Spin(m,n)$ has two connected components. If $m=0$ or $n=0$, then $\Spin(m,n)$ is connected, i.e., $\Spin(m,n)=\Spin_0(m,n)$.
\end{rem}

\begin{cor}\begin{itemize}
\item For $x\in\ClGr_0(m,n)$, $N(x)>0$;
\item The map $$x\mapsto\frac{x}{\sqrt{N(x)}}$$ maps $\ClGr_0(m,n)$ surjectively to $\Spin_0(m,n)$;
\item The map $$x\mapsto\frac{x}{\sqrt{|N(x)|}}$$ maps $\ClGr(m,n)$ surjectively to $\Spin(m,n)$;
\end{itemize}
\end{cor}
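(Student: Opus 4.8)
The plan is to isolate the two properties of the Clifford norm $N$ that the statement actually uses: that $N\colon\ClGr(m,n)\to\R^\times$ is continuous, and that it is homogeneous of degree two, $N(\lambda x)=\lambda^2 N(x)$ for $\lambda\in\R^\times$. The second is immediate from $N(x)=x^tx$ together with the $\R$-linearity of the anti-involution $(\cdot)^t$; one also notes, although it is not strictly needed below, that $N$ is a group homomorphism, its values being central scalars.

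For the first bullet I would argue by connectedness: since $\ClGr_0(m,n)$ is connected and $N$ is continuous, $N(\ClGr_0(m,n))$ is a connected subset of $\R^\times=\R_+\sqcup(-\R_+)$ containing $N(1)=1$, and hence lies entirely in $\R_+$. Thus $N(x)>0$ for every $x\in\ClGr_0(m,n)$; in particular $\sqrt{N(x)}$ is a well-defined positive real, and by the second of the exact sequences recalled above it belongs to $\R_+\subseteq\ClGr_0(m,n)$.

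For the second bullet, fix $x\in\ClGr_0(m,n)$ and set $\lambda:=\sqrt{N(x)}$. Then $x/\lambda=\lambda^{-1}x$ again lies in the group $\ClGr_0(m,n)$, and by degree-two homogeneity $N(x/\lambda)=\lambda^{-2}N(x)=1$, so $x/\lambda\in\Spin_0(m,n)$; hence the map is well defined with values in $\Spin_0(m,n)$. Since $N\equiv 1$ on $\Spin_0(m,n)$, the map restricts to the identity there and is therefore surjective. The third bullet is the same computation with $|N(x)|$ in place of $N(x)$, with $\ClGr(m,n)$ in place of $\ClGr_0(m,n)$, and with $\R^\times\subseteq\ClGr(m,n)$ in place of $\R_+\subseteq\ClGr_0(m,n)$: here $N\bigl(x/\sqrt{|N(x)|}\bigr)=N(x)/|N(x)|=\pm1$, so the image lies in the locus where the Clifford norm has absolute value $1$, namely $\Spin(m,n)$, on which the map is again the identity, which gives surjectivity.

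The step that deserves the most care is the well-definedness of the two rescaling maps, i.e.\ checking that the rescaled element is still in the Clifford group and that its Clifford norm is the prescribed one. Group membership follows formally from $\R_+\subseteq\ClGr_0(m,n)$ (respectively $\R^\times\subseteq\ClGr(m,n)$) and closure under multiplication, while the norm value is precisely the homogeneity $N(\lambda x)=\lambda^2 N(x)$ — in the connected case this has to be fed the positivity $N(x)>0$ from the first bullet in order that a genuine real square root exist. One small point worth double-checking is the convention for $\Spin(m,n)$ in the indefinite case $m,n>0$, where $N$ does take the value $-1$ on $\ClGr(m,n)$; with the convention used here this is exactly the contribution of the second connected component. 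Everything else is formal.
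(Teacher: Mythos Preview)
The paper does not supply a proof of this corollary; it is stated as an immediate consequence of the preceding exact sequences and the Fact that $N\colon\ClGr(m,n)\to\R^\times$ is well defined. Your argument is exactly the standard one and is correct: connectedness forces $N>0$ on $\ClGr_0(m,n)$, and the degree-two homogeneity $N(\lambda x)=\lambda^2 N(x)$ together with $\R_+\subset\ClGr_0(m,n)$ (resp.\ $\R^\times\subset\ClGr(m,n)$) gives well-definedness, while surjectivity is immediate since the map restricts to the identity on the target.

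Your caveat about the third bullet is well placed and worth keeping. With the paper's stated definition $\Spin(m,n)=\{x\in\ClGr(m,n)\mid N(x)=1\}$, the rescaled element $x/\sqrt{|N(x)|}$ has norm $N(x)/|N(x)|=\pm 1$, so when $N(x)<0$ (which does occur on $\ClGr(m,n)$ in the indefinite case, e.g.\ $N(f_1e_1)=-1$) the image would not lie in $\Spin(m,n)$ as literally defined. This is a convention issue in the paper rather than a flaw in your reasoning; the intended statement is consistent either with the convention $\Spin(m,n)=\{|N|=1\}$ or with restricting the source to the preimage of $\R_+$ under $N$. You handled it correctly by flagging it.
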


\begin{prop} The Lie algebra $\clgr(m,n)$ agrees with $B(m,n)$.
\end{prop}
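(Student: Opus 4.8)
The plan is to show the two Lie algebras coincide by establishing a containment and then a dimension count. First I would recall the two descriptions: $\clgr(m,n) = \{x\in\Cl(m,n)\mid \forall v\in V:\ \alpha(x)v - vx\in V\}$, while $B(m,n) = \Span_\R(vw\mid v,w\in V)$ sits inside $\Cl_{\mathrm{even}}(m,n)$. The natural strategy is to show $B(m,n)\subseteq \clgr(m,n)$ directly by computing $\alpha(vw)u - u(vw)$ for $v,w,u\in V$ and checking it lands in $V$, then invoke the exact sequence $1\to\R\to\ClGr(m,n)\xrightarrow{\tau}\SO(m,n)\to 1$ together with the fact that $\dim B(m,n) = 1 + \binom{m+n}{2}$ to conclude the inclusion is an equality.

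Here are the key steps in order. \textbf{Step 1.} Since $v,w\in V$, we have $\alpha(vw) = \alpha(v)\alpha(w) = (-v)(-w) = vw$, so for a generator $vw$ of $B(m,n)$ and any $u\in V$ the bracket-type expression is $\alpha(vw)u - u(vw) = vwu - uvw = [vw,u]$, the commutator in $\Cl(m,n)$. \textbf{Step 2.} Using the defining relation $vu + uv = 2b(v,u)$, expand $vwu - uvw$: write $wu = 2b(w,u) - uw$ and $uv = 2b(u,v) - vu$, and push $u$ through to get $vwu - uvw = 2b(w,u)v - 2b(u,v)w \in V$. (One must handle the bilinearity and the case $v=w$ or degenerate configurations, but these reduce to the same identity by linearity.) This shows every generator of $B(m,n)$ satisfies the membership condition, hence $B(m,n)\subseteq\clgr(m,n)$; note $1\in\clgr(m,n)$ as well since $\alpha(1)v - v\cdot 1 = 0\in V$, and $1\in B(m,n)$ by the description \eqref{eq:B^{-sigma}}. \textbf{Step 3.} Compute $\dim_\R B(m,n)$: the span of $\{vw\mid v,w\in V\}$ modulo the relations is $\R\cdot 1 \oplus \Lambda^2 V$, which has dimension $1 + \binom{m+n}{2}$. \textbf{Step 4.} Compare with $\dim_\R\ClGr(m,n) = 1 + \dim\SO(m,n) = 1 + \binom{m+n}{2}$, which is exactly $\dim\clgr(m,n)$. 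Since $B(m,n)\subseteq\clgr(m,n)$ and both are real vector spaces of the same finite dimension, they are equal. \textbf{Step 5.} (Optional sanity check / alternative.) One can instead verify directly that $\tau(\exp(tvw))$ acts on $V$ as a one-parameter subgroup of $\SO(b)$ — indeed $\mathrm{ad}(vw)$ restricted to $V$ via $u\mapsto [vw,u]$ is a skew-symmetric endomorphism of $(V,b)$ by Step 2 — which re-proves the inclusion at the Lie-group/algebra level without the dimension count, but the dimension argument is cleaner.

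I would present the argument essentially as Steps 1--4, keeping the computation in Step 2 to the one displayed identity $vwu - uvw = 2b(w,u)v - 2b(u,v)w$ and remarking that linearity in $v$ and $w$ extends it to all of $B(m,n)$. The reverse inclusion could in principle be done by hand — taking $x = \sum_I x_I e_I$ a general even element and imposing $\alpha(x)v - vx\in V$ for all $v$ — but this is exactly the kind of grinding the dimension count lets us avoid, so I would rely on the already-recalled exact sequence for $\dim\ClGr(m,n)$.

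The main obstacle I anticipate is making Step 2 fully rigorous when $V$ carries a possibly indefinite or degenerate-looking form and when the vectors involved are not orthonormal: one has to be careful that the manipulations $vu = 2b(v,u) - uv$ are applied consistently and that the final expression is genuinely in $V$ (first-degree part of the grading) with no leftover degree-$3$ term — the degree-$3$ term $vwu$ minus $uvw$ must cancel, and verifying this cancellation cleanly (rather than just asserting it) is where the care is needed. Everything else — the dimension of $\Lambda^2 V$, the dimension of $\SO(m,n)$, and the fact that equality of a subspace with the ambient space follows from equal dimension — is routine and can be stated without elaboration.
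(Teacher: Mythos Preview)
Your proposal is correct and follows essentially the same approach as the paper: establish $B(m,n)\subseteq\clgr(m,n)$ by a direct calculation on generators, then conclude equality by matching dimensions via the exact sequence $1\to\R^\times\to\ClGr(m,n)\to\SO(m,n)\to 1$. In fact you supply more detail than the paper does---the paper dispatches the inclusion with the single phrase ``a direct calculation shows,'' whereas you actually carry out the commutator identity $vwu-uvw=2b(w,u)v-2b(u,v)w$, which is exactly the computation that phrase is hiding.
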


\begin{proof}
A direct calculation shows that $B(m,n)\subseteq \clgr(m,n)$.
Moreover,
\begin{align*}
    \dim_{\R}B(m,n)&=1+\frac{m^2-m}{2}+\frac{n^2-n}{2}+mn=1+\frac{m^2-m+n^2-n+2mn}{2}\\
    &=1+\frac{(m+n)^2-(m+n)}{2}=\dim_{\R}\clgr(m,n).
\end{align*}
So we obtain that $B(m,n)=\clgr(m,n)$.
\end{proof}

\begin{prop}\label{U-Clif-Comp}
The maximal compact subgroup $U(\ClGr_0(m,n),\sigma)$ of $\ClGr_0(m,n)$ agrees with $\Spin(m)\times\Spin(n)$.
\end{prop}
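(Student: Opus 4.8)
The plan is to use the polar decomposition (Theorem~\ref{pol_decomp0} and Corollary~\ref{Max_Comp_G}), which tells us that the maximal compact subgroup of $\ClGr_0(m,n)$ is $\OO(\ClGr_0(m,n),\sigma)=\{x\in\ClGr_0(m,n)\mid\sigma(x)x=1\}$, where $\sigma$ is the anti-involution on $\Cl(m,n)$ extending the involutive isometry $\sigma$ on $V$. So the real content is to identify this group explicitly. First I would recall that $\sigma$ is determined on $V=V^\sigma\oplus V^{-\sigma}$ and, under the weakly Hermitian hypothesis (here $m=1$, or $\dim V^{-\sigma}=0$, so that $B(m,n)$ is weakly Hermitian by Proposition~\ref{prop:Cliff.weaklyHerm}), the orthogonal splitting $V=V^\sigma\oplus V^{-\sigma}$ has $b|_{V^\sigma}$ positive definite of dimension $m$ wait --- here I should be careful: in the statement we want $\Spin(m)\times\Spin(n)$, so I would set up the decomposition so that $\sigma$ is the anti-involution whose fixed space on $V$ is a maximal positive-definite subspace (dimension $m$) direct sum a maximal negative-definite subspace; equivalently $\sigma$ acts as $+1$ on a positive $m$-space and $-1$ on a negative $n$-space, OR the other convention. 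The key point is that $\sigma$ restricted to $V$ is an orthogonal reflection in a subspace $W\subseteq V$ with $b|_W$ positive definite of dimension $m$ and $b|_{W^\perp}$ negative definite of dimension $n$ (the ``compatible'' case of Section~\ref{ex:Clifford}).

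**Key steps.** (1) Decompose $V=W\oplus W'$ with $W=\Span(f_1,\dots,f_m)$ the negative part and $W'=\Span(e_1,\dots,e_n)$ the positive part (using the orthonormal basis fixed in Section~\ref{ex:Clifford}), and choose $\sigma$ to act as $-1$ on $W$ and $+1$ on $W'$; then $\sigma$ extended to $\Cl(m,n)$ is the anti-automorphism generated by $v\mapsto\sigma(v)$. (2) The subalgebras $\Cl(W)\cong\Cl(0,m)$ and $\Cl(W')\cong\Cl(n,0)$ sit inside $\Cl(m,n)$ as mutually (graded-)commuting subalgebras, and their even parts generate commuting copies of $\Spin(m)$ and $\Spin(n)$; the product map $\Spin(m)\times\Spin(n)\to\Cl_{even}(m,n)$ lands in $\ClGr_0(m,n)$ because each factor already preserves $V$ under $\tau$. (3) Check that for $x=x_W x_{W'}$ with $x_W\in\Spin(m)\subseteq\Cl_{even}(W)$ and $x_{W'}\in\Spin(n)\subseteq\Cl_{even}(W')$, one has $\sigma(x)x=1$: on $\Cl(W)$ the anti-involution $\sigma$ agrees with the canonical ``transpose'' $(\cdot)^t$ up to the sign $v\mapsto -v$, and on elements of $\Spin$ the combination $\sigma(x_W)x_W$ equals $N(x_W)$ (or its variant) which is $1$ by definition; similarly for $\Spin(n)$. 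Hence $\Spin(m)\times\Spin(n)\subseteq\OO(\ClGr_0(m,n),\sigma)$. (4) For the reverse inclusion and compactness: $\Spin(m)\times\Spin(n)$ is compact and connected of dimension $\binom{m}{2}+\binom{n}{2}$; it is a known fact (via the exact sequence $1\to\R_+\to\ClGr_0(m,n)\xrightarrow{\tau}\SO_0(m,n)\to 1$ from the excerpt) that a maximal compact of $\SO_0(m,n)$ is $\SO(m)\times\SO(n)$, of dimension $\binom{m}{2}+\binom{n}{2}$, and $\tau$ restricted to the candidate subgroup is the double cover onto $\SO(m)\times\SO(n)$; since any compact subgroup of $\ClGr_0(m,n)$ maps into a compact, hence conjugate-into-$\SO(m)\times\SO(n)$, subgroup of $\SO_0(m,n)$ and the kernel $\R_+$ meets compact subgroups trivially, the candidate is maximal. (5) Conclude by Corollary~\ref{Max_Comp_G} that it equals $U(\ClGr_0(m,n),\sigma)$.

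**Main obstacle.** The delicate point is step (3)--(4): matching the abstract anti-involution $\sigma$ on $\Cl(m,n)$ with the classical norm form $N$ (the transpose anti-automorphism) on each Clifford subalgebra, and in particular verifying that $\sigma|_{\Cl(W)}$ and $\sigma|_{\Cl(W')}$ are the ones for which $\Spin(m)$, resp. $\Spin(n)$, are exactly the $\sigma$-unitary elements. The signs $v\mapsto -v$ on the negative-definite factor $W$ versus $v\mapsto +v$ on $W'$ must be tracked carefully so that $\sigma(x)x=1$ really cuts out $\Spin$ and not some noncompact real form. I would handle this by writing $\sigma$ as a composition of the main (transpose) anti-involution of $\Cl(m,n)$ with the grading automorphism $\alpha$ restricted to the negative part, and checking directly on generators $f_if_j$ and $e_ie_j$ that $\sigma(f_if_j)=f_jf_i=-f_if_j$ gives $\sigma(f_if_j)f_if_j=(-f_if_j)(f_if_j)=-f_i f_j f_i f_j = f_i^2 f_j^2 = (-1)(-1)=1$, and likewise $\sigma(e_ie_j)e_ie_j = (e_j e_i)(e_i e_j) = e_j e_i^2 e_j = e_j^2 = 1$; a careful bookkeeping then extends this to all of $\Spin(m)\times\Spin(n)$ and shows nothing larger inside $\ClGr_0(m,n)$ is $\sigma$-unitary. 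Once the sign conventions are pinned down, the rest is the standard identification of maximal compacts via the double-cover exact sequence.
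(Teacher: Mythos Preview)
Your approach is correct in outline but takes a substantially longer route than the paper. The paper's proof is a two-line Lie-algebra argument: both $U(\ClGr_0(m,n),\sigma)$ and $\Spin(m)\times\Spin(n)$ are connected Lie subgroups of $\Cl(m,n)^\times$, so it suffices to check that their Lie algebras coincide; and this is immediate from~\eqref{eq:B^{-sigma}}, which already gives
\[
\Lie\bigl(U(\ClGr_0(m,n),\sigma)\bigr)=B^{-\sigma}(m,n)=B^{-\sigma}(V^\sigma)\oplus B^{-\sigma}(V^{-\sigma})=\spin(m)\oplus\spin(n).
\]
No polar decomposition, no exact-sequence chase, and no sign bookkeeping on Clifford generators is needed.

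Your argument instead works at the group level, via the covering $\tau\colon\ClGr_0(m,n)\to\SO_0(m,n)$ and the known maximal compact $\SO(m)\times\SO(n)$ downstairs. This is a legitimate alternative and has the mild advantage of not invoking connectedness of $U(\ClGr_0(m,n),\sigma)$, which the paper asserts without comment. Two points, however, need tightening. First, your appeal to Theorem~\ref{pol_decomp0} and Corollary~\ref{Max_Comp_G} in step~(1) requires $B(m,n)$ to be weakly Hermitian, which by Proposition~\ref{prop:Cliff.weaklyHerm} fails for general $(m,n)$; the proposition is stated for arbitrary $(m,n)$, so you should drop that invocation and rely entirely on step~(4). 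Second, steps~(3)--(4) give $\Spin(m)\times\Spin(n)\subseteq U(\ClGr_0(m,n),\sigma)$ and that $\Spin(m)\times\Spin(n)$ is maximal compact in $\ClGr_0(m,n)$, but this yields equality only once you know $U(\ClGr_0(m,n),\sigma)$ is itself compact. That is true (for $x$ with $\sigma(x)x=1$ one checks that $\tau(x)$ preserves the positive-definite form $b_\sigma(v,w):=b(\sigma(v),w)$, so $\tau$ embeds $U(\ClGr_0(m,n),\sigma)$ into the compact $\SO(m)\times\SO(n)$), but you should say so explicitly rather than leave it implicit.
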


\begin{proof} Both groups are connected. So it is enough to show that their Lie algebras agree. We refer to~\eqref{eq:B^{-sigma}}:
\begin{align*}
    \Lie(U(\ClGr_0(m,n),\sigma))&=B^{-\sigma}(m,n)\\
    &=B^{-\sigma}(m,0)\oplus B^{-\sigma}(0,n) \\
    &=\spin(m)\oplus\spin(n).\qedhere
\end{align*}
\end{proof}

\begin{teo} Let $\sigma$ be a compatible involutive isometry on $\R^{1,n}$. The group $U(\ClGr_0(1,n),\sigma)$ acts transitively on the set of Jordan frames. In particular, $B(1,n)$ is Hermitian.
\end{teo}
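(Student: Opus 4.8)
The statement to prove is that for a compatible involutive isometry $\sigma$ on $\R^{1,n}$, the maximal compact subgroup $U(\ClGr_0(1,n),\sigma)$ acts transitively on the set of Jordan frames of $B^\sigma(1,n)$, and hence $B(1,n)$ is Hermitian. By Theorem~\ref{JB-clif}, every Jordan frame in $B^\sigma(1,n)$ has the form $\left(\frac{1+e}{2},\frac{1-e}{2}\right)$ for some $e\in\Span_\R(e_1,\dots,e_n)=V^\sigma\cap\ker(f_1\text{-coordinate})$ with $b(e,e)=1$; here I use the compatibility of $\sigma$, which forces $V^{-\sigma}=f_1\R$ and $V^\sigma=\Span_\R(e_1,\dots,e_n)$ with $b|_{V^\sigma}$ positive definite. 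So the set of Jordan frames is parametrized (two-to-one, via $e\mapsto -e$) by the unit sphere $S^{n-1}\subset V^\sigma$. The plan is therefore to show that $U(\ClGr_0(1,n),\sigma)$ acts on this sphere, via the adjoint action on $B^\sigma$, transitively.

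First I would make the group explicit. By Proposition~\ref{U-Clif-Comp}, $U(\ClGr_0(1,n),\sigma)=\Spin(1)\times\Spin(n)$; since $\Spin(1)$ is trivial (or a single point in the identity component), the relevant group is essentially $\Spin(n)$, sitting inside $\Cl_{even}(0,n)$ generated by products $e_ie_j$, $i\neq j$. The action $\psi$ of this group on $B^\sigma$ is by $b\mapsto\sigma(u)bu=u^{-1}bu$ (since $u\in\OO(\ClGr_0,\sigma)$, as noted in the remark preceding Corollary~\ref{Spec_teo_B2a}), i.e. by conjugation. On the idempotent $\frac{1+f_1e}{2}$ this conjugation sends $e\in V^\sigma$ to $u^{-1}eu$; but $\Spin(n)$ acts on $\Span_\R(e_1,\dots,e_n)$ through the covering $\Spin(n)\to\SO(n)$ precisely by this twisted conjugation (this is the defining action $\tau$ up to the automorphism $\alpha$, which acts trivially on even elements and by $-1$ on $V$ — so I should be careful to track whether it is $\SO(n)$ or a sign-twisted version, but in either case the image of $S^{n-1}$ is all of $S^{n-1}$). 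Since $\SO(n)$ acts transitively on $S^{n-1}$ for $n\geq 1$, I obtain transitivity on the set of unit vectors $e$, hence on Jordan frames.

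Concretely the key steps, in order: (1) invoke compatibility of $\sigma$ and Lemma~\ref{lem:weak.Herm.Clifford}/Proposition~\ref{prop:Cliff.weaklyHerm} to know $B(1,n)$ is weakly Hermitian, so the spectral theorems apply and the notion of Jordan frame makes sense; (2) use Theorem~\ref{JB-clif} to reduce "Jordan frame" to "unit vector $e\in V^\sigma$"; (3) use Proposition~\ref{U-Clif-Comp} to identify $U(\ClGr_0(1,n),\sigma)$ with $\Spin(n)$ (up to the trivial $\Spin(1)$ factor), noting it is contained in $\OO(\ClGr_0,\sigma)$ so $\psi(u)=\mathrm{Ad}(u^{-1})$; (4) observe that $\mathrm{Ad}(u^{-1})$ fixes $1$ and acts on $V^\sigma$ by the standard $\Spin(n)\twoheadrightarrow\SO(n)$-action (up to the sign from $\alpha$), which is transitive on $S^{n-1}$; (5) conclude transitivity on Jordan frames, and hence, together with weak Hermitianity and compactness of $U(\ClGr_0(1,n),\sigma)=\Spin(1)\times\Spin(n)$ (a product of compact groups), that $B(1,n)$ satisfies the definition of Hermitian.

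The main obstacle I expect is step (4): pinning down exactly how $\mathrm{Ad}(u^{-1})$ acts on the copy of $V^\sigma$ sitting inside $B^\sigma(1,n)$ as $\{f_1e\mid e\in V^\sigma\}$ — one must check that conjugation by a Clifford-even element $u$ takes $f_1e$ to $f_1(u^{-1}eu)$, which uses that $u\in\Cl_{even}(0,n)$ commutes with $f_1$ (since $f_1$ anticommutes with each $e_i$ but $u$ is a product of an even number of them), and then identify $e\mapsto u^{-1}eu$ with the image of $u$ under the twisted adjoint/covering map. Once this bookkeeping is done, the transitivity of $\SO(n)$ on the sphere finishes it immediately. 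A minor secondary point is the edge case $n=1$, where $S^0$ has two points $\pm e_1$; there $\Spin(1)$ is too small to swap them, but the two corresponding Jordan frames $\left(\frac{1+e_1}{2},\frac{1-e_1}{2}\right)$ and $\left(\frac{1-e_1}{2},\frac{1+e_1}{2}\right)$ are the \emph{same} Jordan frame (as an unordered system of idempotents, or with the ordering convention), so transitivity still holds trivially.
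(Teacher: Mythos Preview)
Your proposal is correct and follows essentially the same approach as the paper: reduce Jordan frames to unit vectors $e\in V^\sigma$ via Theorem~\ref{JB-clif}, identify $U(\ClGr_0(1,n),\sigma)$ with $\Spin(n)$ via Proposition~\ref{U-Clif-Comp}, and use that the induced $\SO(n)$-action is transitive on $S^{n-1}$; the paper's computation $\sigma(v)c_1v=v^{-1}c_1v=\frac{1+(v^{-1}f_1v)(v^{-1}ev)}{2}=\frac{1+f_1e'}{2}$ is exactly your step~(4), including the observation that $v\in\Cl_{even}(0,n)$ commutes with $f_1$. Your treatment is in fact more careful than the paper's in flagging the $n=1$ edge case (which the paper's proof silently glosses over) and in separating out the weak-Hermitian and compactness ingredients needed for the final ``Hermitian'' conclusion.
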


\begin{proof}
Let $\left(c_1=\frac{1+f_1e}{2},c_2=\frac{1-f_1e}{2}\right)$ and $\left(c_1'=\frac{1+f_1e'}{2}, c_2'=\frac{1-f_1e'}{2}\right)$ be two Jordan frames. Since $b(e,e)=b(e',e')=1$, there exists an orthogonal transformation $u\in \SO(n)$ such that $u(e)=e'$. Take some preimage $v$ of $u$ in $\Spin(n)=U(\ClGr_0(1,n),\sigma)$. Then
$$\sigma(v)c_1v=v^{-1}c_iv=\frac{1+(v^{-1}f_1v)(v^{-1}ev)}{2}=\frac{1+u(f_1)u(e)}{2}=\frac{1+f_1e'}{2}=c_1'.$$
Similarly, $\sigma(v)c_2v=c_2$.

Finally, by Proposition~\ref{prop:Cliff.weaklyHerm}, the Lie algebra $B(1,n)$ is weakly Hermitian, and, by Proposition~\ref{U-Clif-Comp}, the group $U(\ClGr(1,n),\sigma)$ is compact.
\end{proof}

\begin{rem}
    As we have seen, the Lie algebra $(B(b),\sigma)$ such that $V^\sigma=V$ or $V^{-\sigma}=V$ is always weakly Hermitian. In this case, $U(\ClGr_0(b),\sigma)=\Spin_0(m,n)$ where $(m,n)$ is the signature of the form $b$, which is not compact unless $m=0$ or $n=0$. Therefore, $(B(b),\sigma)$ is Hermitian if and only of $m=0$ or $n=0$, i.e., if and only if $\sigma$ is compatible with $b$.
\end{rem}

\begin{rem}
    Let $\sigma$ be a compatible involutive isometry on $\R^{1,n}$. Notice that the space $B^\sigma(1,n)^\times$ is (in general) strictly contained in $\ClGr^\sigma(1,n)$. For example, let $n\geq 4$ then the elements $f_1e_1$, $f_1e_2$, $f_1e_3$, $f_1e_4$ are in $B^\sigma(1,n)^\times\subseteq \ClGr^\sigma(1,n)$. However, their product is not contained in  $B^\sigma(1,n)$ because 
    $$f_1e_1f_1e_2f_1e_3f_1e_4=e_1e_2e_3e_4\notin B.$$
    But it is an element of $\ClGr^\sigma(1,n)$ and, more precisely, of the $\sigma$-symmetric part of its unipotent subgroup $U(\ClGr(1,n),\sigma)\cap \ClGr^\sigma(1,n)$.
\end{rem}

\subsection{Lie algebra \texorpdfstring{$\spin(m,n)$}{spin(m,n)} as \texorpdfstring{$\spp_2(B,\sigma)$}{sp2(B,sigma)}}

In this section, we identify $\spp_2(B(m,n),\sigma)$ with $\spin(m+1,n+1)$. We recall the definition of $\spin(m,n)$:
$$\spin(m,n)=\Span_\R(e_ie_j, f_ke_i, f_kf_l\mid i,j\in\{1,\dots,n\}, k,l\in\{1,\dots,m\})$$
where $\{f_i\}_{i=1}^{m}$, $\{e_i\}_{i=1}^n$ build an orthonormal basis of $(V,b)=\R^{m,n}$ as before. Note that $B(m,n)\cong\mathbb R\oplus\spin(m,n)$, where $\R$ is central. Let $\sigma$ be an involutive isometry on $\R^{m,n}$ such that $\sigma(f_m)=-f_m$ and $\sigma(f_i)=f_i$ for $1\leq i\leq m-1$, and $\sigma(e_i)=e_i$ for all $i$.

The following theorem is immediate:
\begin{teo}
The following map is an isomorphism of Lie algebras:
$$\begin{array}{rrcll}
\varphi\colon & \spp_2(B(m,n),\sigma) & \to & \spin(m+1,n+1) \\
& \diag(a,a) & \mapsto & a & \text{for }a\in B^{-\sigma}(m,n)\\
& \diag(f_me_i,-f_me_i) & \mapsto & f_me_i & \text{for }1\leq i\leq n\\
& \diag(f_mf_i,-f_mf_i) & \mapsto & f_mf_i & \text{for }1\leq i\leq m-1\\
& \diag(1,-1) & \mapsto & f_{m+1}e_{n+1} \\
& S(f_mv) & \mapsto & f_{m+1}v & \text{for }v\in V^\sigma\\
& A(f_mv) & \mapsto & ve_{n+1} & \text{for }v\in V^\sigma\\
& S(1) & \mapsto & -f_me_{n+1}\\
& A(1) & \mapsto & f_mf_{m+1}
\end{array}$$
where $S(x)=\begin{pmatrix} 0 & x \\ x & 0 \end{pmatrix}$, $A(x)=\begin{pmatrix} 0 & -x \\ x & 0 \end{pmatrix}$.
\end{teo}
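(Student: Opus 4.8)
The plan is to verify that the explicitly given linear map $\varphi$ is a well-defined linear isomorphism of the underlying vector spaces and then check that it respects the Lie brackets. The theorem is labelled ``immediate'' precisely because the heavy lifting has already been done: $\spp_2(B(m,n),\sigma)$ is a Lie algebra by Proposition~\ref{spp_sublie} since $B(m,n)$ is of Jordan type (Proposition~\ref{semi-Herm1} under the relevant dimension hypothesis), and both sides have the same dimension. Indeed $\dim\spin(m+1,n+1)=\binom{m+n+2}{2}$, while $\dim\spp_2(B(m,n),\sigma)=\dim B(m,n)+2\dim B^\sigma(m,n)$; using $\dim B(m,n)=1+\binom{m+n}{2}$ and the description~\eqref{eq:B^{-sigma}} of $B^\sigma(m,n)$ one checks these agree. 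So the first step is a dimension count confirming $\varphi$ can be an isomorphism once it is shown to be a well-defined injective linear map.

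Next I would check that the listed images actually span $\spin(m+1,n+1)$ and that the listed domain elements form a basis of $\spp_2(B(m,n),\sigma)$. On the domain side, the matrices $\diag(a,a)$ for $a\in B^{-\sigma}(m,n)$ together with $\diag(f_m v,-f_m v)$ for $v\in V^\sigma$ and $\diag(1,-1)$ exhaust the ``diagonal'' block $\{\diag(x,-\sigma(x))\mid x\in B(m,n)\}$ (using $B(m,n)=B^{-\sigma}\oplus B^\sigma$ and the spanning set $1,f_mv$ for $B^\sigma(m,n)$ from~\eqref{eq:B^{-sigma}}), while $S(f_m v),A(f_m v),S(1),A(1)$ exhaust the off-diagonal blocks $\{S(z)\mid z\in B^\sigma\}$ and $\{A(z)\mid z\in B^\sigma\}$; note $\begin{pmatrix}0&z\\0&0\end{pmatrix}=\tfrac12(S(z)+A(z))$ and $\begin{pmatrix}0&0\\z&0\end{pmatrix}=\tfrac12(S(z)-A(z))$. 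On the target side, the images hit $\spin(m,n)$ (via the $a$'s and the $f_m e_i$, $f_m f_i$), plus $f_{m+1}v$ and $v e_{n+1}$ for $v\in V^\sigma$, plus $f_{m+1}e_{n+1}$, $f_m e_{n+1}$, $f_m f_{m+1}$; together with the observation that $f_{m+1}e_j$, $f_{m+1}f_i$, $e_j e_{n+1}$, $f_i e_{n+1}$ are accounted for, this spans all of $\spin(m+1,n+1)=\spin(m,n)\oplus(\text{new }f_{m+1},e_{n+1}\text{ terms})$. Linear independence on the domain side is clear from the block structure, so $\varphi$ is a linear isomorphism.

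The substantive step is checking $\varphi([X,Y])=[\varphi(X),\varphi(Y)]$ on pairs of basis elements. Here I would exploit the Clifford relations $f_{m+1}^2=-1$ (timelike) and $e_{n+1}^2=1$, and $uv+vu=2b(u,v)$, together with the fact that $f_{m+1},e_{n+1}$ anticommute with every $v\in V\subset\R^{m,n}$ inside $\Cl(m+1,n+1)$. The point is that conjugation/multiplication by $f_{m+1}e_{n+1}$ and by $f_{m+1}v$, $ve_{n+1}$ reproduces exactly the $2\times 2$ block arithmetic: e.g.\ $(f_{m+1}e_{n+1})^2=f_{m+1}e_{n+1}f_{m+1}e_{n+1}=-f_{m+1}^2e_{n+1}^2=1$ matches $\diag(1,-1)^2=\Id$, and the product $S(1)\cdot A(1)$-type computations match $(f_m e_{n+1})(f_m f_{m+1})$ etc. Rather than grinding through all $O(\dim^2)$ bracket pairs, I would organize the check by block type: diagonal-diagonal brackets land in the diagonal block and reduce to the adjoint action of $B(m,n)$ on itself (which $\varphi$ clearly intertwines since $\varphi$ is the identity there); diagonal-off-diagonal brackets reduce to the $\psi$/$\psi'$-type twisted actions $z\mapsto \sigma(x)z+zx$ which correspond on the Clifford side to the Lie bracket with the ``extra'' generators; and off-diagonal-off-diagonal brackets $[S(y),L(z)]$-type land back in the diagonal block, matching products like $f_{m+1}v\cdot v'e_{n+1}$. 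The main obstacle is purely bookkeeping: keeping signs straight between the ``timelike $f_{m+1}$, spacelike $e_{n+1}$'' conventions and the symplectic sign in $\Omega$, and confirming the anticommutation rules for $v\in V^\sigma$ versus $v\in V^{-\sigma}$ propagate correctly through the $S(\cdot)$, $A(\cdot)$ notation. I expect no conceptual difficulty, only the need for a careful case table, which is why the theorem is stated as immediate.
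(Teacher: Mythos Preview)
Your proposal is correct and is exactly the direct verification that the paper has in mind; the paper itself offers no proof beyond the phrase ``the following theorem is immediate,'' so your sketch is in fact more detailed than what the paper provides. One trivial slip: in your last paragraph you write ``$[S(y),L(z)]$-type'' where you presumably mean $[S(y),A(z)]$ (the matrices $L(\cdot)$ are unipotent, not off-diagonal), but this does not affect the argument.
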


\subsection{Group \texorpdfstring{$\Spin_0(m,n)$}{Spin0(m,n)} as \texorpdfstring{$\Sp_2(G,\sigma)$}{Sp2(G,sigma)}}

In this section, we want to identify the groups $\Spin_0(m+1,n+1)$ and $\Sp_2(\ClGr(m,n),\sigma)$.

First, note that a generic matrix in $\Sp_2(\ClGr(m,n),\sigma)$ can be decomposed in a unique way as follows:
\begin{equation}\label{eq:sp2-clifford-decomposition}
\begin{pmatrix}
1 & 0 \\
y & 1\
\end{pmatrix}
\begin{pmatrix}
\lambda & 0 \\
0 & \lambda^{-1}\
\end{pmatrix}
\begin{pmatrix}
x & 0 \\
0 & \sigma(x)^{-1}\
\end{pmatrix}
\begin{pmatrix}
1 & z \\
0 & 1\
\end{pmatrix}    
\end{equation}
where $x\in\Spin(m,n)$, $\lambda>0$, $y,z\in B^{\sigma}(m,n)$.

The embedding:
$$\R^{m,n}=\Span(f_1,\dots,f_m,e_1,\dots,e_n)\subset \R^{2,n+1}=\Span(f_1,\dots,f_{m+1},e_1,\dots,e_{n+1})$$
induces the embedding of spin groups $\iota \colon \Spin(m,n)\hookrightarrow\Spin(m+1,n+1)$. Moreover, we can embed the entire $\Spin(m,n)$ into $\Spin_0(m+1,n+1)$ using the following map:
$$\iota_0\colon\Spin(m,n)\to \Spin_0(m+1,n+1)$$
\begin{itemize}
\item if $x\in \Spin_0(m,n)$, then $\iota_0(x):=\iota(x)$;
\item if $x\in \Spin(m,n)\bs\Spin_0(m,n)$, then $\iota_0(x)=\iota(x)f_{m+1}e_{n+1}$.
\end{itemize}
The map $\iota_0$ is an injective group homomorphism.

\begin{teo}\label{Spin_as_Sp2}
The following map is an isomorphism of Lie groups:
$$\begin{array}{rcll}
\Phi\colon \Sp_2(\ClGr(m,n),\sigma) & \to & \Spin_0(m+1,n+1) & \\
 \diag(y,\sigma(y)^{-1}) & \mapsto & \iota_0(y) & \text{for }y\in\Spin(m,n)\\
 \diag(\lambda,\lambda^{-1}) & \mapsto & \frac{\lambda+\lambda^{-1}}{2}+f_{m+1}e_{n+1}\frac{\lambda-\lambda^{-1}}{2} & \text{for }\lambda>0\\
 R(f_mv) & \mapsto & 1+\frac{f_{m+1}+e_{n+1}}{2}v & \text{for }v\in V^\sigma \\
 L(f_mv) & \mapsto & 1+\frac{f_{m+1}-e_{n+1}}{2}v & \text{for }v\in V^\sigma\\
 R(1) & \mapsto & 1-\frac{f_{m+1}+e_{n+1}}{2}f_m\\
 L(1) & \mapsto & 1-\frac{f_{m+1}-e_{n+1}}{2}f_m
\end{array}$$
where $R(x)=\begin{pmatrix} 1 & x \\ 0 & 1 \end{pmatrix}$, $L(x)=\begin{pmatrix} 1 & 0 \\ x & 1 \end{pmatrix}$. The map $\varphi$ is the differential of $\Phi$ at $\Id$.
\end{teo}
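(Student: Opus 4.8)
The plan is to use the explicit formulas in the statement as the definition of $\Phi$, to verify that $\Phi$ is a bijective homomorphism of Lie groups, to compute its differential at the identity, and then to invoke the preceding theorem (which identifies $\spp_2(B(m,n),\sigma)$ with $\spin(m+1,n+1)$ via $\varphi$) in order to conclude that $\Phi$ is a local diffeomorphism at $\Id$, hence a Lie group isomorphism. Throughout I use $\Lie(\Sp_2(\ClGr(m,n),\sigma))=\spp_2(B(m,n),\sigma)$ (Proposition~\ref{prop:LieAlgOfSp2}) and $\Lie(\Spin_0(m+1,n+1))=\spin(m+1,n+1)$. Both groups have to be connected for the statement to hold: $\Spin_0(m+1,n+1)$ is so by definition, and if $m\le 1$ or $n=0$ then $B(m,n)$ is weakly Hermitian (Proposition~\ref{prop:Cliff.weaklyHerm}), so $\Sp_2(\ClGr(m,n),\sigma)$ is connected by Proposition~\ref{prop:Sp2Connected}. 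For $m\ge 2$, where $B(m,n)$ is only of Jordan type, I would argue connectedness directly: $f_me_n$ is $\sigma$-fixed, lies in $(B^\sigma(m,n))^\times\subseteq\ClGr(m,n)$, satisfies $(f_me_n)^2=b(e_n,e_n)=1$, and lies outside $\ClGr_0(m,n)$ since $N(f_me_n)=-1$; hence
$$D(f_me_n)=\diag(f_me_n,f_me_n)=\Om^{-1}\exp\!\left(\tfrac{\pi}{2}\begin{pmatrix}0 & f_me_n\\ -f_me_n & 0\end{pmatrix}\right)$$
(the inner matrix squares to $-\Id$ because $(f_me_n)^2=1$) is joined to $\Id$ inside $\Sp_2(\ClGr(m,n),\sigma)$, so the nonidentity component of $\hat G\cong\ClGr(m,n)$ already lies in the identity component; since the upper-left entry of a generic element runs through all of $\ClGr(m,n)$ and the generic elements are dense, $\Sp_2(\ClGr(m,n),\sigma)$ is connected.

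Next I would show $\Phi$ is a well-defined homomorphism. Each right-hand side in the definition of $\Phi$ lies in $\Spin_0(m+1,n+1)$: the images of the diagonal generators do because $\iota_0$ maps into $\Spin_0(m+1,n+1)$ and $\tfrac{\lambda+\lambda^{-1}}{2}+f_{m+1}e_{n+1}\tfrac{\lambda-\lambda^{-1}}{2}=\exp((\log\lambda)f_{m+1}e_{n+1})$ using $(f_{m+1}e_{n+1})^2=1$; the images of $R(f_mv),L(f_mv),R(1),L(1)$ have the form $1+w$ with $w^2=0$ (for instance $\bigl(\tfrac{f_{m+1}+e_{n+1}}{2}v\bigr)^2=0$ since $(f_{m+1}+e_{n+1})^2=f_{m+1}^2+e_{n+1}^2=0$ and $v$ anticommutes with $f_{m+1}$ and $e_{n+1}$), hence equal $\exp$ of a bivector and lie in $\Spin_0(m+1,n+1)$. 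Then one verifies that $\Phi$ respects the structural relations holding among the matrices $D(x),L(y),R(z),I$ in $\Sp_2(\ClGr(m,n),\sigma)$: additivity $R(z)R(z')=R(z+z')$, $L(y)L(y')=L(y+y')$, multiplicativity and the conjugation rules for $D$, and the identities $I=R(1)L(-1)R(1)$, $I^2=D(-1)$, $IR(z)I^{-1}=L(-z)$. Each becomes an identity in $\Cl_{even}(m+1,n+1)$ following from $v^2=b(v,v)$ for $v\in V$ and the (anti)commutation of $f_{m+1},e_{n+1}$ with $\R^{m,n}$. Equivalently, using the unique decomposition~\eqref{eq:sp2-clifford-decomposition}, one checks that $\Phi$ is multiplicative on the open dense set of generic elements; since this set generates the connected group $\Sp_2(\ClGr(m,n),\sigma)$, $\Phi$ extends to a group homomorphism into $\Spin_0(m+1,n+1)$.

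Finally, differentiating the formulas at $t=0$ along $t\mapsto D(\exp(ta))$, $t\mapsto D(e^t)$, $t\mapsto R(tz)$ and $t\mapsto L(ty)$ reproduces exactly the assignments defining $\varphi$ (the same Clifford computation as above, now in degree one), so $d\Phi_{\Id}=\varphi$, which by the preceding theorem is an isomorphism of Lie algebras; hence $\Phi$ is a local diffeomorphism at $\Id$. Therefore $\Phi$ is open, so $\Phi(\Sp_2(\ClGr(m,n),\sigma))$ is an open, hence closed, subgroup of the connected group $\Spin_0(m+1,n+1)$, i.e.\ $\Phi$ is surjective; and $\ker\Phi$, being discrete (local diffeomorphism) and normal, is central, hence contained in $Z(\Sp_2(\ClGr(m,n),\sigma))=\{\diag(a,a)\mid a\in Z(\ClGr(m,n))\cap\OO(\ClGr(m,n),\sigma)\}$, on which $\Phi$ is injective (it is assembled from the injective homomorphism $\iota_0$ and the injective one-parameter group $s\mapsto\exp(sf_{m+1}e_{n+1})$, whose image meets $\iota_0(\Spin(m,n))$ only in $1$). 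Thus $\ker\Phi=\{\Id\}$, and $\Phi$ is a bijective homomorphism of Lie groups which is a local diffeomorphism, hence an isomorphism of Lie groups with differential $\varphi$ at the identity. The main obstacle is precisely the non-formal content hidden in the first two steps — proving connectedness of $\Sp_2(\ClGr(m,n),\sigma)$ when $B(m,n)$ fails to be weakly Hermitian, and establishing consistency of the formulas with the group relations; the remaining mechanism of integrating a Lie algebra isomorphism is standard, and the Clifford-algebra identities ($(f_{m+1}e_{n+1})^2=1$, $(f_{m+1}+e_{n+1})^2=0$, anticommutation with $\R^{m,n}$) are elementary but sign- and ordering-sensitive, so checking $\exp\circ\varphi$ against each listed formula must be done carefully.
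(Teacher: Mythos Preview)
Your approach is essentially the paper's, carried out with considerably more care. The paper's proof is three sentences: it asserts that the unique decomposition~\eqref{eq:sp2-clifford-decomposition} of generic elements lets one read off $d\Phi_{\Id}=\varphi$, then says ``because $\varphi$ is an isomorphism of Lie algebras, $\Phi$ is a group homomorphism and a $k{:}1$-covering map,'' and finally checks $k=1$ by noting that $\Phi$ is bijective on the diagonal subgroup $\hat G$. Your argument follows the same skeleton---compute $d\Phi_{\Id}=\varphi$, deduce local diffeomorphism, then surjectivity and injectivity---but you supply two things the paper omits: an explicit connectedness argument for $\Sp_2(\ClGr(m,n),\sigma)$ in the non-weakly-Hermitian range $m\ge 2$, and an acknowledgement that ``$\Phi$ is a homomorphism'' actually requires verifying consistency of the generator formulas with the group relations (or, equivalently, that the locally defined map is a local homomorphism). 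Your injectivity step (kernel is discrete, hence central, and $\Phi$ is injective on the center) is a minor variant of the paper's (bijectivity on all diagonals), and both work.

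One small caution: the sentence ``since this set generates the connected group, $\Phi$ extends to a group homomorphism'' is not quite a valid inference---multiplicativity on a generating set does not by itself yield a homomorphism unless you have checked a complete set of relations. What you really want is that near $\Id$ the map $\Phi$ coincides with $\exp\circ\varphi\circ\exp^{-1}$, hence is a \emph{local} homomorphism, and then invoke the standard fact that a local homomorphism from a connected Lie group integrates to a global one from its universal cover; the descent to $\Sp_2(\ClGr(m,n),\sigma)$ is exactly what your center computation (or the paper's diagonal check) establishes. Also, your connectedness argument via $D(f_me_n)$ uses $n\ge 1$; for $n=0$ you should instead note that $\ClGr(m,0)$ is already connected (since $-1=(f_1f_2)^2\in\ClGr_0(m,0)$ when $m\ge 2$), so the generic locus is connected and dense.
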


\begin{proof} Because there is a neighborhood of $\Id\in \Sp_2(\ClGr(1,n),\sigma)$ consisting of generic elements and because of uniqueness of the decomposition~\eqref{eq:sp2-clifford-decomposition}, one can see that $\varphi$ is the differential of $\Phi$ at $\Id$. Because $\varphi$ is an isomorphism of Lie algebras, $\Phi$ is a group homomorphism and a $k:1$-covering map for some $k\geq 1$. Finally, since this map is bijective on the level of diagonal matrices, $k=1$. 
\end{proof}

\subsection{Models of the symmetric space of  \texorpdfstring{$\Spin_0(2,n)$}{Spin0(2,n)}}

In this section, we construct different models of the symmetric space of $\Spin_0(2,n+1)\cong\Sp_2(\ClGr(1,n),\sigma)$.

\subsubsection{Upper half-space model} We remind:
\begin{align*}
    B(1,n)&=\Span_\R(1,e_ie_j,f_1e_k\mid i,j,k\in\{1,\dots n\})\\
    B^{\sigma}(1,n)&=\Span_\R(1,f_1e_k\mid k\in\{1,\dots n\})=f_1V\\
    B^{\sigma}_{+}(1,n)&=\left\{t+uf_1e\mid t>0,\, u\in[0,t), e\in\Span_\R(e_1,\dots,e_n),\, \|e\|=1\right\}\\
    &=f_1V_-\ ,
\end{align*}
where $V_-=\{v\in V\mid b(v,v)<0,\; b(f_1,v)>0\}$. Thus,
$$\mathfrak U(\Spin_0(2,n+1))=\{x+yi\mid x\in B^{\sigma}(1,n),\;y\in B^{\sigma}_+(1,n)\}=f_1(V+V_-i).$$
Further,  $\mathfrak U(\ClGr(1,n))=B^{\sigma}_{+}(1,n)=f_1V_-$. It sits naturally inside $\mathfrak U(\Spin_0(2,n+1))$ as the imaginary part.

\subsubsection{Precompact model}
We take the complexification
$$B_\CC:=B_\CC(1,n)=\Span_\CC\{1,f_1e_i,e_ie_j \mid i,j\in\{1,\dots,n\}\}$$
with complex linear extension of $\sigma$ denoted also by $\sigma$ and the complex antilinear extension of $\sigma$ denoted by $\bar\sigma$. Then
$$B^\sigma_\CC=\Span_\CC\{1,f_1e_i\mid i\in\{1,\dots,n\}\}.$$
We consider the space, which is the precompact model of the symmetric space for $\Spin_0(2,n+1)$:
\begin{align*}
\mathfrak{B}(\Spin_0(2,n+1))&=\{b\in B_\CC^\sigma\mid 1-\bar bb\in \theta_\CC(B^\sigma_\CC)^\times\}\\
&=\{x+yf_1e\mid 1-(\bar x+\bar y f_1e)(x+yf_1e)\in \theta_\CC(B^\sigma_\CC)^\times,\; \|e\|=1\}\\
&=\{x+yf_1e\mid 1-(\bar xx+\bar yy)-(\bar xy+\bar yx)f_1e\in \theta_\CC(B^\sigma_\CC)^\times,\; \|e\|=1\}.
\end{align*}

We first find out what the set $\theta_\CC(B^\sigma_\CC)$ is.
Let $r_1\exp(i\phi_1)+r_1\exp(i\phi_1)f_1e\in B^\sigma_\CC$, $r_1,r_2\geq 0$, $\phi_1,\phi_2\in\R$, $\|e\|=1$ then
$$\theta_\CC(r_1\exp(i\phi_1)+r_1\exp(i\phi_1)f_1e)=(r_1^2+r_2^2)+ 2r_1r_2\cos(\phi_2-\phi_1)f_1e.$$
We denote $r_1^2+r_2^2=:r$. then
$$\theta_\CC(r_1\exp(i\phi_1)+r_2\exp(i\phi_2)f_1e)=r+2r_1\sqrt{r-r_1^2}\cos(\phi_2-\phi_1)f_1e$$
For fixed $r\geq 0$, the expression $2r_1\sqrt{r-r_1^2}\cos(\phi_2-\phi_1)$ can take every value in the interval $[-r,r]$, i.e.
$$\theta_\CC(B^\sigma_\CC)=\{r+pf_1e\mid r\geq 0,\;p\in [-r,r],\;\|e\|=1\}=B^{\sigma}_{\geq 0}.$$
Analogously,
$$\theta_\CC(B^\sigma_\CC)^\times=B^{\sigma}_+.$$
Therefore,
$$\mathfrak{B}(\Spin_0(2,n+1))=\{x+yf_1e\in B_\CC^\sigma\mid 1-(\bar xx+\bar yy)-(\bar xy+\bar yx)f_1e\in B^{\sigma}_+,\;\|e\|=1\}$$
is the precompact model of the symmetric space for $\Spin_0(2,n+1)$. Further,  $\mathfrak B (\ClGr(1,n))=\{x+yf_1e\in B^\sigma\mid 1-(x^2+y^2)-2xyf_1e\in B^\sigma_+\}$. Is sits naturally inside $\mathfrak B(\Spin_0(2,n+1))$ as the imaginary part.

\subsubsection{Projective space model}
We take the upper half-space model:
$$\mathfrak U(\Spin_0(2,n+1))=\{x+yi\mid x\in B^{\sigma}(1,n),\;y\in B^{\sigma}_+(1,n)\}.$$
We know that the map $z\mapsto (z,1)^tA_\CC$ is a homeomorphism between the upper half-space models and the projective space model. So we obtain:
$$\mathfrak P(\Spin_0(2,n))=\{(x+yi,1)^tA_\CC\mid x\in B^{\sigma}(1,n),\;y\in B^{\sigma}_+(1,n)\}=$$
$$=\left\{\begin{pmatrix}(x_1+y_1i)+f_1(ex_2+e'y_2)i\\1\end{pmatrix}A_\CC\left|
\begin{array}{l}
x_1\in\R,\,y_1\in\R_+,\,x_2\in\R_{\geq 0},\\
y_2\in[0,x_2),\,\|e\|=\|e'\|=1
\end{array}
\right.\right\}.$$

We can also interpret the projective space model for $\Spin_0(2,n+1)$ in terms of lines in $\CC^{n+3}$. First, we note that the stabilizer of the line $(i,1)^tA_\CC\subset A_\CC^2$ corresponds under the map $\Phi$ (cf.~Theorem~\ref{Spin_as_Sp2}) to the stabilizer of the line $(f_2+f_1i)\CC\subset\CC^{n+3}$ where $\Spin_0(2,n+1)$ acts on $\CC^{n+3}$ by the (complexified) map $\tau$ (cf.~\eqref{eq:Clifford-tau}). So we can take the following injective map:
$$\begin{array}{rcl}
F\colon \mathfrak P(\Spin_0(2,n+1)) & \to & \{l\in\CC P^{n+2}\mid l=v\CC,\,v\in\CC^{n+3},\,b_\CC(v,v)=0\}\\
 g(i,1)^tA_\CC & \mapsto & \tau(\Phi(g))(f_2+f_1i)\CC=\Phi(g)(f_2+f_1i)\Phi(g)^{-1}\CC
\end{array}$$
where $g\in\Sp_2(G,\sigma)$ and $b_\CC$ the complex bilinear extension of $b$.

Since $\Spin_0(2,n+1)$ acts on $\R^{n+3}$ preserving $b$, it acts on $\CC^{n+3}$ preserving $b_\CC$ and the $\CC$-sesquilinear extension $\tilde b$ of $b$, i.e.
$$\tilde b(v_1+v_2i,w_1+w_2i):=b(v_1,w_1)+b(v_2,w_2)+(b(v_1,w_2)-b(v_2,w_1))i$$
for $v,w\in\R^{n+3}$. Notice that the Hermitian form $\tilde b$ on $\CC^{n+3}$ has signature $(2,n+1)$. Therefore, a direct computation shows that
$F$ maps injectively $\mathfrak P(\Spin_0(2,n+1))$ onto
$$\mathfrak P'(\Spin_0(2,n+1)):=\left\{l\in \CC P^{n+2}\midwd
l=v\CC,\,v\in\CC^{n+3},\,b_\CC(v,v)=0,\,\tilde b(v,v)<0
\right\}.$$
Let $v=v_1+v_2i\in\CC^{n+3}$ be a vector such that $b_\CC(v,v)=0$, $\tilde b(v,v)=-2$. Then $b(v_1,v_1)=b(v_2,v_2)=-1$, $b(v_1,v_2)=0$. There exists an $\SO(2,n+1)$-transformation that maps $(f_2,f_1)$ to $(v_1,v_2)$. Therefore, $\Spin_0(2,n+1)$ acts transitively on $\mathfrak P'(\Spin_0(2,n+1))$, so it is a model of the symmetric space of $\Spin_0(2,n+1)$.

The projective space model for the symmetric space of $\ClGr(1,n)$ can be similarly interpreted as follows:
$$\mathfrak P'(\ClGr(1,n)):=\left\{l\in \CC P^{n+2}\midwd\begin{array}
 ll=(w+vi)\CC,\, v\in\Span_\R(e_1,\dots,e_n,f_1)\\
 w\in\Span_\R(e_{n+1},f_2),
b_\CC(v,v)=0,\,\tilde b(v,v)<0
\end{array}
\right\}.$$
This space is naturally embedded into $\mathfrak P'(\Spin_0(2,n+1))$. Notice that the subset of $\mathfrak P'(\ClGr(1,n))$ with $w=f_2$ provides a projective space model for the Riemannian symmetric of $\Spin(1,n)$, which can be also reinterpreted as follows:
$$\mathfrak P'(\Spin(1,n)):=\left\{l\in \R P^{n}\midwd 
 l=v\R,\, v\in\Span_\R(e_1,\dots,e_n,f_1),\; b(v,v)<0
\right\}.$$
This is the classical projective space model of the hyperbolic $n$-space.

\subsubsection{Space of complex structures}
We consider the space of complex structures for $\Sp_2(G,\sigma)\cong\Spin_0(2,n+1)$:
$$\mathfrak C(\Sp_2(G,\sigma)):=\left\{J\text{ complex structure on $A^2$}\left|\begin{array}{l}
J(\Is_G(\omega))=\Is_G(\omega),\\h_J\text{ is a $(G,\sigma)$-inner product}
\end{array}\right.\right\}$$
where $h_J(x,y)=\omega(J(x),y)$. This model admits the following equivalent interpretation:

\vspace{2mm}

\noindent Notice that $\mathfrak C(\Spin_0(2,n+1))\subseteq\Sp_2(G,\sigma)$ because the standard complex structure $J_0\in\Sp_2(G,\sigma)$ and $\Sp_2(G,\sigma)$ acts on $\mathfrak C(\Spin_0(2,n+1))$ transitively by conjugation.

\noindent If we take the standard complex structure $J_0=\Om$, then $\Phi(J_0)=-f_1f_2$ where $\Phi$ is the isomorphism from $\Sp_2(G,\sigma)$ to $\Spin_0(2,n+1)$ defined in Theorem~\ref{Spin_as_Sp2}. For every $v\in\R^{n+3}$ there exist unique elements $e\in\Span_\R(e_1,\dots,e_n)$, $f\in\Span_\R(f_1,f_2)$ such that $v=e+f$. Then
$$\Phi(J_0)v\Phi(J_0)^{-1}=e-f.$$

\noindent Let $J$ be another element of $\mathfrak C$. There exists $g\in\Sp_2(G,\sigma)$ such that $J=g^{-1}J_0g$. Let $v=\Phi(g)^{-1}(e+f)\Phi(g)$, $e,f$ as above,
$$\Phi(J)v\Phi(J)^{-1}=\Phi(g)^{-1}\Phi(J_0)(e+f)\Phi(J_0)^{-1}\Phi(g)=\Phi(g)^{-1}(e-f)\Phi(g).$$
Since $\Spin_0(2,n+1)$ acts on $(\R^{n+3},b)$ preserving $b$, the restriction of $b$ to the linear subspace $\Phi(g)^{-1}\Span_\R(e_1,\dots,e_{n+1})\Phi(g)$ is positive definite and the restriction of $b$ to the linear subspace $\Phi(g)^{-1}\Span_\R(f_1,f_2)\Phi(g)$ is negative definite.

\noindent Consider the following space of $b$-orthogonal splittings of $\R^{n+3}$:
$$\mathfrak C'(\Spin_0(2,n+1)):=\left\{(V_+,V_-)\midwd \begin{array}{l}
\R^{n+3}=V_+\oplus V_-\\
b|_{V_+}\text{ is positive definite}\\
b|_{V_-}\text{ is negative definite}\\
V_+\perp_b V_-
\end{array}\right\}.$$
We have a surjective map $F\colon\Sp_2(G,\sigma)\to\mathcal D$,
$$F(g):=(\Phi(g)^{-1}\Span_\R(e_1,\dots,e_{n+1})\Phi(g),\Phi(g)^{-1}\Span_\R(f_1,f_2)\Phi(g)).$$
Notice,
$$F^{-1}(\Span_\R(e_1,\dots,e_n),\Span_\R(f_1,f_2))=\Spin(2)\times\Spin(n+1).$$
Therefore, $\mathfrak C'(\Spin_0(2,n+1))$ is isomorphic to $\Spin_0(2,n+1)/(\Spin(2)\times\Spin(n+1))$, i.e., $\mathfrak C'(\Sp_2(G,\sigma))$ is the model of the symmetric space of $\Spin_0(2,n+1)$. 

To identify the spaces $\mathfrak C'(\ClGr(1,n))$ and $\mathfrak C'(\Spin(1,n))$, we notice that $\iota_0(\Spin(1,n))$ inside $\Spin_0(2,n+1)$ stabilizes $\Span(e_{n+1})$ and $\Span(f_2)$. Therefore, $\mathfrak C'(\Spin(1,n))$ can be identified with the space of the following splittings of $\R^{n+3}$:
$$\mathfrak C'(\Spin(1,n)):=\left\{(V_+,V_-)\midwd \begin{array}{l}
\R^{n+3}=V_+\oplus V_-\oplus\Span(e_{n+1})\oplus\Span(f_{2})\\
b|_{V_+}\text{ is positive definite, }V_+\subset W\\
b|_{V_-}\text{ is negative definite, }V_-\subset W\\
V_+\perp_b V_-
\end{array}\right\},$$
where we denote $W:=\Span(e_1,\dots,e_n,f_1)$.

However, $\iota_0(\ClGr(1,n))$ does not preserve $\Span(e_{n+1})$ and $\Span(f_2)$ separately, but only the entire subspace $\Span(e_{n+1},f_2)$. Therefore, we can identify: 
$$\mathfrak C'(\ClGr(1,n)):=\left\{(V_+,V_-,l_+,l_-)\midwd \begin{array}{l}
\R^{n+3}=V_+\oplus V_-\oplus l_+\oplus l_-\\
b|_{V_+}\text{ is positive definite, }V_+\subset W\\
b|_{V_-}\text{ is negative definite, }V_-\subset W\\
b|_{l_+}\text{ is positive definite, }l_+\subset \Span(e_{n+1},f_2)\\
b|_{l_-}\text{ is negative definite, }l_-\subset \Span(e_{n+1},f_2)\\
V_+\perp_b V_-,\;l_+\perp_b l_-
\end{array}\right\},$$
where as above $W=\Span(e_1,\dots,e_n,f_1)$.

The Riemannian symmetric spaces $\mathfrak C'(\Spin(1,n))$ and $\mathfrak C'(\ClGr(1,n))$ naturally embed into $\mathfrak C'(\Spin_0(2,n+1))$ as follows: 
$$(V_+,V_-,l_+,l_-)\mapsto (V_+\oplus l_+, V_-\oplus l_-),$$
where $l_+=\Span(e_{n+1})$ and $l_-=\Span(f_2)$ in case of $\mathfrak C'(\Spin(1,n))$.

\begin{rem}
In constructing the models of the Riemannian symmetric space associated to the group 
$\Sp_2(G,\sigma)\cong \Spin_0(m+1,n+1)$, we restricted attention to the case $m=1$, 
since only in this situation the Lie algebra $(B(m,n),\sigma)$ is Hermitian. 
This allows one to define the positive cone $B^\sigma_+(m,n)$, which is required to build 
these models, and moreover ensures that the stabilizer of every point in the symmetric space, 
which is conjugate to the group $\KSp_2(G,\sigma)$, is compact. 

In the remaining cases, when $(B(m,n),\sigma)$ is of Jordan type but not Hermitian, 
the group $\Sp_2(G,\sigma)\cong \Spin_0(m+1,n+1)$ and its subgroup 
$\KSp_2(G,\sigma)$ are still well defined, although the latter is no longer compact. 
This nevertheless allows one to define the symmetric space 
$\Sp_2(G,\sigma)/\KSp_2(G,\sigma)$, which is then of pseudo--Riemannian type. 
Models of these pseudo--Riemannian symmetric spaces can also be formulated in terms of 
$B(m,n)$; however, a detailed study of their geometric structure lies beyond the scope of this article and will be pursued in future work.
\end{rem}

\bibliographystyle{abbrv}
\bibliography{bibl}

\bigskip

\noindent\small{\textsc{School of Mathematics, Korea Institute for Advanced Study}\\
  85 Hoegi-ro, Dongdaemun-gu, Seoul 02455, Republic of Korea}\\
\emph{E-mail address}:  \texttt{erogozinnikov@gmail.com}

\bigskip

\end{document}